\DeclareSymbolFontAlphabet{\mathbb}{AMSb}
\DeclareSymbolFontAlphabet{\mathbbl}{bbold}
\numberwithin{equation}{section}
\theoremstyle{definition}
\newtheorem* {theorem*}{Theorem}
\newtheorem* {corollary*}{Corollary}
\newtheorem* {conjecture*}{Conjecture}
\newtheorem{theorem}{Theorem}[section]
\newtheorem{thmdef}[theorem]{Theorem-Definition}
\newtheorem{problem}[theorem]{Problem}
\theoremstyle{definition}
\newtheorem* {example*}{Example}
\newtheorem{lemma}[theorem]{Lemma}
\theoremstyle{definition}
\newtheorem{definition}[theorem]{Definition}
\theoremstyle{definition}
\newtheorem{conjecture}[theorem]{Conjecture}
\newtheorem{proposition}[theorem]{Proposition}
\newtheorem{corollary}[theorem]{Corollary}
\newtheorem{remark}[theorem]{Remark}
\theoremstyle{definition}
\newtheorem {example}[theorem]{Example}
\theoremstyle{definition}
\theoremstyle{definition}
\theoremstyle{definition}
\def\modu{\ (\mathrm{mod}\ }
\def\({\left(}
\def\){\right)}
\newcommand{\CC}{\mathbb{C}}
\newcommand{\cR}{\mathcal{R}}
\newcommand{\cC}{\mathcal{C}}
\newcommand{\cZ}{\mathcal{Z}}
\def\cX{\mathcal{X}}
\def\cY{\mathcal{Y}}
\def\CC{\mathbb{C}}
\def\ZZ{\mathbb{Z}}
\def\GL{\textsf{GL}}
\def\ch{\mathrm{ch}}
\def\fk{\mathfrak}
\def\barr{\begin{array}}
\def\earr{\end{array}}
\def\ba{\begin{aligned}}
\def\ea{\end{aligned}}
\def\be{\begin{equation}}
\def\ee{\end{equation}}
\def\quand{\quad\text{and}\quad}
\def\qquord{\qquad\text{or}\qquad}
\newcommand{\gl}{\mathfrak{gl}}
\def\hs{\hspace{0.5mm}}
\def\ds{\displaystyle}
\def\ben{\begin{enumerate}}
\def\een{\end{enumerate}}
\def\hs{\hspace{0.5mm}}
\def\fpf{{\textsf {FPF}}}
\def\D{\hat D}
\def\Des{\mathrm{Des}}
\def\Ffpf{\hat F^\fpf}
\def\Ifpf{I^{\fpf}}
\def\e{\textbf{e}}
\newcommand{\Fl}{\textsf{Fl}}
\renewcommand{\O}{\operatorname{O}}
\newcommand{\Sp}{\textsf{Sp}}
\newcommand{\cA}{\mathcal{A}^\O}
\newcommand{\cB}{\mathcal{B}}
\def\iR{\cR^\O}
\def\iRfpf{\cR^{\Sp}}
\def\cAfpf{\mathcal{A}^\Sp}
\def\iF{\hat F}
\def\Ffpf{\hat F_\fpf}
\def\arcstart{\ \xy<0cm,-.15cm>\xymatrix@R=.1cm@C=.3cm }
\newcommand{\arcstartc}[1]{\ \xy<0cm,-.15cm>\xymatrix@R=.1cm@C=#1cm}
\def\ellhat{\ell^{\textsf{O}}}
\def\ellfpf{\ell^{\textsf{Sp}}}
\def\Ffpf{\hat F^\fpf}
\newcommand{\del}{\operatorname{del}}
\newcommand{\ipush}{\textsf{ipush}}
\newcommand{\fpush}{\textsf{fpush}}
\def\sA{\mathscr{A}}
\def\q{\mathfrak{q}}
\def\Tab{\textsf{Tab}_{EG}}
\def\Q{\mathrm{Q}}
\def\BB{\mathbb{B}}
\newcommand{\row}{\operatorname{row}}
\def\fsim{\mathbin{=_\textsf{Sp}}}
\def\isim{\mathbin{=_\textsf{O}}}
\def\simCK{\overset{\textsf{K}}\sim}
\def\simICK{\overset{\textsf{O}}\sim}
\def\simFCK{\overset{\textsf{Sp}}\sim}
\newcommand{\weight}{\operatorname{wt}}
\def\cW{\mathcal{W}}
\def\row{\textsf{row}}
\def\col{\textsf{col}}
\def\whSym%{\wh{\textsf{S}}\textsf{ym}}%
\def\whQSym%{\wh{\textsf{Q}}\textsf{Sym}}%
\def\D{\textsf{D}}
\def\SD{\textsf{SD}}
\def\PP{\ZZ_{>0}}
\def\NN{\ZZ_{\geq 0}}
\def\EG{\textsf{EG}}
\def\HM{\textsf{HM}}
\def\Tab{\textsf{Tab}}
\def\STab{\textsf{ShTab}}
\def\O{\textsf{O}}
\def\PO{P_{\textsf{EG}}^\O}
\def\QO{Q_{\textsf{EG}}^\O}
\def\PSp{P_{\textsf{EG}}^\Sp}
\def\QSp{Q_{\textsf{EG}}^\Sp}
\def\fkb{\fk b}
\def\ck{\textsf{ck}}
\def\ifkb{\fk{ib}}
\def\ffkb{\fk{fb}}
\newcommand{\push}{\textsf{push}}
\newcommand{\ytab}[1]{
\ytableausetup{boxsize = .38cm,aligntableaux=center}
{\footnotesize\begin{ytableau}  #1  \end{ytableau}}
}
\def\fks{{\fk d}}
\def\shword{\textsf{shword}}
\def\minfpf{1_\fpf}
\def\pair{\textsf{pair}}
\def\fO{f^\O}
\def\eO{e^\O}
\def\fSp{f^\Sp}
\def\eSp{e^\Sp}
\def\pipe{ \hs / \hs }
\def\ttimes{\textcircled{$\times$}}
\def\cS{\textsf{Perm}}
\def\cSe{\textsf{Even}}
\def\dbl{\textsf{dbl}}
\def\invert{\textsf{inv}}
\def\K{\textsf{K}}
\def\ROWINSERT{\mathsf{row\_insert}}
\def\REVERSE{\mathsf{reverse}}
\def\COLUMNINSERT{\mathsf{column\_insert}}
\def\REORIENT{\mathsf{reorient}}
\def\INSERTUPTO{\mathsf{insert\_up\_to\_column}}
\def\TOTAL{\mathsf{total\_insertion}}
\def\spck{\mathsf{ck}_0^\Sp}
\def\ock{\mathsf{ck}_0^\O}
\begin{document}

%% define your title in the usual way
\title{Bumping operators and insertion algorithms for queer supercrystals}

%% define your authors in the usual way
%% use \addressmark{1}, \addressmark{2} etc for the institutions, and use \thanks{} for contact details
\author{
Eric Marberg
\\ Department of Mathematics \\  Hong Kong University of Science and Technology \\ {\tt eric.marberg@gmail.com}
}

\date{}

\maketitle

\begin{abstract}
Results of Morse and Schilling show that the set of increasing factorizations of reduced words for a permutation is naturally a crystal for the general linear Lie algebra. Hiroshima has recently constructed two superalgebra analogues of such crystals. Specifically, Hiroshima has shown that the sets of increasing factorizations of involution words and fpf-involution words for a self-inverse permutation are each crystals for the queer Lie superalgebra. In this paper, we prove that these crystals are normal and identify their connected components. To accomplish this, we study two insertion algorithms that may be viewed as shifted analogues of the Edelman-Greene correspondence. We prove that the connected components of Hiroshima's crystals are the subsets of factorizations with the same insertion tableau for these algorithms, and that passing to the recording tableau defines a crystal morphism. This confirms a conjecture of Hiroshima. Our methods involve a detailed investigation of certain analogues of the Little map, through which we extend several results of Hamaker and Young.
\end{abstract}

%\keywords{}

\setcounter{tocdepth}{2}
\tableofcontents

\section{Introduction}

This article is about combinatorial models for crystals for quantum  queer Lie superalgebras.
The crystals of primary interest will arise as sets of factorizations of reduced words for permutations.
Our main results show how certain insertion algorithms that map words to pairs of shifted tableaux
classify the connected components of these crystals and may be interpreted as crystal isomorphisms.

\emph{Crystal bases} or \emph{(Kashiwara) crystals} are combinatorial objects that arise 
in the representation theory of Lie algebras (or, more precisely, 
of quantum deformations of the corresponding universal enveloping algebras). The theory of crystals 
first appeared in independent work of Kashiwara \cite{Kashiwara1990,Kashiwara1991} and Lusztig \cite{Lusztig1990a,Lusztig1990b} in the 1990s; for a history of the relevant literature, see  
\cite[\S1]{BumpSchilling}.

A theory of abstract crystals exists for any finite-dimensional Lie (super)algebra.
We confine our attention in this paper to abstract crystals for 
the \emph{general linear Lie algebra} $\gl_n$ and the \emph{queer Lie superalgebra} $\q_n$.
The latter object is the second super-analogue of $\gl_n$, and has a number of interesting features.
For example, all of its Cartan subalgebras are noncommutative, 
and this gives the highest weight space of any highest weight $\q_n$-module the structure of a Clifford algebra.
For background on $\q_n$ and its somewhat complicated representation theory, see \cite{GJKKK15,GJKKK,GJKK10,GJKKK10}.

The data of an abstract $\gl_n$-crystal or $\q_n$-crystal
 is equivalent to a certain directed weighted \emph{crystal graph}. Under this identification, crystal isomorphisms correspond to graph isomorphisms.
The weakly connected components of a crystal graph are called
its \emph{full subcrystals}.
The categories of $\gl_n$- and $\q_n$-crystals are both equipped with a tensor product and a \emph{standard crystal} $\BB_n$, 
which is derived from the vector representation of the associated Lie superalgebra.

It is an interesting problem to determine whether or not an abstract crystal
is \emph{normal} in the sense of being isomorphic to a disjoint union of full subcrystals of tensor powers of $\BB_n$,
since such abstract crystals will correspond to actual representations of the associated Lie superalgebra.
Stembridge \cite{Stembridge2003} identified a set of local axioms that 
give a solution to this problem for $\gl_n$-crystals.
Gillespie, Hawkes, Poh, and Schilling \cite{GHPS}, building on work of Assaf and Oguz \cite{AssafOguz}, have recently extended Stembridge's results to
$\q_n$-crystals.

The prototypical example of a normal $\gl_n$-crystal is the set of \emph{semistandard Young tableaux} 
with entries in $\{1,2,\dots,n\}$. The $\q_n$-analogues of these crystals have two combinatorial models,
either as \emph{semistandard decomposition tableaux} \cite{GJKKK} or \emph{semistandard shifted tableaux} \cite{AssafOguz, HPS, Hiroshima2018}. We focus primarily on the second model in this article.
For the definitions, see Section~\ref{shtab-sect}.

Another source of crystal constructions comes from reduced words.
A \emph{reduced word} for a permutation $\pi$ is a minimal-length integer sequence $i_1i_2\cdots i_l$
such that $\pi = s_{i_1}s_{i_2}\cdots s_{i_l}$ where $s_i:=(i,i+1)$.
One can divide a word $i_1i_2\cdots i_l$ into a sequence of $n$ strictly increasing, possibly empty subwords (which we call an \emph{$n$-fold increasing factorization}) if and only if there are fewer than $n$ indices $j$ with $i_j \geq i_{j+1}$. 
Let $\cR_n(\pi)$ denote the set of all $n$-fold increasing factorizations 
of reduced words for a fixed permutation $\pi$.
Morse and Schilling \cite{MorseSchilling}
identified a natural $\gl_n$-crystal structure on this set (see Section~\ref{reduced-sect})
and by checking Stembridge's local axioms, proved the following:

\begin{theorem*}[Morse and Schilling \cite{MorseSchilling}; see Corollary~\ref{ms-cor1}]
The $\gl_n$-crystal $\cR_n(\pi)$ is normal.
\end{theorem*}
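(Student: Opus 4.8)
The plan is to verify the list of local axioms identified by Stembridge \cite{Stembridge2003}, which (as noted above) characterize normality for $\gl_n$-crystals; it then suffices to show that $\cR_n(\pi)$, equipped with the Morse--Schilling operators $e_i,f_i$ that act on the $i$th and $(i+1)$st factors via a tensor-product-style bracketing rule, satisfies these axioms. I would begin by recording the basic structural facts: the weight of a factorization is the composition listing the sizes of its factors; $\varepsilon_i$ and $\varphi_i$ count, respectively, the unbracketed letters of the $(i+1)$st and $i$th factors under the relevant pairing; and $e_i$ alters only factors $i$ and $i+1$, moving (and possibly decrementing by one) a single letter across the boundary between them.

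The axioms split according to $|i-j|$. For $|i-j|\geq 2$ the index sets $\{i,i+1\}$ and $\{j,j+1\}$ are disjoint, so $e_i$ and $e_j$ act on independent coordinates; the commutation $e_ie_j=e_je_i$, the invariance of $\varepsilon_j$ under $e_i$, and the absence of any ``$\delta$-terms'' are then immediate. The substance is the case $j=i\pm1$, where one checks Stembridge's $\Delta$- and $\nabla$-type relations governing $\varepsilon_i(e_jx)$, $\varepsilon_j(e_ix)$, and the double moves $e_ie_je_ix$ versus $e_je_ie_jx$. Here I would reduce everything to a computation involving only the three consecutive factors $x^{(k)},x^{(k+1)},x^{(k+2)}$ with $k=\min(i,j)$, since all other factors are inert; within this window one performs a finite case analysis on the bracketed/unbracketed status of the letters near the two boundaries and tracks how a crystal move on one boundary perturbs the pairing on the other.

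The main obstacle, and the reason this case analysis is more than bookkeeping, is that the reduced-word constraint forbids naively transposing a letter between factors: when the letter $a$ about to be moved would collide with an existing copy of $a$, or would break strict increase, the operator must instead move $a-1$ (or $a+1$), and one must verify both that each factor stays strictly increasing and that the concatenation is still a reduced word for the same $\pi$. The latter is guaranteed by the nilCoxeter/Coxeter--Knuth relations $s_as_{a\pm1}s_a=s_{a\pm1}s_as_{a\pm1}$ and $s_as_b=s_bs_a$ for $|a-b|\geq2$, which exhibit the local modifications used to justify these value shifts as products of braid and commutation moves. I would isolate this as a lemma and then feed it into the Stembridge verification.

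Finally, I would note the alternative route that fits the theme of this paper: Edelman--Greene insertion restricts to a bijection from $\cR_n(\pi)$ onto $\bigsqcup_\lambda \{P_\lambda\}\times\SSYT_n(\lambda)$, the union running over the finitely many shapes $\lambda$ carrying an Edelman--Greene tableau for $\pi$. If one shows (this is where the Little map enters) that this bijection intertwines $e_i,f_i$ with the standard $\gl_n$-crystal operators on the semistandard recording tableaux, then $\cR_n(\pi)$ is a disjoint union of copies of the normal crystals $\SSYT_n(\lambda)$, whence normality is immediate. I expect the corollary to be obtained in this way, as a byproduct of the insertion results proved elsewhere in the paper; the price is that this intertwining statement is the genuinely hard input.
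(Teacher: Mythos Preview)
Your proposal is correct and in fact anticipates both of the standard routes. Your primary approach---verifying Stembridge's local axioms directly on $\cR_n(\pi)$---is precisely how Morse and Schilling originally proved the result (as the introduction notes explicitly). The alternative route you sketch at the end is exactly how the paper packages the statement: Corollary~\ref{ms-cor1} is deduced immediately from Theorem~\ref{eg-thm}(b), which asserts that $Q_\EG:\cR_n(\pi)\to\Tab_n(\ell(\pi))$ is a quasi-isomorphism, so each full subcrystal of $\cR_n(\pi)$ is isomorphic to some $\Tab_n(\lambda)$ and hence normal by Theorem-Definition~\ref{tab-thmdef}.

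The trade-off is as you describe. The axiom-checking route is self-contained but requires a careful case analysis at the $|i-j|=1$ boundary, with the reduced-word constraint handled via braid relations. The insertion route is cleaner once Theorem~\ref{eg-thm} is in hand, but that theorem (also due to Morse--Schilling) is the genuinely hard input, and the paper simply quotes it rather than reproving it. One small clarification: the Little map is not needed for this particular corollary---it enters later in the paper for the shifted analogues, but the unshifted statement that $Q_\EG$ intertwines the crystal operators is proved directly by Morse and Schilling without it.
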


Morse and Schilling also showed something more specific.
The \emph{Edelman-Greene correspondence} \cite{EG} is a well-known map 
that sends each $w \in \cR_n(\pi)$ to a pair of  tableaux $(P_\EG(w),Q_\EG(w))$
with the same shape. This correspondence has many interesting and desirable properties. 

\begin{theorem*}[Morse and Schilling \cite{MorseSchilling}; see Theorem~\ref{eg-thm}]
The full subcrystals of $\cR_n(\pi)$ are the fibers of $w \mapsto P_\EG(w)$.
Moreover, the map $w \mapsto Q_\EG(w)$ defines an isomorphism from each full subcrystal of $\cR_n(\pi)$ to a (normal) $\gl_n$-crystal of semistandard tableaux.
\end{theorem*}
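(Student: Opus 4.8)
The plan is to promote the Edelman--Greene correspondence to an isomorphism of $\gl_n$-crystals
\[
\cR_n(\pi)\ \xrightarrow{\ \sim\ }\ \bigsqcup_{T}\ \{T\}\times \SSYT(\sh(T),n),
\]
where $T$ ranges over the increasing tableaux whose row reading word is a reduced word for $\pi$, the symbol $\SSYT(\sh(T),n)$ denotes semistandard tableaux of shape $\sh(T)$ with entries in $\{1,\dots,n\}$, and the right-hand side carries the $\gl_n$-structure that acts by the standard semistandard-tableaux operators on the second factor and trivially on the first. It is a standard feature of Edelman--Greene insertion that $w\mapsto(P_\EG(w),Q_\EG(w))$ is already a bijection onto this target and is weight-preserving, the weight $(|w^1|,\dots,|w^n|)$ of $w$ being the content of $Q_\EG(w)$. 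So once one knows that the bijection intertwines the crystal operators, the theorem follows: the full subcrystals of the right-hand side are precisely the sets $\{T\}\times\SSYT(\sh(T),n)$, hence the full subcrystals of $\cR_n(\pi)$ are the fibres of $w\mapsto P_\EG(w)$, and on each such fibre $w\mapsto Q_\EG(w)$ is by construction an isomorphism onto $\SSYT(\sh(T),n)$, the prototypical normal $\gl_n$-crystal.

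The content to prove is therefore the \emph{intertwining statement}: for every $w\in\cR_n(\pi)$ and $1\le i\le n-1$, the factorization $f_i(w)$ is defined if and only if $f_i(Q_\EG(w))$ is, and when they are, $P_\EG(f_i w)=P_\EG(w)$ and $Q_\EG(f_i w)=f_i(Q_\EG(w))$. The analogue for the raising operators follows formally, since $e_i$ and $f_i$ are mutually inverse partial bijections on both sides. To prove this I would localize. The operator $f_i$ alters only the factors $w^i$ and $w^{i+1}$, in a way governed by a bracketing of their entries, while Edelman--Greene insertion of $w^1w^2\cdots w^n$ may be carried out one factor at a time from the left. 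Let $A$ be the tableau obtained after inserting $w^1\cdots w^{i-1}$ and $B$ the tableau obtained after further inserting $w^i$ then $w^{i+1}$. The statement reduces to the local claim that inserting $w^i,w^{i+1}$ into $A$ and inserting the $f_i$-modified pair into $A$ yield the \emph{same} tableau $B$, while the two resulting pairs of horizontal recording strips, labelled $i$ and $i+1$, differ exactly by applying the semistandard operator $f_i$ to this two-row pattern. Given the claim, insertion of the remaining factors $w^{i+2}\cdots w^n$ into $B$ proceeds identically in both cases, so $P_\EG$ is unchanged and $Q_\EG$ changes precisely by $f_i$ (recall $f_i$ on a semistandard tableau disturbs only the cells valued $i$ or $i+1$).

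The local claim is the main obstacle. Its delicate point is that the bracketing defining $f_i$ is a bracketing of the \emph{entries} of $w^i$ and $w^{i+1}$, whereas $f_i$ on the recording tableau is a bracketing of \emph{column positions}, and one must show the insertion procedure identifies these two bracketings; the worst cases are those in which the migrating entry is shifted in value by $\pm1$, so that the underlying reduced word genuinely changes. I would attack this either (i) by a direct analysis of the Edelman--Greene bumping routes, tracking how the bracketing pairs govern which routes are altered and checking that the altered routes reorganize into those predicted for the $f_i$-modified pair; or, preferably, (ii) by invoking the fact that $P_\EG$ is a complete invariant of Coxeter--Knuth equivalence of reduced words, and verifying case by case that $f_i$ acts on the underlying word as a composite of Coxeter--Knuth elementary moves while acting on the colouring as the semistandard operator $f_i$. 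The normality of $\cR_n(\pi)$ --- obtainable independently by checking Stembridge's local axioms --- then lets one reduce each such verification to a single representative per connected component, which should considerably shrink the casework.
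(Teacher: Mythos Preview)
The paper does not supply its own proof of this theorem: it is cited as a result of Morse--Schilling \cite{MorseSchilling} (see also \cite[\S10]{BumpSchilling}) and used as background. So there is no in-paper argument to compare against, and the relevant benchmark is the proof in those references.

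Your plan is essentially the standard one. The Edelman--Greene bijection is weight-preserving, so the entire content is indeed the intertwining statement for $f_i$, and your localization to the two factors $w^i,w^{i+1}$ is the right reduction. Of your two suggested attacks, approach (ii) is the one carried out in \cite[\S10]{BumpSchilling}: one checks directly that the underlying reduced word of $f_i(w)$ differs from that of $w$ by a sequence of Coxeter--Knuth moves (in the generic case a single $acb\leftrightarrow cab$ or $bca\leftrightarrow bac$; in the degenerate case where the migrating letter collides with an existing entry, the braid move $a(a{+}1)a\leftrightarrow(a{+}1)a(a{+}1)$), so $P_\EG$ is preserved by Theorem~\ref{simCK-thm}. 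One then verifies the effect on $Q_\EG$. Your approach (i), tracking bumping routes directly, would also work but is more laborious.

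One small correction to your closing sentence: you do not need to invoke normality of $\cR_n(\pi)$ (via Stembridge's axioms) to reduce the casework. Normality is a \emph{consequence} of the intertwining statement, not an input to it; the case analysis for the Coxeter--Knuth verification is already finite and local, governed only by the relative values of a few adjacent letters.
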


One application of this result is to give a crystal theoretic interpretation of the positive coefficients in the Schur expansion of the \emph{Stanley symmetric functions}; see Corollary~\ref{ms-cor2}.
The precise definitions of  $\cR_n(\pi)$ and the Edelman-Greene correspondence appear in Section~\ref{factor-sect}.

In the recent paper \cite{Hiroshima},
Hiroshima has constructed two $\q_n$-analogues of Morse and Schilling's $\gl_n$-crystals of factorized reduced words.
The elements of 
Hiroshima's $\q_n$-crystals
 are $n$-fold increasing factorizations of the \emph{involution words} and \emph{fpf-involution words} 
 of a self-inverse permutation $\pi$. Such words have been studied under various names by several authors \cite{CJW,HMP2,HanssonHultman,HuZhang1,RichSpring}.

Whereas  reduced words for permutations may be identified with maximal chains in the weak order on the symmetric group,
involution words and fpf-involution words correspond to maximal chains in an analogous weak order on
the finite set of orbits of the orthogonal and symplectic groups acting on the complete flag variety.
For this reason, we denote Hiroshima's $\q_n$-crystals by $\iR_n(\pi)$ and $\iRfpf_n(\pi)$.
We review the definitions of these crystals in Sections~\ref{o-fact-sect} and \ref{sp-fact-sect}.
One of our main new results is the following theorem concerning these objects.

\begin{theorem*}[Corollaries~\ref{normal-cor1} and \ref{normal-cor2}]
For each $\K \in \{ \O,\Sp\}$,
the $\q_n$-crystal $\cR^\K_n(\pi)$ is normal.
\end{theorem*}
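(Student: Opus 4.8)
The plan is to deduce normality from a \emph{shifted Edelman--Greene correspondence} for each family, in exact parallel with how the Morse--Schilling normality theorem for $\cR_n(\pi)$ follows from Theorem~\ref{eg-thm}. Concretely, for $\K\in\{\O,\Sp\}$ I would construct an insertion algorithm sending an $n$-fold increasing factorization $w=(w^1,\dots,w^n)\in\cR^\K_n(\pi)$ to a pair $(P^\K(w),Q^\K(w))$ of shifted tableaux of a common shape, where $P^\K(w)$ depends only on the underlying involution (resp.\ fpf-involution) word obtained by concatenating the blocks of $w$, and $Q^\K(w)$ is a semistandard shifted tableau with entries in $\{1,\dots,n\}$ recording which block of $w$ contributed each inserted letter. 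The map should be a bijection onto all pairs $(P,Q)$ with $P$ a fixed admissible insertion tableau and $Q$ an arbitrary compatible semistandard shifted tableau; summing the weights $x^{\weight(Q)}$ over such $Q$ (for $n$ large) should then recover the Schur-$P$/$Q$-positive expansion of the relevant involution Stanley symmetric function, which is the analogue of Corollary~\ref{ms-cor2}.

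The technical heart is to show that the $\q_n$-crystal operators $e_i,f_i$ for $1\le i\le n-1$ and the queer operators $e_{\bar 1},f_{\bar 1}$ act only on the recording tableau: that is, $w\mapsto P^\K(w)$ is constant on every edge of the crystal graph, and $w\mapsto Q^\K(w)$ intertwines these operators with the corresponding operators on the crystal of semistandard shifted tableaux. For $e_i,f_i$ this is a local statement involving only the adjacent blocks $w^i,w^{i+1}$, and I would reduce it to a compatibility between a single Coxeter--Knuth insertion step and the crystal move, using orthogonal and symplectic analogues of the Little map to track how the insertion tableau changes. For the queer operator $f_{\bar 1}$, which perturbs the first block, the argument is more delicate: one must follow a ``Little bump'' through the shifted insertion tableau, verify it commutes with $f_{\bar 1}$, and moreover check that the resulting map is a \emph{strict} morphism rather than merely weight-preserving. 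I expect this step --- extending the Hamaker--Young analysis of the Little map to the $\O$ and $\Sp$ settings and controlling the partially defined queer operators under shifted insertion --- to be the main obstacle.

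Granting this, normality follows quickly. Since $P^\K$ is preserved by all crystal operators, each fiber $(P^\K)^{-1}(T)$ is a union of full subcrystals of $\cR^\K_n(\pi)$; and since $Q^\K$ restricts to a weight- and operator-preserving bijection from this fiber onto the set of \emph{all} semistandard shifted tableaux of the shape of $T$ with entries in $\{1,\dots,n\}$ --- which is a connected normal $\q_n$-crystal by one of the standard models recalled in Section~\ref{shtab-sect} (cf.\ \cite{AssafOguz,HPS,Hiroshima2018}) --- the fiber is in fact a single full subcrystal isomorphic to that tableau crystal. Hence the full subcrystals of $\cR^\K_n(\pi)$ are exactly the fibers of $P^\K$, each isomorphic to a crystal of semistandard shifted tableaux, which confirms Hiroshima's conjecture. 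In particular $\cR^\K_n(\pi)$ is isomorphic to a disjoint union of crystals of semistandard shifted tableaux with entries in $\{1,\dots,n\}$, and each of these is a full subcrystal of a tensor power of the standard crystal $\BB_n$; therefore $\cR^\K_n(\pi)$ is normal. (One could alternatively try to verify the Gillespie--Hawkes--Poh--Schilling local axioms directly, but the insertion-theoretic route has the advantage of simultaneously identifying the connected components.)
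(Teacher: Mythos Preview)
Your high-level outline is correct and matches the paper exactly: one proves that the fibers of $P_\EG^\K$ are the full $\q_n$-subcrystals and that $Q_\EG^\K$ is a quasi-isomorphism onto $\STab_n(\ell^\K(\pi))$ (these are Theorems~\ref{main-thm1} and~\ref{main-thm2}), and normality follows immediately since each $\STab_n(\mu)$ is a full subcrystal of some $\cW_n(m)\cong(\BB_n)^{\otimes m}$ by Theorem-Definition~\ref{stab-thmdef}.

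Where your proposal diverges from the paper --- and where it is not yet a proof --- is in the strategy for the hard step, namely showing that $Q_\EG^\K$ intertwines all the crystal operators. You propose a \emph{direct} verification: analyze how a single crystal move on two adjacent blocks interacts with one pass of shifted insertion, invoking ``Little bumps to track how the insertion tableau changes.'' This is not a concrete plan; in particular, the queer operator $f_{\overline 1}$ case you flag as the main obstacle was exactly the content of Hiroshima's Conjecture~5.1, and no direct argument of this kind was known. The paper deliberately avoids this route. Instead it uses the involution Little bumping operators $\ifkb_\sigma$ and $\ffkb_\sigma$ \emph{globally}, as $\q_n$-crystal automorphisms of the entire disjoint union $\bigsqcup_\tau\cR^\K_n(\tau)$ (Theorems~\ref{qi-thm2} and~\ref{qi-thm3}). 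The two key facts proved about these operators are that they commute with all $\q_n$-crystal operators (a short reduction to the $\gl_n$ case plus an explicit check for $f_{\overline 1}$) and that they preserve the recording tableau $Q_\EG^\K$ (Theorems~\ref{o-little-thm}(d) and~\ref{sp-little-thm}(d), the most technical step). One then shows (Lemmas~\ref{o-final-lem} and~\ref{sp-final-lem}) that a suitable composition of such bumps carries any $\cR^\K_n(\pi)$ isomorphically onto the special crystal of factorizations of permutations of $2,4,\dots,2m$, where shifted EG insertion degenerates to Sagan--Worley insertion and the result is already known via Haiman's mixed insertion (Lemma~\ref{icc-lem}).

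So the Little bumps are not a tool for local analysis of a single insertion step, as you suggest; they are used to transport the entire problem to a base case where it is already solved. Your plan would need either to make the direct commutation argument precise (which no one had done) or to adopt this reduction strategy.
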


This result is a corollary of a more effective theorem, which we sketch as follows.
It turns out that one obtains natural
``orthogonal'' and ``symplectic'' analogues of the Edelman-Greene correspondence
by restricting the \emph{shifted Hecke insertion} and \emph{symplectic Hecke insertion} algorithms
introduced in \cite{Marberg2019a, PatPyl2014}.
We denote these maps by $w \mapsto (\PO(w), \QO(w))$ and $w \mapsto (\PSp(w), \QSp(w))$.
Both are generalizations of \emph{Sagan-Worley insertion} \cite{Sag87,Worley},
and assign increasing factorizations to pairs of shifted tableaux with the same shape;
the definitions are given in Section~\ref{ins-sect}.

The following theorem
provides more substance to the strong formal analogy between Morse and Schilling's $\gl_n$-crystals $\cR_n(\pi)$
and Hiroshima's $\q_n$-crystals $\iR_n(\pi)$ and $\iRfpf_n(\pi)$.

\begin{theorem*} [Theorems~\ref{main-thm1} and \ref{main-thm2}]
Let $\K \in \{ \O,\Sp\}$.
The full subcrystals of $\cR^\K_n(\pi)$ are the fibers of $w \mapsto P_\EG^\K(w)$,
and the map $w \mapsto Q_\EG^\K(w)$ defines an isomorphism from each full subcrystal of $\cR^\K_n(\pi)$ to a (normal) $\q_n$-crystal of semistandard shifted tableaux.
\end{theorem*}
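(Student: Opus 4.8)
The plan is to reduce the structure theorem for $\cR^\K_n(\pi)$ to two more tractable statements: first, that the insertion map $w\mapsto(P_\EG^\K(w),Q_\EG^\K(w))$ intertwines the queer crystal operators with the standard operators on a crystal of semistandard shifted tableaux, in the precise sense that $P_\EG^\K$ is constant on crystal components while $Q_\EG^\K$ transports the operators; and second, that the fibers of $P_\EG^\K$ are exactly the full subcrystals. Since by \cite{GHPS} normality of a $\q_n$-crystal is detected by local axioms, and since the target crystals of semistandard shifted tableaux are known to be normal (Section~\ref{shtab-sect}), the normality corollary will then follow immediately; so the real content is the intertwining and the fiber identification.

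The core technical step is to show that each crystal raising/lowering operator $e_i,f_i$ (including the queer operators $e_{\bar 1},f_{\bar 1}$ adjoined beyond the $\gl_n$-operators) commutes with insertion: if $w'=f_i(w)$ then $P_\EG^\K(w')=P_\EG^\K(w)$ and $Q_\EG^\K(w')=f_i(Q_\EG^\K(w))$, and likewise for $e_i$ and for the bar operators. For the unbarred operators $e_i,f_i$ this should follow from the corresponding statement for $\gl_n$ (the Morse--Schilling/Edelman-Greene theorem quoted as Theorem~\ref{eg-thm}), transported through the shifted/symplectic Hecke insertion algorithms of \cite{Marberg2019a,PatPyl2014}; the key is a compatibility between the Morse--Schilling operators on increasing factorizations and the row/column bumping routines, which one proves by tracking how an application of $f_i$ changes the $i$th and $(i{+}1)$st factors and checking that the insertion path is affected in a controlled, local way. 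This is where the \emph{Little map} analogues enter: following the Hamaker--Young strategy, I would use the orthogonal and symplectic Little maps to reduce statements about arbitrary factorized involution (resp.\ fpf-involution) words to statements about factorized \emph{reduced} words, where the already-established $\gl_n$ machinery applies, and then descend. The queer operators $e_{\bar 1},f_{\bar 1}$ require a separate argument, but they act only on the first two factors, so one needs only the base case of the insertion recursion; here I expect to argue directly, comparing $P_\EG^\K$ before and after applying $f_{\bar 1}$ by unwinding a bounded number of insertion steps, or alternatively by invoking the characterization of $\q_n$-crystals via their restriction to $\gl_2$ together with the Assaf--Oguz/Stembridge-type local axioms.

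Granting the intertwining, the fiber identification goes as follows. Connectedness within a fiber: any two factorizations with the same insertion tableau $P_\EG^\K(w)=P$ are joined by a sequence of crystal operators, because their recording tableaux $Q_\EG^\K$ are two standard-ish shifted tableaux of the same shape, and the $Q$-image crystal (being a normal crystal of semistandard shifted tableaux) is connected with a unique highest-weight element; pulling back along insertion, which is a bijection onto its image, moves $w$ to $w'$. Conversely, stability: since insertion is constant on each crystal component (by the intertwining), distinct fibers lie in distinct components, so components and fibers coincide. Finally, $Q_\EG^\K$ restricted to one fiber is a weight-preserving bijection onto a full subcrystal of semistandard shifted tableaux intertwining all operators, hence a crystal isomorphism; normality of that target is classical, which closes the loop and confirms Hiroshima's conjecture.

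The main obstacle I anticipate is the intertwining for the queer operators $f_{\bar 1},e_{\bar 1}$. The unbarred case is largely bookkeeping once the Little-map reduction to reduced words is in place, but the bar operators have no $\gl_n$-precedent to lean on, and shifted/symplectic Hecke insertion interacts subtly with the first two factors (shifted insertion is not injective on words, and the "Hecke" bumps can be non-standard). Getting a clean, checkable statement of how $P_\EG^\K$ transforms under $f_{\bar 1}$ — ideally reducing to a finite list of two- or three-letter configurations and verifying each — is the crux, and it is probably also where the detailed analysis of the Little map and the extension of Hamaker--Young is doing the most work.
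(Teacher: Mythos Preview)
Your high-level architecture is right: one must show that $P_\EG^\K$ is constant on full subcrystals and that $Q_\EG^\K$ intertwines the crystal operators with those on $\STab_n(m)$, after which the fiber identification follows from the bijectivity theorems (Theorems~\ref{oeg-thm} and \ref{speg-thm}) and connectedness of each $\STab_n(\mu)$. But your plan for \emph{how} to prove the intertwining diverges from the paper and has a genuine gap.

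You propose to use the involution Little maps to ``reduce statements about arbitrary factorized involution words to statements about factorized reduced words, where the already-established $\gl_n$ machinery applies.'' This mischaracterizes what the Little maps do. The operators $\ifkb_\pi$ and $\ffkb_\pi$ do not turn involution words into ordinary reduced words; they carry involution words for one involution to involution words for another. So the $\gl_n$ result for ordinary Edelman--Greene insertion is never directly applicable to $Q_\EG^\K$. What the paper does instead is prove (Theorems~\ref{o-little-thm} and \ref{sp-little-thm}) that these bumping operators (i) commute with $\ock$, $\spck$, and all $\ck_i$, (ii) preserve descent sets, and (iii) preserve the recording tableau $Q_\EG^\K$; from (i)--(ii) it follows (Theorems~\ref{qi-thm2} and \ref{qi-thm3}) that they are $\q_n$-crystal isomorphisms, with the queer operator handled by rewriting $f_{\overline 1}$ as a composition of Coxeter--Knuth moves rather than by a direct insertion-step analysis. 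The proof of (iii) is the technical heart: it requires building an algorithm that computes $Q_\EG^\K(w)$ using only descent data along the $\simICK$- (or $\simFCK$-) class of $w$, so that (i)--(ii) force invariance.

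The second idea you are missing is the endpoint of the reduction. Having established that Little bumps are $Q_\EG^\K$-preserving $\q_n$-crystal isomorphisms, the paper shows (Lemmas~\ref{o-final-lem} and \ref{sp-final-lem}) that a suitable composition of them carries any involution word to a permutation of $2,4,6,\dots,2m$. On that special set $\cSe_n(m)$, orthogonal and symplectic EG insertion both coincide with Sagan--Worley insertion, which by Haiman's duality (Lemma~\ref{icc-lem}(c)) satisfies $\QO(w)=P_\HM(w^{-1})$. Since $P_\HM$ is \emph{by definition} the quasi-isomorphism to $\STab_n(m)$ (Theorem-Definition~\ref{stab-thmdef}), the intertwining for $Q_\EG^\K$ in the general case follows from a commutative diagram, with no direct case analysis of how shifted Hecke insertion interacts with $f_i$ or $f_{\overline 1}$. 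Your proposed ``direct verification'' for the queer operator, or an appeal to the Gillespie--Hawkes--Poh--Schilling local axioms, would amount to re-deriving this machinery from scratch and is precisely what the paper's route avoids.
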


An application of this result is to give a crystal theoretic interpretation of the positive coefficients in the Schur $P$-expansion of the \emph{involution Stanley symmetric functions} studied in \cite{HMP4,HMP5}; see Corollaries~\ref{invF-cor1} and \ref{invF-cor2}.
As we work to show this result, we will end up proving a few conjectures from \cite{HMP4,Hiroshima,Marberg2019a}.
In particular, the claim that $w \mapsto Q_\EG^\Sp(w)$ is a crystal morphism is equivalent to \cite[Conjecture 5.1]{Hiroshima}, as we explain
in Section~\ref{main-sect}.
As another application, we use our results in Section~\ref{dual-sect} to recover a theorem of Assaf \cite{Assaf14} that constructs a \emph{dual equivalence} on standard shifted tableaux.

Our strategy to prove these results is to identify certain crystal isomorphisms between different
instances of $\cR^\K_n(\pi)$,
which commute with the map $ w\mapsto Q_\EG^\K(w)$. We use these isomorphisms to translate our crystals to a simpler form. 
The relevant maps will be composed of the \emph{involution Little bumping operators} introduced in \cite{HMP3}.
Another way to frame the main results of this article is as an in-depth study of these operators.
Ignoring all applications to crystals, our work serves to generalize several theorems of Hamaker and Young \cite{HamakerYoung}
and to clarify the relationship between involution Little bumps and shifted forms of the Edelman-Greene correspondence.

We organize the rest of this paper as follows.
First, there is a section of preliminaries on crystals, words, and tableaux. Section~\ref{factor-sect} reviews the definitions of the crystals $\cR_n(\pi)$, $\iR_n(\pi)$, and $\iRfpf_n(\pi)$, along with Edelman-Greene insertion
and its orthogonal and symplectic variants. At the end of Section~\ref{factor-sect}, we give the precise statements of our main results. 
Section~\ref{little-sect} includes our investigation of various bumping operators.
Section~\ref{proofs-sect} contains our proofs of the theorems sketched
in this introduction, along with a discussion of applications and open problems.
Finally, a mostly expository appendix provides some extra background on 
crystals of shifted tableaux.

\subsection*{Acknowledgements}

This work was partially supported by Hong Kong RGC Grant ECS 26305218.
I am grateful to
Sami Assaf and Brendan Pawlowski
for useful conversations
over the course of several productive visits to the University of Southern California.
I am also thankful
to
Dan Bump,
Maria Gillespie,
Jake Levinson,
Oliver Pechenik,
and Travis Scrimshaw for helpful discussions about crystals,
and to
 Zach Hamaker and Jake Potter for 
answering questions about Little bumping operators.

\section{Preliminaries}\label{prelim-sect}

In this section, we review some
general background on crystals, words, and tableaux from
\cite{BumpSchilling,GJKKK}.
Let  $\ZZ$ be the set of integers and write $\NN$ and $\PP$ for the sets of nonnegative and positive integers.
For each $m \in \NN$, let $[m] = \{ i \in \PP : i \leq m\} =\{1,2,\dots,m\}$, 
so that $[0] = \varnothing$.

\subsection{Crystals}\label{crystal-sect}

Fix a positive integer $n$ and
let $\e_1,\e_2,\dots,\e_n$ be the standard unit basis vectors in $\ZZ^n$.
% be the standard basis vector with 1 in coordinate $i$ and 0 in all other coordinates.
%Let $\alpha_i = \e_i -\e_{i+1}$ for $i \in [n-1]$.

\begin{definition}[{\cite[\S2.2]{BumpSchilling}}]
\label{gl-def}
An \emph{abstract $\gl_n$-crystal} is a set $\cB$ with maps
$\weight : \cB\to \NN^n$
and $ e_i,f_i : \cB \to \cB \sqcup \{0\}$ for $i \in [n-1]$,
where $0 \notin \cB$ is an auxiliary element, such that if $i \in [n-1]$ then:
\ben
\item[(1)] If $b,c \in \cB$ then $e_i(b) = c$ if and only if $f_i(c) = b$, in which case
$\weight(c) = \weight(b) + \e_{i} -\e_{i+1}.$

\item[(2)] If  $b \in \cB$  then
$
\varepsilon_i(b) := \max\left\{ k\geq 0 : e_i^k(b) \neq 0\right\}
$
and
$
\varphi_i(b) := \max\left\{ k \geq 0: f_i^k(b) \neq 0\right\}
$
are both finite, and $\varphi_i(b) - \varepsilon_i(b) = \weight(b)_i - \weight(b)_{i+1}$.
\een
We refer to the function $\weight$ as the \emph{weight map}, to $e_i$ and $f_i$ as the \emph{raising} and \emph{lowering crystal operators},
and to $\varepsilon_i$ and $\varphi_i$ as the \emph{string lengths}  of $\cB$.
\end{definition}

This is slightly more specialized than the definition of a $\gl_n$-crystal in \cite{BumpSchilling}. 
In the terminology of \cite{BumpSchilling},
our definition describes the $\gl_n$-crystals 
that are \emph{seminormal}.

%Suppose $\cB$ is a $\gl_n$-crystal.
%The \emph{crystal graph} of $\cB$ is the weighted directed graph with vertex set $\cB$
%that has an edge $x \xrightarrow{i} y$ whenever $y = f_i(x)$
%and $x = e_i(y)$.
%At most one edge connects any two vertices
%since
%the edge $x \xrightarrow{i} y$ exists only if $\weight(y) = \weight(x) +  \e_{i+1}-\e_i$.

%\begin{example} \label{bb-ex}
%The \emph{standard $\gl_n$-crystal} $\BB_n$ has crystal graph
%  \begin{center}
%    \begin{tikzpicture}[xscale=1.6, yscale=1,>=latex]]
%      \node at (0,0) (T0) {$\boxed{1}$};
%      \node at (1,0) (T1) {$\boxed{2}$};
%      \node at (2,0) (T2) {${\cdots}$};
%      \node at (3,0) (T3) {$\boxed{n}$};
%      %
%      \draw[->,thick]  (T0) -- (T1) node[midway,above,scale=0.75] {$1$};
%      \draw[->,thick]  (T1) -- (T2) node[midway,above,scale=0.75] {$2$};
%      \draw[->,thick]  (T2) -- (T3) node[midway,above,scale=0.75] {$n-1$};
%     \end{tikzpicture}
%  \end{center}
%  %This crystal has $n$ elements. 
%  %denoted $\boxed{i}$ for $i\in [n]$. 
%  and weight map  $\weight(\boxed{i})=\e_i$.
%  The operator $f_i$ changes $\boxed{i}$ to $\boxed{i+1}$ and every other element to zero.
%  The string length $\varphi_i(\boxed{j})$ is 1 if $i=j$ and 0 otherwise
%  while $\varepsilon_i(\boxed{j})$ is 1 if $i+1=j$ and 0 otherwise.
%%By convention, the weight map of $\BB_n$ is  $\weight(\boxed{i})=\e_i$.
%  \end{example}

\begin{definition}[{\cite[\S1.3]{GJKKK}}]
\label{q-def}
An \emph{abstract $\q_n$-crystal} (for $n\geq 2$) is an abstract $\gl_n$-crystal $\cB$ 
with \emph{queer raising} and \emph{lowering operators} $e_{\overline 1},f_{\overline 1} : \cB \to \cB\sqcup\{0\}$
satisfying the following conditions:
\ben
\item[(1)] If $b,c \in \cB$ then $e_{\overline 1}(b)=c$ if and only if $f_{\overline 1}(c) = b$, in which case 
\[\weight(b) = \weight(c)+\e_2 - \e_1,
\quad
\varepsilon_i(b)=\varepsilon_i(c),\quand
\varphi_i(b) = \varphi_i(c)\]
for all $3 \leq i \leq n-1$,
where $\varepsilon_i$, $\varphi_i$ are
the string lengths from Definition~\ref{gl-def}.

\item[(2)] The operators $e_{\overline 1}$ and $f_{\overline 1}$ commute with $e_i$ and $f_i$
for each $3 \leq i \leq n-1$,
under the convention that
$e_{\overline 1}(0) = f_{\overline 1}(0) = e_i(0) = f_i(0)=0.$

\item[(3)] If $b \in \cB$ and  we define
$
\varepsilon_{\overline 1}(b) := \max\bigl\{ k \geq 0 : e_{\overline 1}^k(b) \neq 0\bigr\}
$
and
$\varphi_{\overline 1}(b) := \max\bigl\{ k \geq 0: f_{\overline 1}^k(b) \neq 0\bigr\},
$
then we have $\varepsilon_{\overline 1}(b) + \varphi_{\overline 1}(b) \leq 1$, with equality if 
$\weight(b)_1\neq 0$ or $\weight(b)_2\neq 0$.
\een
\end{definition}

The original definition of an \emph{abstract $\q_n$-crystal} in \cite[\S1.3]{GJKKK}
omits condition (3).
 This condition holds in all examples of interest and will imply a desirable symmetry property. 
To simplify some later statements, we 
consider the empty set to be an abstract $\gl_n$- and $\q_n$-crystal and 
define an abstract $\q_1$-crystal to be any set with a weight map $\weight$ taking values in $\NN$.

The \emph{crystal graph} of an abstract $\q_n$-crystal $\cB$ is the weighted directed graph with vertex set $\cB$
that has an edge $x \xrightarrow{i} y$ whenever $y = f_i(x)$ for some $i\in \{\overline 1,1,2,\dots,n-1\}$.
A weakly connected component of the crystal graph of an abstract $\gl_n$- or $\q_n$-crystal
is called a \emph{full subcrystal}.

\begin{example}\label{bb-ex2}
The \emph{standard $\q_n$-crystal} $\BB_n$ has
weight function $\weight(\boxed{i})=\e_i$ and crystal graph
  \begin{center}
    \begin{tikzpicture}[xscale=1.6, yscale=1,>=latex]]
      \node at (0,0) (T0) {$\boxed{1}$};
      \node at (1,0) (T1) {$\boxed{2}$};
      \node at (2,0) (T2) {$\boxed{3}$};
      \node at (3,0) (T3) {${\cdots}$};
      \node at (4,0) (T4) {$\boxed{n}$};
      \draw[->,thick]  (T0.15) -- (T1.165) node[midway,above,scale=0.75] {$\overline 1$};
      \draw[->,thick]  (T0.345) -- (T1.195) node[midway,below,scale=0.75] {$1$};
      \draw[->,thick]  (T1) -- (T2) node[midway,above,scale=0.75] {$2$};
      \draw[->,thick]  (T2) -- (T3) node[midway,above,scale=0.75] {$3$};
      \draw[->,thick]  (T3) -- (T4) node[midway,above,scale=0.75] {$n-1$};
     \end{tikzpicture}
  \end{center}
%The lowering operator $f_i$ changes $\boxed{i}$ to $\boxed{i+1}$ and every other element to zero.
%  The string length $\varphi_i(\boxed{j})$ is 1 if $i=j$ and 0 otherwise
%  while $\varepsilon_i(\boxed{j})$ is 1 if $i+1=j$ and 0 otherwise.
\end{example}

%Define $x^\lambda = x_1^{\lambda_1}x_2^{\lambda_2}\cdots x_n^{\lambda_n} 
%%\in \ZZ[x_1,x_2,\dots,x_n]
%$ for $\lambda \in \NN^n$.
The \emph{character} of a finite $\gl_n$-crystal $\cB$ is the polynomial $\ch(\cB) = \sum_{b \in \cB} x_1^{\weight(b)_1}x_2^{\weight(b)_2}\cdots x_n^{\weight(b)_n}$.
Let $\Lambda_n$ be the ring of symmetric polynomials in $\ZZ[x_1,x_2,\dots,x_n]$.

\begin{proposition}[{\cite[\S2.6]{BumpSchilling}}]\label{lambda-prop}
The character of a finite $\gl_n$-crystal is in $\Lambda_n$.
\end{proposition}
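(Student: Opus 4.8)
The plan is to show that $\ch(\cB)$ is invariant under the action of the simple transpositions $s_i = (i,i+1)$ for $i \in [n-1]$, since these generate the symmetric group $\mathfrak{S}_n$ acting on the variables $x_1,\dots,x_n$, and a polynomial fixed by all of them is symmetric. Fixing $i$, the idea is to partition $\cB$ into the $i$-strings, that is, the equivalence classes of the relation generated by $b \sim e_i(b)$ and $b \sim f_i(b)$ (discarding $0$). Since $\ch(\cB)$ is a sum over $b \in \cB$ and this partition is disjoint, it suffices to prove that the contribution of each single $i$-string is symmetric in $x_i$ and $x_{i+1}$.

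So first I would fix an $i$-string $S = \{b_0, b_1 = f_i(b_0), b_2 = f_i^2(b_0), \dots, b_k = f_i^k(b_0)\}$ where $b_0$ is the top of the string (characterized by $e_i(b_0) = 0$). By Definition~\ref{gl-def}(1), moving down the string by $f_i$ changes the weight by $\e_i - \e_{i+1}$, so $\weight(b_j)_i = \weight(b_0)_i - j$ and $\weight(b_j)_{i+1} = \weight(b_0)_{i+1} + j$, while all other weight coordinates are constant along $S$. Next I would use Definition~\ref{gl-def}(2): for the top element $b_0$ we have $\varepsilon_i(b_0) = 0$ and $\varphi_i(b_0) = k$, so the condition $\varphi_i(b_0) - \varepsilon_i(b_0) = \weight(b_0)_i - \weight(b_0)_{i+1}$ gives $k = \weight(b_0)_i - \weight(b_0)_{i+1}$. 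Writing $a = \weight(b_0)_i$ and $b = \weight(b_0)_{i+1}$, so that $k = a - b \geq 0$, the total contribution of $S$ to $\ch(\cB)$ is
\[
\left(\prod_{\ell \neq i, i+1} x_\ell^{\weight(b_0)_\ell}\right) \sum_{j=0}^{a-b} x_i^{a-j} x_{i+1}^{b+j},
\]
and the inner sum $\sum_{j=0}^{a-b} x_i^{a-j} x_{i+1}^{b+j} = x_i^b x_{i+1}^b (x_i^{a-b} + x_i^{a-b-1}x_{i+1} + \cdots + x_{i+1}^{a-b})$ is manifestly symmetric under swapping $x_i$ and $x_{i+1}$ (it is $x_i^b x_{i+1}^b$ times a complete homogeneous symmetric polynomial in the two variables). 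Summing over all $i$-strings, $\ch(\cB)$ is fixed by $s_i$; ranging over all $i \in [n-1]$ finishes the proof.

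The only genuine subtlety — and the step I would be most careful about — is verifying that the $i$-strings really do give a finite partition of $\cB$ with each string of the claimed form: one needs $\varepsilon_i(b)$ and $\varphi_i(b)$ finite (guaranteed by Definition~\ref{gl-def}(2)) so that each string is finite, and one needs that $e_i, f_i$ are mutually inverse partial bijections (Definition~\ref{gl-def}(1)) so that the string through any element is unambiguously a chain $b_0 \to b_1 \to \cdots \to b_k$ with a unique top. Everything else is the bookkeeping above. Note the finiteness of $\cB$ is used only to ensure $\ch(\cB)$ is a well-defined polynomial (a finite sum).
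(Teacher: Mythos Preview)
Your argument is correct and is exactly the standard proof (via $i$-strings and the identity $\varphi_i-\varepsilon_i=\weight_i-\weight_{i+1}$) found in \cite[\S2.6]{BumpSchilling}, which the paper simply cites without reproducing a proof. There is nothing to add.
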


An element $f \in \Lambda_n$ 
is \emph{supersymmetric} if 
$f(x_1,-x_1,x_3,\dots,x_n)\in \ZZ[x_3,\dots,x_n].$
Let $\Gamma_n$ denote the subring of supersymmetric polynomials in $\Lambda_n$ for $n\geq 2$,
and set $\Gamma_1=\Lambda_1$.

\begin{proposition}\label{gamma-prop}
The character of a finite $\q_n$-crystal is in $\Gamma_n$.
\end{proposition}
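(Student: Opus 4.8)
The plan is to reduce the statement to the $\gl_n$-fact already established in Proposition~\ref{lambda-prop}, namely that $\ch(\cB) \in \Lambda_n$, together with the classical characterization of supersymmetric polynomials via the cancellation (or ``substitution'') property $f(x_1,-x_1,x_3,\dots,x_n) \in \ZZ[x_3,\dots,x_n]$. Since $\cB$ is in particular a finite $\gl_n$-crystal, we already know $\ch(\cB) \in \Lambda_n$; the only thing left to check is that setting $x_2 = -x_1$ kills all the $x_1$- and $x_2$-dependence. The queer operators $e_{\ov1}, f_{\ov1}$ are exactly the extra structure that should force this, so the heart of the argument is to extract a weight-balancing involution from them.

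First I would isolate, for a fixed monomial exponent vector $\alpha = (\alpha_1,\alpha_2,\dots,\alpha_n)$, the graded piece $\cB_\alpha = \{ b \in \cB : \weight(b) = \alpha\}$, and group these pieces according to the ``tail'' $(\alpha_3,\dots,\alpha_n)$ and the total $\alpha_1 + \alpha_2$. The key step is to build, for each fixed tail and each fixed value $m = \alpha_1+\alpha_2 > 0$, a weight-reversing bijection between $\bigsqcup_{\alpha_1 + \alpha_2 = m} \cB_\alpha$ levels that pairs up an element of weight $(\alpha_1,\alpha_2,\dots)$ with one of weight $(\alpha_1 - 1, \alpha_2 + 1,\dots)$ whenever such a pairing is forced — i.e. I would use $f_{\ov1}$ and $e_{\ov1}$ to produce a perfect matching on the set of $b$ with $\alpha_1 + \alpha_2 = m$ fixed (and $(\alpha_3,\dots,\alpha_n)$ fixed). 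By Definition~\ref{q-def}(1), $f_{\ov1}$ shifts weight by $\e_2 - \e_1$ and is inverse to $e_{\ov1}$ where defined; by Definition~\ref{q-def}(3), $\varepsilon_{\ov1}(b) + \varphi_{\ov1}(b) = 1$ whenever $\weight(b)_1 \neq 0$ or $\weight(b)_2 \neq 0$, i.e. whenever $m > 0$. Hence for $m>0$ every such $b$ has exactly one of $e_{\ov1}(b)$, $f_{\ov1}(b)$ nonzero, so $\{b : e_{\ov1}(b) \neq 0 \text{ or } f_{\ov1}(b)\neq 0, \ \alpha_1+\alpha_2 = m\}$ is partitioned into $2$-element orbits $\{b, f_{\ov1}(b)\}$ or $\{b, e_{\ov1}(b)\}$ under $\langle e_{\ov1}, f_{\ov1}\rangle$. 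Each such orbit contributes $x_1^{\alpha_1} x_2^{\alpha_2}(\cdots) + x_1^{\alpha_1 - 1} x_2^{\alpha_2 + 1}(\cdots)$ to $\ch(\cB)$, with the same trailing monomial in $x_3,\dots,x_n$; when we substitute $x_2 = -x_1$, the two terms become $x_1^{\alpha_1}(-x_1)^{\alpha_2}(\cdots) + x_1^{\alpha_1 - 1}(-x_1)^{\alpha_2+1}(\cdots) = x_1^{m}(-1)^{\alpha_2}(1 - 1)(\cdots) = 0$. So all contributions with $m > 0$ cancel in pairs, and what survives is precisely $\sum_{b : \weight(b)_1 = \weight(b)_2 = 0} x_3^{\weight(b)_3}\cdots x_n^{\weight(b)_n} \in \ZZ[x_3,\dots,x_n]$. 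Since we already know $\ch(\cB) \in \Lambda_n$, this shows $\ch(\cB) \in \Gamma_n$. (For $n = 1$ the claim is $\Gamma_1 = \Lambda_1$, which is the definition, and the statement is vacuous; for $n \geq 2$ this covers it.)

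The main obstacle is making the pairing in the previous paragraph genuinely well-defined and involutive: I must check that the orbit $\{b, f_{\ov1}(b)\}$ (resp. $\{b, e_{\ov1}(b)\}$) really does consist of exactly two elements with the stated weights and is never a singleton or a longer chain. This is where Definition~\ref{q-def}(3)'s inequality $\varepsilon_{\ov1} + \varphi_{\ov1} \leq 1$ (with equality in the relevant range) is essential — it guarantees the $\langle e_{\ov1},f_{\ov1}\rangle$-strings have length exactly $1$ in every graded piece with $m>0$, so no element is fixed and no orbit has size $> 2$. One small point to handle carefully: elements $b$ with $e_{\ov1}(b) = f_{\ov1}(b) = 0$ are allowed only when $\weight(b)_1 = \weight(b)_2 = 0$, again by Definition~\ref{q-def}(3); these are exactly the unpaired elements, and they are the ones contributing to $\ZZ[x_3,\dots,x_n]$, which is consistent. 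Once this bookkeeping is in place the substitution computation is the routine cancellation displayed above, and the proof is complete.
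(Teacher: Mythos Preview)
Your proof is correct and follows essentially the same argument as the paper: use Definition~\ref{q-def}(3) to pair each $b$ having $\weight(b)_1 + \weight(b)_2 > 0$ with its unique $e_{\overline 1}$- or $f_{\overline 1}$-partner, observe that each pair contributes a multiple of $x_1+x_2$ to $\ch(\cB)$ (hence vanishes under $x_2\mapsto -x_1$), and combine with $\ch(\cB)\in\Lambda_n$ from Proposition~\ref{lambda-prop}. The paper's version is more terse but the idea is identical.
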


\begin{proof}
Fix $n\geq 2$.
Let $R$ be the set of $f \in \ZZ[x_1,\dots,x_n]$
with  $f(x_1,-x_1,x_3,\dots,x_n) \in \ZZ[x_3,\dots,x_n]$.
Suppose $\cB$ is a finite $\q_n$-crystal and $b \in \cB$. If $\weight(b)_1 = \weight(b)_2 = 0$ then $x^{\weight(b)} \in R_n$.
Otherwise, Definition~\ref{q-def}(3)
implies that there exists a unique $c \in \cB$
with $e_{\overline 1}(b) = c$ or $f_{\overline 1}(b) = c$,
and $x^{\weight(b)} + x^{\weight(c)} \in (x_1+x_2) \ZZ[x_1,x_2,\dots,x_n] \subset R_n$.
We conclude that $\ch(\cB) \in 
R \cap \Lambda_n=\Gamma_n$.
\end{proof}

A \emph{(strict) morphism} $\cB \to \cC$ of abstract $\gl_n$- or $\q_n$-crystals
is a map $\cB\sqcup\{0\} \to \cC\sqcup\{0\}$ with $0\mapsto 0$ 
that preserves weights and string lengths
and commutes with all crystal operators. 
A morphism that is also a bijection is an \emph{isomorphism};
such a map induces an isomorphism of 
crystal graphs.
%If $\cB$, $\cC$, and $\cD$ are abstract $\gl_n$-crystals
% (respectively, $\q_n$-crystals),
% then  the obvious map
%$(b\otimes c)\otimes d \mapsto b\otimes(c\otimes d)$
%is an isomorphism
% $(\cB \otimes \cC) \otimes \cD \xrightarrow{\sim} \cB \otimes (\cC \otimes \cD)$  \cite[\S2.3]{BumpSchilling}.

\subsection{Words}\label{words-sect}
 
 Given two abstract $\gl_n$- or $\q_n$-crystals $\cB$ and $\cC$,
 one can form the \emph{tensor product crystal} $\cB \otimes \cC$;
 see \cite[\S.3]{BumpSchilling} and \cite[\S1.3]{GJKKK}
 for the precise definitions.
For our applications, it will suffice to describe 
the $m$-fold tensor product $(\BB_n)^{\otimes m}$ 
of the standard $\q_n$-crystal from Example~\ref{bb-ex2}.
One can realize this object as the following crystal of {words}.
 
A \emph{word} is a finite sequence $w_1w_2\cdots w_m$ of integers. 
Fix $m \in \PP$ and let $\cW_{n}(m)$ be the set of $m$-letter words in the alphabet $[n]=\{1,2,\dots,n\}$.
%There is an obvious bijection from $\cW_n(m)$ to the $m$-fold tensor product $(\BB_n)^{\otimes m}$ 
%of the standard $\q_n$-crystal,
%which gives $\cW_n(m)$
%the following $\q_n$-crystal structure.
%
Given $w \in \cW_n(m)$,
define $\weight(w)  \in (\NN)^n$ to be the $n$-tuple
whose $i$th entry is the number of occurrences of $i$ in $w$.
For any $i \in \ZZ$,
there are operators $f_i$ and $e_i$ acting on $\cW_n(m)$ as follows.
 Consider the sequence formed by replacing each $i$  in a word $w$ by a right 
parenthesis
and each $i+1$ in $w$ by a left parenthesis.
\begin{itemize}
\item 
If all right parentheses in this sequence 
belong to a balanced pair of left and right parentheses,
then $f_i(w) = 0$.
Otherwise, form $f_i(w)$ from $w$ by
changing the letter $i$ corresponding to the last unbalanced 
right parenthesis to $i+1$.

\item Similarly, if all left parentheses 
belong to a balanced pair,
then $e_i(w) = 0$.
Otherwise,
form $e_i(w)$ by changing the $i+1$ in $w$ corresponding to the
first unbalanced  left parenthesis to $i$.
\end{itemize}
For example, if $w= 1223313212$ and $i=2$ then the parenthesized word is $1))((1()1)$,
so
$f_2(w) = 12 \underline{3}3313212$
and
$e_2(w) = 122 \underline{2}313212.$
We also 
define
$f_{\overline 1}(w)$ and $e_{\overline 1}(w)$ for words $w \in \cW_n(m)$:
\begin{itemize}
\item If $w$ has no $1$'s or if its first $1$ appears after its first $2$,
then $f_{\overline 1}(w) =0$.
 Otherwise, $f_{\overline 1}(w)$ is the word formed by changing the first $1$ in $w$ to $2$.

\item If $w$ has no $2$'s or if its first $2$ appears after its first $1$,
then $e_{\overline 1}(w) =0$. 
Otherwise, $e_{\overline 1}(w)$ is the word formed by changing the first $2$ in $w$ to  $1$.
\end{itemize}
If $w= 1223313212$ then $f_{\overline 1}(w) = \underline{2}223313212$ and $e_{\overline 1}(w) =0$.
%The following  is equivalent to \cite[Remarks 2.3 and 2.4]{GHPS}; see also \cite[\S2.4]{BumpSchilling}.

\begin{proposition}[{\cite[Remarks 2.3 and 2.4]{GHPS}}]
Relative to the maps $\weight$, $e_i$, $f_i$ just given,
$\cW_n(m)$ is an abstract $\q_n$-crystal
and there is a $\q_n$-crystal isomorphism $\cW_n(m) \cong (\BB_n)^{\otimes m} $.
\end{proposition}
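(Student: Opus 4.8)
The plan is to verify directly that the word crystal $\cW_n(m)$ satisfies the axioms of Definition~\ref{q-def}, and then to exhibit an explicit bijection $\cW_n(m) \to (\BB_n)^{\otimes m}$ intertwining all structure maps. The natural bijection sends a word $w = w_1 w_2 \cdots w_m$ to the tensor $\boxed{w_1} \otimes \boxed{w_2} \otimes \cdots \otimes \boxed{w_m}$, and the content of the proposition is that under this identification the operators $\weight$, $e_i$, $f_i$, $e_{\overline 1}$, $f_{\overline 1}$ defined combinatorially on words agree with those coming from the tensor product construction on $(\BB_n)^{\otimes m}$. Since iterated tensor products are computed by iterating the rank-$2$ tensor rule, it suffices to check compatibility with that rule; for the $\gl_n$-part (the operators $f_i$, $e_i$ with $i \in [n-1]$) this is the classical ``signature rule'' or ``parenthesization rule,'' so I would either cite \cite[\S2.4, \S3.2]{BumpSchilling} or include the short induction on $m$ showing that the bracketing procedure on words reproduces the recursive tensor-product formula.

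The substantive part is the queer operators $e_{\overline 1}, f_{\overline 1}$. First I would recall from \cite[\S1.3]{GJKKK} the tensor product rule for $f_{\overline 1}$ on $\cB \otimes \cC$: it acts on the leftmost tensor factor whose weight has a nonzero entry in position $1$ or $2$, subject to a comparison condition governing whether $f_{\overline 1}$ ``sees'' a $1$ before a $2$. Unwinding this for $(\BB_n)^{\otimes m}$, one finds that $f_{\overline 1}$ changes the first $1$ in $w$ to a $2$ provided no $2$ occurs before that first $1$, and returns $0$ otherwise — which is exactly the word-operator definition given in the excerpt; the analogous statement holds for $e_{\overline 1}$. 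So the key step is a careful bookkeeping argument tracking, through the iterated tensor product, which letter of $w$ gets modified. I would formalize this by induction on $m$: writing $w = w' w_m$ with $w' \in \cW_n(m-1)$, apply the rank-$2$ rule to $(\BB_n)^{\otimes (m-1)} \otimes \BB_n$, use the inductive description of $f_{\overline 1}$ on $\cW_n(m-1)$, and case-split on whether $w_m \in \{1,2\}$ and on the first occurrences of $1$ and $2$ in $w'$.

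Once the crystal-operator identities are established, the axioms of Definition~\ref{q-def} follow: conditions (1) and (2) of Definition~\ref{gl-def} and conditions (1)--(3) of Definition~\ref{q-def} are all transported along the bijection from the tensor product crystal, where they hold by construction of $\cB \otimes \cC$ and by induction on the number of factors (the base case $\BB_n$ being Example~\ref{bb-ex2}, which one checks by inspection against the displayed crystal graph — note in particular that $\varepsilon_{\overline 1} + \varphi_{\overline 1} \leq 1$ there with equality exactly on $\boxed{1}$ and $\boxed{2}$, matching Definition~\ref{q-def}(3)). Alternatively, since the word operators are written down explicitly, one can verify all axioms by hand directly on $\cW_n(m)$, which is elementary but tedious; I would prefer the tensor-product route to keep the argument short and to make the isomorphism statement manifest.

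The main obstacle I anticipate is purely notational rather than conceptual: the queer tensor-product rule in \cite{GJKKK} is stated with several cases depending on the relative sizes of $\varepsilon_{\overline 1}$, $\varphi_{\overline 1}$ and certain weight entries across the two tensor factors, and matching these cases against the ``first $1$ versus first $2$'' dichotomy for words requires an attentive but routine induction. There is no deep difficulty, only the need to organize the case analysis cleanly; indeed the proposition is cited as \cite[Remarks 2.3 and 2.4]{GHPS}, so a short proof citing \cite{BumpSchilling} for the $\gl_n$-part and \cite{GHPS, GJKKK} for the queer part, together with the explicit bijection $w \mapsto \boxed{w_1} \otimes \cdots \otimes \boxed{w_m}$, is the most economical presentation.
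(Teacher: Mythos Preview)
The paper does not actually prove this proposition: it is stated with a citation to \cite[Remarks 2.3 and 2.4]{GHPS} and no further argument is given, since the result is treated as background material from the literature. Your proposal---identifying $\cW_n(m)$ with $(\BB_n)^{\otimes m}$ via $w \mapsto \boxed{w_1}\otimes\cdots\otimes\boxed{w_m}$, citing \cite{BumpSchilling} for the $\gl_n$-part, and unwinding the queer tensor-product rule from \cite{GJKKK} by induction on $m$---is the natural and correct way to supply a proof if one were required, and it matches what the cited references do.
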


%Figure~\ref{w3-ex} shows the $\q_3$-crystal graph of $\cW_3(2) \cong (\BB_3)^{\otimes 2}$.

 \subsection{Tableaux}

The Young diagram of an integer partition $\lambda = (\lambda_1  \geq \lambda_2\geq \dots \geq \lambda_n > 0)$ is the set of pairs 
$\D_\lambda = \{ (i,j) \in [n]\times [\lambda_1] : 1 \leq j \leq \lambda_i\}.$
We use the term \emph{tableau}
to mean a map  $\D_\lambda \to \ZZ$ for some partition $\lambda$;
 such a map is said to have shape $\lambda$.

We draw tableaux in French notation, so that row indices increase going up. 
For example,
\be\label{tab-ex}
\ytab{ 3 & 4 & \none  \\ 
 2 & 2 & 4}
 \quand
  \ytab{
 3 & 4 & \none  \\ 
 2 & 3 & 4}
 \quand
\ytab{ 3 & 5 & \none  \\ 
 1 & 2 & 4}
\ee
all have shape $\lambda=(3,2)$.
The pairs in the domain of a tableau are its \emph{boxes}.

A tableau is \emph{semistandard} if its rows are weakly increasing and its columns are strictly increasing.
A tableau is \emph{increasing} if its rows and columns are both strictly increasing.
 A tableau with $m$ boxes is \emph{standard} if 
it is increasing and
it contains each of the numbers $1,2,\dots,m$ exactly once.
The three tableaux drawn above are respectively semistandard, increasing, and standard.
Let $\Tab_n(m)$ denote the set of semistandard tableaux with $m$ boxes and entries in $[n]$.
Let $\Tab_n(\lambda)$ denote the subset of $T \in \Tab_{n}(|\lambda|)$ of shape $\lambda$.

The \emph{row reading word} of a tableau $T$ is 
the sequence $\row(T)$
formed by listing the entries of  $T$  row-by-row from left to right, starting with the top row.
The row reading words of the tableaux in \eqref{tab-ex} are $34224$, $34234$, and $35124$.
The \emph{column reading word} of $T$ is the sequence $\col(T)$ formed by listing the entries of $T$
down each column, starting with the first column.
The column reading words of the tableaux in \eqref{tab-ex} are $32424$, $32434$, and $31524$.

We introduce the term \emph{quasi-isomorphism}
to mean
a morphism $\psi : \cB \to \cC$ between abstract $\gl_n$- or $\q_n$-crystals 
with the property that
 for each full subcrystal $\tilde \cB \subset \cB$, there is a full subcrystal $\tilde \cC \subset \cC$
such that $\psi$ restricts to an isomorphism $\tilde\cB \to \tilde\cC$.
Let $m,n \in \PP$.

\begin{thmdef}[{\cite[\S3.1]{BumpSchilling}}]
\label{tab-thmdef} 
There is a unique abstract $\gl_n$-crystal structure on $\Tab_n(m)$
that makes the injective map $\row:\Tab_n(m)\to \cW_n(m)$ into a quasi-isomorphism.
The full $\gl_n$-subcrystals of $\Tab_n(m)$  
 are the sets 
$\Tab_n(\lambda)$ as $\lambda$ ranges over all partitions of $m$ with $\leq n$ parts.
\end{thmdef}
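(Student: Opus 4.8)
The plan is to establish the Theorem-Definition in two stages: first constructing the crystal structure on $\Tab_n(m)$ via the row reading map, and then identifying the full subcrystals with the sets $\Tab_n(\lambda)$. Since $\row : \Tab_n(m) \to \cW_n(m)$ is injective, the uniqueness claim is immediate once existence is shown: any crystal structure making $\row$ a quasi-isomorphism must have $\weight(T) = \weight(\row(T))$ and must satisfy $f_i(T) = \row^{-1}(f_i(\row(T)))$ whenever the right side lies in the image of $\row$, so the structure is forced. The content is therefore to show that (a) applying the word operators $e_i, f_i$ on $\cW_n(m)$ to a row reading word of a semistandard tableau again yields a row reading word of a semistandard tableau (or $0$), and (b) this assignment makes $\row$ not merely a morphism but a quasi-isomorphism, i.e.\ it carries each full subcrystal of $\Tab_n(m)$ isomorphically onto a full subcrystal of $\cW_n(m)$.

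First I would recall the standard fact (from \cite[\S3.1]{BumpSchilling}, which we are entitled to cite) that under the identification $\cW_n(m) \cong (\BB_n)^{\otimes m}$, the tensor product rule for the bracketing operators $e_i, f_i$ acts on row reading words of semistandard tableaux by bumping a single entry $i \leftrightarrow i{+}1$ in a way that preserves semistandardness. Concretely, one checks that if $T$ is semistandard and $f_i(\row(T)) \neq 0$, then the unique letter changed from $i$ to $i{+}1$ occupies a box $(r,c)$ such that the box above it (if any) contains an entry $> i{+}1$ or is empty and the box to its right (if any) contains an entry $> i$; this is exactly the condition guaranteeing that the modified filling is again semistandard, and that its row reading word is $f_i(\row(T))$. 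The verification is a local argument on two adjacent rows, using the observation that within a single weakly increasing row the $i$'s and $i{+}1$'s contribute a block of the form $\cdots )))(((\cdots$ to the bracketing sequence, so that inter-row cancellation behaves predictably. This gives a well-defined $\gl_n$-crystal structure on $\Tab_n(m)$ for which $\row$ is a morphism; that it is injective is clear since a tableau is determined by its shape together with its row reading word, and the shape is constant on the image.

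Next, to upgrade "morphism" to "quasi-isomorphism" and simultaneously identify the components, I would show that each $\Tab_n(\lambda)$ is a single full subcrystal and that $\row$ restricts to an isomorphism from $\Tab_n(\lambda)$ onto a full subcrystal of $\cW_n(m)$. Connectedness of $\Tab_n(\lambda)$ follows from the classical fact that every semistandard tableau of shape $\lambda$ can be transformed to the superstandard (Yamanouchi) tableau $U_\lambda$ of shape $\lambda$ by a sequence of raising operators $e_i$ — this is the standard highest-weight analysis for $\gl_n$-tableaux — so $\Tab_n(\lambda)$ lies in one weakly connected component; conversely the operators preserve shape, so distinct $\Tab_n(\lambda)$ lie in distinct components. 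That $\row$ is injective and intertwines the operators then forces it to map the component $\Tab_n(\lambda)$ bijectively, hence isomorphically, onto the full subcrystal of $\cW_n(m)$ generated by $\row(U_\lambda)$; preservation of string lengths $\varepsilon_i, \varphi_i$ is automatic for a weight- and operator-preserving injection. Finally, the shapes $\lambda$ occurring are exactly the partitions of $m$ with at most $n$ parts, since columns of a tableau with entries in $[n]$ have length $\le n$.

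\textbf{Main obstacle.} The one step that requires genuine care — rather than invocation of a cited black box — is the stability of semistandardness under the word operators, i.e.\ checking that $f_i$ and $e_i$, defined by the global bracketing rule on the entire row reading word, in fact modify a semistandard tableau within $\Tab_n(m)$. The subtlety is that the bracketing is computed across \emph{all} rows at once, so one must verify that the cancellation of $i$-brackets against $(i{+}1)$-brackets is consistent with the column-strictness constraint coupling vertically adjacent rows; the cleanest route is the two-row reduction, where one reduces to analyzing a pair of rows and shows the last unbalanced right parenthesis sits at a box whose neighbor above is necessarily large enough, but one has to be careful that this local picture correctly reflects the global bracketing (earlier rows contribute only fully balanced segments relative to the relevant letter). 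I expect this to be the only place where a reader would want the details spelled out; everything else is either forced by injectivity or is standard $\gl_n$ highest-weight theory.
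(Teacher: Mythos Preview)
The paper does not prove this statement at all; it is stated as a Theorem-Definition with a citation to \cite[\S3.1]{BumpSchilling} and no argument is given in the paper itself. Your proposal supplies a correct outline of the standard proof one would find in that reference: transport the word crystal operators along the injective map $\row$, verify they preserve semistandardness via a local two-row analysis of the bracketing, and identify the components as $\Tab_n(\lambda)$ by the highest-weight argument. Since the paper treats this as a black-box citation, there is nothing to compare; your write-up is more than the paper offers, and the obstacle you flag (compatibility of the global bracketing with column-strictness) is indeed the only nontrivial verification.
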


%Figure~\ref{tab-fig} shows the crystal graphs of three full subcrystals of $\Tab_3(4)$.

\begin{remark}
If $s_\lambda$ denotes the \emph{Schur function} of a partition $\lambda$,
then 
the character of the abstract $\gl_n$-crystal
$\Tab_n(\lambda)$ is  the Schur polynomial $s_\lambda(x_1,x_2,\dots,x_n) \in \Lambda_n$ \cite[Eq.\ (3.3)]{BumpSchilling}.
\end{remark}

\subsection{Shifted tableaux}\label{shtab-sect}

If $i \in \ZZ$ then we define $i' := i - \frac{1}{2}$. Using this notation,
 we can write \[\tfrac{1}{2}\ZZ = \{\ldots  <0' < 0 < 1' < 1 < 2'<2<\ldots\}.\]
Since $1' = \frac{1}{2}$, we have $(i+1)' = i' + 1 = i+1'$ for all $i\in \ZZ$.
An element of $\frac{1}{2}\ZZ$ is a \emph{primed number} if it has the form $i' \in \ZZ -\frac{1}{2}$ for some $i \in \ZZ$.
We sometimes refer to integers $i \in \ZZ$ as \emph{unprimed numbers}.

%\subsection{Shifted tableaux}\label{shtab-sect}

The shifted diagram of a strict partition $\mu = (\mu_1 > \dots > \mu_n > 0)$ 
is the set of pairs
\[\SD_\mu := \{ (i,j) \in [n]\times [\mu_1] : i \leq j  \leq \mu_i + i - 1\}.\]
We use the term \emph{shifted tableau} to mean a map from the shifted diagram
$\SD_\mu$ of some strict to partition to $\frac{1}{2}\ZZ$.
A shifted tableau with domain $\SD_\mu$
has shape $\mu$.
The \emph{(main) diagonal}
of a shifted tableau consists of the boxes $(i,j)$ in its domain with $i=j$.

A shifted tableau with positive entries is \emph{semistandard}
if its rows and columns are weakly increasing,
no unprimed number appears more than once in a column, and no
primed number appears in a diagonal position or more than once  in a row.
A shifted tableau is \emph{increasing} if it contains no primed entries 
and its rows and columns are strictly increasing.
A shifted tableau with $m$ boxes is \emph{standard} if it is semistandard
and its boxes contain exactly one of $i$ or $i'$ for each $i \in [m]$.
The following examples are respectively semistandard, increasing, and standard:
\[
%\ytableausetup{boxsize = .4cm,aligntableaux=center}
\ytab{
 \none & 3 & 4'  \\ 
 2 & 2 & 4'
}
 \quand
\ytab{
 \none & 4 & 5  \\ 
 2 & 3 & 4
}
 \quand
\ytab{
  \none & 3 & 5'   \\ 
 1 & 2' & 4
}.
\]
Fix $m,n \in \PP$ and 
let $\STab_n(m)$ denote the set of semistandard shifted tableaux with $m$ boxes and entries in $\{1'<1<2'<2\dots<n'<n\}$.
For each strict partition $\mu$,
let $\STab_n(\mu)$ denote the subset of 
shifted tableaux in $ \STab_{n}(|\mu|)$ of shape $\mu$.
The row and column reading words of a shifted tableau are defined in the same way as for ordinary tableaux.

Results in \cite{AssafOguz,HPS,Hiroshima2018} show that
the set $\STab_n(m)$ carries a natural $\q_n$-crystal structure, 
which we describe in Section~\ref{app-sect}.
Except for the proof of Theorem~\ref{dual-thm2},
we will not need to work with the explicit formulas for the relevant crystal operators the appear in this appendix.
Instead, it will suffice to use the following characterization of the
$\q_n$-crystal structure on $\STab_n(m)$ in terms of Haiman's notion of \emph{mixed (shifted) insertion} \cite[Definition 6.7]{HaimanMixed}.

Mixed insertion
is defined as an iterative
algorithm, where at each stage a number $x$ is ``inserted'' 
into a row or column of a tableau. When this happens, 
$x$ replaces some other (usually larger) entry $y$, and we say that
the number $y$ is ``bumped.''

\begin{definition}[\cite{HaimanMixed}]
\label{hm-def}
Given a word $w=w_1w_2\cdots w_m \in \cW_n(m)$,
let $\emptyset = T_0,T_1,\dots,T_m=P_\HM(w)$
be the sequence of shifted tableaux in which
 $T_i$ for $i \in [m]$ is formed from $T_{i-1}$ by inserting $w_i$ according to the following procedure:
\ben
\item[] Start by inserting $w_i$ into the first row.
At each stage, an entry $x$ is inserted into a row or column.
Let $y$ be 
the first entry in the row going left to right (respectively, column going bottom to top) with $x< y$.
If no such entry $y$ exists then $x$ is added to the end of the row or column.
Otherwise, $x$ replaces $y$ and we 
continue by inserting $y$ into
the next row if $y$ is unprimed or into the next column otherwise,
with the exception that if $y$ is on the main diagonal (and therefore unprimed) then we insert 
the primed number $y'$ into the next column.
\een
We call $P_\HM(w)$ the \emph{mixed insertion tableau} of $w$. The \emph{mixed recording tableau}
$Q_\HM(w)$ is the shifted tableau with the same shape as $P_\HM(w)$
which contains $i$ in the box added to $T_{i-1}$. 
\end{definition}

\begin{example}\label{hm-ex} We compute $P_\HM(w)$ and $Q_\HM(w)$ for $w=332332$:
\[ 
{\small
%\ytableausetup{boxsize = .45cm,aligntableaux=center}
\barr{rl}
\ytab{
  3 
}
   \leadsto 
\ytab{
  3 & 3
}
   \leadsto 
\ytab{
\none & 3 \\
  2 &3' 
}
      \leadsto 
\ytab{
\none & 3 \\
  2 &3'  & 3
}
      \leadsto 
\ytab{
\none & 3 \\
  2 &3'  & 3 & 3 
}
        \leadsto 
\ytab{
  \none & 3 & 3 \\
  2 & 2 & 3' & 3
} 
  = P_\HM(w)
\quand
\ytab{
  \none & 3 & 6 \\
  1 & 2 & 4 & 5
}  = Q_\HM(w).
%  \\ \\
%\ytab{
%  1
%}
%   \ \leadsto \
%\ytab{
%  1 & 2
%}
%    \ \leadsto \
%\ytab{
%\none & 3 \\
%  1 &2 
%}
%     \ \leadsto \
%\ytab{
%\none & 3 \\
%  1 &2  & 4
%}
%      \ \leadsto \
%\ytab{
%\none & 3 \\
%  1 &2  & 4 &5 
%}
%        \ \leadsto \
%\ytab{
%  \none & 3 & 6 \\
%  1 & 2 & 4 & 5
%} & = Q_\HM(w).
\earr}
\]
\end{example}

The following theorem is due to Haiman \cite{HaimanMixed},
but the cited result in \cite{HPS} matches our notation.

\begin{theorem}[{\cite{HaimanMixed}; see \cite[Theorem 3.12]{HPS}}]
\label{hm-bijection-thm}
The map $w\mapsto (P_\HM(w), Q_\HM(w))$ is a bijection from 
$\cW_n(m)$ to the set of pairs $(P,Q)$
of shifted tableaux of the same shape in which $P \in \STab_n(m)$ 
and $Q$ is standard with no primed entries.
\end{theorem}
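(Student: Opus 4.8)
The plan is to give a self-contained proof of this classical result of Haiman by exhibiting an explicit two-sided inverse to mixed insertion. First I would show that the procedure in Definition~\ref{hm-def} is well-defined: by induction on $i$, each $T_i$ is a semistandard shifted tableau with $|T_i| = |T_{i-1}| + 1$ and entries in $\{1' < 1 < \cdots < n' < n\}$. The substance here is a \emph{bumping lemma} stating that along a single insertion path the successively bumped entries are weakly decreasing and the rows and columns they enter are weakly increasing, so the path is finite and terminates by appending one new box. One then checks that the four defining inequalities of a semistandard shifted tableau (weakly increasing rows and columns, no repeated unprimed entry in a column, no primed entry on the diagonal or repeated in a row) are preserved; the only delicate point is that a primed entry is never created on the main diagonal, which is exactly what the diagonal exception (``insert $y'$ into the next column'' when $y$ is diagonal) enforces. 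Granting this, $P_\HM(w) \in \STab_n(m)$, while $Q_\HM(w)$ is standard with no primed entries and has the same shape as $P_\HM(w)$, essentially by construction, since the box added at step $i$ receives the unprimed label $i$ and the shapes of the $T_i$ strictly increase.

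Next I would construct the inverse ``uninsertion'' map. Given a pair $(P,Q)$ with $P \in \STab_n(m)$ and $Q$ standard of the same shape with no primed entries, the box $b$ containing $m$ in $Q$ is a removable corner, hence the box added last in any insertion producing $(P,Q)$. Define a reverse step that removes the entry $v := P(b)$ and unbumps it back through the earlier columns and rows: an entry leaving column $j$ re-enters column $j-1$ by displacing the largest entry strictly smaller than it (read top to bottom), with the convention that a primed value reaching the diagonal reverts to its unprimed form and passes into row-uninsertion in the row above, and symmetrically for the initial row phase. This outputs a letter $x \in [n]$ and a tableau $P'$ of shape $\sh(P) \setminus \{b\}$; setting $Q'$ to be $Q$ with $b$ deleted yields an admissible pair of size $m-1$, and iterating recovers a word $w = x_1 x_2 \cdots x_m \in \cW_n(m)$.

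The heart of the argument is then that a single insertion step and a single uninsertion step are inverse bijections on triples (shifted tableau, entry being inserted or removed, mode $\in \{\text{row},\text{column}\}$). This reduces to the analogous classical fact for row and column insertion on semistandard tableaux, plus a direct verification that the diagonal transition in the forward direction (an unprimed diagonal entry $y$ passing as $y'$ into the next column) is reversed exactly by the diagonal transition in the backward direction ($y'$ reaching the diagonal and passing as $y$ into the previous row), and that no primes are created or destroyed anywhere else. It follows that uninsertion applied to $(P_\HM(w), Q_\HM(w))$ returns $w$, so the map is injective, and that insertion applied to the word produced by uninserting any admissible $(P,Q)$ returns $(P,Q)$, so the map is surjective onto the stated set.

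I expect the main obstacle to be precisely the bumping lemma together with the diagonal bookkeeping: one must show that neither the insertion nor the uninsertion paths ever violate the shifted shape or the row/column strictness conditions, and that the primed/unprimed dichotomy is tracked consistently across the diagonal in both directions. Once this monotonicity and case analysis is in place, the remaining steps are routine inductions.
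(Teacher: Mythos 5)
The paper does not supply its own proof of this statement; it is quoted from Haiman's 1989 article (and from \cite[Theorem 3.12]{HPS}, where it is restated in the primed-tableau language used here). Your outline — well-definedness via a bumping lemma, an explicit uninsertion map, and a bijectivity argument by showing insertion and uninsertion are two-sided inverses — is exactly the structure of Haiman's original argument, so you are reconstructing the classical proof rather than taking a new route.

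One claim in your sketch is stated backwards and would need fixing before the termination argument goes through. You assert that ``along a single insertion path the successively bumped entries are weakly decreasing.'' In fact each bumped value is strictly larger than the value that displaced it (the rule replaces $y$ with $x$ only when $x < y$, and then passes $y$ onward), so the sequence of bumped entries is strictly increasing. What \emph{is} monotone in the useful direction is the geometry: during the row phase the row index strictly increases while the column index of the bump position weakly decreases, and during the column phase (after the diagonal switch) the column index strictly increases. That geometric monotonicity, bounded by the finite shape, is what makes the path terminate. The rest of your outline — that the diagonal exception keeps primed entries off the diagonal and that uninsertion reverses the diagonal transition by converting the primed value back to an unprimed one and switching from column to row mode — is the right bookkeeping, though the full case analysis there is precisely the nontrivial content of Haiman's proof and would need to be written out in detail.
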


The  $\q_n$-crystal structure on $\STab_n(m)$
is now determined by the following result.

\begin{thmdef}[\cite{AssafOguz,HPS,Hiroshima2018}]
\label{stab-thmdef}
Fix integers $m,n \in \PP$. Then:
\ben
\item[(a)] The full $\q_n$-subcrystals of  $\cW_n(m)$
are the sets on which $Q_\HM$ is constant.

\item[(b)] There is a unique abstract $\q_n$-crystal structure on the set $\STab_n(m)$
that makes the surjective map  
$P_\HM : \cW_n(m) \to \STab_n(m)$ into a quasi-isomorphism.

\item[(c)] The full $\q_n$-subcrystals of $\STab_n(m)$ in this structure are
the sets 
$\STab_n(\mu)$ where $\mu$ ranges over all strict partitions of $m$ with at most $n$ parts.
\een
\end{thmdef}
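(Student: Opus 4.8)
The single input that does all the work is the \emph{intertwining property} of mixed insertion with the word crystal: for every $w\in\cW_n(m)$ and every $x\in\{\overline 1,1,2,\dots,n-1\}$, if $f_x(w)\neq 0$ then
$Q_\HM(f_x(w))=Q_\HM(w)$ and $P_\HM(f_x(w))=f_x(P_\HM(w))$, where the last equality is computed in the crystal structure on $\STab_n(m)$ constructed explicitly in Section~\ref{app-sect}; the analogous statements hold for $e_x$. For the $\gl_n$-operators $f_1,\dots,f_{n-1}$ this is the shifted analogue of the classical fact that the RSK recording tableau is a coplactic invariant, and it follows by tracking bumping paths through a single $f_i$-move (equivalently, from the fact that $P_\HM$ is a complete invariant of shifted plactic equivalence, so it suffices to know $f_i$ carries a shifted plactic class into one). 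For the queer operator $f_{\overline 1}$, which recolours the first $1$ of $w$ to a $2$, this intertwining is precisely the content of the constructions in \cite{AssafOguz,HPS,Hiroshima2018} (or may be extracted from the queer Stembridge-type axioms of \cite{GHPS} applied to both sides). I will treat this intertwining property as established and deduce (a), (b), (c) from it.

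Part (a). From $Q_\HM(f_x(w))=Q_\HM(w)$ (and the dual statement) it is immediate that every crystal operator sends a word into another word with the same value of $Q_\HM$, hence each full $\q_n$-subcrystal of $\cW_n(m)$ is contained in a single fiber of $Q_\HM$. Conversely, fix a standard shifted tableau $Q$ of shape $\mu$, where $\mu$ is a strict partition of $m$ with at most $n$ parts. By Theorem~\ref{hm-bijection-thm}, $w$ is recovered from the pair $(P_\HM(w),Q_\HM(w))$, so $P_\HM$ restricts to a bijection from the fiber $\{w:Q_\HM(w)=Q\}$ onto $\STab_n(\mu)$; by the intertwining property this bijection carries the word operators on the fiber to the crystal operators of Section~\ref{app-sect} on $\STab_n(\mu)$. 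Hence the fiber is weakly connected if and only if $\STab_n(\mu)$ is, and the latter holds because $\STab_n(\mu)$ has a unique highest weight element (the super-standard tableau, whose row $i$ is constant equal to $i$) which generates all of it. Therefore the fibers of $Q_\HM$ are exactly the full $\q_n$-subcrystals of $\cW_n(m)$.

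Parts (b) and (c). Since $P_\HM:\cW_n(m)\to\STab_n(m)$ is surjective by Theorem~\ref{hm-bijection-thm} and injective on each fiber of $Q_\HM$, part (a) together with the intertwining property shows that $P_\HM$ restricts to an isomorphism from each full subcrystal of $\cW_n(m)$ onto $\STab_n(\mu)$, i.e.\ $P_\HM$ is a quasi-isomorphism onto the crystal of Section~\ref{app-sect}; this proves existence in (b). For uniqueness, any $\q_n$-crystal structure on $\STab_n(m)$ making $P_\HM$ a quasi-isomorphism must satisfy $e_x(T)=P_\HM(e_x(w))$ and $f_x(T)=P_\HM(f_x(w))$ for any $w$ with $P_\HM(w)=T$ (with $P_\HM(0):=0$), and this prescription is forced and well defined by part (a); so the structure is unique. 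For (c), the full $\q_n$-subcrystals of $\STab_n(m)$ are the images under $P_\HM$ of those of $\cW_n(m)$, that is, the images of the fibers of $Q_\HM$, and by Theorem~\ref{hm-bijection-thm} the image of the fiber over a standard tableau of shape $\mu$ is exactly $\STab_n(\mu)$; as $Q$ ranges over all standard shifted tableaux with $m$ boxes, $\mu$ ranges over all strict partitions of $m$ with at most $n$ parts. (As a consistency check, Proposition~\ref{gamma-prop} forces $\ch(\STab_n(\mu))\in\Gamma_n$, in agreement with its being a Schur $Q$-polynomial.) The main obstacle in this argument is the intertwining property for the queer operator $f_{\overline 1}$: unlike the $\gl_n$-operators it is not a local modification compatible with jeu-de-taquin moves, so verifying that mixed insertion absorbs the single recolouring $1\mapsto 2$ without disturbing the recording tableau is the delicate step, and is exactly where the detailed analyses of \cite{AssafOguz,HPS,Hiroshima2018} are needed.
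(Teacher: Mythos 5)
Your proposal is essentially correct and follows the same route as the paper's proof sketch: identify that mixed insertion intertwines the crystal operators (the paper makes explicit only the observation $Q_\HM(f_{\overline 1}(w)) = Q_\HM(w)$ since it is the genuinely new check, and cites \cite{HPS,Hiroshima2018,AssafOguz} for the rest), and then derive parts (a)--(c). You fill in the logical bookkeeping more explicitly than the paper, which is useful, but the hard content still resides in the cited works in both treatments.

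Two small points worth flagging. First, your justification that each fiber of $Q_\HM$ is a \emph{single} full subcrystal hinges on the connectedness of $\STab_n(\mu)$, which you assert via ``the super-standard tableau ... generates all of it.'' As stated this is not an independent argument: connectedness of $\STab_n(\mu)$ is precisely the content of part (c), so you are implicitly invoking the cited structure theory rather than proving connectedness from scratch. That is fine for a proof sketch — the paper does the same — but it should be framed as a citation rather than a deduction, since ``unique highest weight'' does not imply connectedness without already knowing the crystal is (or behaves like) a normal $\q_n$-crystal, and indeed $\STab_n(\mu)$ is \emph{not} $\gl_n$-connected (one needs the $\overline 1$-arrows). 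Second, a notational slip in your consistency check: the character of $\STab_n(\mu)$ is the Schur $P$-polynomial $P_\mu$, not the Schur $Q$-polynomial, since primed entries are forbidden on the main diagonal in this paper's convention (see the Remark immediately after the statement).
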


\begin{proof}[Proof sketch]
One can check directly
 that $Q_\HM(w) = Q_\HM(f_{\overline 1}(w))$ 
 if $f_{\overline 1}(w)\neq 0$.
The other properties follow from \cite[Theorems 3.13 and 4.3]{HPS}, \cite[Theorem 3.2]{Hiroshima2018},
 and  \cite[Theorem 4.8]{AssafOguz}.
% 
%\cite[Theorem 3.13]{HPS} asserts that $Q_\HM$ is constant on full $\gl_n$-subcrystals of $\cW_n(m)$.
%It is a straightforward exercise to check
%directly
%from the definitions
% that $Q_\HM(w) = Q_\HM(f_{\overline 1}(w))$ 
% for all $w \in \cW_n(m)$ with $f_{\overline 1}(w)\neq 0$,
%so $Q_\HM$ is also constant on $\q_n$-subcrystals.
%Given this property, part (b) is equivalent to \cite[Theorem 3.2]{Hiroshima2018}; see also \cite[Theorem 4.3]{HPS}.
%If $\mu$ is a strict partition of $m$ then 
%the subset $\STab_n(\mu)\subset \STab_n(m)$ is automatically a union of full $\q_n$-subcrystals.
%To prove (a) and (c), it suffices to show that if $\STab_n(\mu)$ is nonempty then it consists of a single full subcrystal.
%This assertion is part of \cite[Theorem 4.9]{AssafOguz}.
\end{proof}

It follows that the weight map for  $\STab_n(m)$
assigns to a shifted tableau $T$ the sequence $\weight(T) \in \NN^n$
whose $i$th entry is the number of times $i$ or $i'$ appears in $T$.
For example, 
\be\label{stab-weight-eq} \weight\(\ytab{
 \none & 3 & 4  \\ 
 2 & 2 & 4'
}\) = (0,  2,1,2,0)
\ee
when $n=5$.
For an example of the $\q_n$-crystal $\STab_n(m)$, see Figure~\ref{stab-fig}.
For more direct formulas for the operators $e_{i}$
and $f_{i}$ acting on this crystal, see Section~\ref{app-sect}.

\begin{remark}
If $P_\mu$ denotes the \emph{Schur $P$-function} of a strict partition $\mu$ (see \cite[\S{III.8}]{Macdonald}),
then 
the character of the abstract $\q_n$-crystal
$\STab_n(\mu)$ is
the Schur $P$-polynomial
$P_\mu(x_1,x_2,\dots,x_n) \in \Gamma_n$.
\end{remark}

\begin{figure}[h]
  \begin{center}
    \begin{tikzpicture}[xscale=2.4, yscale=2.4,>=latex]
      \node at (2,0) (T02) {$\ytab{\none & 3 \\ 2 & 3' & 3}$};
      \node at (1,1) (T11) {$\ytab{\none & 3 \\ 1 & 3' & 3}$};
      \node at (2,1) (T12) {$\ytab{\none & 3 \\ 2 & 2 & 3}$};
      \node at (3,1) (T13) {$\ytab{\none &3 \\ 2 & 2 & 3'}$};
      \node at (0,2) (T20) {$\ytab{\none & 3 \\1 & 2' & 3}$};
      \node at (1,2) (T21) {$\ytab{\none & 3 \\ 1 & 2 & 3}$};
      \node at (2,2) (T22) {$\ytab{\none & 3 \\ 2 & 2  & 2}$};
      \node at (3,2) (T23) {$\ytab{\none & 3 \\ 1 & 2 & 3'}$};
      \node at (5,2) (T25) {$\ytab{\none & 3 \\ 1 & 2' & 3'}$};
      \node at (0,3) (T30) {$\ytab{\none & 2 \\ 1 & 2' & 3}$};
      \node at (1,3) (T31) {$\ytab{\none & 3 \\ 1 & 1 & 3}$};
      \node at (2,3) (T32) {$\ytab{\none & 3 \\ 1 & 2 & 2 }$};
      \node at (3,3) (T33) {$\ytab{\none & 3 \\ 1 & 2' & 2}$};
      \node at (4,3) (T34) {$\ytab{\none & 3 \\ 1 & 1 & 3'}$};
      \node at (5,3) (T35) {$\ytab{\none & 2 \\ 1 & 2' & 3'}$};
      \node at (0,4) (T40) {$\ytab{\none & 2 \\ 1 & 2' & 2}$};
      \node at (1,4) (T41) {$\ytab{\none & 2 \\ 1 & 1 & 3}$};
      \node at (2,4) (T42) {$\ytab{\none & 3 \\ 1 & 1 & 2 }$};
      \node at (3,4) (T43) {$\ytab{\none & 3 \\ 1 & 1 & 2'}$};
      \node at (5,4) (T45) {$\ytab{\none & 2 \\ 1 & 1 & 3'}$};
      \node at (1,5) (T51) {$\ytab{\none & 2 \\ 1 & 1 & 2 }$};
      \node at (2,5) (T52) {$\ytab{\none & 3 \\ 1 & 1 & 1}$};
      \node at (3,5) (T53) {$\ytab{\none & 2 \\ 1 & 1 & 2'}$};
      \node at (2,6) (T62) {$\ytab{ \none & 2 \\ 1 & 1 & 1 }$};
      \draw[->,thick]  (T62) -- (T51) node[midway,above,scale=0.75] {$1$};
      \draw[->,thick]  (T62) -- (T52) node[midway,right,scale=0.75] {$2$};
      \draw[->,thick]  (T62) -- (T53) node[midway,above,scale=0.75] {$\overline 1$};
      \draw[->,thick]  (T51.218) -- (T40.52) node[midway,above,scale=0.75] {$\overline 1$};
      \draw[->,thick]  (T51.232) -- (T40.38) node[midway,below,scale=0.75] {$1$};
      \draw[->,thick]  (T51) -- (T41) node[midway,right,scale=0.75] {$2$};
      \draw[->,thick]  (T52) -- (T42) node[midway,right,scale=0.75] {$1$};
      \draw[->,thick]  (T52) -- (T43) node[midway,above,scale=0.75] {$\overline 1$};
      \draw[->,thick]  (T53) -- (T43) node[midway,right,scale=0.75] {$2$};
      \draw[->,thick]  (T40) -- (T30) node[midway,left,scale=0.75] {$2$};
      \draw[->,thick]  (T41.218) -- (T30.52) node[midway,above,scale=0.75] {$\overline 1$};
      \draw[->,thick]  (T41.232) -- (T30.38) node[midway,below,scale=0.75] {$1$};
      \draw[->,thick]  (T41) -- (T31) node[midway,right,scale=0.75] {$2$};
      \draw[->,thick]  (T42) -- (T32) node[midway,right,scale=0.75] {$1$};
      \draw[->,thick]  (T42) -- (T33) node[midway,above,scale=0.75] {$\overline 1$};
      \draw[->,thick]  (T43) -- (T33) node[midway,right,scale=0.75] {$1$};
      \draw[->,thick]  (T43) -- (T34) node[midway,above,scale=0.75] {$2$};
      \draw[->,thick]  (T45.260) -- (T35.100) node[midway,left,scale=0.75] {$\overline 1$};
      \draw[->,thick]  (T45.280) -- (T35.80) node[midway,right,scale=0.75] {$1$};
      \draw[->,thick]  (T30) -- (T20) node[midway,left,scale=0.75] {$2$};
      \draw[->,thick]  (T31) -- (T20) node[midway,above,scale=0.75] {$\overline 1$};
      \draw[->,thick]  (T31) -- (T21) node[midway,right,scale=0.75] {$1$};
      \draw[->,thick]  (T32) -- (T21) node[midway,above,scale=0.75] {$2$};
      \draw[->,thick]  (T32.260) -- (T22.100) node[midway,left,scale=0.75] {$\overline 1$};
      \draw[->,thick]  (T32.280) -- (T22.80) node[midway,right,scale=0.75] {$1$};
      \draw[->,thick]  (T33) -- (T23) node[midway,right,scale=0.75] {$2$};
      \draw[->,thick]  (T34) -- (T23) node[midway,above,scale=0.75] {$1$};
      \draw[->,thick]  (T34) -- (T25) node[midway,above,scale=0.75] {$\overline 1$};
      \draw[->,thick]  (T35) -- (T25) node[midway,right,scale=0.75] {$2$};
      \draw[->,thick]  (T20) -- (T11) node[midway,above,scale=0.75] {$2$};
      \draw[->,thick]  (T21.322) -- (T12.128) node[midway,above,scale=0.75] {$\overline 1$};
      \draw[->,thick]  (T21.308) -- (T12.142) node[midway,below,scale=0.75] {$1$};
      \draw[->,thick]  (T22) -- (T12) node[midway,right,scale=0.75] {$2$};
      \draw[->,thick]  (T23.260) -- (T13.100) node[midway,left,scale=0.75] {$\overline 1$};
      \draw[->,thick]  (T23.280) -- (T13.80) node[midway,right,scale=0.75] {$1$};
      \draw[->,thick]  (T11.322) -- (T02.128) node[midway,above,scale=0.75] {$\overline 1$};
      \draw[->,thick]  (T11.308) -- (T02.142) node[midway,below,scale=0.75] {$1$};
      \draw[->,thick]  (T12) -- (T02) node[midway,right,scale=0.75] {$2$};
     \end{tikzpicture}
  \end{center}
\caption{The $\q_3$-crystal graph of $\STab_3(\lambda)$ for $\lambda=(3,1)$.}
\label{stab-fig}
\end{figure}

\section{Crystals of factorizations}\label{factor-sect}

In this section, we first review three families of abstract crystals introduced in \cite{Hiroshima,MorseSchilling}.
The elements of each crystal are increasing factorizations of reduced words for certain permutations.
We then present our main new results
 in Section~\ref{main-sect}.
Throughout, $n\in \PP$ is a positive integer.

\subsection{Reduced factorizations}\label{reduced-sect}

Let $S_\ZZ$ denote the group of permutations of $\ZZ$ fixing all but finitely many integers.
The simple transpositions $s_i := (i,i+1) \in S_\ZZ$ for $i \in \ZZ$
generate $S_\ZZ$ and with
respect to this generating set $S_\ZZ$ is a Coxeter group,
whose length function $\ell : S_\ZZ \to \NN$ counts the
inversions of a permutation.

\begin{definition}
A \emph{reduced word} for a permutation $\pi \in S_\ZZ$
is a minimal-length sequence of integers $i_1i_2\cdots i_l$
with $\pi = s_{i_1}s_{i_2}\cdots s_{i_l}$.
Let $\cR(\pi)$ denote the set of reduced words for $\pi \in S_\ZZ$.
\end{definition}

Let $\pi \in S_\ZZ$.
It is well-known that $\ell(\pi)$ is the length every  word in $ \cR(\pi)$,
which is a single equivalence class under the transitive closure of the \emph{Coxeter braid relations}
defined by
%\be\label{cox-eq} \cdots ij\cdots \sim \cdots ji \cdots \quand 
%\cdots i(i+1)i\cdots \sim \cdots (i+1)i(i+1)\cdots \ee
$ \cdots ij\cdots \sim \cdots ji \cdots$ for $|i-j|>1$ 
and 
$\cdots i(i+1)i\cdots \sim \cdots (i+1)i(i+1)\cdots$
for all $i \in \ZZ$ \cite[Lemma 6.18]{EG}.
%, where 
%it is understood that
%corresponding ellipses mask identical subwords. 

\begin{definition}\label{reduced-fac-def}
An \emph{increasing factorization} of a word $w$
is a finite sequence $(w^1,w^2,\dots,w^n)$ 
in which each $w^i$ is a strictly increasing, possibly empty word and $w=w^1w^2\cdots w^n$.
A \emph{reduced factorization}
is an increasing factorization of a reduced word for some 
$\pi \in S_\ZZ$.
Let $\cR_n(\pi)$ denote the set of increasing factorizations
with $n$ factors of reduced words for $\pi \in S_\ZZ$.
\end{definition}

Given increasing words $a=a_1a_2\cdots a_p$ and $b=b_1b_2\cdots b_q$,
define a set $\pair(a,b)$ inductively as follows.
If no $(i,j) \in [p] \times [q]$ exists with $a_i > b_j$,
then  $\pair(a,b) = \varnothing$. Otherwise, choose $j$ to be maximal with $\{ i  : a_i > b_j\}\neq\varnothing$, then choose $i $ to be minimal with $a_i>b_j$,
and finally set 
\be\pair(a,b) = \{(a_i, b_j)\} \sqcup \pair(a_1\cdots a_{i-1}a_{i+1}\cdots a_p, b_1\cdots b_{j-1}b_{j+1}\cdots b_q).\ee
Equivalently, we form $\pair(a,b)$ by iterating over the letters in the second word from largest to smallest; at each iteration,
the current letter $b_j$ is paired with the smallest unpaired letter $a_i$ in the first word with $a_i>b_j$, if such a letter exists. For example,
if
$u = 1 , 3, 4, 5, 8, 10, 11
$
and
$
v = 2, 6, 9 , 12 , 13
$
then we have
$\pair(u,v) = \{ (10,9), (8,6), (3,2) \}$.

The set $\cR_n(\pi)$ has an abstract $\gl_n$-crystal structure \cite{MorseSchilling},
which we can describe as follows.
Fix $\pi \in S_\ZZ$ and $w=(w^1,w^2,\dots,w^n) \in \cR_n(\pi)$.
Let
$\weight(w) = (\ell(w^1),\ell(w^2),\dots,\ell(w^n)) \in \NN^n$.
To define $f_i(w)$ and $e_i(w)$
for $i \in [n-1]$,
we examine the set $\pair(w^i,w^{i+1})$:
\begin{itemize}
\item If every letter in $w^i$ belongs to $\{ a : (a,b) \in \pair(w^i,w^{i+1})\}$,
then $f_i(w) = 0$. Otherwise, form
$f_i(w)$ by removing from $w^i$ its largest unpaired letter $x$,
and then adding to $w^{i+1}$ the smallest integer $y\geq x$ that is not already a letter
(in the position yielding an increasing word).

\item If every letter in $w^{i+1}$ belongs to $\{ b : (a,b) \in \pair(w^i,w^{i+1})\}$,
then $e_i(w) = 0$. Otherwise, form 
$e_i(w)$ by removing  from $w^{i+1}$
its smallest unpaired letter $y$,
and then adding to $w^i$ the largest integer $x\leq y$ that is not already a letter
(in the position yielding an increasing word).

\end{itemize}
The following is equivalent to \cite[Theorem 3.5]{MorseSchilling};
see also \cite[\S10.2]{BumpSchilling}.
Both references work with factorizations 
into \emph{decreasing} subwords that are indexed in 
reverse order as $(w^n,\dots,w^2,w^1)$. 
Reading everything backwards translates the relevant statements to
what is given here.

\begin{proposition}[\cite{MorseSchilling}]
Relative to the maps $\weight$, $e_i$, $f_i$ just given, 
the set of reduced factorizations
$\cR_n(\pi)$ is an abstract $\gl_n$-crystal  for all  $\pi \in S_\ZZ$.
(We call this a \emph{Morse-Schilling crystal}.)
\end{proposition}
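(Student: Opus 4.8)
The plan is to verify Definition~\ref{gl-def} directly, since the crystal axioms for $\gl_n$ are local in the factors $w^i, w^{i+1}$ and essentially reduce to the combinatorics of the $\pair$ operation. First I would record the basic properties of $\pair(a,b)$ for increasing words $a,b$: the set of pairs has the structure of a noncrossing matching (if $(a_i,b_j)$ and $(a_{i'},b_{j'})$ are both pairs with $b_j < b_{j'}$ then $a_i < a_{i'}$), every pair satisfies $a > b$, and crucially the set of unpaired letters of $a$ (the ``free'' letters) all lie below the set of unpaired letters of $b$ — more precisely, if $x$ is the largest unpaired letter of $w^i$ and $y$ is the smallest unpaired letter of $w^{i+1}$, then $x \le y$ (otherwise $x$ and $y$ could be paired, contradicting maximality/minimality in the inductive construction). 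This ordering fact is what makes $f_i$ and $e_i$ well-defined: removing $x$ from $w^i$ and inserting the smallest $y' \ge x$ not already present into $w^{i+1}$ keeps $w^{i+1}$ strictly increasing, and the symmetric statement holds for $e_i$.

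Next I would check that $f_i$ and $e_i$ are mutually inverse partial maps (axiom (1)) and that they change the weight by $\e_i - \e_{i+1}$. The weight claim is immediate: $f_i$ removes one letter from $w^i$ and adds one letter to $w^{i+1}$, so $\ell(w^i)$ drops by one and $\ell(w^{i+1})$ rises by one; and one must also confirm that $f_i(w)$ is still a factorization of a \emph{reduced} word for the \emph{same} permutation $\pi$ — this follows because changing a letter $j$ to $j\pm 1$ at the boundary between two increasing runs, in the controlled way prescribed, corresponds to a Coxeter braid move on the underlying word (or more carefully, one shows directly that the product $s_{w^1}\cdots s_{w^n}$ is unchanged and the length is preserved). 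For the mutual-inverse property, given $w' = f_i(w) \ne 0$, I would show that the letter $y$ just added to $w'^{i+1}$ is the smallest unpaired letter of the new pair $\pair(w'^i, w'^{i+1})$, and that applying $e_i$ removes exactly $y$ and restores $x$ to $w'^i$. This is a bookkeeping argument about how $\pair$ changes under the single-letter modification; the key point is that deleting $x$ from $w^i$ and inserting $y \ge x$ into $w^{i+1}$ shifts the matching by exactly one ``notch'' and leaves the noncrossing structure and all other pairs intact.

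Then I would verify the finiteness and string-length condition in axiom (2). Finiteness of $\varepsilon_i(b)$ and $\varphi_i(b)$ is clear because $f_i^k$ strictly increases $\ell(w^{i+1})$ each time and the total length $\ell(\pi)$ is bounded. For $\varphi_i(b) - \varepsilon_i(b) = \weight(b)_i - \weight(b)_{i+1}$: the standard trick is that iterating $f_i$ exactly empties the free letters of $w^i$ into $w^{i+1}$ one at a time until none remain, so $\varphi_i(b)$ equals the number of unpaired letters of $w^i$, which is $\ell(w^i) - |\pair(w^i,w^{i+1})|$; symmetrically $\varepsilon_i(b)$ equals the number of unpaired letters of $w^{i+1}$, which is $\ell(w^{i+1}) - |\pair(w^i,w^{i+1})|$; subtracting gives $\ell(w^i) - \ell(w^{i+1}) = \weight(b)_i - \weight(b)_{i+1}$. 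To make this rigorous I need the invariant that $|\pair(w^i, w^{i+1})|$ is unchanged by $f_i$ and $e_i$ — which is exactly the ``one-notch shift'' statement from the previous paragraph — together with the observation that $f_i$ (resp.\ $e_i$) is applicable iff $w^i$ (resp.\ $w^{i+1}$) has an unpaired letter.

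The main obstacle is the detailed analysis of how $\pair(w^i, w^{i+1})$ transforms under a single application of $f_i$: one must show that after removing the largest unpaired letter $x$ from $w^i$ and inserting the appropriate $y \ge x$ into $w^{i+1}$, the new pair set agrees with the old one except that the count of unpaired letters on each side drops by one, and the noncrossing/ordering structure is preserved. Once this invariant is established, the mutual-inverse property (axiom (1)) and the string-length identity (axiom (2)) both follow formally, and the preservation of ``reduced word for $\pi$'' is a separate but routine Coxeter-theoretic check. Since the reference \cite[Theorem 3.5]{MorseSchilling} proves the mirror statement for decreasing factorizations, the cleanest writeup is to invoke that result and merely verify that reversing each word and reindexing the factors as $(w^n,\dots,w^1)$ intertwines the two crystal structures, exchanging the roles of $e_i$ and $f_i$ appropriately; I would present this translation explicitly rather than redoing the whole verification.
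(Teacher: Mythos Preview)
Your proposal is correct, and in fact goes well beyond what the paper itself does: the paper gives no proof at all, merely citing \cite[Theorem 3.5]{MorseSchilling} and \cite[\S10.2]{BumpSchilling} and noting that those references use decreasing factorizations indexed in reverse, so that ``reading everything backwards translates the relevant statements to what is given here.'' This is precisely your closing recommendation, so on that count you match the paper exactly.

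Your longer direct-verification sketch is a reasonable alternative and would also work. The one place where you are slightly glib is the claim that $f_i(w)$ remains a reduced factorization of the \emph{same} $\pi$: you say this ``corresponds to a Coxeter braid move on the underlying word,'' but when $y > x$ (i.e., when $x, x+1, \dots, y-1$ are all already present in $w^{i+1}$) the underlying word genuinely changes, and one must argue that removing $x$ from $w^i$ and appending $y$ after the block $x(x+1)\cdots(y-1)$ in $w^{i+1}$ is achieved by a sequence of commutation and braid relations (essentially $s_x \cdot s_x s_{x+1}\cdots s_{y-1} = s_x s_{x+1}\cdots s_{y-1}\cdot s_y$ after commuting $x$ past the intervening larger letters of $w^i$). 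This is routine but deserves a sentence; your parenthetical ``or more carefully, one shows directly that the product $s_{w^1}\cdots s_{w^n}$ is unchanged and the length is preserved'' is the right fallback.
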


For examples of Morse-Schilling crystals,
see Figures~\ref{ir-fig} and \ref{irfpf-fig}, ignoring any arrows $x\xrightarrow{\overline 1} y$.

\begin{remark}\label{nonempty-rmk1}
In \cite{Stanley}, Stanley associates to each permutation $\pi$
a certain homogeneous symmetric function $F_\pi$. (In later references, the 
indexing convention for this power series is often inverted.)
It is clear from the definition in \cite{Stanley} that the character of the crystal $\cR_n(\pi)$
is the polynomial $F_\pi(x_1,x_2,\dots,x_n) \in \Lambda_n$
obtained by specializing the \emph{Stanley symmetric function} $F_\pi$
to $n$ variables.

Stanley derives a formula for the maximal term in the Schur expansion of $F_\pi$ relative to dominance order on
partitions \cite[Theorem 4.1]{Stanley}. Because $s_\lambda(x_1,x_2,\dots,x_n)$ is nonzero if and only if 
the number of parts of $\lambda$ satisfies $\ell(\lambda)\leq n$, and because one has
 $\mu \leq \lambda$ in dominance order only if $\ell(\mu)\geq \ell(\lambda)$,
Stanley's formula implies that $F_\pi(x_1,x_2,\dots,x_n)$ is nonzero if and only if the numbers
$c_i(\pi) := |\{ i < j \in \ZZ : \pi(i) > \pi(j)\}|$  that make up the \emph{code} of $\pi$ are at most $n$ for all $i \in \ZZ$.
Necessarily, the set $\cR_n(\pi)$ is nonempty if and only if the same condition holds.
\end{remark}
 
\subsection{Orthogonal factorizations}\label{o-fact-sect}

Let 
$I_\ZZ = \{ \pi \in S_\ZZ : \pi = \pi^{-1}\}$
be the set of involutions in $S_\ZZ$.
Such permutations are in bijection with the (incomplete) matchings on $\ZZ$
with finitely many edges.
Given $\pi \in S_\ZZ$ and $ i \in \ZZ$, define 
\[
\pi\circ s_i = \begin{cases} \pi s_i &\text{if }\pi(i) < \pi(i+1) \\
\pi &\text{if }\pi(i)>\pi(i+1)\end{cases}
\quand
\pi \rtimes s_i = \begin{cases} s_i \pi s_i & \text{if $\pi s_i \neq s_i \pi$} \\
\pi s_i &\text{if $\pi s_i = s_i \pi$}.
\end{cases}
\]
The operation $\circ$ extends to an associative product $S_\ZZ \times S_\ZZ \to S_\ZZ$ \cite[\S7.1]{Humphreys}.
The operation $\rtimes $ is not associative, 
but one has $\pi \rtimes s_i \in I_\ZZ$ if and only if $\pi \in I_\ZZ$.

\begin{lemma}[{\cite[\S2]{HMP2}}]
\label{rtimes-lem}
Let $\pi \in S_\ZZ$ and $i_1,i_2,\dots, i_l \in \ZZ$. The following are equivalent:
\ben
\item[(a)]   $i_1i_2\cdots i_l$ is a minimal-length word with $\pi =s_{i_l} \circ \cdots \circ s_{i_2}\circ s_{i_1} \circ 1 \circ s_{i_1} \circ s_{i_2} \circ \cdots \circ s_{i_l}.$

\item[(b)] $i_1i_2\cdots i_l$ is a minimal-length word with $\pi =(\cdots ((1 \rtimes   s_{i_1}) \rtimes s_{i_2}) \rtimes \cdots  )\rtimes s_{i_l}.$

\item[(c)] $i_j$ is not a descent of $ (\cdots ((1 \rtimes   s_{i_1}) \rtimes s_{i_2}) \rtimes \cdots  )\rtimes s_{i_{j-1}}$
for each $j \in [l]$.
\een
Moreover, if these properties hold then we must have $\pi \in I_\ZZ$.
\end{lemma}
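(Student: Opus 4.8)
The statement to prove is Lemma~\ref{rtimes-lem}: the equivalence of conditions (a), (b), (c), together with the conclusion that $\pi \in I_\ZZ$ whenever any of them holds. The key structural fact I would isolate first is a one-step identity relating the two partial products $\circ$ and $\rtimes$. Namely, if $w \in I_\ZZ$ and $i \in \ZZ$, then I claim
\[
w \rtimes s_i = s_i \circ w \circ s_i,
\]
and moreover $\ell(w \rtimes s_i) = \ell(w) + 1$ precisely when $i$ is not a descent of $w$ (equivalently $w(i) < w(i+1)$), while $w \rtimes s_i = w$ otherwise. This is a small case analysis: if $ws_i \ne s_i w$ then $w \rtimes s_i = s_i w s_i$ and one checks $\ell(s_i w s_i) = \ell(w) + 2$ when $w(i) < w(i+1)$; if $w s_i = s_i w$ and $w(i) < w(i+1)$ then $i, i+1$ are fixed by $w$ and $w \rtimes s_i = w s_i$ has length $\ell(w)+1$; and if $w(i) > w(i+1)$ then in both sub-cases $\ell$ does not increase. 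The Demazure-type product $\circ$ satisfies $s_i \circ w \circ s_i = w$ when $w(i) > w(i+1)$ and $= s_i w s_i$ (or $ws_i$, matching the matching picture) when $w(i)<w(i+1)$ — this is exactly the associative extension of $\circ$ recorded in \cite[\S7.1]{Humphreys}. Comparing, the identity $w \rtimes s_i = s_i \circ w \circ s_i$ follows in every case, and the length bookkeeping is read off simultaneously.

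\textbf{From the one-step identity to the equivalences.}
Given this lemma, (a)$\iff$(b) is immediate by induction on $l$: writing $w_j := (\cdots(1 \rtimes s_{i_1})\rtimes\cdots)\rtimes s_{i_j}$, the inductive hypothesis gives $w_{j-1} = s_{i_{j-1}} \circ \cdots \circ s_{i_1} \circ 1 \circ s_{i_1} \circ \cdots \circ s_{i_{j-1}} \in I_\ZZ$, and then the one-step identity applied to $w_{j-1}$ (which is an involution, so the identity applies) gives $w_j = s_{i_j} \circ w_{j-1} \circ s_{i_j} = s_{i_j} \circ \cdots \circ s_{i_1} \circ 1 \circ s_{i_1} \circ \cdots \circ s_{i_j}$; this also shows inductively that every $w_j \in I_\ZZ$, which yields the final ``moreover'' clause. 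The minimal-length conditions match because the word $i_1\cdots i_l$ computes the same element under both recipes at every prefix. For (b)$\iff$(c): the length of $w_l$ under the $\rtimes$ recipe increases by one at step $j$ iff $i_j$ is not a descent of $w_{j-1}$, and it stays the same (or the recipe ``stalls'') otherwise; since $\ell(1) = 0$, the word $i_1\cdots i_l$ has minimal length $l$ (achieving the value $\ell(\pi)$ via $\circ$, which is always length-additive up to the stalls of $\rtimes$) if and only if no stall ever occurs, i.e.\ iff each $i_j$ is a non-descent of $w_{j-1}$. I would phrase this as: $\ell(w_j) \le \ell(w_{j-1}) + 1$ always, with equality iff $i_j$ is not a descent of $w_{j-1}$, so $\ell(w_l) = l$ forces equality at every step, and conversely.

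\textbf{Main obstacle.}
The genuinely delicate point is the two-increment jump $\ell(s_i w s_i) = \ell(w) + 2$ in the case $ws_i \ne s_i w$, $w(i) < w(i+1)$ — one must rule out the possibility that conjugation only raises the length by $0$ or $1$. This is where the involution hypothesis is essential: for $w \in I_\ZZ$, the inversions $\{i,i+1\}$ creates come in the symmetric pair $(i, w^{-1}(i+1)) = (i, w(i+1))$ and $(w(i), i+1)$ — I would argue via the explicit effect of $s_i \cdot s_i$ on the matching/one-line notation of $w$, or cite the relevant statement from \cite[\S2]{HMP2}, that these two new inversions are distinct exactly when $ws_i \ne s_i w$, giving the $+2$. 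Everything else is routine case-checking, so I expect the writeup to be short once this length computation is pinned down, and indeed the cleanest route is to invoke the needed facts about $\circ$ and about conjugation of involutions from \cite{Humphreys, HMP2} rather than reprove them.
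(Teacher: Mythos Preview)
The paper does not prove this lemma; it is stated with a citation to \cite[\S2]{HMP2} and is immediately followed by a definition, with no intervening proof. So there is nothing in the paper to compare your argument against.

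That said, your write-up contains genuine errors, not just informalities. The identity $w \rtimes s_i = s_i \circ w \circ s_i$ is \emph{false} when $i$ is a descent of the involution $w$: in that case both Demazure multiplications stall (since $w=w^{-1}$ forces $w^{-1}(i)>w^{-1}(i+1)$ as well), so $s_i \circ w \circ s_i = w$, whereas $w \rtimes s_i$ equals $ws_i$ or $s_iws_i$, neither of which is $w$. Your parenthetical ``while $w \rtimes s_i = w$ otherwise'' is therefore wrong. You also conflate the Coxeter length $\ell$ with the involution length: in the non-descent, non-commuting case one has $\ell(s_iws_i)=\ell(w)+2$, not $+1$, so the bookkeeping ``$\ell(w \rtimes s_i)=\ell(w)+1$ precisely when $i$ is not a descent'' is off. (What does increase by exactly one at each non-descent step is the involution length $\hat\ell$, which you never introduce.) These errors matter because your argument for (a)$\iff$(b) asserts that the two recipes agree at every prefix unconditionally, and that is only true once (c) is already in hand.

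The repair is routine once you see the issue: prove the one-step identity only under the hypothesis that $i$ is not a descent of $w$, and use it to show that (c) forces the two recipes to coincide prefix-by-prefix (this simultaneously gives $w_j\in I_\ZZ$ and the ``moreover'' clause). Then argue (b)$\Rightarrow$(c) and (a)$\Rightarrow$(c) separately: for (b), a descent at step $j$ makes $\hat\ell$ drop, so a strictly shorter word reaches the same element; for (a), a descent at step $j$ makes the Demazure product stall, so $i_j$ can simply be deleted. With (c) established, the agreement of the recipes gives (a)$\iff$(b).
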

  
\begin{definition}
An \emph{involution word} for $\pi \in I_\ZZ$
is a sequence of integers $i_1i_2\cdots i_l$ with the equivalent properties in Lemma~\ref{rtimes-lem}.
Let $\iR(\pi)$ denote the set of involution words for $\pi \in S_\ZZ$.
\end{definition}

One can view  
 $\iR(\pi)$ as an ``orthogonal'' analogue of 
the usual set of reduced words $\cR(\pi)$. 
Elements of the latter correspond to maximal chains in weak order on the set of Borel orbits in the type A flag variety $\Fl_n$. The orbits of the orthogonal group $\O_n(\CC)$ in $\Fl_n$ are in bijection with 
the set of permutations $\pi \in I_\ZZ$ with support in $[n]$.
Involution words may be identified with the set of maximal chains in an analogous weak order on these orbits; see \cite{CJW,RichSpring,WyserYong}.

Let $\isim$
denote
the transitive closure of
 the Coxeter braid relations 
%\eqref{cox-eq}
plus the relation 
on words that has
$%\be\label{isim-eq}
w_1w_2w_3\cdots w_m \isim w_2w_1w_3\cdots w_m$ %\ee
for any choice of $w_1,w_2,w_3,\dots,w_m\in\ZZ$.

\begin{theorem}[{\cite[Theorem 3.1]{HuZhang1}}]
\label{isim-thm}
If $\pi \in I_\ZZ$
then $\iR(\pi)$ is 
a single equivalence class under
$\isim$.
Moreover, an equivalence class of words under $\isim$ is
equal to $\iR(\pi)$ for some $\pi \in I_\ZZ$
if and only if no word in the class has equal adjacent letters.
\end{theorem}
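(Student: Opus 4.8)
The plan is to reduce everything to the characterization of involution words in Lemma~\ref{rtimes-lem}(c): a word $i_1i_2\cdots i_l$ lies in $\iR(\pi)$ exactly when, writing $\sigma_j := (\cdots((1 \rtimes s_{i_1})\rtimes s_{i_2})\rtimes\cdots)\rtimes s_{i_j}$, each $i_j$ fails to be a descent of $\sigma_{j-1}$ and $\sigma_l = \pi$. I would establish the theorem in three steps: (i) $\isim$ preserves each $\iR(\pi)$ and preserves the ``distinct adjacent letters'' property, so each $\iR(\pi)$ is a union of $\isim$-classes of such words; (ii) each $\iR(\pi)$ is a \emph{single} $\isim$-class; (iii) conversely, any $\isim$-class with no equal adjacent letters equals some $\iR(\pi)$.

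\textbf{Step (i).} The Coxeter braid moves preserve $\iR(\pi)$ because $\circ$ is associative with braid-satisfying generators, so they change neither the permutation computed by the palindromic word in Lemma~\ref{rtimes-lem}(a) nor its minimal length. For the extra relation $w_1w_2w_3\cdots w_m \isim w_2w_1w_3\cdots w_m$, the key identity is $1\rtimes s_a\rtimes s_b = 1\rtimes s_b\rtimes s_a$ for all $a,b$ (the only nontrivial case $|a-b|=1$ follows from $s_as_bs_a=s_bs_as_b$): swapping the first two letters leaves $\sigma_2$ unchanged, hence all later steps unchanged, while the first two steps of the swapped word stay valid since $c$ is a descent of $s_d$ only when $c=d$, which is excluded. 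Finally, no word of $\iR(\pi)$ has equal adjacent letters: this rests on the sub-lemma that if $i$ is not a descent of an involution $\sigma$ then $i$ \emph{is} a descent of $\sigma\rtimes s_i$, proved by a short case split on whether $\sigma s_i=s_i\sigma$ (in the commuting case one checks $(\sigma s_i)(i)=\sigma(i+1)>\sigma(i)=(\sigma s_i)(i+1)$; in the non-commuting case one uses that $\{\sigma(i),\sigma(i+1)\}\ne\{i,i+1\}$ and that $s_i$ reverses only the pair $(i,i+1)$). A repeat $\cdots ii\cdots$ would then violate the non-descent condition at the second copy.

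\textbf{Step (ii), the main obstacle.} I would argue by induction on the common length $\hat\ell(\pi)$ of words in $\iR(\pi)$. Given $u,v\in\iR(\pi)$, it suffices to make $u_1$ and $v_1$ agree and then recurse on the tails, which become ``involution words relative to $s_a$'', governed by a relative analogue of Lemma~\ref{rtimes-lem} (with $1$ replaced by the intermediate involution $s_a$); $\isim$-moves on a tail lift to $\isim$-moves on the full word precisely because distinct adjacent letters prevent the half-braid/swap discrepancy at positions $2,3$. So the crux is: if $u_1\ne v_1$, then $v$ is $\isim$-equivalent to a word beginning with $u_1$. This is a strong-exchange statement for involution words, generalizing Matsumoto--Tits, and I expect it to be the hard part. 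The approach I would take: since $a:=u_1$ is realized as a valid first step for $\pi$, an exchange argument on the lengths $\ell(\sigma_j)$ locates an occurrence of $a$ in $v$ that can be transported to the front through commutations and braid/initial-swap moves, all within words of distinct adjacent letters; then the inductive hypothesis finishes the job.

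\textbf{Step (iii).} Given an $\isim$-class $C$ in which no word has equal adjacent letters, fix $w=w_1\cdots w_l\in C$ and let $\pi$ be the involution produced from $w$ by the $\rtimes$-recursion. By Steps (i)--(ii), $\iR(\pi)$ is a single $\isim$-class of words with distinct adjacent letters, so it is enough to check $w\in\iR(\pi)$, i.e.\ the non-descent condition of Lemma~\ref{rtimes-lem}(c). If it failed at some step $j$, so $w_j$ were a descent of $\sigma_{j-1}$, I would derive a contradiction by ``pulling'' $w_j$ leftward: using the sub-lemma of Step (i) in reverse, such a descent forces a copy of $w_j$ to become adjacent to $w_j$ after a sequence of commutations and half-braid moves, producing a word in $C$ with equal adjacent letters. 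This ``deletion on non-reduced involution words'' is the counterpart of the exchange argument in Step (ii) and is the other place where genuine combinatorial work is needed. Combining the three steps yields both assertions of the theorem.
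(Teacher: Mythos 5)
The paper does not prove Theorem~\ref{isim-thm} at all---it cites it as \cite[Theorem~3.1]{HuZhang1} (see also \cite{HanssonHultman} for a general Coxeter version). So there is no in-paper proof to compare against; what follows is an assessment of the proposal on its own terms.

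Your Step~(i) is fine: the key identities ($1 \rtimes s_a \rtimes s_b = 1 \rtimes s_b \rtimes s_a$, and the ``non-descent implies descent after twisting'' sub-lemma) are correct and give the easy direction. But Step~(ii), which you rightly flag as the crux, contains a genuine structural error, not merely an unfinished computation. You propose to align $u_1$ and $v_1$ and then recurse on the tails $u_2u_3\cdots$, $v_2v_3\cdots$, treating them as ``involution words relative to $s_a$'' and applying the inductive hypothesis. This cannot work as stated, because the first-two-swap relation is \emph{special to the starting involution $1$}: the identity $1 \rtimes s_a \rtimes s_b = 1 \rtimes s_b \rtimes s_a$ does \emph{not} generalize to $\sigma \rtimes s_a \rtimes s_b = \sigma \rtimes s_b \rtimes s_a$. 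Concretely, $s_1 \rtimes s_2 \rtimes s_3 = (1,4)$ while $s_1 \rtimes s_3 \rtimes s_2 = (1,3)(2,4)$. So the tails of $u$ and $v$ are \emph{not} related by $\isim$ (where swapping the tails' first two letters is allowed), and the inductive hypothesis in the form you state does not apply to them. Your parenthetical claim that tail-level $\isim$-moves ``lift to $\isim$-moves on the full word because distinct adjacent letters prevent the half-braid/swap discrepancy at positions $2,3$'' is also false: distinct adjacent letters certainly allow $|w_2 - w_3| = 1$, in which case swapping positions $2$ and $3$ is neither a braid move nor an initial swap, hence not an $\isim$-move on the full word.

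The deeper issue is that you have essentially deferred the whole theorem to an unproved ``exchange'' claim (``an exchange argument on the lengths $\ell(\sigma_j)$ locates an occurrence of $a$ in $v$ that can be transported to the front''). That is precisely where the real combinatorial work of Hu--Zhang and Hansson--Hultman lives, and it does not follow by a routine Matsumoto--Tits adaptation because (as above) the ``initial move'' does not commute with passing to suffixes. Step~(iii) inherits the same missing ingredient: the ``pull $w_j$ leftward to manufacture a repeated letter'' argument would need exactly the transport lemma you have not supplied. As written, the proposal establishes the soundness half of the theorem but leaves the completeness half (single $\isim$-class, and the converse characterization) unproved, with a recursion scaffold that would need to be rebuilt around a correct inductive statement for twisted/relative involution words.
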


%Every (equivalently, some) involution word for $\pi \in I_\ZZ$ has positive letters if and only if $\pi \in I_\infty$.
%\begin{corollary}[?]
%\label{atom-cor}
The theorem implies that there is a finite set $\cA(\pi)\subset S_\ZZ$
with 
$
\iR(\pi) = \bigsqcup_{\sigma \in \cA(\pi)} \cR(\sigma)
$ for each $\pi \in I_\ZZ$.
In prior related work, the sets $\iR(\pi)$ and $\cA(\pi)$ have usually been denoted ``$\hat{\mathcal{R}}(\pi)$'' and ``$\mathcal{A}(\pi)$'' or sometimes ``$\mathcal{W}(\pi)$''.
Our present convention follows \cite{Pawlowski2019}.

\begin{definition}
An \emph{orthogonal factorization}
is an increasing factorization of an involution word for some element of $I_\ZZ$.
Let $\iR_n(\pi)$ denote the set of increasing factorizations with $n$ factors
of involution words for $\pi \in I_\ZZ$.
\end{definition}

Let $\pi \in I_\ZZ$.
Then $\iR_n(\pi) = \bigsqcup_{\sigma \in \cA(\pi)} \cR_n(\sigma)$ is a finite, disjoint union of Morse-Schilling crystals,
so is an abstract $\gl_n$-crystal.
Hiroshima \cite[Appendix B]{Hiroshima} has identified two new
 operators $\fO$ and $\eO$ acting on this set.
Given $w=(w^1,w^2,\dots,w^n) \in \iR_n(\pi)$, define $\fO(w)$ and $\eO(w)$ as follows:
\begin{itemize}
\item If $w^1\neq \emptyset$ and its first letter $x$ is smaller than every letter in $w^2$,
then $\fO(w)$ is formed from $w$ by moving $x$ from $w^1$ to $w^2$.
Otherwise, $\fO(w) := 0$.

\item If $w^2\neq \emptyset$ and its first letter $x$ is smaller than every letter in $w^1$,
then $\eO(w)$ is formed from $w$ by moving $x$ from $w^2$ to $w^1$.
Otherwise, $\eO(w) := 0$.
\end{itemize}
Note that Theorem~\ref{isim-thm} implies that the smallest letters of $w^1$ and $w^2$ can never be equal.

\begin{proposition}[{\cite[Theorem B.2]{Hiroshima}}]
Relative to the queer crystal operators $e_{\overline 1}=\eO$ and $f_{\overline 1}=\fO$ just given, 
the abstract $\gl_n$-crystal 
$\iR_n(\pi)$ is an abstract $\q_n$-crystal for all $\pi \in I_\ZZ$.
\end{proposition}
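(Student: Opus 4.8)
The plan is to verify directly that the operators $e_{\overline 1}=\eO$ and $f_{\overline 1}=\fO$ satisfy conditions (1), (2), and (3) of Definition~\ref{q-def}, taking for granted that $\iR_n(\pi)$ is already an abstract $\gl_n$-crystal. The one preliminary point that needs real input is that $\fO$ and $\eO$ are well-defined partial maps $\iR_n(\pi)\to\iR_n(\pi)\sqcup\{0\}$, mutually inverse where nonzero. Suppose $w=(w^1,\dots,w^n)\in\iR_n(\pi)$ with $\fO(w)\neq 0$, so $w^1$ has letters $x<a_2<\cdots<a_k$ and $x$ lies below every letter of $w^2$. Then $\fO(w)$ --- delete $x$ from $w^1$, prepend it to $w^2$ --- is plainly an $n$-fold increasing factorization, and its underlying word is obtained from that of $w$ by one swap of the first two letters (an instance of the relation $\isim$) followed by far Coxeter commutations sliding $x$ past $a_3,\dots,a_k$ (legal because strict increase forces $|a_j-x|\geq 2$ for $j\geq 3$). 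So the two words are $\isim$-equivalent, hence by Theorem~\ref{isim-thm} the word of $\fO(w)$ is an involution word for $\pi$ and $\fO(w)\in\iR_n(\pi)$; the case of $\eO$ is symmetric. Comparing the definitions then gives $\eO(\fO(w))=w$ and $\fO(\eO(w))=w$ whenever the outer operator is nonzero, which is exactly the biconditional in Definition~\ref{q-def}(1); and since $\eO$ transfers one letter from the second factor to the first, the weight identity $\weight(b)=\weight(c)+\e_2-\e_1$ for $e_{\overline 1}(b)=c$ is immediate.

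The rest of (1), and all of (2), follows from \emph{locality}. The operators $\fO$ and $\eO$ change only the factors $w^1$ and $w^2$, whereas for $3\leq i\leq n-1$ the operators $e_i,f_i$ --- and the conditions under which they return $0$ --- depend only on $\pair(w^i,w^{i+1})$, hence only on the disjoint factors $w^i,w^{i+1}$. Consequently $\fO$ and $\eO$ commute with $e_i$ and $f_i$ for $3\leq i\leq n-1$, even when either side evaluates to $0$, which is (2); and the string lengths $\varepsilon_i,\varphi_i$ for such $i$, being defined via iterates of $e_i,f_i$, are unaffected by $\fO$ and $\eO$, giving the equalities $\varepsilon_i(b)=\varepsilon_i(c)$ and $\varphi_i(b)=\varphi_i(c)$ required in (1).

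For (3), I would first note that $\fO\circ\fO=0$ and $\eO\circ\eO=0$ identically: after $\fO$ the new first factor begins with $a_2>x$ while the new second factor contains $x$, so $\fO$ cannot fire again, and symmetrically for $\eO$. Moreover $\fO(b)$ and $\eO(b)$ cannot both be nonzero, since the former would force the leading letter of $b^1$ to be smaller than that of $b^2$ and the latter the reverse; therefore $\varepsilon_{\overline 1}(b)+\varphi_{\overline 1}(b)\leq 1$. For the equality clause, suppose $\weight(b)_1\neq 0$ or $\weight(b)_2\neq 0$, i.e. $b^1\neq\emptyset$ or $b^2\neq\emptyset$. If exactly one factor is empty, the leading letter of the other is vacuously below every letter of it, so one of $\fO(b),\eO(b)$ is nonzero. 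If both are nonempty, their leading letters are distinct by Theorem~\ref{isim-thm}, and strict increase of the factors promotes ``leading letter of $b^1$ $<$ leading letter of $b^2$'' to ``$\fO(b)\neq 0$'' and the opposite inequality to ``$\eO(b)\neq 0$''. In every case $\varepsilon_{\overline 1}(b)+\varphi_{\overline 1}(b)=1$, as needed.

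I expect the only genuine obstacle to be the well-definedness step: confirming, via Theorem~\ref{isim-thm} and the description of $\isim$-equivalence classes, that applying $\fO$ or $\eO$ never leaves $\iR_n(\pi)$. Everything after that is a routine unwinding of definitions that stays inside the first two factors.
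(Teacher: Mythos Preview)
The paper does not supply its own proof of this proposition; it is stated with attribution to \cite[Theorem B.2]{Hiroshima} and no argument is given. So there is nothing to compare against, and your task reduces to whether the direct verification stands on its own. It does.

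Your well-definedness step is the only nontrivial point, and it is handled correctly: the $\isim$ swap of the first two letters followed by commutations $xa_j\sim a_jx$ (legal since $a_j>a_2>x$ forces $a_j-x\geq 2$ for $j\geq 3$) shows the underlying word of $\fO(w)$ lies in the same $\isim$-class, hence in $\iR(\pi)$ by Theorem~\ref{isim-thm}. The locality argument for conditions (1) and (2) of Definition~\ref{q-def} is sound, since $e_i,f_i$ for $i\geq 3$ depend only on the pair $(w^i,w^{i+1})$, disjoint from $(w^1,w^2)$. Your case analysis for condition (3) is complete; in particular the observation that the leading letters of $w^1$ and $w^2$ cannot coincide (else the same commutation trick produces adjacent equal letters, contradicting Theorem~\ref{isim-thm}) is exactly what the paper notes just before stating the proposition.
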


For an example of the crystal $\iR_n(\pi)$, see Figure~\ref{ir-fig}.

\begin{remark}\label{iF-rmk}
The character of the crystal
$\iR_n(\pi)$ 
is the polynomial $\iF_\pi(x_1,x_2,\dots,x_n) \in \Gamma_n$,
where
$
\iF_\pi := \sum_{\sigma \in \cA(\pi)} F_\sigma
$
is the \emph{involution Stanley symmetric function} studied in  \cite{HMP4}.
Because the definition of $F_\sigma$ in \cite{HMP4} differs from the one in \cite{Stanley}
by inverting indices, it is not obvious that the formula for $\iF_\pi$ just given matches what is in \cite{HMP4}.
However, this follows from \cite[Lemma 5.3 and Corollary 5.10]{Marberg2019a},
which show that $ \sum_{\sigma \in \cA(\pi)} F_\sigma =  \sum_{\sigma \in \cA(\pi)} F_{\sigma^{-1}}$.

The maximal term in the Schur $P$-expansion of $\iF_\pi$ relative to dominance order is given in 
\cite[Theorem 1.13]{HMP4}. Similar to Remark~\ref{nonempty-rmk1}, since $P_\mu(x_1,x_2,\dots,x_n)\neq 0$ if and only if $\ell(\mu)\leq n$,
this result characterizes when
$\iF_\pi(x_1,x_2,\dots,x_n)\neq 0$ and hence also when $\iR_n(\pi)\neq \varnothing$.
Specifically, \cite[Theorem 1.13]{HMP4} implies that $\iR_n(\pi)$ is nonempty if and only if the numbers $\hat c_i(\pi) := | \{ i < j \in \ZZ :  \min\{i,\pi(i)\} \geq  \pi(j) \}|$
in the \emph{involution code} of $\pi$ \cite[Definition 4.6]{HMP4} are at most $n$ for all $i\in \ZZ$.
\end{remark}

\begin{figure}[h]
  \begin{center}
  {
    \begin{tikzpicture}[xscale=2.4, yscale=2.4,>=latex]
      \node at (2,0) (T02) {$\boxed{\emptyset \pipe 3 \pipe 124}$};
      \node at (1,1) (T11) {$\boxed{3 \pipe \emptyset \pipe 124}$};
      \node at (2,1) (T12) {$\boxed{\emptyset \pipe 13 \pipe 24}$};
      \node at (3,1) (T13) {$\boxed{\emptyset \pipe 34 \pipe 12}$};
      \node at (0,2) (T20) {$\boxed{3 \pipe 1 \pipe 24}$};
      \node at (1,2) (T21) {$\boxed{1 \pipe 3 \pipe 24}$};
      \node at (2,2) (T22) {$\boxed{\emptyset \pipe 134 \pipe 2}$};
      \node at (3,2) (T23) {$\boxed{3 \pipe 4 \pipe 12}$};
      \node at (5,2) (T25) {$\boxed{4 \pipe 3 \pipe 12}$};
      \node at (0,3) (T30) {$\boxed{3 \pipe 12 \pipe 4}$};
      \node at (1,3) (T31) {$\boxed{13 \pipe \emptyset \pipe 24}$};
      \node at (2,3) (T32) {$\boxed{1 \pipe 34 \pipe 2}$};
      \node at (3,3) (T33) {$\boxed{3 \pipe 14 \pipe 2}$};
      \node at (4,3) (T34) {$\boxed{34 \pipe \emptyset \pipe 12}$};
      \node at (5,3) (T35) {$\boxed{4 \pipe 13 \pipe 2}$};
      \node at (0,4) (T40) {$\boxed{3 \pipe 124 \pipe \emptyset}$};
      \node at (1,4) (T41) {$\boxed{13 \pipe 2 \pipe 4}$};
      \node at (2,4) (T42) {$\boxed{13 \pipe 4 \pipe 2}$};
      \node at (3,4) (T43) {$\boxed{34 \pipe 1 \pipe 2}$};
      \node at (5,4) (T45) {$\boxed{14 \pipe 3 \pipe 2}$};
      \node at (1,5) (T51) {$\boxed{13 \pipe 24 \pipe \emptyset}$};
      \node at (2,5) (T52) {$\boxed{134 \pipe \emptyset \pipe 2}$};
      \node at (3,5) (T53) {$\boxed{34 \pipe 12 \pipe \emptyset}$};
      \node at (2,6) (T62) {$\boxed{134 \pipe 2 \pipe \emptyset}$};
      \draw[->,thick]  (T62) -- (T51) node[midway,above,scale=0.75] {$1$};
      \draw[->,thick]  (T62) -- (T52) node[midway,right,scale=0.75] {$2$};
      \draw[->,thick]  (T62) -- (T53) node[midway,above,scale=0.75] {$\overline 1$};
      \draw[->,thick]  (T51.218) -- (T40.52) node[midway,above,scale=0.75] {$\overline 1$};
      \draw[->,thick]  (T51.232) -- (T40.38) node[midway,below,scale=0.75] {$1$};
      \draw[->,thick]  (T51) -- (T41) node[midway,right,scale=0.75] {$2$};
      \draw[->,thick]  (T52) -- (T42) node[midway,right,scale=0.75] {$1$};
      \draw[->,thick]  (T52) -- (T43) node[midway,above,scale=0.75] {$\overline 1$};
      \draw[->,thick]  (T53) -- (T43) node[midway,right,scale=0.75] {$2$};
      \draw[->,thick]  (T40) -- (T30) node[midway,left,scale=0.75] {$2$};
      \draw[->,thick]  (T41.218) -- (T30.52) node[midway,above,scale=0.75] {$\overline 1$};
      \draw[->,thick]  (T41.232) -- (T30.38) node[midway,below,scale=0.75] {$1$};
      \draw[->,thick]  (T41) -- (T31) node[midway,right,scale=0.75] {$2$};
      \draw[->,thick]  (T42) -- (T32) node[midway,right,scale=0.75] {$1$};
      \draw[->,thick]  (T42) -- (T33) node[midway,above,scale=0.75] {$\overline 1$};
      \draw[->,thick]  (T43) -- (T33) node[midway,right,scale=0.75] {$1$};
      \draw[->,thick]  (T43) -- (T34) node[midway,above,scale=0.75] {$2$};
      \draw[->,thick]  (T45.260) -- (T35.100) node[midway,left,scale=0.75] {$\overline 1$};
      \draw[->,thick]  (T45.280) -- (T35.80) node[midway,right,scale=0.75] {$1$};
      \draw[->,thick]  (T30) -- (T20) node[midway,left,scale=0.75] {$2$};
      \draw[->,thick]  (T31) -- (T20) node[midway,above,scale=0.75] {$\overline 1$};
      \draw[->,thick]  (T31) -- (T21) node[midway,right,scale=0.75] {$1$};
      \draw[->,thick]  (T32) -- (T21) node[midway,above,scale=0.75] {$2$};
      \draw[->,thick]  (T32.260) -- (T22.100) node[midway,left,scale=0.75] {$\overline 1$};
      \draw[->,thick]  (T32.280) -- (T22.80) node[midway,right,scale=0.75] {$1$};
      \draw[->,thick]  (T33) -- (T23) node[midway,right,scale=0.75] {$2$};
      \draw[->,thick]  (T34) -- (T23) node[midway,above,scale=0.75] {$1$};
      \draw[->,thick]  (T34) -- (T25) node[midway,above,scale=0.75] {$\overline 1$};
      \draw[->,thick]  (T35) -- (T25) node[midway,right,scale=0.75] {$2$};
      \draw[->,thick]  (T20) -- (T11) node[midway,above,scale=0.75] {$2$};
      \draw[->,thick]  (T21.322) -- (T12.128) node[midway,above,scale=0.75] {$\overline 1$};
      \draw[->,thick]  (T21.308) -- (T12.142) node[midway,below,scale=0.75] {$1$};
      \draw[->,thick]  (T22) -- (T12) node[midway,right,scale=0.75] {$2$};
      \draw[->,thick]  (T23.260) -- (T13.100) node[midway,left,scale=0.75] {$\overline 1$};
      \draw[->,thick]  (T23.280) -- (T13.80) node[midway,right,scale=0.75] {$1$};
      \draw[->,thick]  (T11.322) -- (T02.128) node[midway,above,scale=0.75] {$\overline 1$};
      \draw[->,thick]  (T11.308) -- (T02.142) node[midway,below,scale=0.75] {$1$};
      \draw[->,thick]  (T12) -- (T02) node[midway,right,scale=0.75] {$2$};
     \end{tikzpicture}
     }
  \end{center}
\caption{The $\q_3$-crystal graph of $\iR_3(\pi)$ for $\pi=(1,3)(2,5) \in I_\ZZ$.}
\label{ir-fig}
\end{figure}

\subsection{Symplectic factorizations}\label{sp-fact-sect}

Let $\Ifpf_\ZZ$ denote the $S_\ZZ$-conjugacy class of the 
 permutation $\minfpf$ of $\ZZ$ %$\minfpf : \ZZ \to \ZZ$
that maps $
i \mapsto i -(-1)^i.
$
The elements $\pi \in \Ifpf_\ZZ$ are the fixed-point-free involutions of $\ZZ$
with $\pi(i) = \minfpf(i)$ whenever $|i|$ is sufficiently large. 
If $n$ is even and $\pi \in I_\ZZ$ restricts to a fixed-point-free involution of $[n]$,
then there is a unique element $\check \pi \in \Ifpf_\ZZ$ that restricts to $\pi$ on $[n]$ and to $\minfpf$
on $\ZZ\setminus [n]$.
In examples, for convenience,
we usually make no distinction between the permutations $\pi$ and $\check \pi$.

\begin{definition}
An \emph{fpf-involution word} for 
an element $\pi \in \Ifpf_\ZZ$
is a minimal-length sequence of integers $i_1i_2\cdots i_l$
such that $\pi = s_{i_l}\cdots s_{i_2} s_{i_1} \cdot \minfpf\cdot  s_{i_1}s_{i_2}\cdots s_{i_l}.$
Let $\iRfpf(\pi)$ denote the set of fpf-involution words for $\pi \in \Ifpf_\ZZ$.
\end{definition}

One can view  
 $\iRfpf(\pi)$ as a ``symplectic'' analogue of $\cR(\pi)$. 
When $n$ is even, the orbits of the symplectic group $\Sp_n(\CC)$ acting on the type A flag variety $\Fl_n$ are in bijection with 
the set of involutions $\pi \in \Ifpf_\ZZ$ with $\pi(i) =\minfpf(i)$ for all $i \notin [n]$.
Words in $\iRfpf(\pi)$ may be identified with maximal chains in a weak order on these orbits \cite{CJW,WyserYong}.
Fpf-involution words are also instances of reduced words for \emph{quasiparabolic sets} as defined in \cite{RainsVazirani}.

 The following is helpful for checking whether a given word is an fpf-involution word:
 
 \begin{lemma}[{\cite[\S2.3]{HMP5}}]
\label{fpf-rtimes-lem}
An integer sequence $i_1i_2\cdots i_l$ is an fpf-involution word for some element of $\Ifpf_\ZZ$ if and only
$i_j$ is not a descent of 
$s_{i_{j-1}}\cdots s_{i_2} s_{i_1} \cdot \minfpf\cdot  s_{i_1}s_{i_2}\cdots s_{i_{j-1}}$ for all $j\in [l]$.
\end{lemma}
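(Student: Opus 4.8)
\textbf{Plan of proof for Lemma~\ref{fpf-rtimes-lem}.}
The statement asserts a ``non-descent'' characterization of fpf-involution words, parallel to the equivalence (b)$\iff$(c) in Lemma~\ref{rtimes-lem}, but now for the twisted conjugation action $\pi \mapsto s_i \pi s_i$ based at $\minfpf$ instead of at the identity. The natural plan is to mimic the orthogonal-case argument. The key observation is that for any fixed-point-free involution $z \in \Ifpf_\ZZ$ and any $i \in \ZZ$, since $z$ has no fixed points we always have $z s_i \neq s_i z$ (the pair $\{i,i+1\}$ can never be an orbit of $z$ by itself in a way that makes $s_i$ commute with $z$; more precisely $z s_i = s_i z$ forces either $z(i)=i+1$, which is allowed, or $i,i+1$ both fixed, which is impossible). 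So one must be slightly careful: when $z(i) = i+1$, conjugating by $s_i$ does nothing, $s_i z s_i = z$; otherwise $s_i z s_i \neq z$ and moreover $\ell(s_i z s_i)$ differs from $\ell(z)$ by $\pm 2$ according to whether $i$ is or is not a descent of $z$. The first step is therefore to record this length-change dichotomy: if $i$ is not a descent of $z \in \Ifpf_\ZZ$ (equivalently $z(i) < z(i+1)$), then either $z(i) = i+1$ and $s_i z s_i = z$ has the same length, or $z(i) \neq i+1$ and $\ell(s_i z s_i) = \ell(z) + 2$; and conversely if $i$ is a descent then $\ell(s_i z s_i) = \ell(z) - 2$.

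The second step is to translate this into a statement about the length of the ``partial product'' $z_j := s_{i_j}\cdots s_{i_1}\minfpf s_{i_1}\cdots s_{i_j}$. By induction on $j$, if at every stage $i_k$ is not a descent of $z_{k-1}$, then $\ell(z_j) \le \ell(z_{j-1}) + 2$ with the inequality accounting for the $z(i)=i+1$ case; I would actually want to track instead the quantity $\ell(z_j)$ together with the analogue of the ``reduced'' condition, but the cleanest route is to compare with the standard length function on the quasiparabolic set, or equivalently to use the \emph{atom} decomposition. Indeed, by the fpf-analogue of Theorem~\ref{isim-thm} (established in \cite{HMP5}; it says $\iRfpf(\pi)$ is a single equivalence class under braid relations plus the initial-letter swap $w_1 w_2 \cdots \isim w_2 w_1 \cdots$), every fpf-involution word for $\pi$ has the same length $\hat\ell^{\Sp}(\pi)$, and there is a finite set $\cAfpf(\pi) \subset S_\ZZ$ with $\iRfpf(\pi) = \bigsqcup_{\sigma \in \cAfpf(\pi)} \cR(\sigma)$. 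I would invoke whichever of these facts is available earlier in the paper or in the cited references.

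The third step combines the two: suppose $i_1 i_2 \cdots i_l$ is a sequence in which no $i_j$ is a descent of $z_{j-1}$. Then by Step 1 the sequence $\ell(z_0) = 0 \le \ell(z_1) \le \cdots \le \ell(z_l)$ is weakly increasing with jumps of $0$ or $2$, and I claim the jump is $0$ precisely when a letter is ``wasted.'' By comparing with the known common length of fpf-involution words, one shows that no letter is wasted, i.e.\ each jump is exactly $2$, so that $i_1 \cdots i_l$ is \emph{minimal-length} among words producing $z_l$ via twisted conjugation, which is exactly the definition of being an fpf-involution word for $z_l \in \Ifpf_\ZZ$. Conversely, if some $i_j$ \emph{is} a descent of $z_{j-1}$, then $\ell(z_j) = \ell(z_{j-1}) - 2 \le \ell(z_{j-2}) $ (using Step 1 again at stage $j-1$), so the word $i_1 \cdots i_{j-2} i_{j+1} \cdots i_l$ (or a suitable shortening) also reaches $z_l$ and is strictly shorter, so $i_1 \cdots i_l$ is not minimal-length, hence not an fpf-involution word. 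The main obstacle I anticipate is handling the ``wasted letter'' case $z(i) = i+1$ cleanly — one needs to be sure that even though such a step does not change $z$, it still cannot occur inside a minimal-length word, which is where one genuinely needs the rigidity provided by the fpf-analogue of Theorem~\ref{isim-thm} rather than a purely local length computation. Once that is in hand, everything else is a routine induction.
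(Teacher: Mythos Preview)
The paper does not prove this lemma at all; it simply cites \cite[\S2.3]{HMP5}. So there is no ``paper's own proof'' to compare against, and your plan is essentially the standard argument one finds in that reference.

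That said, your case analysis in Step~1 is misplaced in a way that creates the very obstacle you worry about in Step~3. You write that if $i$ is \emph{not} a descent of $z\in\Ifpf_\ZZ$ then ``either $z(i)=i+1$ and $s_izs_i=z$, or $z(i)\ne i+1$ and the length jumps by $2$.'' But $z(i)=i+1$ forces $z(i+1)=i<i+1=z(i)$, so $i$ \emph{is} a descent in that case. Thus the ``wasted letter'' possibility lives entirely in the descent branch, not the non-descent branch. The correct dichotomy is: if $i$ is not a descent of $z$ then always $z(i)\ne i+1$ and $\ellfpf(s_izs_i)=\ellfpf(z)+1$; if $i$ is a descent then either $z(i)=i+1$ and $s_izs_i=z$, or $\ellfpf$ drops by $1$. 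With this correction your Step~3 becomes trivial: under the non-descent hypothesis every step increases $\ellfpf$ by exactly one, so $\ellfpf(z_l)=l$ and the word is minimal by definition; conversely a single descent step gives $\ellfpf(z_l)<l$, so the word is not minimal. No appeal to Theorem~\ref{fsim-thm} or to the atom decomposition is needed.

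A second small issue: you use $\ell(z)$ for $z\in\Ifpf_\ZZ$, but $\minfpf$ has infinitely many inversions, so the ordinary Coxeter length is undefined here. You should work throughout with $\ellfpf$ (or equivalently with the finite symmetric difference of inversion sets against $\minfpf$), as the paper does later when it introduces $\ellfpf(\pi)$.
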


Since every odd integer is a descent of $\minfpf$, 
an fpf-involution word must start with an even letter.
Let $\fsim$
denote
the transitive closure of
 the Coxeter braid relations 
%\eqref{cox-eq}
plus the symmetric relation 
on words that has
$w_1(w_1- 1)w_2\cdots w_m \fsim w_1(w_1+ 1)w_2\cdots w_m$
for any choice of $w_1,w_2,\dots,w_m$.

\begin{theorem}[{\cite[Theorem 2.4]{Marberg2019a}}]
\label{fsim-thm}
If $\pi \in \Ifpf_\ZZ$
then $\iRfpf(\pi)$ is 
an equivalence class under the relation
$\fsim$.
Moreover, an equivalence class of words under $\fsim$ is
equal to $\iRfpf(\pi)$ for some $\pi \in \Ifpf_\ZZ$
if and only if no word in the class has equal adjacent letters or starts with an odd letter.
\end{theorem}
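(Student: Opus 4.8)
The plan is to mirror the proof of the orthogonal analogue, Theorem~\ref{isim-thm} (Hu--Zhang), which in turn follows the pattern of the Edelman--Greene proof that $\cR(\pi)$ is a single Coxeter-braid class. The first claim — that $\iRfpf(\pi)$ is contained in a single $\fsim$-class — I would establish by showing that each of the two moves generating $\fsim$ preserves the property of being an fpf-involution word for a fixed $\pi$. For the Coxeter braid relations this is standard: if $i_1\cdots i_l$ satisfies the characterization in Lemma~\ref{fpf-rtimes-lem}, then applying a braid relation deep inside the word does not change the sequence of partial products $s_{i_{j-1}}\cdots \minfpf \cdots s_{i_{j-1}}$ at the untouched positions, and a short case check handles the three positions involved. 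For the new relation $w_1(w_1-1)w_2\cdots \fsim w_1(w_1+1)w_2\cdots$, the key computation is that $s_a s_{a-1} \cdot \minfpf \cdot s_{a-1} s_a = s_a s_{a+1} \cdot \minfpf \cdot s_{a+1} s_a$ when $a$ is even (equivalently $\minfpf$ sends $a-1 \mapsto a$, i.e. these are ``matched'' indices), so that conjugating $\minfpf$ by the length-two prefix gives the same involution either way; one then checks that the descent condition of Lemma~\ref{fpf-rtimes-lem} holds at position $2$ for one prefix iff it holds for the other, and positions $\geq 3$ are unaffected. This direction also needs that the move is only ever applied when $w_1$ is even and $w_1 \ne w_2$-ish — but in fact the statement of $\fsim$ allows the move unconditionally, so I must check that if the \emph{left} side is an fpf-involution word then $w_1$ is automatically even (forced, since an fpf-word starts with an even letter and, after a braid-free prefix, the relevant partial product still has the ``even-matched'' structure) and conclude equality of the two involutions.

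For the converse — that $\iRfpf(\pi) $ is \emph{all} of the $\fsim$-class of any one of its words, and for the characterization of which classes arise — I would argue as follows. Suppose $w$ is an fpf-involution word for $\pi$ and $w'$ is any word with $w \fsim w'$. Since each generating move preserves ``is an fpf-involution word for $\pi$'' (first paragraph), every word in the class is an fpf-involution word for $\pi$, giving one containment; the reverse containment, that \emph{every} fpf-involution word for $\pi$ is $\fsim$-equivalent to $w$, is the substantive point. Here I would invoke the structural decomposition analogous to the one following Theorem~\ref{isim-thm}: an fpf-involution word for $\pi$ is, by definition, an ordinary reduced word for one of finitely many permutations $\sigma$ in an ``atom set'' $\cAfpf(\pi)$, and within each $\cR(\sigma)$ all words are already Coxeter-braid equivalent by \cite[Lemma 6.18]{EG}; so it suffices to show the new relation $\fsim$ connects the classes $\cR(\sigma)$ for $\sigma$ ranging over $\cAfpf(\pi)$. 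This I would prove by a transition/exchange argument on the atoms: given two atoms, pick reduced words realizing a common prefix as long as possible, and show the first point of divergence can be bridged by a single $w_1(w_1\mp1)$ move, using Lemma~\ref{fpf-rtimes-lem} to track how the partial fpf-conjugates evolve. This is essentially the symplectic analogue of the connectivity argument in \cite{HMP5} for fpf-atoms; the cited results on $\iRfpf$ in \cite{Marberg2019a,HMP5} should make it available, and in fact Theorem~\ref{fsim-thm} is attributed to \cite[Theorem 2.4]{Marberg2019a}, so the honest route is to cite that proof and merely indicate its shape.

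For the final ``moreover'' clause, I must characterize the $\fsim$-classes that equal $\iRfpf(\pi)$ for some $\pi$ as exactly those in which no word has equal adjacent letters or starts with an odd letter. The forward direction is immediate from Lemma~\ref{fpf-rtimes-lem} and the remark that every odd integer is a descent of $\minfpf$: a word with $w_j = w_{j+1}$ would force $i_{j+1}$ to be a descent of a partial product (since $s_{i_j}$ applied then is not a descent, $s_{i_{j+1}} = s_{i_j}$ applied to the new involution \emph{is} a descent), and a word starting with an odd letter fails at $j=1$; since these properties are $\fsim$-invariant, if one word in the class is an fpf-word none can have these defects. For the backward direction, given a class with neither defect, take any word $w = i_1\cdots i_l$ in it and build the involution $\pi := s_{i_l}\cdots s_{i_1}\minfpf s_{i_1}\cdots s_{i_l}$; I would show by induction on $l$, using the absence of adjacent equal letters and of an odd initial letter together with the $\fsim$-exchange flexibility, that the descent condition of Lemma~\ref{fpf-rtimes-lem} can be arranged to hold at every position — the point being that whenever the naive condition fails at position $j$, one can apply a braid or a $w_1(w_1\mp1)$ move to repair it without increasing length, and absence of equal adjacent letters guarantees we never get stuck. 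The main obstacle is precisely this last inductive repair step: making rigorous that the two elementary moves suffice to always either verify or fix the descent condition, which is where the combinatorial heart of the theorem lies and where I would lean most heavily on \cite{Marberg2019a}.
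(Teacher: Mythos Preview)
The paper does not contain a proof of Theorem~\ref{fsim-thm}; the result is simply quoted from \cite[Theorem~2.4]{Marberg2019a} and used as a black box throughout. There is therefore no proof in the paper to compare your attempt against, and you in fact notice this yourself midway through (``the honest route is to cite that proof'').

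That said, your sketch is broadly sound as an outline of how such a proof goes. The verification that the generating moves of $\fsim$ preserve $\iRfpf(\pi)$ is correct in spirit: your key computation that $s_{a-1}s_a\cdot\minfpf\cdot s_a s_{a-1} = s_{a+1}s_a\cdot\minfpf\cdot s_a s_{a+1}$ for even $a$ is right and is exactly the point. The harder direction---connectivity of the atoms in $\cAfpf(\pi)$ under the extra move---is where your sketch becomes vague (``pick reduced words realizing a common prefix as long as possible, and show the first point of divergence can be bridged''); this is not quite how the argument in \cite{Marberg2019a} runs, and making your prefix-divergence idea rigorous would require substantially more work than you indicate. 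Likewise, the ``inductive repair'' argument for the backward direction of the moreover clause is not really a proof as stated: it is not clear that local repairs using $\fsim$-moves always terminate or succeed, and the actual proof proceeds differently. If you want a self-contained argument rather than a citation, you should consult \cite{Marberg2019a} directly rather than guessing at the structure.
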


This theorem implies that
there is a finite set $\cAfpf(\pi)\subset S_\ZZ$
with 
$
\iRfpf(\pi) = \bigsqcup_{\sigma \in \cAfpf(\pi)} \cR(\sigma)
$
for each $\pi \in \Ifpf_\ZZ$.
Our notation again follows the convention of \cite{Pawlowski2019}, which differs 
from prior related work where $\iRfpf(\pi)$ and $\cAfpf(\pi)$ have usually been denoted ``$\hat\cR^\fpf(\pi)$'' and ``$\mathcal{A}^\fpf(\pi)$.''

\begin{definition}
A \emph{symplectic factorization}
is a reduced factorization of an fpf-involution word
for some element of $\Ifpf_\ZZ$.
Let $\iRfpf_n(\pi)$ denote the set of increasing factorizations with $n$ factors of 
fpf-involution words for $\pi \in \Ifpf_\ZZ$.
\end{definition}

Fix $\pi \in \Ifpf_\ZZ$.
Then $\iRfpf_n(\pi)$ is again a disjoint union of Morse-Schilling crystals.
Hiroshima \cite[\S5]{Hiroshima} has defined two other operators $\fSp$ and $\eSp$ acting on this set, analogous to $\fO$ and $\eO$ in the previous section.
Given $w=(w^1,w^2,\dots,w^n) \in \iRfpf_n(\pi)$, form $\fSp(w)$ and $\eSp(w)$ as follows:
\begin{itemize}
\item If $w^1=\emptyset$ or $\min(w^2) \leq \min(w^1)$, then $\fSp(w):=0$.
Otherwise, let $x=\min(w^1)$.
If $x+1$ is not in $w^1$ 
then form $\fSp(w)$ from $w$ by moving $x$ from $w^1$ to $w^2$.
If $x+1$ is in $w^1$ then form $\fSp(w)$ from $w$ by removing $x+1$ from $w^1$
and adding $x-1$ to the start of $w^2$.

\item If $w^2=\emptyset$ or $\min(w^1) \leq \min(w^2)$, then $\eSp(w):=0$.
Otherwise, let $x=\min(w^2)$.
If $x$ is even 
then form $\eSp(w)$ from $w$ by moving $x$ from $w^2$ to $w^1$.
If $x$ is odd then form $\eSp(w)$ from $w$ by removing $x$ from $w^2$
and adding $x+2$ to $w^1$ (in the position yielding an increasing word).

\end{itemize}
Some observations are warranted.
As in the orthogonal case, if
$w^1$ and $w^2$ are both nonempty then
it follows from Theorem~\ref{fsim-thm} that
$\min(w^1) \neq \min(w^2)$.
Likewise, in the definition of $\eSp(w)$, if $x$ is odd and $\eSp(w)\neq 0$, then $x+2$ 
cannot be a letter in $w^1$.

\begin{proposition}[{\cite[Theorem 5.1]{Hiroshima}}]
Relative to the queer crystal operators $e_{\overline 1}=\eSp$ and $f_{\overline 1}=\fSp$ just given, 
the abstract $\gl_n$-crystal 
$\iRfpf_n(\pi)$ is an abstract $\q_n$-crystal for all $\pi \in \Ifpf_\ZZ$.
\end{proposition}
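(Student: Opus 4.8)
We may assume $n\geq 2$, since the claim is vacuous for $n=1$. The set $\iRfpf_n(\pi)$ is already an abstract $\gl_n$-crystal, so the plan is to check that the queer operators $e_{\overline 1}=\eSp$ and $f_{\overline 1}=\fSp$ satisfy conditions (1)--(3) of Definition~\ref{q-def}. The one genuinely substantial step is the first: showing that $\fSp$ and $\eSp$ are well-defined maps $\iRfpf_n(\pi)\to\iRfpf_n(\pi)\sqcup\{0\}$. Granting this \emph{closure} property, the remaining axioms follow from short direct computations, which I outline below.

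For closure I would argue as follows. Suppose $w=(w^1,\dots,w^n)\in\iRfpf_n(\pi)$ and $\fSp(w)\neq 0$, and set $x=\min(w^1)$. Since $w^1$ is nonempty and strictly increasing, $x$ is the first letter of the fpf-involution word $w^1w^2\cdots w^n$, hence is even by Theorem~\ref{fsim-thm}. If $x+1\notin w^1$, then every other letter of $w^1$ exceeds $x+1$, and the underlying word of $\fSp(w)$ is obtained from that of $w$ by sliding $x$ rightward past letters at distance $>1$ from it; each such slide is a Coxeter commutation move, so the two words are braid-equivalent. If $x+1\in w^1$, write $w^1=(x,x+1,z_1,\dots,z_k)$ with each $z_j\geq x+2$; then the underlying word of $\fSp(w)$ is obtained from that of $w$ by applying the relation $x(x+1)\cdots \fsim x(x-1)\cdots$ and then commuting $x-1$ rightward past $z_1,\dots,z_k$. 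In either case the underlying word of $\fSp(w)$ is $\fsim$-equivalent to $w^1w^2\cdots w^n$, hence lies in the single $\fsim$-class $\iRfpf(\pi)$ by Theorem~\ref{fsim-thm}, and the factorization of $\fSp(w)$ is plainly increasing. For $\eSp$ the argument is parallel, but one must also verify that the defining operation makes sense in each case (for instance, that $x+2\notin w^1$ when $x=\min(w^2)$ is odd and $\eSp(w)\neq 0$); this, together with closure, again comes from tracking the underlying word through $\fsim$-moves, since Theorem~\ref{fsim-thm} forbids the words with equal adjacent letters or odd initial letter that the bad configurations would produce.

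For the remaining axioms I would argue as follows. Going through the two cases shows $\eSp(\fSp(w))=w$ whenever $\fSp(w)\neq 0$, and symmetrically, which gives the biconditional in Definition~\ref{q-def}(1); the weight identity there holds because $\fSp$ moves exactly one letter from $w^1$ to $w^2$, lowering $\weight(\cdot)_1$ by one and raising $\weight(\cdot)_2$ by one. Since $\fSp$ and $\eSp$ read and modify only the first two factors, while $e_i$ and $f_i$ for $i\geq 3$ read and modify only $w^i$ and $w^{i+1}$, the two families of operators commute, giving Definition~\ref{q-def}(2), and the string lengths $\varepsilon_i,\varphi_i$ for $i\geq 3$ are unaffected, giving the rest of Definition~\ref{q-def}(1). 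For Definition~\ref{q-def}(3), the case analysis shows that applying $\fSp$ (or $\eSp$) twice always gives $0$, and that $\fSp(w)$ and $\eSp(w)$ are never simultaneously nonzero, since the first requires $\min(w^2)>\min(w^1)$ and the second requires $\min(w^1)>\min(w^2)$ (with the convention $\min\varnothing:=+\infty$); hence $\varepsilon_{\overline 1}(w)+\varphi_{\overline 1}(w)\leq 1$. If $w^1$ or $w^2$ is nonempty, then comparing $\min(w^1)$ with $\min(w^2)$ — distinct when both factors are nonempty, again by Theorem~\ref{fsim-thm} — shows that exactly one of $\fSp(w),\eSp(w)$ is nonzero, so the inequality is an equality.

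The hard part will be the closure step, and inside it the cases where the underlying word genuinely changes, namely $x+1\in w^1$ for $\fSp$ and its counterpart for $\eSp$. There the decisive ingredient is Theorem~\ref{fsim-thm}: it is $\fsim$-equivalence, not merely braid-equivalence, that certifies membership in $\iRfpf(\pi)$, and it is again Theorem~\ref{fsim-thm} that rules out the degenerate sub-configurations. Everything else is bookkeeping closely parallel to the orthogonal case of Hiroshima's Theorem~B.2.
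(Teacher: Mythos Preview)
The paper does not give its own proof of this proposition; it is stated with a citation to \cite[Theorem~5.1]{Hiroshima} and no further argument. Your direct-verification outline is the natural approach and is correct.

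One refinement worth making explicit concerns the $\eSp$ case with $x=\min(w^2)$ odd. You note that one must check $x+2\notin w^1$, but in fact more is forced: the first letter $y_1=\min(w^1)$ of the underlying word is even and exceeds $x$, so $y_1\in\{x+1,x+3,x+5,\dots\}$; if $y_1\geq x+3$ then commuting $x$ leftward past all of $w^1$ would produce an $\fsim$-equivalent word beginning with the odd letter $x$, contradicting Theorem~\ref{fsim-thm}. Hence $y_1=x+1$ necessarily, which is why $x+2$ is inserted in the second position of the new $w^1$ and why the relevant $\fsim$-move is $(x+1)x\fsim(x+1)(x+2)$ after commuting $x$ past $y_2,\dots,y_p$ (each $\geq x+3$). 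The exclusion $x+2\notin w^1$ then follows as you indicate: if $y_2=x+2$, applying $(x+1)(x+2)\fsim(x+1)x$ and commuting would create two adjacent copies of $x$. With this in hand, the inverse relation $\eSp\circ\fSp=\mathrm{id}$ on nonzero values is immediate case by case, and your arguments for Definition~\ref{q-def}(2)--(3) go through as written.
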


See Figure~\ref{irfpf-fig} for an example of the crystal $\iRfpf_n(\pi)$.

\begin{remark}\label{Ffpf-rmk}
The character of the crystal
$\iRfpf_n(\pi)$ %for $\pi \in \Ifpf_\ZZ$
is the polynomial $\Ffpf_\pi(x_1,x_2\dots,x_n) \in \Gamma_n$,
where
$\Ffpf_\pi := \sum_{\sigma \in \cAfpf(\pi)} F_\sigma$ is 
the \emph{fpf-involution Stanley symmetric function} studied in \cite{HMP5}.
As in Remark~\ref{iF-rmk},
it is not obvious that this definition of $\Ffpf_\pi$ is consistent with what is in \cite{HMP5}, but this holds as
 \cite[Lemma 5.3 and Corollary 5.10]{Marberg2019a} imply that $\sum_{\sigma \in \cAfpf(\pi)} F_\sigma =\sum_{\sigma \in \cAfpf(\pi)} F_{\sigma^{-1}}$.

The maximal term in the Schur $P$-expansion of $\Ffpf_\pi$ in dominance order is given in 
\cite[Theorem 1.4]{HMP5}. As in Remarks~\ref{nonempty-rmk1} and \ref{iF-rmk}, this result leads to a characterization of 
when $\iRfpf_n(\pi)$ is nonempty: namely, $\iRfpf_n(\pi)\neq \varnothing$ if and only if
the numbers $\hat c^\fpf_i(\pi) := | \{ i < j \in \ZZ :  \min\{i,\pi(i)\} >  \pi(j) \}|$
in the \emph{FPF-involution code} of $\pi$ (see \cite[\S1]{HMP5}) are at most $n$ for all $i \in \ZZ$.
\end{remark}

\begin{figure}[h]
  \begin{center}
  {
    \begin{tikzpicture}[xscale=2.4, yscale=2.4,>=latex]
      \node at (2,0) (T02) {$\boxed{ \emptyset \pipe 4 \pipe 235}$};
      \node at (1,1) (T11) {$\boxed{ 4 \pipe \emptyset \pipe 235}$};
      \node at (2,1) (T12) {$\boxed{\emptyset \pipe 24 \pipe 35 }$};
      \node at (3,1) (T13) {$\boxed{\emptyset \pipe 45 \pipe 23}$};
      \node at (0,2) (T20) {$\boxed{4 \pipe 2 \pipe 35 }$};
      \node at (1,2) (T21) {$\boxed{2 \pipe 4 \pipe 35 }$};
      \node at (2,2) (T22) {$\boxed{\emptyset \pipe 245 \pipe 3 }$};
      \node at (3,2) (T23) {$\boxed{ 4 \pipe 5 \pipe 23 }$};
      \node at (5,2) (T25) {$\boxed{4 \pipe 3 \pipe 23 }$};
      \node at (0,3) (T30) {$\boxed{4 \pipe 23 \pipe 5 }$};
      \node at (1,3) (T31) {$\boxed{ 24 \pipe \emptyset \pipe 35}$};
      \node at (2,3) (T32) {$\boxed{ 2 \pipe 45 \pipe 3}$};
      \node at (3,3) (T33) {$\boxed{ 4 \pipe 25 \pipe 3}$};
      \node at (4,3) (T34) {$\boxed{ 45 \pipe \emptyset \pipe 23}$};
      \node at (5,3) (T35) {$\boxed{4 \pipe 23 \pipe 2 }$};
      \node at (0,4) (T40) {$\boxed{4 \pipe 235 \pipe \emptyset}$};
      \node at (1,4) (T41) {$\boxed{24 \pipe 3 \pipe 5 }$};
      \node at (2,4) (T42) {$\boxed{ 24 \pipe 5 \pipe 3}$};
      \node at (3,4) (T43) {$\boxed{ 45 \pipe 2 \pipe 3}$};
      \node at (5,4) (T45) {$\boxed{24 \pipe 3 \pipe 2 }$};
      \node at (1,5) (T51) {$\boxed{24 \pipe 35 \pipe \emptyset}$};
      \node at (2,5) (T52) {$\boxed{245\pipe \emptyset \pipe 3 }$};
      \node at (3,5) (T53) {$\boxed{45 \pipe 23 \pipe \emptyset }$};
      \node at (2,6) (T62) {$\boxed{245 \pipe 3 \pipe \emptyset}$};
      \draw[->,thick]  (T62) -- (T51) node[midway,above,scale=0.75] {$1$};
      \draw[->,thick]  (T62) -- (T52) node[midway,right,scale=0.75] {$2$};
      \draw[->,thick]  (T62) -- (T53) node[midway,above,scale=0.75] {$\overline 1$};
      \draw[->,thick]  (T51.218) -- (T40.52) node[midway,above,scale=0.75] {$\overline 1$};
      \draw[->,thick]  (T51.232) -- (T40.38) node[midway,below,scale=0.75] {$1$};
      \draw[->,thick]  (T51) -- (T41) node[midway,right,scale=0.75] {$2$};
      \draw[->,thick]  (T52) -- (T42) node[midway,right,scale=0.75] {$1$};
      \draw[->,thick]  (T52) -- (T43) node[midway,above,scale=0.75] {$\overline 1$};
      \draw[->,thick]  (T53) -- (T43) node[midway,right,scale=0.75] {$2$};
      \draw[->,thick]  (T40) -- (T30) node[midway,left,scale=0.75] {$2$};
      \draw[->,thick]  (T41.218) -- (T30.52) node[midway,above,scale=0.75] {$\overline 1$};
      \draw[->,thick]  (T41.232) -- (T30.38) node[midway,below,scale=0.75] {$1$};
      \draw[->,thick]  (T41) -- (T31) node[midway,right,scale=0.75] {$2$};
      \draw[->,thick]  (T42) -- (T32) node[midway,right,scale=0.75] {$1$};
      \draw[->,thick]  (T42) -- (T33) node[midway,above,scale=0.75] {$\overline 1$};
      \draw[->,thick]  (T43) -- (T33) node[midway,right,scale=0.75] {$1$};
      \draw[->,thick]  (T43) -- (T34) node[midway,above,scale=0.75] {$2$};
      \draw[->,thick]  (T45.260) -- (T35.100) node[midway,left,scale=0.75] {$\overline 1$};
      \draw[->,thick]  (T45.280) -- (T35.80) node[midway,right,scale=0.75] {$1$};
      \draw[->,thick]  (T30) -- (T20) node[midway,left,scale=0.75] {$2$};
      \draw[->,thick]  (T31) -- (T20) node[midway,above,scale=0.75] {$\overline 1$};
      \draw[->,thick]  (T31) -- (T21) node[midway,right,scale=0.75] {$1$};
      \draw[->,thick]  (T32) -- (T21) node[midway,above,scale=0.75] {$2$};
      \draw[->,thick]  (T32.260) -- (T22.100) node[midway,left,scale=0.75] {$\overline 1$};
      \draw[->,thick]  (T32.280) -- (T22.80) node[midway,right,scale=0.75] {$1$};
      \draw[->,thick]  (T33) -- (T23) node[midway,right,scale=0.75] {$2$};
      \draw[->,thick]  (T34) -- (T23) node[midway,above,scale=0.75] {$1$};
      \draw[->,thick]  (T34) -- (T25) node[midway,above,scale=0.75] {$\overline 1$};
      \draw[->,thick]  (T35) -- (T25) node[midway,right,scale=0.75] {$2$};
      \draw[->,thick]  (T20) -- (T11) node[midway,above,scale=0.75] {$2$};
      \draw[->,thick]  (T21.322) -- (T12.128) node[midway,above,scale=0.75] {$\overline 1$};
      \draw[->,thick]  (T21.308) -- (T12.142) node[midway,below,scale=0.75] {$1$};
      \draw[->,thick]  (T22) -- (T12) node[midway,right,scale=0.75] {$2$};
      \draw[->,thick]  (T23.260) -- (T13.100) node[midway,left,scale=0.75] {$\overline 1$};
      \draw[->,thick]  (T23.280) -- (T13.80) node[midway,right,scale=0.75] {$1$};
      \draw[->,thick]  (T11.322) -- (T02.128) node[midway,above,scale=0.75] {$\overline 1$};
      \draw[->,thick]  (T11.308) -- (T02.142) node[midway,below,scale=0.75] {$1$};
      \draw[->,thick]  (T12) -- (T02) node[midway,right,scale=0.75] {$2$};
     \end{tikzpicture}
     }
  \end{center}
\caption{The $\q_3$-crystal graph of $\iRfpf_3(\pi)$ for $\pi=(1,4)(2,6)(3,5) \in \Ifpf_\ZZ$.}
\label{irfpf-fig}
\end{figure}

\subsection{Insertion algorithms}\label{ins-sect}

In this section, we describe two shifted variants of the \emph{Edelman-Greene correspondence},
which will be closely related to 
$\iR_n(\pi)$ and $\iRfpf_n(\pi)$.
We start with some well-known results from \cite{EG}.

\begin{definition}[\cite{EG}]
\label{eg-def}
Let $\pi \in S_\ZZ$ and 
$w=(w^1,w^2,\dots,w^n)\in\cR_n(\pi)$.
 Suppose concatenating the factors in $w$ gives the $m$-letter word
 $w_1w_2\cdots w_m$.
Let $\emptyset = T_0,T_1,\dots,T_m=P_\EG(w)$
be the sequence of (unshifted) tableaux in which
 $T_i$ for $i \in [m]$ is formed from $T_{i-1}$ by inserting $w_i$ as follows:
\ben
\item[] Start by inserting $w_i$ into the first row. 
At each stage, an entry $x$ is inserted into a row.
Let $y$ be the smallest entry in the row with $x \leq y$.
If no such entry $y$ exists then $x$ is added to the end of the row.
If $x=y$ then the current row is unchanged and $y+1$ is inserted into the next row.
If $x<y$ then $y$ is replaced by $x$ and $y$ is inserted into the next row.
\een
We call $P_\EG(w)$ the \emph{EG-insertion tableau} of $w$. The \emph{EG-recording tableau}
$Q_\EG(w)$ is the tableau with the same shape as $P_\EG(w)$
that contains $j$ in each of the boxes added by inserting the letters in 
the factor $w^j$, for each $j \in [n]$.
\end{definition}

%\begin{remark}
%Consider the process that adds $w_i$ to $T_{i-1}$ in Definition~\ref{eg-def}.
%Each iteration of this process inserts an entry $x$ into a row of a tableau of the same shape as $T_{i-1}$.
%One can show that this tableau is always itself increasing. Moreover, the smallest entry $y$ in the row can have $x=y$ only if the row also contains $y+1$ (in the box necessarily adjacent to $y$).
%\end{remark}

\begin{example} We compute $P_\EG(w)$ and $Q_\EG(w)$ for $w=(4,23,2)$:
\[ 
%\ytableausetup{boxsize = .45cm,aligntableaux=center}
\barr{rl}
\ytab{
  4 
}
  \ \leadsto\ 
\ytab{
  4 \\
  2 
}
   \ \leadsto\ 
\ytab{
  4 \\
  2 & 3
}
     \ \leadsto\ 
\ytab{
  4 \\
  3 \\
   2 & 3
} & = P_\EG(w)
  \earr
\quand
  \barr{rl}
%\ytab{
%  1 
%}
%  \ \leadsto\ 
%\ytab{
%  2 \\
%  1 
%}
%   \ \leadsto\ 
%\ytab{
%   2 \\
%  1 & 2
%}
%     \ \leadsto\ 
\ytab{
  3 \\
  2 \\
  1 & 2
}& = Q_\EG(w).
\earr
\]
\end{example}

When $w=w_1w_2\cdots w_m \in \cR(\pi)$,
 we set 
$
P_\EG(w) := P_\EG((w_1,w_2,\dots,w_m))
$
and
$
Q_\EG(w) := Q_\EG((w_1,w_2,\dots,w_m)).
$
That is, we treat words as factorizations with all factors of length one.

\begin{theorem}[\cite{EG}]
If $\pi \in S_\ZZ$
then 
$w \mapsto (P_\EG(w), \Q_\EG(w))$
is a bijection 
\[
\cR_n(\pi)\xrightarrow{\sim}
\left\{\
\ba &\text{pairs $(P,Q)$ of tableaux of the same shape with} \\ 
&\text{$P$ increasing, $\row(P) \in \cR(\pi)$, and $Q \in \Tab_n(\ell(\pi))$}
\ea\
\right\}.
\]
Moreover, $Q_\EG(w)$ is a standard tableau if and only if all factors of $w$ have size one.
\end{theorem}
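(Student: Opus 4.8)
The plan is to prove the Edelman--Greene bijection by the standard strategy of constructing an explicit two-sided inverse, and then analyzing when the recording tableau is standard. First I would verify that the map $w \mapsto (P_\EG(w), Q_\EG(w))$ is well-defined with the claimed codomain: at each step the insertion procedure in Definition~\ref{eg-def} preserves the property that $T_i$ is an increasing tableau (rows and columns strictly increasing), since we only ever place $x$ at the end of a row when no entry is $\geq x$, or bump the smallest entry $\geq x$; a short case check on the bump (distinguishing $x=y$ from $x<y$) shows columns stay strictly increasing. The fact that $\row(P_\EG(w)) \in \cR(\pi)$ follows from the compatibility of EG-insertion with the Coxeter braid relations: inserting a letter either leaves the reading word in the same $\sim$-class or performs an elementary braid move, which is the content of \cite[Lemma 6.18 and \S6]{EG}. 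The recording tableau $Q_\EG(w)$ is semistandard because the boxes added during the insertion of a single strictly increasing factor $w^j$ form a horizontal strip (successive insertions of increasing letters add boxes in distinct, weakly-right-moving columns), so equal labels $j$ never stack in a column, while labels strictly increase up each column since later factors add boxes weakly above; hence $Q_\EG(w) \in \Tab_n(\ell(\pi))$.

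Next I would construct the inverse. Given a pair $(P,Q)$ in the target set, reverse-read $Q$: find the box containing the largest label, and among boxes with that label process them from the one in the rightmost column to the leftmost (this reconstructs the letters of the last nonempty factor from last-inserted to first-inserted). For each such box, run reverse EG-insertion starting from its row: an entry gets ``reverse-bumped'' up from row to row, at each stage taking the largest entry $y$ in the row above with $y < x$ (or, if the reverse step corresponds to the $x = y$ case, leaving the row unchanged and decrementing). One must check that reverse-insertion is well-defined on any increasing tableau whose reading word lies in $\cR(\pi)$ and returns an increasing tableau together with an extracted letter, and that it genuinely undoes a single forward insertion; this is a local verification of the bump rules. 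Iterating extracts a word, grouped into factors by the labels of $Q$, and one shows this recovers an element of $\cR_n(\pi)$ (the extracted word is reduced for $\pi$ because reverse-insertion also respects braid equivalence). Then the two composites are identities by construction, since forward and reverse insertion are step-by-step inverse, and the recording tableau exactly dictates the order of extraction.

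For the final sentence, I would argue both directions. If all factors of $w$ have size one, then each factor $w^j$ is a single letter, so exactly one box carries the label $j$; as the labels run over $1, 2, \dots, m = \ell(\pi)$ each appearing once, $Q_\EG(w)$ is standard. Conversely, if some factor $w^j$ has length $\geq 2$, then inserting its (at least two) strictly increasing letters adds at least two boxes all labeled $j$, so $j$ is repeated and $Q_\EG(w)$ is not standard.

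The main obstacle I expect is the careful verification that reverse EG-insertion is a genuine step-by-step inverse of forward insertion on \emph{increasing} (rather than merely semistandard) tableaux, because the $x=y$ ``no-op plus increment'' case of Definition~\ref{eg-def} makes the bump path non-obvious to reverse: one must correctly identify, in the reverse direction, whether a given row was passed through with a bump or with the silent increment. This requires showing that from the target row of a reverse step the correct predecessor entry and the correct case are uniquely determined by the increasing-tableau condition, and that the resulting reading word stays in the single $\sim$-class $\cR(\pi)$ throughout. Everything else is bookkeeping: the horizontal-strip property of factor-insertions, and the equivalence between ``all factors size one'' and ``$Q$ standard,'' are routine once the core bijection is in place.
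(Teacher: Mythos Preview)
The paper does not give its own proof of this statement: it is quoted as a background result from Edelman and Greene's original paper \cite{EG}, with no argument supplied. Your sketch is the standard proof strategy (construct reverse insertion as an explicit inverse, verify the horizontal-strip property for recording tableaux), and it is correct in outline; the obstacle you flag---uniquely reversing the $x=y$ ``silent increment'' case---is resolved by the fact (noted in Remark~\ref{insertion-remarks}) that whenever a row already contains $x$ it must also contain $x+1$, which lets the reverse step detect whether a bump or an increment occurred.
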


We turn to our first shifted analogue of Definition~\ref{eg-def}.

\begin{definition}[\cite{HKPWZZ,Marberg2019a,PatPyl2014}]
\label{o-eg-def}
Let $\pi \in I_\ZZ$
and $w=(w^1,\dots,w^n) \in \iR_n(\pi)$.
 Suppose concatenating the factors in $w$ gives the $m$-letter word
 $w_1w_2\cdots w_m$.
Let $\emptyset = T_0,T_1,\dots,T_m=\PO(w)$
be the sequence of shifted tableaux in which
 $T_i$ for $i \in [m]$ is formed from $T_{i-1}$ by inserting $w_i$ as follows:
\ben
\item[] Start by inserting $w_i$ into the first row. 
At each stage, an entry $x$ is inserted into a row or column.
Let $y$ be 
the smallest entry in the row or column with $x\leq y$.
If no such entry $y$ exists then add $x$ to the end of the row or column.
If $x=y$ then the row (respectively, column) is unchanged
and $y+1$ is inserted into the next row (respectively, column),
with one exception.
If $x < y$ then $y$ is replaced by $x$ and $y$ is inserted into the next row (respectively, column),
again with one exception.
The exceptions are that if $x$ is inserted into a row and $y$ is on the main diagonal,
then we insert $y+1$ (if $x=y$) or $y$ (if $x<y$) into the next column.

\een
If the orientation of insertion changes from rows to columns during this process,
then we say that $w_i$ is \emph{column-inserted}; otherwise, $w_i$ is \emph{row-inserted}.
We call $\PO(w)$ the \emph{orthogonal-EG-insertion tableau} of $w$. The \emph{orthogonal-EG-recording tableau}
$\QO(w)$ is the shifted tableau with the same shape as $\PO(w)$
that contains $j$ (respectively, $j'$) in each of the boxes added by a row-inserted (respectively, column-inserted) letter from 
the factor $w^j$, for each $j \in [n]$.
\end{definition}

This algorithm is called \emph{involution Coxeter-Knuth insertion} in \cite[\S4.3]{Marberg2019a}.

\begin{example} We compute $\PO(w)$ and $\QO(w)$ for $w=(4,23,2,1)$:
\[ 
%\ytableausetup{boxsize = .45cm,aligntableaux=center}
\barr{rl}
\ytab{
  4 
}
  \ \leadsto\ 
\ytab{
  2 & 4
}
  \ \leadsto\ 
\ytab{
  \none & 4 \\
  2 & 3
}
  \ \leadsto\ 
\ytab{
  \none & 4 \\
   2 & 3 &4
}
  \ \leadsto\ 
\ytab{
  \none &  4 \\
  1 & 2 & 3 & 4
} & = \PO(w)
\earr
\quand
\barr{rl}
%  \\ \\
%\ytab{
%  1 
%}
%  \ \leadsto\ 
%\ytab{
%  1 & 2'
%}
%  \ \leadsto\ 
%\ytab{
%  \none & 2 \\
%  1 & 2'
%}
%  \ \leadsto\ 
%\ytab{
%  \none & 2\\
%  1 & 2' & 3'
%}
%  \ \leadsto\ 
\ytab{
  \none & 2 \\
  1 & 2' & 3'& 4' 
} & = \QO(w).
\earr
\]
\end{example}

If $w=w_1\cdots w_m \in\iR(\pi)$ is an involution word
then we let 
$\PO(w) := \PO((w_1,\dots,w_m))$
and
$ \QO(w) := \QO((w_1,\dots,w_m)).$
Define $\ellhat(\pi)$ for $\pi \in I_\ZZ$
to be the length of any word in $\iR(\pi)$.
One has 
$\ellhat(\pi) = \tfrac{1}{2}(\ell(\pi) + \kappa(\pi))$
where $\kappa(\pi)$ is the number of 2-cycles of $\pi$  \cite[\S3]{Hultman2}.

\begin{theorem}[{\cite[Theorem 5.19]{HMP4}}]
\label{oeg-thm}
If $\pi \in I_\ZZ$ then
$w \mapsto (\PO(w), \QO(w))$
is a bijection 
\[
\iR_n(\pi)\xrightarrow{\sim}
\left\{\
\ba &\text{pairs $(P,Q)$ of shifted tableaux of the same shape with} \\ 
&\text{$P$ increasing, $\row(P) \in \iR(\pi)$, and $Q \in \STab_n(\ellhat(\pi))$}
\ea\
\right\}.
\]
Moreover,  $\QO(w)$ is a standard shifted tableau if and only if all factors of $w$ have size one.
\end{theorem}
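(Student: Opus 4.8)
The plan is to prove Theorem~\ref{oeg-thm} by bootstrapping from the analogous result for ordinary Edelman--Greene insertion together with the decomposition $\iR_n(\pi) = \bigsqcup_{\sigma \in \cA(\pi)} \cR_n(\sigma)$ coming from Theorem~\ref{isim-thm}, while simultaneously using the characterization of $\iR(\pi)$ as an $\isim$-equivalence class. First I would establish that $\PO$ is well-defined and that $\row(\PO(w))$ always lies in $\iR(\pi)$ whenever $\row(w_1 w_2 \cdots w_m) \in \iR(\pi)$: the key lemma is that a single orthogonal-EG insertion step changes the row reading word only by a sequence of braid relations and the extra ``first-letter swap'' move $w_1 w_2 \cdots \isim w_2 w_1 \cdots$, so it preserves $\isim$-equivalence classes; since Theorem~\ref{isim-thm} says these classes are exactly the sets $\iR(\pi)$, insertion preserves the property of being an involution word for $\pi$. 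One also checks that $\PO(w)$ is increasing — this is immediate from the insertion rules, since at each step $x$ is placed where it strictly separates its neighbors, and the diagonal exception is precisely what prevents a repeated entry straddling the diagonal.

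Next I would prove injectivity and surjectivity of $w \mapsto (\PO(w), \QO(w))$ onto the stated target set. The natural route is to exhibit an explicit reverse algorithm: given an increasing shifted tableau $P$ with $\row(P) \in \iR(\pi)$ and a semistandard shifted recording tableau $Q \in \STab_n(\ellhat(\pi))$, uninsert the boxes of $P$ in the order dictated by $Q$ (processing the largest label of $Q$ first, and using the primed/unprimed distinction in $Q$ to decide whether the last step of that box's insertion was a column move or a row move, hence how to reverse-bump out of the diagonal). The heart of the matter is checking that this reverse procedure is genuinely inverse to insertion, i.e.\ that the ``exception'' cases match up under reversal; this is a finite but delicate case analysis that mirrors the verification already carried out for shifted Hecke / symplectic Hecke insertion in \cite{Marberg2019a, PatPyl2014}, so I would cite those references for the bijectivity at the level of a single word and then note that the recording tableau $\QO$ keeps track of exactly the data needed to split a factorization into its $n$ factors. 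Indeed, the ``moreover'' clause — that $\QO(w)$ is standard iff every factor of $w$ has size one — falls out immediately: the recording tableau has $|\{\,$letters in factor $w^j\}|$ many boxes labeled $j$ or $j'$, so it uses each of $1', 1, 2', 2, \dots$ at most once precisely when each factor has at most one letter, and since the total number of boxes is $\ellhat(\pi) = \tfrac12(\ell(\pi) + \kappa(\pi))$ this forces all factors of a length-$\ellhat(\pi)$ word to have size exactly one.

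The main obstacle, I expect, is the well-definedness-and-braid-invariance step: showing that orthogonal-EG insertion respects the relation $\isim$, including the extra first-two-letters transposition, requires tracking how the insertion path interacts with the main diagonal. The subtle point is that the insertion of a letter that eventually reaches and reflects off the diagonal behaves differently from a purely row-inserted letter, and one must verify that the resulting change to $\row(P)$ is still a product of Coxeter braid moves together with (at most) one application of the initial swap. I would handle this by reducing to the known fact that unshifted EG insertion realizes Coxeter--Knuth equivalence (Definition~\ref{eg-def} and \cite[Lemma 6.18]{EG}), then analyzing separately the local effect near the diagonal — essentially the content of \cite[\S4.3]{Marberg2019a}, where this algorithm is studied as involution Coxeter--Knuth insertion — so that the bulk of the argument can be quoted rather than reproved. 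Once insertion is known to preserve $\iR(\pi)$ and to be reversible, the bijection statement follows by the usual cardinality-free argument (mutually inverse maps), and the symmetric-function/character consequences recorded in Remark~\ref{iF-rmk} are not needed here.
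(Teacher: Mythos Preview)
The paper does not prove this theorem at all: it is quoted as \cite[Theorem 5.19]{HMP4} and used as a black box. So there is no ``paper's own proof'' to compare against. Your outline is a reasonable sketch of how the result is established in the cited literature --- bijectivity of the underlying shifted Hecke insertion is proved in \cite{PatPyl2014,HKPWZZ} via an explicit reverse-bumping algorithm, and the fact that the insertion tableau has row word in $\iR(\pi)$ (equivalently, that insertion respects $\isim$) is the content of \cite[\S4.3]{Marberg2019a} and \cite[\S5]{HMP4}. Your identification of the diagonal interaction as the delicate point is correct, and your handling of the ``moreover'' clause is fine. If you were writing this up independently you would need to actually carry out (or more carefully cite) the case analysis you allude to, but as a plan it matches what the references do.
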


Our second shifted analogue of Definition~\ref{eg-def} relates to fpf-involution words.

\begin{definition}[\cite{Marberg2019a}]
\label{sp-eg-def}
Let $ \pi \in \Ifpf_\ZZ$ and $w=(w^1,w^2,\dots,w^n)\in \iRfpf_n(\pi)$.
 Suppose concatenating the factors in $w$ gives the $m$-letter word
 $w_1w_2\cdots w_m$.
Let $\emptyset = T_0,T_1,\dots,T_m=\PSp(w)$
be the sequence of shifted tableaux in which
 $T_i$ for $i \in [m]$ is formed from $T_{i-1}$ by inserting $w_i$ as follows:
\ben
\item[] Start by inserting $w_i$ into the first row. 
At each stage, an entry $x$ is inserted into a row or column.
Let $y$ be 
the smallest entry in the row or column with $x\leq y$.
If no such entry $y$ exists then $x$ is added to the end of the row or column.
If $x=y$ then the current row (respectively, column) is unchanged,
and $y+1$ is inserted into the next row (respectively, column).
If $x < y$ then $y$ is replaced by $x$ and $y$ is inserted into the next row (respectively, column),
except
when $x$ is inserted into a row and $y$ is on the main diagonal.
In this case, if $y>x+1$
 then $y$ is replaced by $x$ and $y$ is inserted into the next column,
 while  if $y=x+1$ then
the current row is unchanged and $y+1$ is inserted into the next column.
\een
If the orientation of insertion changes from rows to columns during this process,
then we again say that $w_i$ is column-inserted; otherwise, $w_i$ is row-inserted.
We call $\PSp(w)$ the \emph{symplectic-EG-insertion tableau} of $w$. The \emph{symplectic-EG-recording tableau}
$\QSp(w)$ is the shifted tableau with the same shape as $\PSp(w)$
that contains $j$ (respectively, $j'$) in each of the boxes added by a row-inserted (respectively, column-inserted) letter from 
the factor $w^j$, for each $j \in [n]$.
\end{definition}

This algorithm is called \emph{FPF-involution Coxeter-Knuth insertion} in \cite[\S4.3]{Marberg2019a}.

\begin{example} 
\label{sp-eg-ex}
We compute $\PSp(w)$ and $\QSp(w)$ for $w=(4,23,12)$:
\[ 
%\ytableausetup{boxsize = .45cm,aligntableaux=center}
\barr{rl}
\ytab{
  4 
}
  \ \leadsto\ 
\ytab{
  2 & 4
}
  \ \leadsto\ 
\ytab{
  \none & 4 \\
  2 & 3
}
  \ \leadsto\ 
\ytab{
  \none & 4 \\
  2 & 3 & 4
}
  \ \leadsto\ 
\ytab{
  \none & 4 & 5 \\
  2 & 3 & 4
}& = \PSp(w)
\earr
\quand
\barr{rl}
%\ytab{
%  1 
%}
%  \ \leadsto\ 
%\ytab{
%  1 & 2'
%}
%  \ \leadsto\ 
%\ytab{
%  \none & 2 \\
%  1 & 2'
%}
%  \ \leadsto\ 
%\ytab{
%  \none & 2\\
%  1 & 2' & 3'
%}
%  \ \leadsto\ 
\ytab{
  \none & 2 & 3' \\
  1 & 2' & 3'
} & =\QSp(w).
\earr
\]
\end{example}

 \begin{remark}\label{insertion-remarks}
Consider the procedures adding $w_i$ to $T_{i-1}$ in Definitions~\ref{eg-def}, \ref{o-eg-def}, and \ref{sp-eg-def}.
Each iteration of these processes inserts an entry $x$ into a row or column of a tableau of the same shape as $T_{i-1}$.
The analysis of these algorithms in \cite{EG,HKPWZZ,Marberg2019a,PatPyl2014}
implies some additional properties:
 \begin{itemize}
\item Each intermediate (shifted) tableau is itself increasing. 
\item If the current row or column contains $x$ prior to insertion, then it must also contain $x+1$.
\item In both shifted algorithms, if $x$ is inserted into a row and the row's smallest entry $y$ is on the diagonal, 
then $x=y$ only if the diagonal position in the next row is occupied by $y+2$.
\item In the symplectic algorithm, 
if $x$ is inserted into a row and the row's smallest entry $y\geq x$ is on the main diagonal, 
then 
$y$ must be even, and either $x$ is even or $x=y-1$. The case with $x=y-1$ can only occur if the diagonal position in the next row is occupied by $y+2$.
\end{itemize}
\end{remark}

If $w=w_1\cdots w_m$ is an fpf-involution word then we define
$
\PSp(w) :=\PSp((w_1,\dots,w_m))
$
and
$ \QSp(w) := \QSp((w_1,\dots,w_m)).
$
Given 
 $\pi \in \Ifpf_\ZZ$,
 write $\ellfpf(\pi)$ 
for the length of any word in $\iRfpf(\pi)$.
To compute this number, choose
$m \in \NN$ 
with $\pi(i) = \minfpf(i)$ for all $ i > m$ and $i \leq -m$.
If 
$\sigma \in I_\ZZ$ has $\sigma(i) = \pi(i)$ for $-m < i \leq m$ and $\sigma(i) =i$ otherwise,
%\[
%\sigma(i) = \begin{cases} \pi(i) &\text{for $-m< i \leq m$} 
%\\
%i &\text{for $i \leq -m$ and $i>m$}
%\end{cases}
%\]
then  $\ellfpf(\pi) = \tfrac{1}{2}(\ell(\sigma) - m)$ \cite[\S2.3]{HMP5}.

\begin{theorem}[{\cite[Theorem 4.5]{Marberg2019a}}]
\label{speg-thm}
If $\pi \in \Ifpf_\ZZ$
then $w \mapsto (\PSp(w), \QSp(w))$
is a bijection 
\[
\iRfpf_n(\pi)\xrightarrow{\sim}
\left\{\
\ba &\text{pairs $(P,Q)$ of shifted tableaux of the same shape with} \\ 
&\text{$P$ increasing, $\row(P) \in \iRfpf(\pi)$, and $Q \in \STab_n(\ellfpf(\pi))$}
\ea\
\right\}.
\]
Moreover, $\QSp(w)$ is a standard shifted tableau if and only if all factors of $w$ have size one.
\end{theorem}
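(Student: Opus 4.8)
The plan is to reduce the statement to the special case in which all factors of $w$ have length one---so that $\QSp(w)$ is forced to be standard---and then to bootstrap to arbitrary factorizations by a standardization argument.

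First I would establish the bijection for words, i.e.\ for factorizations in $\iRfpf(\pi)$ regarded as having all factors of size one. The essential ingredients are: (i) every intermediate shifted tableau produced by the insertion is itself increasing and the process terminates, which is precisely the content of Remark~\ref{insertion-remarks}; (ii) each elementary bumping step changes the row (or column) reading word only by a Coxeter braid relation or by the elementary move generating $\fsim$, so that $\row(\PSp(u)) \fsim u$; combined with Theorem~\ref{fsim-thm}, which says $\iRfpf(\pi)$ is a single $\fsim$-class, this gives $\row(\PSp(u)) \in \iRfpf(\pi)$; and (iii) a reverse-insertion procedure that processes the boxes of $Q$ from the largest label downward (and within a fixed label in the order dictated by $\STab_n$-semistandardness), column-uninserting from a primed box and row-uninserting from an unprimed box, and recovers $u$. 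One then checks that forward insertion and reverse insertion are mutually inverse, and that reverse insertion applied to any increasing $P$ with $\row(P) \in \iRfpf(\pi)$ and any standard shifted $Q$ of shape $\sh(P)$ returns a genuine fpf-involution word, using $u \fsim \row(P)$ once more. It would be natural to organize this step around a Coxeter--Knuth-type equivalence $\simFCK$ on words with $\PSp(u) = \PSp(v) \iff u \simFCK v$, so that the fibers of $\PSp$ and the reading-word condition both become statements about $\simFCK$; I would develop this in parallel with the termination analysis.

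Next I would pass to semistandard recording tableaux. A factorization $w=(w^1,\dots,w^n)\in\iRfpf_n(\pi)$ is the same data as the concatenated word $u=w_1\cdots w_m$ together with a weakly increasing coloring $c_1\le\cdots\le c_m$ (where $c_k$ names the factor containing $w_k$) subject to the rule that $c_k=c_{k+1}$ forces $w_k<w_{k+1}$. Since $\PSp(w)$ depends only on $u$, the content here is twofold: that labelling each new box by its color, primed exactly when the inserted letter is column-inserted, yields a tableau $Q\in\STab_n$; and conversely that reverse insertion applied to any $(P,Q)$ with $P$ increasing, $\row(P)\in\iRfpf(\pi)$, and $Q\in\STab_n(\ellfpf(\pi))$ of shape $\sh(P)$ returns a factorization with strictly increasing factors. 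Both directions follow once one understands how a single strictly increasing monochromatic run $a_1<a_2<\cdots<a_r$ inserts: the new boxes it creates, with the unprimed ones forming one strip and the primed ones another, must be compatible with the semistandardness constraints on columns. This reduces to a ``two-letter lemma''---for $a<b$ inserted consecutively, control the bumping path of $b$ relative to that of $a$---applied repeatedly along the run.

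The main obstacle is this two-letter lemma in the symplectic case, and specifically its behavior when a bumping path reaches the main diagonal: one must verify that the diagonal-specific rules of Definition~\ref{sp-eg-def} (the split into the cases $y>x+1$ and $y=x+1$) are exactly what is needed for the strip conditions to hold, using the parity and structural facts recorded in Remark~\ref{insertion-remarks}---for instance that a bumped diagonal entry $y$ is even and that the incoming entry is even or equal to $y-1$. This is precisely where the symplectic algorithm diverges from the orthogonal algorithm of Definition~\ref{o-eg-def}, so I would not expect to quote the orthogonal case directly. Granting the lemma, semistandardness of $Q$, well-definedness of reverse insertion on all of $\STab_n$, and the mutual-inverse property all follow by induction on $m$. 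The ``moreover'' clause is then immediate, since $\QSp(w)$ is standard precisely when every label of $Q$ occurs once, i.e.\ when each factor $w^j$ has exactly one letter.
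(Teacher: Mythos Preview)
The paper does not prove this theorem: it is quoted verbatim from \cite[Theorem 4.5]{Marberg2019a} and used as a black box.  So there is no ``paper's own proof'' here to compare against; your proposal is a sketch of how one would reprove the cited result, and as such it is reasonable and follows the standard shape of such arguments (reverse insertion for the word case, then a strip/two-letter lemma to pass to factorizations).

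One point of caution.  You write that it would be natural to organize the argument around the biconditional $\PSp(u)=\PSp(v)\iff u\simFCK v$.  Be careful: the forward implication $u\simFCK v\Rightarrow \PSp(u)=\PSp(v)$ is exactly Theorem~\ref{sp-eg-thm}, one of the main results \emph{of this paper}, and both proofs the paper offers (Hiroshima's direct analysis, and the Little-bump argument of Section~\ref{proofs-sp-sect}) ultimately rely on Theorem~\ref{speg-thm} via Lemma~\ref{fpf-gr-lem}(b).  So invoking that biconditional as an ingredient in a proof of Theorem~\ref{speg-thm} would be circular.  Fortunately your actual step (ii) only uses the much weaker claim $\row(\PSp(u))\fsim u$, which one checks one bumping step at a time; that is all you need, and you should drop the reference to the full $\simFCK$ characterization.
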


Ending this subsection, we include a few remarks about the relationship between our insertion algorithms and similar maps in the literature.
To start,  Edelman-Greene insertion is a special case of \emph{Hecke insertion} \cite{BKSTY}
and coincides with the \emph{RSK correspondence} \cite[\S7.1]{BumpSchilling} when restricted to (factorizations of) \emph{partial permutations}, i.e., words without repeated letters.

Orthogonal Edelman-Greene insertion, in turn, is a special case of \emph{shifted Hecke insertion} \cite{HKPWZZ,PatPyl2014}
and coincides with \emph{Sagan-Worley insertion} \cite{Sag87,Worley}
when restricted to (factorizations of) partial permutations.
 Symplectic Edelman-Greene insertion is a special case of \emph{symplectic Hecke insertion} \cite{Marberg2019a}
 and gives another generalization of Sagan-Worley insertion.

Haiman's mixed insertion    is also closely related to Sagan-Worley insertion
(and therefore to our shifted forms of  Edelman-Greene insertion).
We will say more about this connection in Section~\ref{reduction-sect}.

\subsection{Main results}\label{main-sect}

This section summarizes our main results, which relate our two shifted forms of Edelman-Greene insertion
to the abstract $\q_n$-crystals $\iR_n(\pi)$ and $\iRfpf_n(\pi)$.

As motivation and to provide some needed background, we start by reviewing two 
 theorems about Edelman-Greene insertion.
Let $\ck$ be the involution on 3-letter words that acts by
\[
acb \leftrightarrow cab,\quad bca \leftrightarrow bac,\quand
a(a+1)a\leftrightarrow (a+1)a(a+1),\quad\text{if }a<b<c,
\]
while fixing all other words.
Given a word $w$ and $i \in [\ell(w)-2]$,
define $\ck_i(w)$ to be the word obtained from $w$ by replacing the subword
$w_iw_{i+1}w_{i+2}$ by its image under $\ck$.
For integers $i \notin [\ell(w) - 2]$ set $\ck_i(w) := w$.
For example, $13541 = \ck_1(13541)=\ck_2(15341)  =  \ck_4(13541).$
\emph{Coxeter-Knuth equivalence} $\simCK$ is the transitive closure   of the reflexive relation
with $w \simCK \ck_i(w)$ for all $w$ and $i$.

\begin{theorem}[{\cite[\S6]{EG}}]
\label{simCK-thm}
If $v$ and $w$ are reduced words, 
then
 $v\simCK w$ if and only if $P_\EG(v) = P_\EG(w)$.
\end{theorem}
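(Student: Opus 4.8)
The plan is to prove both directions of the equivalence $v \simCK w \iff P_\EG(v) = P_\EG(w)$ separately, and each will reduce to a local claim about a single application of a Coxeter--Knuth move versus a single letter-insertion step.

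\textbf{Forward direction.} First I would show that each elementary Coxeter--Knuth move preserves the EG-insertion tableau; by transitivity this gives $v \simCK w \implies P_\EG(v) = P_\EG(w)$. Since Coxeter--Knuth equivalence only modifies three consecutive letters $w_i w_{i+1} w_{i+2}$, and since the tableaux $T_0, \dots, T_{i-1}$ produced by inserting $w_1 \cdots w_{i-1}$ are identical for $w$ and $\ck_i(w)$, it suffices to prove: if $T$ is any increasing tableau arising as an intermediate EG-insertion tableau and $a < b < c$, then inserting $acb$ into $T$ and inserting $cab$ into $T$ yield the same tableau (and similarly for the pair $bca \leftrightarrow bac$ and for $a(a{+}1)a \leftrightarrow (a{+}1)a(a{+}1)$). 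This is the classical Edelman--Greene ``Knuth-relation'' lemma. I would prove it by the standard bumping-path analysis: insert the first letter, track which row the bump lands in, then insert the second and third letters, and check case-by-case (depending on the relative positions of $a$, $b$, $c$ in the first row of $T$, and on whether an equality $x = y$ occurs triggering the Hecke-style ``row unchanged, insert $y+1$'' branch) that the two orders of insertion agree. The case $a(a{+}1)a \leftrightarrow (a{+}1)a(a{+}1)$ is where the Hecke (as opposed to RSK) behavior of EG-insertion genuinely enters, since inserting a repeated value leaves a row unchanged; here one checks directly that both words build the same $2\times 1$-or-larger configuration. This is routine but somewhat lengthy.

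\textbf{Reverse direction.} For $P_\EG(v) = P_\EG(w) \implies v \simCK w$, I would argue by induction on word length, reducing to the statement that $\row(P_\EG(w)) \simCK w$ for every reduced word $w$; then if $P_\EG(v) = P_\EG(w) = P$ we get $v \simCK \row(P) \simCK w$. To prove $\row(P_\EG(w)) \simCK w$, induct on $\ell(w)$: write $w = w' w_m$, set $P' = P_\EG(w')$ and $P = P_\EG(w)$, so by induction $w' \simCK \row(P')$, hence $w \simCK \row(P') w_m$. It then remains to show $\row(P') w_m \simCK \row(P)$, i.e.\ that inserting a single letter into an increasing tableau and reading off the new tableau's row word is Coxeter--Knuth equivalent to appending that letter to the old tableau's row word. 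This is proved by following the bumping path one row at a time and applying the elementary moves $acb \leftrightarrow cab$, $bca \leftrightarrow bac$ (and the braid move when a repeated entry is encountered) to ``push'' the inserted letter down through the row word; each single bump from row $k$ to row $k+1$ corresponds to finitely many elementary Coxeter--Knuth moves. One must also invoke the fact (from Remark~\ref{insertion-remarks} / \cite{EG}) that all intermediate tableaux are increasing so that the relevant inequalities $a<b<c$ hold strictly, and that $\row(P)$ is itself a reduced word for the same $\pi$.

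\textbf{Main obstacle.} The technical heart is the local bumping-path analysis underlying both directions, and in particular correctly handling the Hecke-insertion branches (the ``$x = y$, row unchanged, insert $y+1$'' case and the resulting need for the braid relation $a(a{+}1)a \leftrightarrow (a{+}1)a(a{+}1)$), which distinguishes EG-insertion from ordinary RSK. I expect the cleanest writeup to isolate a single lemma --- ``inserting a letter and transposing row words differ by Coxeter--Knuth moves'' --- prove it by a careful induction on the number of rows touched by the bumping path, and then derive both implications of the theorem as corollaries. The bookkeeping of which of the three elementary moves applies at each step, as a function of the three-letter pattern currently at the ``front'' of the row word, is the part that requires care; everything else is formal.
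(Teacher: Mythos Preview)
Your proposal is correct and reconstructs the classical Edelman--Greene argument from \cite[\S6]{EG}. The paper does not supply its own proof of this theorem; it simply cites \cite{EG} as a background result, and your outline---local invariance under elementary Coxeter--Knuth moves via bumping-path analysis, plus the reverse direction via $w \simCK \row(P_\EG(w))$---is exactly the original approach.
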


Morse and Schilling \cite{MorseSchilling} have shown  that $w \mapsto Q_\EG(w)$
is a crystal morphism:

\begin{theorem}[{\cite[Theorem 4.11]{MorseSchilling}; see \cite[\S10]{BumpSchilling}}]
\label{eg-thm}
Consider a permutation $\pi\in S_\ZZ$. Then:
\ben
\item[(a)] The full $\gl_n$-subcrystals of $\cR_n(\pi)$ are the subsets on which $P_\EG$ is constant.
\item[(b)] The map
 $Q_\EG$
is a quasi-isomorphism 
$\cR_n(\pi) \to \Tab_n(\ell(\pi))$. 
\een
\end{theorem}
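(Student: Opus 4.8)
The plan is to show that Edelman--Greene insertion intertwines the two $\gl_n$-crystal structures, after which both parts follow formally. The key assertion --- call it the \emph{commutation lemma} --- is this: for every $\pi \in S_\ZZ$, every $w \in \cR_n(\pi)$, and every $i \in [n-1]$, one has $f_i(w) = 0$ if and only if $f_i\bigl(Q_\EG(w)\bigr) = 0$ in the crystal $\Tab_n(\ell(\pi))$ of Theorem-Definition~\ref{tab-thmdef}, and whenever these are nonzero,
\[
P_\EG\bigl(f_i(w)\bigr) = P_\EG(w)
\qquand
Q_\EG\bigl(f_i(w)\bigr) = f_i\bigl(Q_\EG(w)\bigr).
\]
The analogues for $e_i$ are then automatic from the equivalence $e_i(b) = c \Leftrightarrow f_i(c) = b$, valid in both crystals, together with the surjectivity in the Edelman--Greene bijection. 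Granting the lemma, part (a) is immediate: each $e_i$ and $f_i$ preserves the fibres of $P_\EG$, so every full subcrystal lies inside one fibre; conversely, if $P$ is increasing with $\row(P) \in \cR(\pi)$ and $\sh(P) = \lambda$, the Edelman--Greene bijection identifies $\{w \in \cR_n(\pi) : P_\EG(w) = P\}$ with $\Tab_n(\lambda)$ compatibly with all crystal operators (by the lemma), and $\Tab_n(\lambda)$ is a single full subcrystal by Theorem-Definition~\ref{tab-thmdef}, so the fibre is connected. For part (b), the lemma gives that $Q_\EG$ commutes with all $e_i$ and $f_i$, and it visibly preserves weights --- the content of $Q_\EG(w)$ records the factor lengths $\bigl(\ell(w^1),\dots,\ell(w^n)\bigr)$ --- hence it preserves string lengths and is a morphism; its restriction to each full subcrystal is a weight-preserving bijection commuting with the operators, that is, an isomorphism onto the full subcrystal $\Tab_n(\sh P_\EG(w))$, so $Q_\EG$ is a quasi-isomorphism.

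To prove the commutation lemma, the first step is to localize it to two consecutive factors. Since $f_i$ changes only $w^i$ and $w^{i+1}$, while $P_\EG(w)$ depends only on the underlying concatenated word and $Q_\EG(w)$ merely records each letter's factor of origin, inserting $w^1 \cdots w^{i-1}$ produces an intermediate increasing tableau $U$ untouched by $f_i$, and the letters of $w^{i+2} \cdots w^n$ are inserted afterward. Using the standard properties from \cite{EG} that $U = P_\EG(\row U)$ and that inserting a word $v$ into $P_\EG(u)$ returns $P_\EG(uv)$, the equality $P_\EG(f_i w) = P_\EG(w)$ reduces --- via Theorem~\ref{simCK-thm} and the fact that left-concatenation preserves Coxeter--Knuth equivalence --- to the following claim: writing $a = w^i$ and $b = w^{i+1}$, letting $x$ be the largest letter of $a$ not occurring as a first coordinate in $\pair(a,b)$, and letting $(a',b')$ be obtained by deleting $x$ from $a$ and inserting $y = \min\{z \geq x : z\text{ is not a letter of }b\}$ into $b$, the concatenations $ab$ and $a'b'$ are Coxeter--Knuth equivalent.

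This last claim is the heart of the matter and the step I expect to be the main obstacle. I would prove it by tracking the ``trajectory'' of the deleted letter $x$: from its position inside $a$ one moves it rightward, first past the larger letters of $a$ standing to its right and then into $b$, using a chain of elementary Coxeter--Knuth moves of types $acb \leftrightarrow cab$ and $bca \leftrightarrow bac$; the upward shift from $x$ to $y$ --- which happens exactly when $x$ would collide with a maximal run $x, x+1, \dots, y-1$ already present in $b$ --- is realized by braid moves $a(a+1)a \leftrightarrow (a+1)a(a+1)$. The subtle point is that choosing $x$ to be the largest \emph{unpaired} letter is precisely what guarantees, at every stage of this trajectory, the presence of the third letter needed to apply the next elementary move (and, in the degenerate case $y = x$, reduces the whole move to a genuine recoloring of the underlying word); establishing this invariant carefully is where the real work lies.

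It remains to verify the recording-tableau statement and the vanishing criterion. Because $P_\EG(f_i w) = P_\EG(w)$, the tableaux $Q_\EG(f_i w)$ and $Q_\EG(w)$ have the same shape, and their contents differ only in that one entry $i$ becomes an $i+1$; I would check, by following the insertion of the transported letter, that the affected box is exactly the one selected by $f_i$ acting on $\Tab_n(\ell(\pi))$. The cleanest route is to prove the single compatibility statement that, under Edelman--Greene insertion, the pairing $\pair(w^i, w^{i+1})$ corresponds to the parenthesization of the $\{i, i+1\}$-subword of $\row(Q_\EG(w))$ used to define $f_i$ via $\row$; this simultaneously pins down the affected box and yields the vanishing criterion $f_i(w) = 0 \Leftrightarrow f_i(Q_\EG(w)) = 0$, since both sides then say that no letter of the lower factor is available to be moved. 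With this, the commutation lemma --- and therefore the theorem --- is complete.
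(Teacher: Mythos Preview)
The paper does not give its own proof of this theorem: it is stated with citations to \cite{MorseSchilling} and \cite{BumpSchilling} and used as background for the new orthogonal and symplectic analogues. So there is no in-paper proof to compare against; one can only compare your proposal to the cited sources.

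Your overall architecture --- prove a commutation lemma for $f_i$ directly, then read off (a) and (b) formally --- is sound and is essentially the route taken in \cite[\S10]{BumpSchilling}. The reduction to two adjacent factors is correct (Coxeter--Knuth moves are local, so $ab \simCK a'b'$ propagates to the full word on both sides), and your identification of the core claim $ab \simCK a'b'$ as the crux is accurate. Your ``trajectory'' sketch for this step is plausible but, as you say, is where the real work lies; an alternative that sidesteps the explicit chain of moves is to verify directly, from the insertion rules, that $P_\EG(ab) = P_\EG(a'b')$ and then invoke Theorem~\ref{simCK-thm}. One small caution in your recording-tableau paragraph: knowing only that $P_\EG(f_i w) = P_\EG(w)$ and that the weights differ by $\e_i - \e_{i+1}$ does not by itself force \emph{exactly one} box to change from $i$ to $i+1$; you do need the insertion-path analysis you allude to (or the pairing correspondence you propose) to pin this down, not just to identify the box but to rule out compensating swaps. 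It is worth noting that the original argument in \cite{MorseSchilling} proceeds differently: they first verify Stembridge's local axioms on $\cR_n(\pi)$ to obtain normality, and establish the $Q_\EG$ correspondence afterward. Your direct approach trades that axiom-check for the commutation lemma, which is arguably cleaner but front-loads the insertion combinatorics.
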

 
This theorem has some noteworthy corollaries.
An abstract $\gl_n$-crystal is \emph{normal} (or \emph{Stembridge} \cite{Stembridge2003}) if each of its full subcrystals is isomorphic to a full $\gl_n$-subcrystal of
$\cW_n(m)$ for some $m$.

\begin{corollary}[\cite{MorseSchilling}]
\label{ms-cor1}
Each set of factorizations $\cR_n(\pi)$ for $\pi \in S_\ZZ$ is a normal $\gl_n$-crystal.
\end{corollary}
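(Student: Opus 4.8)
The plan is to derive this immediately from Theorem~\ref{eg-thm} together with Theorem-Definition~\ref{tab-thmdef}, by composing quasi-isomorphisms. Recall that, by the definition recalled just above the corollary, an abstract $\gl_n$-crystal is normal precisely when each of its full subcrystals is isomorphic to a full $\gl_n$-subcrystal of $\cW_n(m)$ for some $m$; and that a quasi-isomorphism $\psi:\cB\to\cC$ is exactly a morphism restricting to an isomorphism from each full subcrystal of $\cB$ onto some full subcrystal of $\cC$. So it suffices to exhibit a quasi-isomorphism from $\cR_n(\pi)$ to some $\cW_n(m)$.

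First I would take $m=\ell(\pi)$ and invoke Theorem~\ref{eg-thm}(b), which says that $Q_\EG$ is a quasi-isomorphism $\cR_n(\pi)\to\Tab_n(\ell(\pi))$. (Theorem~\ref{eg-thm}(a) additionally pins down the full subcrystals of $\cR_n(\pi)$ as the fibers of $P_\EG$, though this is not strictly needed for normality.) Next, by Theorem-Definition~\ref{tab-thmdef}, the map $\row:\Tab_n(m)\to\cW_n(m)$ is itself a quasi-isomorphism, its full $\gl_n$-subcrystals on the source side being the sets $\Tab_n(\lambda)$ with $\lambda\vdash m$ having at most $n$ parts. Then I would check the routine fact that a composite of quasi-isomorphisms is a quasi-isomorphism: if $\psi:\cB\to\cC$ and $\phi:\cC\to\cD$ are quasi-isomorphisms and $\tilde\cB\subseteq\cB$ is a full subcrystal, then $\psi$ restricts to an isomorphism $\tilde\cB\xrightarrow{\sim}\tilde\cC$ onto a full subcrystal $\tilde\cC\subseteq\cC$, and $\phi$ restricts to an isomorphism $\tilde\cC\xrightarrow{\sim}\tilde\cD$ onto a full subcrystal $\tilde\cD\subseteq\cD$, so $\phi\circ\psi$ restricts to the isomorphism $\tilde\cB\xrightarrow{\sim}\tilde\cD$. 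Applying this with $\psi=Q_\EG$ and $\phi=\row$ shows $\row\circ Q_\EG:\cR_n(\pi)\to\cW_n(\ell(\pi))$ is a quasi-isomorphism, so every full subcrystal of $\cR_n(\pi)$ is isomorphic to a full $\gl_n$-subcrystal of $\cW_n(\ell(\pi))\cong(\BB_n)^{\otimes \ell(\pi)}$, which is exactly normality.

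I do not anticipate a genuine obstacle here: the statement is a formal consequence of the two cited structural results, and the only points requiring care are (i) matching the ambient definition of ``normal'' with the output of a quasi-isomorphism, and (ii) verifying that composites of quasi-isomorphisms again have the required restriction property. One could alternatively avoid forming the composite map and instead argue fiberwise, first carrying a full subcrystal of $\cR_n(\pi)$ isomorphically onto some $\Tab_n(\lambda)$ via $Q_\EG$ and then onto a full subcrystal of $\cW_n(\ell(\pi))$ via $\row$; the composite formulation is cleaner but entirely equivalent.
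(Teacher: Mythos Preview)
Your argument is correct and is exactly the intended one: the corollary is stated immediately after the definition of normal and Theorem~\ref{eg-thm}, and follows formally by composing the quasi-isomorphism $Q_\EG:\cR_n(\pi)\to\Tab_n(\ell(\pi))$ with $\row:\Tab_n(\ell(\pi))\to\cW_n(\ell(\pi))$ from Theorem-Definition~\ref{tab-thmdef}. The paper gives no separate proof, treating this as an immediate consequence.
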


An element $b \in \cB$ in an abstract $\gl_n$-crystal
is a \emph{highest weight}
if $e_i(b) = 0$ for all $i \in [n-1]$.
A connected normal $\gl_n$-crystal $\cB$ has a unique highest weight $b$,
and the value of $\weight(b) $ for this element (after discarding trailing zeros) is always an integer partition \cite[\S4.4]{BumpSchilling}.

Edelman and Greene \cite{EG} proved that each Stanley symmetric function $F_\pi$ is a
nonnegative integer linear combination of Schur functions.
One can  interpret these coefficients as follows:

\begin{corollary}[\cite{MorseSchilling}]
\label{ms-cor2}
Suppose $\pi \in S_\ZZ$ and $n \geq \ell(\pi)$.
Then $F_\pi = \sum_\lambda a_{\pi\lambda} s_{\lambda}$ where $ a_{\pi\lambda}$ is the number of highest weights $w$ in the abstract $\gl_n$-crystal $\cR_n(\pi)$
with $\weight(w) = \lambda$.
\end{corollary}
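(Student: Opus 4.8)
The plan is to deduce Corollary~\ref{ms-cor2} from Theorem~\ref{eg-thm} together with the computations of characters already recorded. First I would recall two identities: the character of the Morse--Schilling crystal $\cR_n(\pi)$ equals the specialized Stanley symmetric function $F_\pi(x_1,\dots,x_n)$ (the Remark following the Morse--Schilling proposition), and the character of each $\Tab_n(\lambda)$ equals the Schur polynomial $s_\lambda(x_1,\dots,x_n)$ (the Remark following Theorem-Definition~\ref{tab-thmdef}). Since characters are additive over disjoint unions of full subcrystals and invariant under isomorphism, Theorem~\ref{eg-thm} --- which says the full $\gl_n$-subcrystals of $\cR_n(\pi)$ are the fibers of $w\mapsto P_\EG(w)$ and that $Q_\EG$ carries each such fiber isomorphically onto some $\Tab_n(\lambda)$ --- yields
\[
F_\pi(x_1,\dots,x_n) = \ch(\cR_n(\pi)) = \sum_{P} s_{\sh(P)}(x_1,\dots,x_n) = \sum_\lambda c_{\pi\lambda}\, s_\lambda(x_1,\dots,x_n),
\]
where $P$ ranges over increasing tableaux with $\row(P)\in\cR(\pi)$ and $c_{\pi\lambda}$ counts those with $\sh(P) = \lambda$.

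Next I would identify $c_{\pi\lambda}$ with $a_{\pi\lambda}$. By Corollary~\ref{ms-cor1} each full subcrystal of $\cR_n(\pi)$ is normal and connected, hence has a unique highest weight element $w$, so the highest weights of $\cR_n(\pi)$ are in bijection with the tableaux $P$. Under the isomorphism $Q_\EG$ onto $\Tab_n(\sh(P))$ --- which preserves weights and commutes with the $e_i$ --- this $w$ corresponds to the unique highest weight of $\Tab_n(\sh(P))$, namely the tableau whose $i$th row consists of $i$'s, whose weight is the partition $\sh(P)$ itself. Therefore $\weight(w) = \sh(P)$ (with the usual identification of partitions with weakly decreasing tuples in $\NN^n$ after padding with zeros), and so $a_{\pi\lambda} = c_{\pi\lambda}$, giving $F_\pi(x_1,\dots,x_n) = \sum_\lambda a_{\pi\lambda}\, s_\lambda(x_1,\dots,x_n)$.

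Finally I would lift this $n$-variable identity to an identity of symmetric functions. Every $\lambda$ occurring satisfies $|\lambda| = \ell(\pi)$, hence has at most $\ell(\pi)\le n$ parts; thus the polynomials $s_\lambda(x_1,\dots,x_n)$ over these $\lambda$ are nonzero and linearly independent, and no term is lost under the restriction to $n$ variables. Since $F_\pi\in\Lambda$ has a well-defined expansion in the Schur basis, comparing it with its $n$-variable specialization shows the coefficient of $s_\lambda$ in $F_\pi$ is exactly $a_{\pi\lambda}$, as claimed. I do not expect a genuine obstacle here: the content is entirely contained in Theorem~\ref{eg-thm}, and the only care needed is the bookkeeping that a connected component's unique highest weight has weight equal to its tableau shape, together with the elementary observation that $n\ge\ell(\pi)$ is enough variables to recover the symmetric-function identity from its truncation.
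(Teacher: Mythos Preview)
Your proposal is correct and follows exactly the approach the paper intends: the corollary is stated without explicit proof as an immediate consequence of Theorem~\ref{eg-thm}, the character identities for $\cR_n(\pi)$ and $\Tab_n(\lambda)$, and the standard fact that a connected normal $\gl_n$-crystal has a unique highest weight whose weight is the shape $\lambda$. Your argument spells out precisely these steps, including the observation that $n\ge\ell(\pi)$ ensures no information is lost in passing from symmetric functions to their $n$-variable specializations.
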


Our main new results are ``orthogonal'' and ``symplectic'' analogues of Theorems~\ref{simCK-thm} and \ref{eg-thm}.
We start with the orthogonal case.
 Let $\ock$ denote the operator on words given by
\be\label{ck0-eq}
\ock(w_1w_2w_3w_4\cdots w_m) := w_2w_1 w_3w_4\cdots w_m
\ee
for any letters $w_i \in \ZZ$. If $\ell(w)\leq 1$ then set $\ock(w):=w$.
Define \emph{orthogonal Coxeter-Knuth equivalence} 
$\simICK$ to be
the transitive closure of $\simCK$ and the relation with $w\simICK \ock(w)$ for all words $w$.

\begin{theorem}\label{o-eg-thm}
If $v$ and $w$ are involution words, 
then $v\simICK w$ if and only if $\PO(v) = \PO(w)$.
\end{theorem}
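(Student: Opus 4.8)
The plan is to establish the two implications separately, in each case reducing to a local statement about orthogonal Edelman--Greene insertion. For the implication $v\simICK w\Rightarrow\PO(v)=\PO(w)$ it is enough to check that the moves generating $\simICK$ preserve $\PO$. The move $w\mapsto\ock(w)$ I would handle directly: an involution word has no two equal adjacent letters by Theorem~\ref{isim-thm}, so if $w$ begins with distinct letters $a,b$, then inserting $a$ and then $b$, or $b$ and then $a$, into the empty shifted tableau both yield the one-row tableau with entries $\min(a,b)<\max(a,b)$ (when $a>b$ one uses that $(1,1)$ lies on the main diagonal, so $b$ bumps $a$ into the second column), after which the insertions of the remaining letters coincide. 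For the ordinary Coxeter--Knuth moves generating $\simCK$, the statement to verify is that whenever $x_1x_2x_3\simCK x_1'x_2'x_3'$, inserting these two triples into any increasing shifted tableau produces the same tableau and releases the same sequence of entries into later rows and columns; this is a finite case check over the three Coxeter--Knuth move types, the two insertion orientations, and the location of the affected cells relative to the diagonal, using the structural facts of Remark~\ref{insertion-remarks}. This check runs parallel to the verification of Coxeter--Knuth invariance for shifted and symplectic Hecke insertion in \cite{HKPWZZ,Marberg2019a,PatPyl2014}.

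For the converse, the crux is the \emph{reading-word lemma}: $w\simICK\row(\PO(w))$ for every involution word $w$; granting it, $\PO(v)=\PO(w)=P$ gives $v\simICK\row(P)\simICK w$. I would prove the reading-word lemma by induction on $\ell(w)$, writing $w=u\,c$ with $c$ the final letter, so that $u$ is an involution word for some $\tau\in I_\ZZ$. By induction $u\simICK\row(\PO(u))$, and since every $\simICK$-move acts on a consecutive triple or on the first two letters, this chain of moves also shows $w=u\,c\simICK\row(\PO(u))\,c$. Now $\row(\PO(u))$ is again an involution word for $\tau$ by Theorem~\ref{oeg-thm}, so $\row(\PO(u))\,c$ is an involution word for the same element $\pi$ as $w$, and the standard identity $\PO(\row(\PO(u)))=\PO(u)$ (a consequence of the bijectivity in Theorem~\ref{oeg-thm}) gives $\PO(\row(\PO(u))\,c)=\PO(u\,c)=P$. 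Thus everything reduces to the \emph{one-letter insertion lemma}: if $P$ is an increasing shifted tableau with $\row(P)$ an involution word and $c$ is a letter with $\row(P)\,c$ an involution word, then $\row(P)\,c\simICK\row(P')$, where $P'$ is the result of orthogonal-inserting $c$ into $P$. (That $\simICK$ keeps us inside a single $\iR(\pi)$ is because each generating move is a Coxeter braid move or a swap of the first two letters, so $\simICK\subseteq\isim$, and $\iR(\pi)$ is a full $\isim$-class by Theorem~\ref{isim-thm}.)

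The one-letter insertion lemma I would prove by tracing the bumping path of $c$ through $P$: $c$ replaces an entry $y_1$ of the first row, $y_1$ is reinserted into the second row replacing $y_2$, and so on, possibly switching from row to column insertion upon meeting the diagonal. Writing $\row(P)$ as the concatenation of the rows of $P$ from top to bottom and appending $c$, one shows by a succession of local $\simICK$-moves that $c$ slides leftward into its landing cell while $y_1$ is released past the next row, and so on, eventually producing $\row(P')$; within the pure row-insertion regime this reproduces Edelman and Greene's argument for the classical fact underlying Theorem~\ref{simCK-thm}. I expect the main obstacle to be the case in which the bumping path crosses the main diagonal and switches to column insertion: there the new cell of $P'$ is not simply appended to a row, the reading word is altered in a less transparent way, and one must verify that the rearrangement needed is expressible through ordinary Coxeter--Knuth moves together with the $\ock$ move (which is precisely why $\simCK$ alone does not suffice and the $\ock$ relation must be built into $\simICK$). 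Keeping careful track of the near-diagonal conventions of Remark~\ref{insertion-remarks} — when a row is left unchanged, and when a bumped diagonal entry is primed — is the bookkeeping that makes this case go through.
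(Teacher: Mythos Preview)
Your approach is genuinely different from the paper's, and the difference matters.

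For the implication $\PO(v)=\PO(w)\Rightarrow v\simICK w$, the paper simply cites \cite[Corollary 4.12]{Marberg2019a}. Your reading-word lemma and one-letter insertion lemma are essentially a sketch of how that result is proved there, so this half of your proposal is correct but not new.

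The real content of the theorem is the direction $v\simICK w\Rightarrow\PO(v)=\PO(w)$, which was an open conjecture (\cite[Conjecture 5.24]{HMP4}, \cite[Conjecture 4.13]{Marberg2019a}) prior to this paper. The paper does \emph{not} verify directly that the generators of $\simICK$ preserve $\PO$. Instead it uses the involution Little bumping operators $\ifkb_\sigma$: by Lemma~\ref{i-gr-lem}(a) one can bump $v$ into $\iR(\pi)$ for some inv-Grassmannian $\pi$; Theorem~\ref{o-little-thm} shows these bumps preserve $\QO$ and commute with $\ock$ and all $\ck_i$, so the same bumps send $w$ into $\iR(\pi)$ as well; finally Lemma~\ref{i-gr-lem}(b) says $\QO$ is a bijection on $\iR(\pi)$, which pins down $w$ via the bijection of Theorem~\ref{oeg-thm}.

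Your proposal for this direction is to check directly that each $\ck_i$ preserves $\PO$. The $\ock$ case is indeed easy, as you argue. But your treatment of the $\ck_i$ case is where the gap lies: you describe it as ``a finite case check\ldots parallel to the verification of Coxeter--Knuth invariance for shifted and symplectic Hecke insertion in \cite{HKPWZZ,Marberg2019a,PatPyl2014}.'' Those references do not prove that ordinary Coxeter--Knuth moves preserve the $P$-tableau for this insertion; \cite{Marberg2019a} proves the \emph{other} direction (the reading-word statement), while \cite{HKPWZZ,PatPyl2014} work with weak $K$-Knuth relations, which are different. The only place a direct verification of this type has been carried out is Hiroshima's proof of Theorem~\ref{sp-eg-thm} in the symplectic case, which the paper explicitly characterizes as ``a self-contained but very complicated analysis.'' So your plan is not wrong in principle---it is the Hiroshima route---but calling it a routine parallel to existing work substantially understates the difficulty, and the fact that it remained a conjecture is evidence that the case check is not as finite or as local as you suggest: the interaction of a Coxeter--Knuth triple with the row-to-column switch at the diagonal is precisely what makes the bookkeeping hard.

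In short: your outline is a legitimate alternative strategy, but the hard half is asserted rather than done, and your citations for it do not support the claim. The paper's Little-bump argument trades that case analysis for the machinery of Section~\ref{little-sect}, whose payoff is that the same machinery also proves Theorems~\ref{main-thm1}, \ref{sp-eg-thm}, and \ref{main-thm2}.
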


This affirms \cite[Conjecture 5.24]{HMP4}, which is also stated as \cite[Conjecture 4.13]{Marberg2019a}.

\begin{theorem}\label{main-thm1}
Consider an involution $\pi\in I_\ZZ$. Then:
\ben
\item[(a)] The full $\q_n$-subcrystals of $\iR_n(\pi)$ are the subsets on which $\PO$ is constant.

\item[(b)] The map
$\QO$
is a quasi-isomorphism 
$\iR_n(\pi) \to \STab_n(\ellhat(\pi))$. 
\een
\end{theorem}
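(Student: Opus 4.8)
The plan is to reduce both statements to the single assertion that, for every increasing shifted tableau $P$ with $\row(P)\in\iR(\pi)$, the restriction of $\QO$ to the fiber $C_P:=\{w\in\iR_n(\pi):\PO(w)=P\}$ is an isomorphism of $\q_n$-crystals onto $\STab_n(\mu)$, where $\mu$ is the shape of $P$. Granting this, part (a) follows since each $\STab_n(\mu)$ is connected (Theorem-Definition~\ref{stab-thmdef}(c)) and the sets $C_P$ partition $\iR_n(\pi)$, so the $C_P$ are exactly the full $\q_n$-subcrystals; and part (b) follows because $\QO$ then carries each full subcrystal isomorphically onto a full $\q_n$-subcrystal of $\STab_n(\ellhat(\pi))$, with Theorem~\ref{oeg-thm} governing which shapes occur. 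Note that $\weight(w)=\weight(\QO(w))$ for all $w$, so the bijections $\QO\colon C_P\to\STab_n(\mu)$ supplied by Theorem~\ref{oeg-thm} are at least weight-preserving, and once the commutation with crystal operators is known, the string lengths are preserved automatically.

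The first real step is coplacticity: every operator $e_i,f_i$ (for $i\in[n-1]$) and $\eO,\fO$ preserves $\PO$, so that each $C_P$ is a subcrystal. For the $\gl_n$-operators I would use that $\iR_n(\pi)=\bigsqcup_{\sigma\in\cA(\pi)}\cR_n(\sigma)$ as $\gl_n$-crystals (Theorem~\ref{isim-thm}), so $e_i,f_i$ restrict to Morse--Schilling operators, which preserve $P_\EG$ by Theorem~\ref{eg-thm}(a); since $P_\EG$ is a complete invariant of $\simCK$-classes (Theorem~\ref{simCK-thm}), since $\simICK$ is coarser than $\simCK$, and since $\PO$ is a complete invariant of $\simICK$-classes (Theorem~\ref{o-eg-thm}), it follows that $\PO(f_i(w))=\PO(w)$. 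For the queer operators it suffices to observe that $\fO$ changes the underlying involution word only by sliding its first letter $x$ to the right past the remaining, strictly larger letters of the first factor; one $\ock$-move handles the first such letter and Coxeter commutations handle the rest (each commutation legal, since every letter of an increasing factor beyond the first two differs from $x$ by at least $2$), so the two words are $\simICK$-equivalent and Theorem~\ref{o-eg-thm} again yields $\PO(\fO(w))=\PO(w)$.

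The hard part is identifying the induced subcrystal structure on $C_P$ with $\STab_n(\mu)$. Here I would follow the strategy of the introduction and use the \emph{involution Little bumping operators}, establishing in Section~\ref{little-sect} that suitable compositions of them give $\q_n$-crystal isomorphisms between different instances $\iR_n(\pi)\xrightarrow{\sim}\iR_n(\pi')$ that commute with $\QO$ and transform $\PO$ by an explicit, invertible operation on increasing tableaux. For a fixed component $C_P$ one chooses such a composition carrying $P$ to the row-reading word of a canonical increasing tableau of shape $\mu$ attached to a ``dominant'' involution $\pi_0$ for which $\iR(\pi_0)$ is a single $\simICK$-class; then $C_P$ is transported isomorphically onto $\iR_n(\pi_0)$, and the remaining base case is to check that this one-component $\q_n$-crystal is isomorphic via $\QO$ to $\STab_n(\mu)$. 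That base-case identification I would deduce from the mixed-insertion characterization of the $\STab_n$-structure (Theorem-Definition~\ref{stab-thmdef}) together with the fact recorded at the end of Section~\ref{ins-sect} that orthogonal Edelman--Greene insertion restricted to partial permutations coincides with Sagan--Worley insertion, which relates $\QO$ on such $\pi_0$ to mixed recording tableaux.

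I expect two principal obstacles. First, proving that the involution Little bumps commute with the queer operators $\eO,\fO$: their $\gl_n$-equivariance generalizes theorems of Hamaker and Young, but the interaction with the queer operators is genuinely new and requires a careful analysis of how bumping paths respond to moving the smallest letter between the first two factors. Second, the base-case isomorphism $\iR_n(\pi_0)\cong\STab_n(\mu)$, which is entangled with the proof of Theorem~\ref{o-eg-thm} itself --- the analogue for $\PO$ of the Edelman--Greene insertion theorem --- and on which the whole argument ultimately rests.
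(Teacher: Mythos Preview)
Your proposal is essentially correct and follows the same strategy as the paper: use involution Little bumps (shown to be $\q_n$-crystal isomorphisms commuting with $\QO$ in Theorem~\ref{qi-thm2}) to reduce to a base case consisting of partial permutations, where orthogonal-EG insertion coincides with Sagan--Worley insertion and the link to mixed insertion (Lemma~\ref{icc-lem}) yields the identification with $\STab_n(\mu)$. You have correctly isolated the two genuine obstacles.

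Two points deserve sharpening. In your coplacticity argument for $\fO$, the phrase ``Coxeter commutations'' is misleading: the braid commutation $s_is_j=s_js_i$ does \emph{not} preserve $\simICK$ (it does not even preserve $\simCK$), so the parenthetical about letters differing by at least $2$ is beside the point. What actually moves $x$ rightward after the initial $\ock$ is the Knuth move $bac\leftrightarrow bca$, applied as $a_k\,x\,a_{k+1}\to a_k\,a_{k+1}\,x$ with $x<a_k<a_{k+1}$; this needs only strict inequalities, which hold because $w^1$ is increasing.

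Second, your base case $\pi_0$ must be chosen so that its involution words are \emph{actual} partial permutations; a generic inv-Grassmannian involution does not have this property (for instance $(1,4)$ has involution word $323$). The paper handles this by pushing the reduction one step further (Lemma~\ref{o-final-lem}) to the involution $\tau=s_2s_4\cdots s_{2m}$, whose involution words are exactly the permutations of $2,4,\ldots,2m$. Note that $\iR_n(\tau)$ has \emph{many} $\simICK$-classes, so the paper organizes the argument as a single commutative diagram (proving (b) first, then deducing (a) via Theorem-Definition~\ref{stab-thmdef}) rather than component-by-component as you describe; but this is a matter of packaging rather than substance.
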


We give the proofs of both theorems in Section~\ref{proofs-o-sect}.
These results lead to interesting 
analogues of 
Corollaries~\ref{ms-cor1} and \ref{ms-cor2}.
Following \cite{AssafOguz,GHPS}, we say that
an abstract $\q_n$-crystal is \emph{normal} if each of its full subcrystals is isomorphic to a full $\q_n$-subcrystal of
$\cW_n(m) \cong (\BB_n)^{\otimes m}$ for some $m$.

\begin{corollary}\label{normal-cor1}
Each set of factorizations $\iR_n(\pi)$ for $\pi \in I_\ZZ$ is a normal $\q_n$-crystal.
\end{corollary}

The notion of a \emph{highest weight} of an abstract $\q_n$-crystal is 
 subtler than for $\gl_n$-crystals; 
see \cite[\S1.3]{GJKKK} for the precise definition.
Each connected normal $\q_n$-crystal $\cB$ has a unique highest weight $b$,
and it holds that  $\weight(b)$ is a strict partition and 
$\ch(\cB)=P_{\weight(b)}(x_1,x_2,\dots,x_n)$
 \cite[Theorem 1.14]{GJKKK}.

It is shown in \cite{HMP4} that 
the symmetric functions $\iF_\pi$ from Remark~\ref{iF-rmk} are $\NN$-linear combinations of Schur $P$-functions.
We can give a new interpretation of these coefficients:

\begin{corollary}\label{invF-cor1}
Suppose $\pi \in I_\ZZ$ and $n \geq \ellhat(\pi)$.
Then $\iF_\pi = \sum_\lambda b_{\pi\lambda} P_{\lambda}$ where $ b_{\pi\lambda}$ is the number of highest weights $w$ in the abstract $\q_n$-crystal $\iR_n(\pi)$
with $\weight(w) = \lambda$.
\end{corollary}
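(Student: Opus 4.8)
The plan is to decompose the finite $\q_n$-crystal $\iR_n(\pi)$ into its full subcrystals and read off the character one term at a time. By Corollary~\ref{normal-cor1}, $\iR_n(\pi)$ is normal, so each of its full subcrystals is a connected normal $\q_n$-crystal; by the structure theory recalled just above (from \cite[Theorem 1.14]{GJKKK}), such a crystal $\cB$ has a unique highest weight element $b_\cB$, with $\weight(b_\cB)$ a strict partition and $\ch(\cB) = P_{\weight(b_\cB)}(x_1,x_2,\dots,x_n)$. Since the character is additive over the decomposition of a crystal into its full subcrystals, summing gives
\[
\ch\bigl(\iR_n(\pi)\bigr) = \sum_{\cB} P_{\weight(b_\cB)}(x_1,x_2,\dots,x_n),
\]
where $\cB$ runs over the full subcrystals of $\iR_n(\pi)$.

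Next I would identify the multiplicities. Every highest weight element of $\iR_n(\pi)$ lies in a unique full subcrystal, and each full subcrystal contains exactly one highest weight element; hence the number of full subcrystals $\cB$ with $\weight(b_\cB) = \lambda$ equals $b_{\pi\lambda}$, the number of highest weights $w \in \iR_n(\pi)$ with $\weight(w) = \lambda$. (Theorem~\ref{main-thm1}(a) additionally tells us that these subcrystals are exactly the fibers of $\PO$, but this refinement is not needed here.) Combining this with Remark~\ref{iF-rmk}, which identifies $\ch(\iR_n(\pi))$ with the $n$-variable specialization $\iF_\pi(x_1,\dots,x_n)$ of the involution Stanley symmetric function, we obtain the polynomial identity
\[
\iF_\pi(x_1,\dots,x_n) = \sum_\lambda b_{\pi\lambda}\, P_\lambda(x_1,\dots,x_n).
\]

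Finally I would promote this to an identity of symmetric functions using the hypothesis $n \geq \ellhat(\pi)$. Both sides are homogeneous of degree $\ellhat(\pi)$, and every strict partition $\lambda$ occurring has $|\lambda| = \ellhat(\pi) \leq n$ and hence at most $n$ parts; therefore the specialization map from degree-$\ellhat(\pi)$ supersymmetric functions to $\ZZ[x_1,\dots,x_n]$ is injective, since the Schur $P$-polynomials $P_\lambda(x_1,\dots,x_n)$ indexed by strict partitions with at most $n$ parts are linearly independent. Consequently $\iF_\pi$ and $\sum_\lambda b_{\pi\lambda} P_\lambda$ agree as symmetric functions, as claimed. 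I expect the only point requiring any care to be this last step, namely checking that the bound $n \geq \ellhat(\pi)$ genuinely makes the relevant specialization injective in the degree of interest so that the polynomial identity lifts; everything else is a formal consequence of normality together with the character computation in Remark~\ref{iF-rmk}.
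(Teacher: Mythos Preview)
Your proof is correct and follows exactly the argument the paper has in mind; the corollary is stated without an explicit proof, as an immediate consequence of normality (Corollary~\ref{normal-cor1}), the structure theory of connected normal $\q_n$-crystals from \cite[Theorem~1.14]{GJKKK}, and the character identification in Remark~\ref{iF-rmk}, together with the injectivity of the $n$-variable specialization in degree $\ellhat(\pi)\le n$.
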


Parallel results hold in the symplectic case.
For a word $w=w_1w_2\cdots w_m$ with $m\geq 2$, define 
\be\label{spck-eq}
\spck(w) := \begin{cases} w_1(w_1\mp 1) w_3w_4\cdots w_m &\text{if $w_2 = w_1 \pm 1$} \\
w_2 w_1 w_3w_4\cdots w_m &\text{if $w_1- w_2$ is even} \\
w &\text{otherwise}.
\end{cases}
\ee
For words with $\ell(w) \leq 1$ we set $\spck(w) :=w$.
Define
\emph{symplectic Coxeter-Knuth equivalence}
 $\simFCK$ to be the transitive closure of 
$\simCK$ and the relation with $w \simFCK \spck(w)$ for all words $w$.

Hiroshima \cite[Theorem 4.4]{Hiroshima} has recently proved the following analogue of Theorem~\ref{o-eg-thm}.
His methods consist of a self-contained but very complicated analysis of 
symplectic-EG insertion.
We give a  conceptually simpler (but less self-contained) alternate proof in Section~\ref{proofs-sp-sect}.

\begin{theorem}[{\cite{Hiroshima}}]
\label{sp-eg-thm}
If $v$, $w$ are fpf-involution words,
then $v\simFCK w$ if and only if $\PSp(v) = \PSp(w)$.
\end{theorem}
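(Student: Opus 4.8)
The plan is to prove the two implications separately: the forward direction $v\simFCK w \Rightarrow \PSp(v)=\PSp(w)$ is routine, while the converse is the substance and is handled via the fpf-involution Little bumping operators developed in Section~\ref{little-sect}, following the strategy that Hamaker and Young \cite{HamakerYoung} used for ordinary reduced words.

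For the forward direction it suffices to check that each generating relation of $\simFCK$ preserves the symplectic-EG insertion tableau. That $v\simCK w$ implies $\PSp(v)=\PSp(w)$ is precisely the assertion that symplectic-EG insertion is a ``Coxeter--Knuth insertion,'' which is recorded (and, if needed, re-derivable from the standard two- and three-letter insertion lemmas) in \cite{Marberg2019a}. For the relation $w\simFCK\spck(w)$, which alters only the first two letters $w_1w_2$, one checks directly, in each parity case in the definition of $\spck$, that inserting $w_1$ then $w_2$ into the empty tableau produces the same two-box shifted tableau as inserting the two letters of $\spck(w_1w_2)$; the point is that $\spck$ is calibrated exactly to the diagonal-position exceptions in Definition~\ref{sp-eg-def} (for instance $45$ and $43$ both insert to a single row $\ytab{4 & 5}$). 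Since $w_3\cdots w_m$ is then inserted identically, $\PSp(w)=\PSp(\spck(w))$.

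For the converse, note first that $\row(\PSp(v))\in\iRfpf(\pi)$ for a unique $\pi\in\Ifpf_\ZZ$, so the hypothesis forces $v,w\in\iRfpf(\pi)$ and we must show that the $\PSp$-fibers inside $\iRfpf(\pi)$ are single $\simFCK$-classes; together with the forward direction this reduces to bounding the number of $\simFCK$-classes in $\iRfpf(\pi)$ by the number of increasing shifted tableaux with reading word in $\iRfpf(\pi)$. I would argue by induction using the symplectic Little bumping operators, which move between $\iRfpf(\pi)$ and $\iRfpf(\pi')$ for suitably related $\pi,\pi'$. The two inputs to be established in Section~\ref{little-sect} are: (i) a Little bump transforms $\PSp$ in a prescribed, reversible way, hence induces a bijection between $\PSp$-fibers; and (ii) a Little bump is compatible with symplectic Coxeter--Knuth equivalence, i.e.\ $v\simFCK w$ if and only if the bumped words are $\simFCK$-equivalent. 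Granting (i) and (ii), a sequence of Little bumps reduces to the case where $\pi$ is fpf-dominant --- where $\iRfpf(\pi)$ is the set of reading words of a single increasing shifted tableau (or an otherwise transparently small family) and the claim is immediate --- and transporting the equivalence back up the bump sequence via (ii) completes the induction. As an alternative to part of this, one can instead transport the already-proven orthogonal statement Theorem~\ref{o-eg-thm} across the correspondence between symplectic and orthogonal factorizations, but the Little-bump route is the one that fits the paper's framework.

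The main obstacle is item (ii): showing that the fpf-Little bump is $\simFCK$-equivariant. A Little bump is a global operation --- it can insert or delete a letter and shift an entire column of the underlying wiring diagram --- so controlling its interaction with the local moves $\simCK$ and $\spck$, particularly near the main diagonal and given the ``first letter even'' constraint peculiar to fpf-involution words, is delicate; this is exactly the portion of the argument that extends the Hamaker--Young analysis. A secondary obstacle is (i), which rests on the relationship, developed in the same section, between the Little bumps and the shifted forms of Edelman--Greene insertion.
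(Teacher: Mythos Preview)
You have the two directions reversed relative to the paper. The implication $\PSp(v)=\PSp(w)\Rightarrow v\simFCK w$, which you treat as the substance, is already \cite[Corollary~3.22]{Marberg2019a}; your entire Little-bump plan for it is unnecessary. Conversely, the implication $v\simFCK w\Rightarrow \PSp(v)=\PSp(w)$, which you call routine, is the one the paper establishes via Little bumps. Your $\spck$ check on the first two letters is fine, but the claim that invariance under the ordinary $\ck_i$ moves is ``recorded in \cite{Marberg2019a}'' is not supported: this forward direction was open in \cite{Marberg2019a} (the orthogonal analogue is stated there as a conjecture), and a direct case analysis of insertion under $\ck_i$ through the diagonal exceptions is exactly Hiroshima's approach, which the paper characterizes as ``very complicated.''

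The paper's argument runs the Little bumps on the \emph{other} direction. Assume $v\simFCK w$. Lemma~\ref{fpf-gr-lem}(a) supplies a composite $\ffkb_{\sigma_1}\cdots\ffkb_{\sigma_l}$ taking $v$ into $\iRfpf(\pi)$ for some fpf-Grassmannian $\pi$. Since each $\ffkb_\sigma$ commutes with $\spck$ and with every $\ck_i$ (Theorem~\ref{sp-little-thm}(c)) and preserves $\QSp$ (Theorem~\ref{sp-little-thm}(d)), the same composite sends $w$ to the same $\iRfpf(\pi)$, and $\QSp(v),\QSp(w)$ are standard shifted tableaux of the common shape given by Lemma~\ref{fpf-gr-lem}(b). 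Now invoke the bijection of Theorem~\ref{speg-thm}: there is a unique $u$ with $\PSp(u)=\PSp(v)$ and $\QSp(u)=\QSp(w)$. The cited direction gives $u\simFCK v\simFCK w$, so applying the same bumps to $u$ lands in $\iRfpf(\pi)$ with the same $\QSp$ as $w$; since $\QSp$ is injective there (Lemma~\ref{fpf-gr-lem}(b)), the bumped words coincide, hence $u=w$ and $\PSp(w)=\PSp(v)$. Note that the key Little-bump input is preservation of the \emph{recording} tableau $\QSp$, not any prescribed action on $\PSp$; your item~(i) is not what is used.
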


Next, we have a symplectic version of Theorem~\ref{main-thm1}.
This result implies \cite[Conjecture 5.1]{Hiroshima}.

\begin{theorem}\label{main-thm2}
Consider an involution $\pi\in \Ifpf_\ZZ$. Then:
\ben
\item[(a)] The full $\q_n$-subcrystals of $\iRfpf_n(\pi)$ are
the subsets on which $\PSp$ is constant.

\item[(b)]
The map
$\QSp$
is a quasi-isomorphism 
$\iRfpf_n(\pi) \to \STab_n(\ellfpf(\pi))$. 
\een
\end{theorem}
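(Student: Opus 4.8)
The plan is to mirror, step by step, the strategy used to prove the orthogonal Theorem~\ref{main-thm1}, replacing $\PO,\QO,\simICK,\fO,\eO$ by their symplectic counterparts $\PSp,\QSp,\simFCK,\fSp,\eSp$ throughout, and invoking Theorem~\ref{sp-eg-thm} in place of Theorem~\ref{o-eg-thm}. The key conceptual input (beyond Theorem~\ref{sp-eg-thm}, which we take as given here) is a family of crystal isomorphisms between different instances of $\iRfpf_n(\pi)$ that commute with the recording map $w\mapsto \QSp(w)$. These isomorphisms are built from the \emph{involution Little bumping operators}, whose symplectic variant is analyzed in Section~\ref{little-sect}; the point is that applying such a bump moves one from $\iRfpf_n(\pi)$ to $\iRfpf_n(\pi')$ for a related $\pi'$ while leaving both the $\gl_n$-crystal operators $e_i,f_i$ and the queer operators $\eSp,\fSp$ intact, and also leaving $\QSp$ unchanged. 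Composing such bumps, one reduces an arbitrary $\pi\in\Ifpf_\ZZ$ to a dominant-type representative whose crystal $\iRfpf_n(\pi)$ is a single connected component whose structure can be matched directly against $\STab_n(\mu)$ for a strict partition $\mu$ of $\ellfpf(\pi)$.

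First I would establish part (a). By Theorem~\ref{speg-thm}, the map $w\mapsto(\PSp(w),\QSp(w))$ is a bijection, so the fibers of $w\mapsto\PSp(w)$ partition $\iRfpf_n(\pi)$; I must show each fiber is a full $\q_n$-subcrystal. That it is a union of full $\gl_n$-subcrystals is (the symplectic case of) a known statement following from the fact that $\simFCK$ restricted to words—i.e. on factorizations with singleton factors—is compatible with Coxeter–Knuth moves, combined with the $\gl_n$ half of the Morse–Schilling argument; concretely one checks that $f_i$ and $e_i$ preserve the value of $\PSp$, which is the factorization-level analogue of Theorem~\ref{sp-eg-thm}. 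The new content is that $\fSp$ and $\eSp$ also preserve $\PSp$. Here I would use the reduction via Little bumps: since the bumping isomorphisms commute with $\fSp,\eSp$ and change $\PSp$ in a controlled (bijective) way, it suffices to verify $\PSp\circ\fSp=\PSp$ (when $\fSp\neq 0$) for the reduced representatives, where the factorizations are short enough that the two-cases definition of $\fSp$ (moving $\min(w^1)$, versus deleting $x+1$ and prepending $x-1$) can be checked against the symplectic insertion rules of Definition~\ref{sp-eg-def} directly, using the structural facts collected in Remark~\ref{insertion-remarks} (in particular the behavior of diagonal bumps and the parity constraint $y$ even, $x$ even or $x=y-1$). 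Connectedness of each fiber then follows because, after reduction, $\iRfpf_n(\pi)$ with a fixed $\PSp$-value is visibly connected: any two factorizations with the same insertion tableau differ by a sequence of $e_i,f_i,\eSp,\fSp$ moves, which one sees by tracking $\QSp$ and appealing to Theorem-Definition~\ref{stab-thmdef}(a).

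For part (b), once (a) is known, the map $\QSp$ sends each full subcrystal bijectively onto $\{Q : \exists P,\ \row(P)\in\iRfpf(\pi),\ \text{sh}(P)=\text{sh}(Q)\}=\STab_n(\mu)$ for the appropriate $\mu$, again by Theorem~\ref{speg-thm}. It remains to check $\QSp$ is a crystal morphism, i.e. commutes with all of $e_i,f_i,\eSp,\fSp$ and preserves weights and string lengths. Weight preservation is immediate from the definitions. Commutation with $e_i,f_i$ is the symplectic analogue of the Morse–Schilling computation and can be reduced, via the Little-bump isomorphisms (which also commute with $\QSp$ by construction), to the reduced case, where it is checked against the characterization of the $\q_n$-structure on $\STab_n(m)$ through mixed insertion (Definition~\ref{hm-def}, Theorem~\ref{hm-bijection-thm}, Theorem-Definition~\ref{stab-thmdef}); here one uses that symplectic-EG insertion on factorizations with singleton factors agrees up to standardization with Sagan–Worley, hence with mixed insertion, as recalled at the end of Section~\ref{ins-sect}. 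Commutation with $\fSp$ is the assertion equivalent to \cite[Conjecture 5.1]{Hiroshima}; after reduction this again becomes a finite verification comparing the action of $\fSp$ on a short factorization with the action of $f_{\overline 1}$ on the corresponding recording tableau.

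The main obstacle I anticipate is not part (a) or the $\gl_n$-level commutations—those follow the established template—but rather controlling the symplectic Little bumping operators well enough to know that they (i) genuinely commute with $\fSp$ and $\eSp$ (the queer operators are defined by a delicate two-case rule involving the parity of $\min(w^1)$ and the presence of $x\pm 1$, so the bump must be shown to interact correctly with all cases), and (ii) commute with $\QSp$. Establishing (i) and (ii) is exactly the content of the "detailed investigation of the involution Little map" advertised in the introduction, extending Hamaker–Young \cite{HamakerYoung}; I would isolate this as the technical heart of Section~\ref{little-sect} and treat Theorems~\ref{sp-eg-thm} and \ref{main-thm2} as comparatively formal consequences once those bump properties are in hand.
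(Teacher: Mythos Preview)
Your overall strategy matches the paper's: reduce via symplectic Little bumps (which are $\q_n$-crystal isomorphisms commuting with $\QSp$, as established in Theorems~\ref{sp-little-thm} and~\ref{qi-thm3}) to a base case that can be verified independently. You also correctly identify that the technical heart lies in proving those bump properties.

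However, your description of the base case has a genuine gap. You speak of reducing to a ``dominant-type representative whose crystal $\iRfpf_n(\pi)$ is a single connected component'' and then verifying things by ``direct checks'' on ``short factorizations'' or by a ``finite verification.'' Neither works: reducing to an fpf-Grassmannian $\pi$ (Lemma~\ref{fpf-gr-lem}) makes $\iRfpf(\pi)$ a single $\simFCK$-class, but it does not make $\iRfpf_n(\pi)$ connected as a $\q_n$-crystal---that is precisely part~(a), which you are trying to prove. And Little bumping does not shorten factorizations; it only changes which $\pi$ they index. Your appeal to ``tracking $\QSp$ and appealing to Theorem-Definition~\ref{stab-thmdef}(a)'' to deduce connectedness presupposes that $\QSp$ is already a crystal morphism, i.e.\ presupposes part~(b), so the argument is circular as stated.

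The paper's fix is more specific. Via Lemma~\ref{sp-final-lem} one reduces not just to the fpf-Grassmannian case but all the way to $\cSe_n(m)$, the set of $n$-fold increasing factorizations of permutations of $2,4,\ldots,2m$. This crystal is \emph{independently} understood: by Lemma~\ref{icc-lem}, the maps $\dbl^{-1}:\cSe_n(m)\to\cS_n(m)$ and $\invert:\cS_n(m)\to\cW_n(m)$ are $\q_n$-crystal isomorphisms, and one has the exact identity $\QSp(2[w]) = P_\HM(w^{-1})$. Since $P_\HM:\cW_n(m)\to\STab_n(m)$ is a quasi-isomorphism by the very definition of the crystal structure on $\STab_n(m)$ (Theorem-Definition~\ref{stab-thmdef}), the composite shows $\QSp$ is a quasi-isomorphism on $\cSe_n(m)$. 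The commutative diagram then pulls this back along the Little-bump map $\fk f$ to give part~(b) directly, and part~(a) is derived from~(b) afterward. In short, the missing ingredient in your proposal is the identification of the correct reduction target $\cSe_n(m)$ and the key identity $\QSp = P_\HM\circ\invert\circ\dbl^{-1}$ there.
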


The 
proof is also in Section~\ref{proofs-sp-sect}.
There are again a few corollaries worth noting.
 
\begin{corollary}\label{normal-cor2}
Each set of factorizations $\iRfpf_n(\pi)$ for $\pi \in \Ifpf_\ZZ$ is a normal $\q_n$-crystal.
\end{corollary}

It is shown in \cite{HMP5} that 
the symmetric functions $\Ffpf_\pi$ 
from Remark~\ref{Ffpf-rmk} 
are $\NN$-linear combinations of Schur $P$-functions.
We can give a new interpretation of these coefficients:

\begin{corollary}\label{invF-cor2}
Suppose $\pi \in \Ifpf_\ZZ$ and $n \geq \ellfpf(\pi)$.
Then $\Ffpf_\pi = \sum_\lambda c_{\pi\lambda} P_{\lambda}$ where $ c_{\pi\lambda}$ is the number of highest weights $w$ in the abstract $\q_n$-crystal $\iRfpf_n(\pi)$
with $\weight(w) = \lambda$.
\end{corollary}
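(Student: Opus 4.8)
The plan is to deduce this as a short corollary of Theorem~\ref{main-thm2}, exactly paralleling the argument for Corollary~\ref{invF-cor1} in the orthogonal case. First I would record two facts that are already available: by Remark~\ref{Ffpf-rmk} the character of the $\q_n$-crystal $\iRfpf_n(\pi)$ is the polynomial $\Ffpf_\pi(x_1,x_2,\dots,x_n)$, and this character is homogeneous of degree $\ellfpf(\pi)$ because every word in $\iRfpf(\pi)$ (hence every $\sigma \in \cAfpf(\pi)$) has length $\ellfpf(\pi)$. In particular, any highest weight $w$ of $\iRfpf_n(\pi)$ has $\weight(w)$ summing to $\ellfpf(\pi)$, so only strict partitions $\lambda$ of $\ellfpf(\pi)$ can contribute to the claimed expansion.

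Next I would apply Theorem~\ref{main-thm2}(b): the map $\QSp$ is a quasi-isomorphism $\iRfpf_n(\pi) \to \STab_n(\ellfpf(\pi))$, so $\iRfpf_n(\pi)$ is the disjoint union of its full $\q_n$-subcrystals, and each such full subcrystal $\tilde\cB$ is isomorphic via $\QSp$ to a full subcrystal of $\STab_n(\ellfpf(\pi))$, which by Theorem-Definition~\ref{stab-thmdef}(c) equals $\STab_n(\mu)$ for some strict partition $\mu$ of $\ellfpf(\pi)$ with at most $n$ parts. Since $n \geq \ellfpf(\pi) = |\mu|$, every strict partition of $\ellfpf(\pi)$ automatically has at most $n$ parts, so this condition is no restriction. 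By the remark following Theorem-Definition~\ref{stab-thmdef}, $\ch(\STab_n(\mu)) = P_\mu(x_1,\dots,x_n)$, and by Corollary~\ref{normal-cor2} (or directly, since $\tilde\cB \cong \STab_n(\mu)$ is normal) the connected crystal $\tilde\cB$ has a unique highest weight $w$, with $\weight(w) = \mu$. Summing characters over the full subcrystals then gives
\[
\Ffpf_\pi(x_1,\dots,x_n) \;=\; \sum_{\tilde\cB} \ch(\tilde\cB) \;=\; \sum_\lambda c_{\pi\lambda}\, P_\lambda(x_1,x_2,\dots,x_n),
\]
where $c_{\pi\lambda}$ is the number of full subcrystals of $\iRfpf_n(\pi)$ whose highest weight has weight $\lambda$, equivalently the number of highest weights $w$ in $\iRfpf_n(\pi)$ with $\weight(w) = \lambda$.

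Finally, to upgrade this identity of polynomials in $n \geq \ellfpf(\pi)$ variables to the stated identity of symmetric functions, I would use that the Schur $P$-polynomials $P_\lambda(x_1,\dots,x_n)$, as $\lambda$ ranges over strict partitions of $\ellfpf(\pi)$, are linearly independent once $n \geq \ellfpf(\pi)$; since $\Ffpf_\pi$ is known from \cite{HMP5} to be an $\NN$-linear combination of such $P_\lambda$ (and is homogeneous of degree $\ellfpf(\pi)$), comparing coefficients after specializing to $n$ variables identifies those coefficients with the $c_{\pi\lambda}$ above. I do not expect any genuine obstacle in this corollary: all the substance is in Theorem~\ref{main-thm2}, and once that structural result is established the deduction is bookkeeping. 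The only point meriting care is the remark that the hypothesis $n \geq \ellfpf(\pi)$ guarantees no strict partition of $\ellfpf(\pi)$ is lost, either when $\QSp$ lands in $\STab_n(\ellfpf(\pi))$ or when passing from symmetric functions to their $n$-variable specializations.
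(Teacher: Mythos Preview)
Your proposal is correct and follows essentially the same approach as the paper: the corollary is left unproved in the paper, but the intended argument is exactly the one you give, combining Remark~\ref{Ffpf-rmk}, Theorem~\ref{main-thm2} (via Corollary~\ref{normal-cor2}), and the fact recalled before Corollary~\ref{invF-cor1} that a connected normal $\q_n$-crystal has a unique highest weight $b$ with $\ch(\cB)=P_{\weight(b)}(x_1,\dots,x_n)$. Your care with the hypothesis $n\geq\ellfpf(\pi)$ in passing from the polynomial identity to the symmetric function identity is appropriate and matches what the paper tacitly assumes.
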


\section{Bumping operators}\label{little-sect}

Rather than attempting to prove our main theorems directly, our strategy is to construct certain $\q_n$-crystal isomorphisms
that translate each result to an equivalent but more tractable setting.
These isomorphisms will be formed as compositions of the \emph{involution Little bumping operators}
introduced in \cite{HMP3}. We study the properties of these operators in this section.

\subsection{Little bumps}

We begin by reviewing some ``classical'' results about Little bumping operators
from \cite{HamakerYoung,Little}. For related background, see also \cite{LamShim}, whose notational conventions we follow.

Let $w$ be a word, choose an index $i \in[\ell(w)]$,
and define 
$\del_i(w)$ to be the subword of $w$ formed by omitting the $i$th letter.
The pair $(w,i)$ is a \emph{$\pi$-marked word} 
for a permutation $\pi\in S_\ZZ$ if
 $\del_i(w) \in \cR(\pi)$.
A marked word   $(w,i)$ is \emph{reduced} if $w$ is a reduced word.

\begin{lemma}[{\cite[Lemma 21]{LamShim}}]
\label{lamshim-lem}
If $(w,i)$ is an unreduced $\pi$-marked word, 
then 
there exists a unique index $i \neq j \in [\ell(w)]$
such that $(w,j)$ is a $\pi$-marked word.
\end{lemma}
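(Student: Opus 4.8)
The plan is to analyze what it means for $(w,i)$ to be unreduced in terms of the subword $v := \del_i(w)$, which by hypothesis lies in $\cR(\pi)$. Write $w = w_1 w_2 \cdots w_l$ with the marked letter $w_i$ removed to form $v$. Since $v$ is reduced but $w$ is not, inserting the letter $w_i$ back into position $i$ destroys reducedness; concretely, the word $w_1 \cdots w_{i-1}$ and the word $w_{i+1}\cdots w_l$ are each reduced (being subwords of the reduced word $v$), so the failure of reducedness must ``involve'' the letter $w_i$ interacting with the two flanking pieces. The key structural fact I would invoke is the \emph{strong exchange condition} for the Coxeter group $S_\ZZ$: if $w_1\cdots w_l$ is not reduced, there is a unique pair of indices on which a deletion restores a reduced word \emph{producing the same group element as $v$} — but here I must be careful, because a priori deleting a different letter could give a reduced word for a \emph{different} permutation, which is not what we want. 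So the real content is the uniqueness of the index $j$ with $\del_j(w) \in \cR(\pi)$ specifically, not just $\del_j(w)$ reduced.

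First I would establish \emph{existence} of some $j\neq i$ with $\del_j(w)\in\cR(\pi)$. Set $u = s_{w_1}\cdots s_{w_{i-1}}$ and $u' = s_{w_{i+1}}\cdots s_{w_l}$, so that $\pi = u u'$ (with $\ell(\pi) = \ell(u) + \ell(u')$ since $v$ is reduced) while $u \, s_{w_i}\, u'$ has length $\le \ell(\pi)$, hence strictly less by the parity/exchange argument, so $\ell(u s_{w_i}) < \ell(u)$ or the analogous drop occurs after passing $s_{w_i}$ across $u'$. In the first case, the exchange condition gives an index $j < i$ with $s_{w_1}\cdots \widehat{s_{w_j}}\cdots s_{w_{i-1}} = u s_{w_i}$, and then $\del_j(w)$ is a reduced word for $u s_{w_i} \cdot s_{w_i} u' = uu' = \pi$; the second case is symmetric, producing $j > i$. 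This handles existence. For \emph{uniqueness}, suppose $\del_j(w)$ and $\del_k(w)$ are both $\pi$-marked words with $i,j,k$ distinct. Then $\del_j(w)$ and $\del_k(w)$ are two reduced words for $\pi$ of length $\ell(\pi) = l-1$, both obtained from the length-$l$ word $w$ by a single deletion. I would compare them: if WLOG $j < k$, then deleting positions $j$ and $k$ from $w$ gives a common length-$(l-2)$ word $w'$ which is a subword of both, and $\del_j(w)$, $\del_k(w)$ each insert one letter into $w'$; reducedness of both, together with $\ell(\pi) = l - 1 > l-2$, forces $w'$ to be an unreduced word for $\pi$ as well (it represents $\pi$ but has the wrong length), contradiction — unless $w'$ represents a strictly shorter element, in which case I track how the two single insertions both land on length $l-1$ and derive that the configuration around positions $j,k$ is overdetermined. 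The cleanest route here is probably to cite the standard fact (e.g.\ the one underlying the Little map, or \cite[Lemma 21]{LamShim} itself in its original form) that a word $w$ with exactly one ``defect'' has a unique deletion restoring \emph{its own} reduced hull.

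The step I expect to be the main obstacle is the uniqueness argument, specifically ruling out a ``third'' valid deletion index. The subtlety is that the ordinary strong exchange condition gives uniqueness of the deletion that recovers a \emph{specified} group element $u s_{w_i} u'$, but here the marked-word condition only pins down the permutation $\pi$, and one must rule out the a priori possibility that two \emph{different} factorizations $\pi = u_1 u_1' = u_2 u_2'$ of $\pi$ through $s_{w_i}$ arise from the same $w$. I would resolve this by observing that the position of the deletion is determined by where the length actually drops when $s_{w_i}$ is commuted/exchanged past the prefix and suffix, and that by the properties of reduced words this ``drop position'' is unique. If this direct approach proves fiddly, the fallback is to phrase everything in the language of the associated affine/infinite symmetric group nil-Hecke algebra or to quote the precise statement from \cite{Little} or \cite{LamShim} verbatim, since the lemma is explicitly attributed there and we are entitled to use it as stated.
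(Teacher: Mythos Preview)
The paper does not supply its own proof of this lemma; it is stated with attribution to \cite[Lemma~21]{LamShim} and used as a black box. So there is no ``paper's approach'' to compare against, and your fallback of simply citing the source is exactly what the paper does.

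That said, your attempted direct proof is worth commenting on. The existence half is fine: writing $\pi = uu'$ with $u = s_{w_1}\cdots s_{w_{i-1}}$ and $u' = s_{w_{i+1}}\cdots s_{w_l}$, the unreducedness of $w$ forces $\ell(u s_{w_i} u') = \ell(\pi)-1$, and the exchange condition then produces the required $j$.

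Your uniqueness argument, however, has a genuine gap. You claim that if $\del_j(w),\del_k(w)\in\cR(\pi)$ with $i,j,k$ distinct, then the double deletion $w'$ ``represents $\pi$ but has the wrong length''; this is false. Deleting one letter from the reduced word $\del_j(w)$ changes the group element by a reflection, so $w'$ represents $\pi t$ for some reflection $t\neq 1$, not $\pi$ itself. Your hedge (``unless $w'$ represents a strictly shorter element'') is exactly the case that always occurs, so no contradiction arises this way.

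A clean fix goes as follows. For $m\in[l-1]$ let $\rho_m = s_{v_1}\cdots s_{v_{m-1}} s_{v_m} s_{v_{m-1}}\cdots s_{v_1}$ be the left reflections attached to the reduced word $v=\del_i(w)$; these are pairwise distinct. A short computation shows that for $j<i$ one has $r_j=\rho_j$, while for $j>i$ one has $r_j = r_i\,\rho_{j-1}\,r_i$, where $r_j := s_{w_1}\cdots s_{w_{j-1}} s_{w_j} s_{w_{j-1}}\cdots s_{w_1}$. The condition $\del_j(w)\in\cR(\pi)$ is equivalent to $r_j=r_i$, which by the formulas above is equivalent to $\rho_m=r_i$ for $m=j$ (if $j<i$) or $m=j-1$ (if $j>i$). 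Since $r_i\pi=\sigma$ has length $\ell(\pi)-1$, the reflection $r_i$ is a left inversion of $\pi$ and hence equals $\rho_m$ for exactly one $m$; this gives exactly one $j\neq i$.
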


\begin{definition}
Fix $\pi \in S_\ZZ$ and suppose $(w,i)$ is a $\pi$-marked word of length $m$.
If $w$ is reduced then let $j=i$, and otherwise 
let $ j \in [m]$ be the unique index with $i\neq j$ such that $(w,j)$ is a $\pi$-marked word.
Then define $\push(w,i):=(v, j)$ where 
 $v  := w_1\cdots w_{j-1} (w_j + 1) w_{j+1}\cdots w_m.$
 \end{definition}
 
If $(w,i)$ is a $\pi$-marked word then $\push^N(w,i)$
 is reduced for some sufficiently large $N>0$ \cite[Lemma 5]{Little}.
The
Strong Exchange Condition \cite[Theorem 5.8]{Humphreys},
moreover,
implies that if
$\pi \in S_\ZZ$ and
$w$ is a fixed reduced word, then
  $(w,i)$ is a $\pi$-marked word for at most one choice of $i$.

\begin{definition}[{\cite[\S5]{Little}}]
For each $\pi \in S_\ZZ$, 
the \emph{Little bumping operator} $\fkb_{\pi}$ acts on reduced words $w$ as follows.
If there exists an index $i$ such that
 $(w,i)$ is a $\pi$-marked word
 and 
 $N>0$ is minimal such that $\push^N(w,i)=: (v,j)$
is reduced, then
$\fkb_{\pi}(w) := v$.
Otherwise, $\fkb_{\pi}(w) := w$.
\end{definition}

Our definition of $\fkb_\pi$ gives the inverse of operator described in \cite[Algorithm 2]{Little}.
In \cite{Little}, bumping operators act by decrementing letters, but here it is convenient to adopt the opposite convention.

The \emph{descent set} of a word $w$ is $\Des(w) := \{ i \in [\ell(w)-1] : w_i > w_{i+1}\}$.
The following theorem gathers together several properties of $\fkb_\pi$ from papers of Little \cite{Little} and of Hamaker and Young \cite{HamakerYoung}.

\begin{theorem}[\cite{HamakerYoung,Little}]
\label{little-thm}
Let $\pi,\sigma \in S_\ZZ$ and $w \in \cR(\sigma)$. 
\ben
\item[(a)] The operator $\fkb_\pi$ is a bijection $\bigsqcup_{\tau \in S_\ZZ} \cR(\tau) \to \bigsqcup_{\tau \in S_\ZZ} \cR(\tau)$.
\item[(b)] It holds that $\Des(\fkb_{\pi}(w)) = \Des(w)$.
\item[(c)] For all $i\in \PP$ it holds that $\ck_i(\fkb_\pi(w)) = \fkb_\pi(\ck_i(w))$.
\item[(d)] It holds that $Q_\EG(\fkb_\pi(w)) = Q_\EG(w)$.
\een
\end{theorem}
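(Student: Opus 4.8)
The plan is to separate the four claims, using throughout that $\fkb_\pi(w)$ is obtained from $w$ by finitely many applications of $\push$, each incrementing a single letter. For (a) I would construct an explicit two-sided inverse. On $\pi$-marked words, $\push$ is undone by a ``pull'' move that decrements the relevant letter; by Lemma~\ref{lamshim-lem} and the Strong Exchange Condition \cite[Theorem~5.8]{Humphreys} the index to decrement is uniquely determined, so $\pull$ is well defined and $\pull\circ\push=\push\circ\pull=\id$ on $\pi$-marked words. Since iterating $\push$ (resp.\ $\pull$) from a reduced word terminates at a reduced word \cite[Lemma~5]{Little}, this defines $\fkb_\pi^{-1}$ with $\fkb_\pi\circ\fkb_\pi^{-1}=\fkb_\pi^{-1}\circ\fkb_\pi=\id$ on $\bigsqcup_{\tau\in S_\ZZ}\cR(\tau)$; as the text notes, $\fkb_\pi$ is by construction the inverse of \cite[Algorithm~2]{Little}, so (a) is essentially Little's bijectivity statement.

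For (b), one must track the descent set across the whole push sequence, since a single $\push$ can create or destroy a descent---for instance $\push$ changes the reduced word $21$ into $22$---with the net changes cancelling by the end. I would establish invariance through the wiring-diagram (heap) model of \cite{Little,HamakerYoung,LamShim}, in which $\Des(w)$ is visible as a feature of the diagram preserved by the deformation realizing $\fkb_\pi$. For (c), I would reduce to a single $\push$ and analyze its interaction with the three-letter window on which $\ck_i$ acts: away from this window the two operations manifestly commute, and otherwise one runs through the finitely many ways the push can thread through it. The wiring-diagram picture, where $\ck_i$ is a bounded local rearrangement and a $\push$ slides a wire, is again the cleanest setting for this bookkeeping. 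I expect this case analysis to be the main obstacle; it is exactly where the work in \cite{Little,HamakerYoung} is concentrated.

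Finally, (d) is formal given (a)--(c) and Theorem~\ref{simCK-thm}. By (c), $\fkb_\pi$ carries each Coxeter--Knuth class $P_\EG^{-1}(P)$ into a single Coxeter--Knuth class $P_\EG^{-1}(P')$, bijectively by (a). Transporting along the recording-tableau bijections $w\mapsto Q_\EG(w)$ from these classes onto $\SYT(\sh(P))$ and $\SYT(\sh(P'))$, the $\ck_i$-action becomes the action of elementary dual equivalences \cite{EG}, so $\fkb_\pi$ induces a bijection $\SYT(\sh(P))\to\SYT(\sh(P'))$ commuting with all of them; by (b), together with the standard fact that the descent set of a reduced word equals that of its Edelman--Greene recording tableau, this bijection also preserves descent sets. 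Since the Schur polynomial of a shape is the sum of the fundamental quasisymmetric functions indexed by the descent sets of its standard tableaux, a descent-preserving bijection $\SYT(\mu)\to\SYT(\nu)$ forces $\mu=\nu$, so $\sh(P)=\sh(P')=:\mu$. As the row-superstandard tableau is the unique element of $\SYT(\mu)$ with descent set $\{\mu_1,\ \mu_1+\mu_2,\ \dots\}$, the induced bijection fixes it, and then commuting with the elementary dual equivalences together with connectedness of the dual equivalence graph of $\SYT(\mu)$ forces it to be the identity. Hence $Q_\EG(\fkb_\pi(w))=Q_\EG(w)$.
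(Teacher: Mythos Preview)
The paper does not actually prove this theorem: its proof consists entirely of citations to \cite[Lemma~7]{Little} for (a) and \cite[Corollary~1, Lemma~3, Proposition~1]{HamakerYoung} for (b), (c), (d). Your sketches for (a)--(c) are in line with how those references proceed (an explicit $\pull$ inverse, the wiring-diagram model, and a local case analysis on how a single $\push$ threads through the three-letter window of $\ck_i$), so there is nothing to correct there beyond filling in the bookkeeping you already flag as the main obstacle.

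Your derivation of (d) from (a)--(c) is correct and takes a genuinely different route from both the Hamaker--Young argument and the alternative the paper itself sketches later (Propositions~\ref{once-lem} and~\ref{once-prop}). That alternative observes that $\fkb_\pi$ changes each letter of $w$ by $0$ or $1$, and then checks directly on the insertion algorithm that any two reduced words related in this way have equal $Q_\EG$; this is short and entirely elementary. Your approach is more structural: knowing only (a)--(c), you transport $\fkb_\pi$ to a descent-preserving bijection on standard Young tableaux commuting with all elementary dual equivalences, pin down the shape via quasisymmetric expansions of Schur functions, fix the row-superstandard tableau by descent-set uniqueness, and conclude by connectedness of the dual equivalence graph. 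The payoff is that your argument would apply verbatim to \emph{any} bijection on reduced words satisfying (a)--(c), without needing the specific ``increment by at most one'' property of Little bumps; the cost is reliance on several auxiliary facts (dual-equivalence transitivity on $\SYT(\mu)$, the correspondence between $\ck_i$ and $d_i$ on recording tableaux) that the elementary route avoids.
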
 

\begin{proof}
Part (a) is a weaker form of \cite[Lemma 7]{Little}.
Parts (b), (c), and (d) 
are equivalent to \cite[Corollary 1, Lemma 3, and Proposition 1]{HamakerYoung}, respectively.
\end{proof}

Since $\fkb_\pi$ preserve descents, 
it extends to an operator on reduced factorizations
as follows. Given 
$\pi, \sigma \in S_\ZZ$ and
$w =(w^1,w^2,\dots,w^n) \in \cR_n(\sigma)$,
define
$%\be\label{fkb-ext}
 \fkb_\pi(w) = (v^1,v^2,\dots,v^n)
$ %\ee
where the words $v^i$ are such that $\fkb_\pi(w^1w^2\cdots w^n) = v^1v^2\cdots v^n$
and $\ell(v^i) = \ell(w^i)$.
Since $\Des(v^i) = \Des(w^i)$,
each $v^i$ is again strictly increasing.
The following theorem appears to be new.

\begin{theorem}\label{qi-thm1}
Let $\pi \in S_\ZZ$. Then
$\fkb_\pi$ is an isomorphism of $\gl_n$-crystals
$\ds\bigsqcup_{\sigma \in S_\ZZ} \cR_n(\sigma) \to \bigsqcup_{\sigma \in S_\ZZ} \cR_n(\sigma)$.
\end{theorem}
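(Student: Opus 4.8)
The plan is to show that $\fkb_\pi$ commutes with the weight map and with each crystal operator $e_i,f_i$ for $i\in[n-1]$. Since $\fkb_\pi$ is a bijection on $\bigsqcup_\sigma\cR(\sigma)$ by Theorem~\ref{little-thm}(a), and it preserves descents by Theorem~\ref{little-thm}(b), the induced factorization-level map is a well-defined bijection $\bigsqcup_\sigma\cR_n(\sigma)\to\bigsqcup_\sigma\cR_n(\sigma)$ (the factor lengths $\ell(w^i)$ are preserved by fiat, and each image factor is increasing because its descent set is empty). In particular $\weight(\fkb_\pi(w))=\weight(w)$ is immediate. So the content is the commutation with $e_i$ and $f_i$.

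First I would recall that the $\gl_n$-crystal structure on $\cR_n(\sigma)$ depends only on the pair of consecutive factors $(w^i,w^{i+1})$ and is defined via the $\pair$ operation; equivalently, it is a restriction of the tensor-product crystal structure on words read with a standardization that records which factor each letter lies in. The key observation is that $\fkb_\pi$ acts on the underlying word in a way that is ``local'' with respect to descents: by Theorem~\ref{little-thm}(b) the descent positions never move, so the decomposition of $\fkb_\pi(w)$ into increasing factors sits in exactly the same positions as that of $w$. Thus $\fkb_\pi$, viewed on factorizations, only ever (a) increments some letters and (b) never changes which factor a letter belongs to. The crystal operators $f_i$ act by deleting the largest unpaired letter from $w^i$ and inserting a slightly larger letter into $w^{i+1}$; I would show that applying $\fkb_\pi$ before or after this modification yields the same result.

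The cleanest route to this is to use the Edelman--Greene recording tableau as a bridge. By Theorem~\ref{eg-thm}, the map $Q_\EG$ is a quasi-isomorphism $\cR_n(\sigma)\to\Tab_n(\ell(\sigma))$, so the $\gl_n$-crystal structure on $\cR_n(\sigma)$ is completely determined by $Q_\EG$ together with the fact that the fibers of $P_\EG$ are the full subcrystals. Now Theorem~\ref{little-thm}(d) gives $Q_\EG(\fkb_\pi(w))=Q_\EG(w)$ for reduced words, and this extends to factorizations: since $\fkb_\pi$ preserves descents, the factor $w^j$ of $w$ and the factor $v^j$ of $\fkb_\pi(w)$ contribute boxes to the same positions of the recording tableau, so $Q_\EG(\fkb_\pi(w))=Q_\EG(w)$ as factorization recording tableaux as well. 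Consequently $\fkb_\pi$ sends each full subcrystal (a fiber of $P_\EG$) to another full subcrystal, and on each it commutes with the $Q_\EG$-labelling; since within a connected normal $\gl_n$-crystal the operators $e_i,f_i$ are uniquely determined by where elements sit relative to $Q_\EG$ (via Theorem~\ref{tab-thmdef} and Theorem-Definition~\ref{tab-thmdef}), $\fkb_\pi$ must intertwine them. Spelling this out: if $f_i(w)\neq0$ then $Q_\EG(f_i(w))$ is obtained from $Q_\EG(w)$ by the tableau crystal operator $f_i$ on $\Tab_n$; the same holds for $\fkb_\pi(w)$; and since $\fkb_\pi$ preserves $P_\EG$-fibers and $Q_\EG$, bijectivity forces $\fkb_\pi(f_i(w))=f_i(\fkb_\pi(w))$.

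The main obstacle is making the last ``uniqueness'' step airtight: one must verify that knowing $P_\EG(w)$ and $Q_\EG(w)$, together with the rule that $Q_\EG$ commutes with crystal operators on full subcrystals, really pins down $f_i(w)$ as an element of $\cR_n(\sigma)$ — in other words, that $w\mapsto(P_\EG(w),Q_\EG(w))$ being a bijection (Theorem~\ref{eg-thm}) plus $Q_\EG$ being a morphism of crystals suffices to transport the crystal structure. This is essentially the statement that $Q_\EG$ is a quasi-isomorphism onto its image, which is exactly Theorem~\ref{eg-thm}(b); the remaining care is to check that $\fkb_\pi$ genuinely restricts to an isomorphism on each fiber (not merely a weight- and $Q_\EG$-preserving bijection), which follows because a weight-preserving, $Q_\EG$-preserving bijection between two connected normal $\gl_n$-crystals of the same highest weight is automatically a crystal isomorphism. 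An alternative, more hands-on approach — directly tracking how the Little pushing procedure interacts with the $\pair$ map defining $e_i,f_i$ — would also work but requires a somewhat delicate case analysis of how incrementing a letter by the push can or cannot disturb the bracketing of parentheses between factors $w^i$ and $w^{i+1}$; I would keep this as a fallback if the $Q_\EG$-based argument needs the morphism property in a form not quite supplied by the cited results.
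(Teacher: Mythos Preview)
Your approach is the same as the paper's: use that $Q_\EG$ is a quasi-isomorphism on each side and that $Q_\EG\circ\fkb_\pi=Q_\EG$, so that $\fkb_\pi$ restricted to a full subcrystal $\cB$ factors as $(Q_\EG|_\cC)^{-1}\circ (Q_\EG|_\cB)$ and is therefore a crystal isomorphism. The paper's proof does exactly this.

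However, there is a real gap in your argument at the word ``Consequently.'' You assert that $Q_\EG(\fkb_\pi(w))=Q_\EG(w)$ implies that $\fkb_\pi$ sends each fiber of $P_\EG$ to another fiber of $P_\EG$, but this does not follow from $Q_\EG$-preservation and bijectivity alone. Abstractly, if there are two full subcrystals $\cB_1,\cB_2$ on one side and two full subcrystals $\cC_1,\cC_2$ of the same shape on the other, a bijection satisfying $Q_\EG\circ\fkb_\pi=Q_\EG$ could perfectly well send part of $\cB_1$ into $\cC_1$ and part into $\cC_2$. What you need---and what the paper invokes explicitly---is Theorem~\ref{little-thm}(c), that $\fkb_\pi$ commutes with every Coxeter--Knuth operator $\ck_i$. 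Combined with Theorem~\ref{simCK-thm} (the $P_\EG$-fibers are precisely the Coxeter--Knuth classes) and Theorem~\ref{eg-thm}(a), this gives that $\cC:=\fkb_\pi(\cB)$ is again a single full subcrystal. Once you insert this one citation, your argument is complete and coincides with the paper's. Your closing remark about a ``weight-preserving, $Q_\EG$-preserving bijection between two connected normal crystals'' being automatically an isomorphism is correct, but it presupposes that the target is a \emph{single} connected crystal, which is exactly the point you have not yet established.
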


\begin{proof}
Suppose $\cB$ is a full $\gl_n$-subcrystal of $\cX : = \bigsqcup_{\sigma \in S_\ZZ} \cR_n(\sigma)$.
Theorems~\ref{simCK-thm} and \ref{eg-thm} imply that $\cB$ consists of all 
$n$-fold increasing factorizations of reduced words in a
single Coxeter-Knuth equivalence class.
 Theorem~\ref{little-thm}(c)
implies that $\cC := \fkb_\pi(\cB)$ is another full subcrystal of $\cX$.
Let $\lambda$ be the partition that is 
the common shape of the EG-recording tableaux for all factorizations in $\cB$ and $\cC$.
By Theorem~\ref{little-thm}(d), 
the map $\cB \xrightarrow{\fkb_\pi} \cC \xrightarrow{Q_\EG} \Tab_n(\lambda)$
coincides with $\cB \xrightarrow{Q_\EG}\Tab_n(\lambda)$.
Since both
$\cB \xrightarrow{Q_\EG}\Tab_n(\lambda)$
and
$ \cC \xrightarrow{Q_\EG} \Tab_n(\lambda)$
are crystal isomorphisms
by Theorem~\ref{eg-thm},
we conclude that $\fkb_\pi : \cB \to \cC$ is a crystal isomorphism.
By Theorem~\ref{little-thm}(a), $\fkb_\pi$ is an isomorphism $ \cX \to \cX$.
\end{proof}

\subsection{Involution Little bumps}

Our next goal is 
to prove analogues of Theorems~\ref{little-thm} and \ref{qi-thm1} for involution words.
 Fix $\pi \in I_\ZZ$ and recall that $\cA(\pi)\subset S_\ZZ$ is the set with
$\iR(\pi) = \bigsqcup_{\sigma \in \cA(\pi)} \cR(\sigma)$.
A \emph{$\pi$-marked involution word} is a
pair $(w,i)$ in which $w$ is word and $i$ is an index such that $\del_i(w) \in \iR(\pi)$.
Equivalently, this is just an $\alpha$-marked word for some $\alpha \in \cA(\pi)$.
A $\pi$-marked involution word $(w,i)$ is \emph{inv-reduced} if $w$ is  an involution word
for some element of $I_\ZZ$.

\begin{lemma}[{\cite[Lemma 3.34]{HMP3}}]
\label{toggle-lem}
If $(w,i)$ is a $\pi$-marked involution word
that is not inv-reduced,
then there is a unique index $i\neq j \in [\ell(w)]$ such that $(w,j)$ is also a $\pi$-marked involution word.
\end{lemma}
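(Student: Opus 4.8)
The plan is to reduce the statement to its counterpart for ordinary marked words, Lemma~\ref{lamshim-lem}, together with the combinatorial characterizations of involution words in Lemma~\ref{rtimes-lem} and Theorem~\ref{isim-thm}. The first observation is that, since $\iR(\pi) = \bigsqcup_{\sigma \in \cA(\pi)} \cR(\sigma)$, a pair $(w,i)$ is a $\pi$-marked involution word precisely when it is an $\alpha$-marked word for the unique $\alpha \in \cA(\pi)$ with $\del_i(w) \in \cR(\alpha)$. Since $(w,i)$ is assumed not inv-reduced, the word $w$ is not an involution word for any element of $I_\ZZ$; by Theorem~\ref{isim-thm} this happens exactly when either $w$ is not reduced, or $w$ is reduced but $\isim$-equivalent to a word with equal adjacent letters. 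I would treat these two cases separately.

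If $w$ is not reduced, then $(w,i)$ is an unreduced $\alpha$-marked word, so Lemma~\ref{lamshim-lem} yields a unique index $j \neq i$ with $\del_j(w) \in \cR(\alpha) \subseteq \iR(\pi)$, and hence $(w,j)$ is a $\pi$-marked involution word. For the uniqueness clause I would invoke the Strong Exchange Condition \cite[Theorem~5.8]{Humphreys}: if $w$ is an unreduced word of length $m$, then the set of indices $k$ for which $\del_k(w)$ is reduced is either empty or a two-element set, and all of those subwords represent the same permutation of $\ZZ$. Consequently any index $k \neq i$ with $\del_k(w) \in \iR(\pi)$ automatically satisfies $\del_k(w) \in \cR(\alpha)$, so $k = j$ by Lemma~\ref{lamshim-lem}.

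If instead $w$ is reduced but not an involution word, I would locate the defect using Lemma~\ref{rtimes-lem}(c). Writing $\rho_k := (\cdots((1 \rtimes s_{w_1}) \rtimes s_{w_2})\cdots)\rtimes s_{w_k}$, there is a position $p$ with $w_p$ a descent of $\rho_{p-1}$; choose $p$ minimal, so that $w_1 \cdots w_{p-1}$ is a genuine partial involution word. Since $\del_i(w) \in \iR(\pi)$, deleting the letter $w_i$ must destroy this unique defect, and a short analysis of the $\rtimes$-recurrence shows this forces $i$ to lie in a small window determined by $p$ --- essentially $i = p$, or $i$ is the index from which the offending value propagated under the $\rtimes$-action. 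Comparing with the behaviour of the ordinary Little toggle of Lemma~\ref{lamshim-lem}, one finds that there is exactly one other index $j$ in that window whose deletion also removes the defect and introduces no new one, and moreover that $\del_j(w)$ is obtained from $\del_i(w)$ by a single $\isim$-move (a Coxeter braid relation or the transposition of the first two letters). Since $\iR(\pi)$ is a full $\isim$-class, this gives $\del_j(w) \in \iR(\pi)$, so $(w,j)$ is a $\pi$-marked involution word, and uniqueness of $j$ follows because any repairing deletion must act inside the same window.

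The main obstacle is the bookkeeping in the second case: one must track carefully how removing a single letter of $w$ changes the whole sequence of partial products $\rho_k$, and verify that there is exactly one alternative deletion that both kills the defect at $p$ and creates no defect further to the right. This is where Lemma~\ref{rtimes-lem}, the conjugation formula defining $\rtimes$, and a direct comparison with Lemma~\ref{lamshim-lem} have to be combined; the remainder of the argument is a routine translation between involution words and the ordinary marked-word setting.
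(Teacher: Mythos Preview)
The paper does not prove this lemma; it is quoted directly from \cite[Lemma~3.34]{HMP3} with no argument given here. The remark immediately following the statement confirms your two-case split and says explicitly that when $w$ is not reduced the index $j$ coincides with the one produced by Lemma~\ref{lamshim-lem}, so your treatment of that case matches the intended picture.

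Your second case, however, is not a proof but an outline with real gaps. The assertions that the repairing index $i$ must lie ``in a small window determined by $p$'', that there is ``exactly one other index $j$ in that window'' whose deletion fixes the defect without creating a new one, and that $\del_j(w)$ differs from $\del_i(w)$ by a single $\isim$-move are all stated without justification. These are precisely the nontrivial content of \cite[Lemma~3.34]{HMP3}: one needs a structural result about atoms in $\cA(\pi)$ (an analogue of the Strong Exchange Condition for the involution weak order, as developed in \cite{HMP3} and \cite{Hultman3}) to see that at most two elements of $\cA(\pi)$ can sit below a given $\sigma\in S_\ZZ$ in Bruhat order, and hence that at most two deletions from a reduced $w$ can land in $\iR(\pi)$. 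Your $\rtimes$-recurrence bookkeeping does not by itself supply this; you would need to either reproduce that exchange-type argument or cite it.
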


\begin{remark}
A $\pi$-marked involution word $(w,i)$ may fail to be inv-reduced in two ways:
either $w$ is a reduced word that is not an involution word, or $w$ is not reduced.
In the latter case,
the index $j$ identified in Lemma~\ref{toggle-lem} is necessarily the same as the one in Lemma~\ref{lamshim-lem}.
\end{remark}

\begin{definition}
Let $(w,i)$ be a $\pi$-marked involution word of length $m$.
If $(w,i)$ is inv-reduced then let $j=i$,
and otherwise let $i\neq j \in [m]$ be 
the unique index  such that $(w,j)$ is 
a $\pi$-marked involution word.
Then define $\ipush(w,i):=(v, j)$
where $v := w_1\cdots w_{j-1} (w_j + 1) w_{j+1}\cdots w_m.$
\end{definition}

As with the earlier $\push$ operator,
one can show that if 
$(w,i)$ is a $\pi$-marked involution word then 
$\ipush(w,i)$ is inv-reduced for some sufficiently large $N>0$
\cite[Lemma 3.37]{HMP3}.
By \cite[Theorem 3.4]{HMP3} (see also \cite[Theorem 2.8]{Hultman3}),
it holds that 
if $\pi \in I_\ZZ$ and $w$ is a fixed involution word, then there exists at most one index $i$ 
such that $(w,i)$ is a $\pi$-marked involution word.

\begin{definition}[{\cite[\S3.3]{HMP3}}]
The \emph{involution Little bumping operator} $\ifkb_{\pi}$ of $\pi \in I_\ZZ$ acts on involution words $w$ as follows.
If there exists $i$ such that 
 $(w,i)$ is a $\pi$-marked involution word
and $N>0$ is minimal such that 
$ \ipush^N(w,i) =: (v,j)$ is inv-reduced, then
$\ifkb_{\pi}(w) := v$.
Otherwise, $\ifkb_{\pi}(w) := w$.
\end{definition}

The map $\ifkb_\pi$ is the inverse of the operator 
${\hat\cB}_\pi$  in 
\cite[Theorem 3.40]{HMP3}.

\begin{example}\label{ibb-ex}
Let $\pi = (2,5) \in I_\ZZ$, $\sigma = (1,5) \in I_\ZZ$,
and $w=2134 \in \iR(\sigma)$. 
Then $234 \in \iR(\pi)$, so to compute $\ifkb_\pi(w)$ we must find the minimal $N >0$ such that 
 $\ipush^N(2134,2)$ is inv-reduced. The following pictures show that $N=4$:
\[
\arraycolsep=0.7pt\def\arraystretch{0.7}
\barr{cccc}
 \cdot & \cdot & \cdot & \cdot \\
 \cdot & \cdot & \cdot & \times \\
 \cdot & \cdot & \times & \cdot \\
 \times & \cdot & \cdot & \cdot \\
 \cdot & \ttimes & \cdot & \cdot \\
 \hline \\[-6pt]
 2 & 1 & 3 & 4  
\earr
\ \
\xrightarrow{\ipush}
\ \
\barr{cccc}
 \cdot & \cdot & \cdot & \cdot \\
 \cdot & \cdot & \cdot & \times \\
 \cdot & \cdot & \times & \cdot \\
 \times & \ttimes & \cdot & \cdot \\
 \cdot & \cdot & \cdot & \cdot \\
 \hline \\[-6pt]
 2 & 2 & 3 & 4  
\earr
\ \
\xrightarrow{\ipush}
\ \
\barr{cccc}
 \cdot & \cdot & \cdot & \cdot \\
 \cdot & \cdot & \cdot & \times \\
 \ttimes & \cdot & \times & \cdot \\
 \cdot & \times & \cdot & \cdot \\
 \cdot & \cdot & \cdot & \cdot \\
 \hline \\[-6pt]
 3 & 2 & 3 & 4  
\earr
\ \
\xrightarrow{\ipush}
\ \
\barr{cccc}
 \cdot & \cdot & \cdot & \cdot \\
 \cdot & \cdot & \ttimes & \times \\
 \times & \cdot & \cdot & \cdot \\
 \cdot & \times & \cdot & \cdot \\
 \cdot & \cdot & \cdot & \cdot \\
 \hline \\[-6pt]
 3 & 2 & 4 & 4  
\earr
\ \
\xrightarrow{\ipush}
\ \
\barr{cccc}
 \cdot & \cdot & \cdot & \ttimes \\
 \cdot & \cdot & \times & \cdot \\
 \times & \cdot & \cdot & \cdot \\
 \cdot & \times & \cdot & \cdot \\
 \cdot & \cdot & \cdot & \cdot \\
 \hline \\[-6pt]
 3 & 2 & 4 & 5
\earr
\]
The third marked word is reduced but not inv-reduced, as $3234 \isim 2334$.
Thus $\ifkb_\pi(w) = 3245$. 
\end{example}

\begin{lemma}\label{easy-lem}
Let $\pi \in I_\ZZ$.
For any involution word $w$,
there is a finite sequence of elements $\alpha_1,\alpha_2,\dots,\alpha_l \in \cA(\pi)$
such that $\ifkb_\pi(w) = \fkb_{\alpha_l}\cdots \fkb_{\alpha_2}\fkb_{\alpha_1}(w)$.
Moreover, this sequence is the same for all involution words in a single Coxeter-Knuth equivalence class.
\end{lemma}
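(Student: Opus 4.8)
The plan is to unwind $\ifkb_\pi$ through $\ipush$ and to match the $\ipush$-iteration, block by block, against iterations of the ordinary operator $\push$.

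\emph{The decomposition.} Fix an involution word $w$. If no index $i$ satisfies $\del_i(w)\in\iR(\pi)$ then $\ifkb_\pi(w)=w$ and the empty sequence works; otherwise such $i$ is unique by the Strong Exchange Condition. Put $(w^{(k)},i_k):=\ipush^k(w,i)$ for $0\le k\le N$, where $N>0$ is minimal with $(w^{(N)},i_N)$ inv-reduced (finite by \cite[Lemma 3.37]{HMP3}); each $(w^{(k)},i_k)$ remains a $\pi$-marked involution word since $\ipush(u,i)=(v,j)$ has $\del_j(v)=\del_j(u)$ with $(u,j)$ a $\pi$-marked involution word. Let $0=k_0<k_1<\dots<k_r=N$ list the $k$ with $w^{(k)}$ reduced. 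By minimality of $N$, the words $w^{(k_1)},\dots,w^{(k_{r-1})}$ are reduced but not involution words, while $w^{(0)}=w$ and $w^{(N)}$ are involution words and every other $w^{(k)}$ is non-reduced. The local fact I would then check, from the Remark following Lemma~\ref{toggle-lem} together with Lemmas~\ref{lamshim-lem} and~\ref{toggle-lem} and the Strong Exchange Condition, is: a reduced non-involution word $u$ that is $\beta$-marked for some $\beta\in\cA(\pi)$ is $\beta$-marked for exactly two such $\beta$; and if $(u,i)$ is a $\pi$-marked involution word with $u$ not an involution word, then one $\ipush$ step from $(u,i)$ equals one $\push$ step taken with respect to the unique $\beta\in\cA(\pi)$ with $\del_i(u)\in\cR(\beta)$. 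Granting this, the steps from $w^{(k_t)}$ to $w^{(k_{t+1})}$ form exactly the $\push_{\gamma_t}$-iteration computing $\fkb_{\gamma_t}$, where $\gamma_0$ is fixed by $\del_{i_0}(w)\in\cR(\gamma_0)$ and, for $t\ge 1$, $\gamma_t$ is the element of $\cA(\pi)\setminus\{\gamma_{t-1}\}$ marking $w^{(k_t)}$. Hence $w^{(k_{t+1})}=\fkb_{\gamma_t}(w^{(k_t)})$ for all $t$, and composing gives $\ifkb_\pi(w)=\fkb_{\gamma_{r-1}}\cdots\fkb_{\gamma_1}\fkb_{\gamma_0}(w)$ with each $\gamma_t\in\cA(\pi)$, which is the first assertion (with $l=r$ and $\alpha_{t+1}=\gamma_t$).

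\emph{Coxeter--Knuth invariance of the sequence.} I would use two facts. (i) For fixed $\beta\in\cA(\pi)$, the property ``$u$ is a $\beta$-marked word'' depends only on the Coxeter--Knuth class of the reduced word $u$: indeed $u$ is $\beta$-marked iff $\fkb_\beta(u)\ne u$ (a fixed point of $\fkb_\beta$ cannot be $\beta$-marked, as every $\push$ step raises the sum of the letters), and by Theorem~\ref{little-thm}(c) the operator $\fkb_\beta$ commutes with each $\ck_i$, so its fixed-point set is a union of Coxeter--Knuth classes. (ii) ``$u$ is an involution word'' is a union of Coxeter--Knuth classes, because the set of all involution words equals $\bigcup_{\sigma\in I_\ZZ}\iR(\sigma)=\bigcup_{\sigma\in I_\ZZ}\bigsqcup_{\alpha\in\cA(\sigma)}\cR(\alpha)$ by Theorem~\ref{isim-thm}, and each $\cR(\alpha)$ is $\simCK$-closed since Coxeter--Knuth moves preserve the underlying permutation. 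With these, I argue by induction on $r$. The base case $r=0$ is covered by applying (i) to every $\beta\in\cA(\pi)$, so that ``$w$ admits no such $i$'' is a class property. When $r\ge 1$, $\gamma_0$ is the unique $\beta\in\cA(\pi)$ marking $w$ (unique because $w$ is an involution word), hence determined by the class of $w$ by (i); the class of $\fkb_{\gamma_0}(w)$ is determined by that of $w$ by Theorem~\ref{little-thm}(c); whether $\fkb_{\gamma_0}(w)$ is an involution word (i.e.\ whether $r=1$) is determined by (ii); and if not, $\gamma_1$ is the unique element of $\cA(\pi)\setminus\{\gamma_0\}$ marking $\fkb_{\gamma_0}(w)$, again determined by (i). Formalizing this as an induction on the number of remaining $\ipush$-blocks, via an auxiliary statement for pairs $(u,\beta)$ with $u$ reduced and $\beta$-marked, shows the whole sequence $(\gamma_0,\dots,\gamma_{r-1})$ is a function of the Coxeter--Knuth class of $w$.

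The step I expect to be the real work is the local claim in the first paragraph: verifying that a single $\ipush$ step reproduces a single $\push$ step in each regime (current word an involution word, reduced but not an involution word, or non-reduced) and that the marked index and the active element of $\cA(\pi)$ are transported correctly between consecutive steps, especially at the boundaries $k=k_t$ where the active element switches from $\gamma_{t-1}$ to $\gamma_t$. The ingredients are precisely the Remark after Lemma~\ref{toggle-lem}, Lemmas~\ref{lamshim-lem} and~\ref{toggle-lem}, and the Strong Exchange Condition; the verification is elementary but must be organized so that nothing is lost at the block boundaries.
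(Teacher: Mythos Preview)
Your proposal is correct and follows essentially the same approach as the paper. The paper dispatches the first assertion in one line by citing the remark after Lemma~\ref{toggle-lem} (which is exactly your ``local claim'' that $\ipush$ and $\push$ agree on non-reduced marked words), and for the second assertion phrases the characterization of the $\alpha_i$'s via Bruhat covers: writing $\sigma_i$ for the permutation with $\fkb_{\alpha_{i-1}}\cdots\fkb_{\alpha_1}(w)\in\cR(\sigma_i)$, one has that $\alpha_1$ is the unique element of $\cA(\pi)$ with $\alpha_1\lessdot\sigma_1$ and $\alpha_i$ for $i>1$ is the unique element of $\cA(\pi)\setminus\{\alpha_{i-1}\}$ with $\alpha_i\lessdot\sigma_i$; since the $\sigma_i$ are determined by the Coxeter--Knuth class (Theorem~\ref{little-thm}(c)), so are the $\alpha_i$. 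Your fixed-point argument for (i) is a valid alternative to the Bruhat-cover formulation---indeed ``$u$ is $\beta$-marked'' is equivalent to ``$\beta\lessdot\sigma$'' for $u\in\cR(\sigma)$, so both reduce to the same permutation-level statement---but the Bruhat phrasing makes the dependence on $\sigma_i$ alone more transparent and avoids the small detour through fixed points. One wording caution: your local claim as stated (``one $\ipush$ step equals one $\push$ step with respect to the $\beta$ with $\del_i(u)\in\cR(\beta)$'') is literally false at the block boundaries $k_t$ with $t\ge1$, where $\ipush$ switches to the \emph{other} element $\gamma_t\in\cA(\pi)$; you clearly understand this from your subsequent definition of $\gamma_t$, but the sentence should be restricted to the non-reduced regime.
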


In Example~\ref{ibb-ex} 
where $\pi = (2,5)$ and $w=2134$ 
we have $\ifkb_\pi(w) =\fkb_{\alpha_2}\fkb_{\alpha_1}(w)$
for $\alpha_1 = s_2s_3s_4 = (2,3,4,5)$
and $\alpha_2 = s_3s_2s_4 = (2,4,5,3)$.

\begin{proof}
The first assertion holds by the remark after Lemma~\ref{toggle-lem}.
Write $\lessdot $ for the covering relation in the Bruhat order on $S_\ZZ$.
Fix an involution word $w$ and let $\alpha_1,\alpha_2,\dots,\alpha_l \in \cA(\pi)$
be the sequence with
$\ifkb_\pi(w) = \fkb_{\alpha_l}\cdots \fkb_{\alpha_2}\fkb_{\alpha_1}(w)$.
Define $\sigma_i \in S_\ZZ$ such that 
$w \in \cR(\sigma_1)$ and $\fkb_{\alpha_{i-1}}\cdots \fkb_{\alpha_2}\fkb_{\alpha_1}(w) \in \cR(\sigma_i)$ for $1<i\leq l$.
Lemma~\ref{toggle-lem} implies that
$\alpha_1$ is the unique element of $\cA(\pi)$ with $\alpha_1 \lessdot \sigma_1$
while $\alpha_i$ for $i>1$ is the unique element of $\cA(\pi)\setminus\{\alpha_{i-1}\}$
with $\alpha_i \lessdot \sigma_i$.   
It follows that any word $v \in \cR(\sigma_1)$
with $\fkb_{\alpha_{i-1}}\cdots \fkb_{\alpha_2}\fkb_{\alpha_1}(v) \in \cR(\sigma_i)$ for all $1<i \leq l$
also has $\ifkb_\pi(v) =  \fkb_{\alpha_l}\cdots \fkb_{\alpha_2}\fkb_{\alpha_1}(v)$.
These conditions hold
if $v \simCK w$ by Theorem~\ref{little-thm}.
\end{proof}

The following lemma collects several observations
used in the proof of Theorem~\ref{o-little-thm},
which gives an analogue of Theorem~\ref{little-thm}
 for the operator $\ifkb_\pi$

\begin{lemma}\label{16lem}
Let $\mu$ be a strict partition.

\begin{itemize}
\item[(1)] Suppose we know that $w$ is the row (respectively, column) reading word of an increasing shifted tableau of shape $\mu$.
Then we can recover $\mu$ from $\ell(w)$ and $\Des(w)$.

\item[(2)] For each strict partition $\mu$, there are indices $i_1,i_2,\dots,i_p \in \PP$ (depending only on $\mu$)
such that
$\ck_{i_1}\ck_{i_2}\cdots \ck_{i_p}(\row(T)) = \col(T)$
and
$\ck_{i_p}\cdots \ck_{i_2}\ck_{i_1}(\col(T)) = \row(T)$
 for all increasing shifted tableaux $T$ of shape $\mu$.
We define
\be\label{tau-eq} \tau_\mu^\col := \ck_{i_1}\ck_{i_2}\cdots \ck_{i_p}
\quand
\tau_\mu^\row := \ck_{i_p}\cdots \ck_{i_2}\ck_{i_1}.
\ee
\end{itemize}
Now suppose $u=u_1u_2\cdots u_{n}$ is a strictly increasing word with $n>0$ and $ x \in \ZZ$.
\begin{itemize}
\item[(3)] 
Assume $ux$ is a reduced word and $ u_{n}> x$.
If $i$ is minimal such that  $y:=u_i\geq x$
then
\[
\ck_1\ck_2\cdots \ck_{n-1}(ux) = \begin{cases}
(y+1)\cdot u_1u_2\cdots u_{n}&\text{if $x=y$} \\
y \cdot u_1\cdots u_{i-1} \cdot x\cdot u_{i+1}\cdots u_{n} &\text{if $x<y$}.
\end{cases}
\]
In particular, if $x<u_1$ then $\ck_1\ck_2\cdots \ck_{n-1}(ux) = u_1 \cdot x \cdot u_2\cdots u_n$.

\item[(4)] If $ux$ is an involution word (respectively, fpf-involution word)
then  
 $x<u_1$ if and only if 
 a descent in position $n$
 occurs in $\ck_{n-2}\cdots\ck_2 \ck_1\ock(ux)$
 (respectively, $\ck_{n-2}\cdots\ck_2 \ck_1\spck(ux)$).

\end{itemize}
Finally, let $v=v_1v_2\cdots v_{n}$ be a strictly decreasing word with $n>0$ and $ x \in \ZZ$.
\begin{itemize}

\item[(5)] Assume $xv$ is a reduced word
and
 $x<v_1$.
If $i$ is maximal such that  $y:=v_i\geq x$
then
\[
\ck_{n-1}\cdots \ck_2\ck_1(xv) 
=
 \begin{cases}
v_1v_2\cdots v_{n}\cdot (y+1)&\text{if $x=y$} \\
v_1\cdots v_{i-1} \cdot x\cdot v_{i+1}\cdots v_{n}\cdot y &\text{if $x<y$}.
\end{cases}
\]

\item[(6)] If $w:=v_1  x v_2 \cdots v_n$ is reduced  and $v_1<x$,
then 
$
\ck_{n-1}\cdots \ck_2\ck_1(w) = vx.
$
\end{itemize}
\end{lemma}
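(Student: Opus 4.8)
The plan is to handle the six parts in a logical rather than numerical order. Parts~(3), (5), and (6) are ``bubbling'' identities describing how iterated Coxeter--Knuth operators move a single letter through a monotone word; I would prove these first by direct bookkeeping, then deduce the descent statements~(4) and~(1) quickly, and finally treat the global normal-form statement~(2), which I expect to be the main obstacle. For~(3), I would track the letter $x$ as $\ck_{n-1},\ck_{n-2},\dots,\ck_1$ are applied in turn to $ux=u_1u_2\cdots u_nx$: since $u$ is strictly increasing and $u_n>x$, each move that acts nontrivially is a rotation ($acb\leftrightarrow cab$ or $bca\leftrightarrow bac$) or the braid move $a(a+1)a\leftrightarrow(a+1)a(a+1)$, and one checks inductively that $x$ bubbles past exactly the entries $u_j$ with $u_j>x$, coming to rest in slot $i$, where $i$ is minimal with $u_i\ge x$; the last move is the braid move exactly when $x=y:=u_i$ (yielding the $y+1$ prefix) and is a rotation otherwise (yielding the $y$ prefix with $x$ at slot $i$), and the ``in particular'' clause is the case $i=1$. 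Statement~(5) follows from~(3) by reversing words, since reversal sends a $\ck$-move to a $\ck$-move and reverses their order, and~(6) is the same bookkeeping with $x$ bubbled rightward through the strictly decreasing tail $v_2\cdots v_n$. In each case the hypothesis that the ambient word is reduced rules out the collisions $x=u_j$ (resp.\ $x=v_j$) that would obstruct a move, since $\ck$-moves preserve reducedness.

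For~(4), note that $\ock$ and $\spck$ from~\eqref{ck0-eq} and~\eqref{spck-eq} are exactly the extra generators of $\isim$ and $\fsim$ beyond the Coxeter--Knuth relations, so $\ck_{n-2}\cdots\ck_1\ock(ux)$ is an involution word and $\ck_{n-2}\cdots\ck_1\spck(ux)$ is an fpf-involution word; by Theorems~\ref{isim-thm} and~\ref{fsim-thm} neither has equal adjacent letters. Applying $\ck_1,\dots,\ck_{n-2}$ bubbles the displaced small letter ($u_1$, or $u_1-1$ in the braid case of $\spck$) rightward past $u_3,\dots,u_n$ into slot $n$, so the descent in position $n$ of the result compares that letter with $x$: in the $\ock$ case this is exactly the condition $u_1>x$. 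For $\spck$ only its first two cases arise here, since an fpf-involution word $ux$ with $u$ increasing begins with an even letter $u_1$ and then, using Lemma~\ref{fpf-rtimes-lem} to compute the descents of $s_{u_1}\cdot\minfpf\cdot s_{u_1}$, satisfies $u_2-u_1\in\{1,2,4,6,\dots\}$; the resulting condition is $u_1>x$ or $u_1-1>x$, and in the latter case the no-equal-adjacent-letters property forces $x\neq u_1-1$, so it is again equivalent to $x<u_1$. For~(1): in an increasing shifted tableau $T$ of shape $\mu=(\mu_1>\cdots>\mu_k)$, the row reading word is strictly increasing within each row and has a descent at the junction of consecutive rows, because the rightmost entry of the shorter upper row dominates the diagonal entry $T(i,i)$ of the lower row via $T(i,i)<T(i,i+1)<T(i+1,i+1)$; hence $\Des(\row(T))$ is the set of partial sums $\mu_k,\ \mu_k+\mu_{k-1},\ \dots$, which with $\ell(w)=|\mu|$ recovers $\mu$. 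The column case is dual: every within-column step is a descent and every column-junction step is not, so $\Des(\col(T))$ encodes the column lengths of $\SD_\mu$, which again determine $\mu$.

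For~(2), the main obstacle, I would induct on the number of rows of $\mu$. Writing $\row(T)=\row(T^{>1})\cdot R_1$ with $T^{>1}$ the subtableau on rows $\ge 2$ (an increasing shifted tableau of a smaller shifted shape) and $R_1$ the bottom row, the inductive hypothesis supplies a shape-dependent sequence of $\ck$-moves carrying $\row(T^{>1})$ to $\col(T^{>1})$; these act at positions that do not reach into $R_1$, so they produce $\col(T^{>1})\cdot R_1$. It then remains to interleave each entry $T(1,j)$ of $R_1$ into its column position by bubbling it leftward past the columns of $T^{>1}$ lying to its right, which is a sequence of $\ck$-moves of the type analyzed in~(5)--(6). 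The crucial point is to verify that all of these move-positions depend only on $\mu$ and that each intermediate $\ck$-move genuinely acts --- i.e.\ its three-letter window lies in a special form --- both of which follow from inequalities forced by $T$ being increasing, such as $T(1,j)<T(1,j')$ and $T(1,j)<T(2,j)<T(2,j')$ for $j<j'$; organizing this bookkeeping into a clean induction is the delicate part. Given the forward sequence $\ck_{i_1}\cdots\ck_{i_p}$, the reverse identity in~(2) is automatic because each $\ck_i$ is an involution. Alternatively, (2) can be extracted from the known compatibility of shifted Edelman--Greene (equivalently Sagan--Worley) insertion with both reading orders, but the inductive argument keeps the proof self-contained.
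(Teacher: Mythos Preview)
Your proposal is correct and aligns with the paper's own treatment, which is extremely terse: the paper declares parts (1), (3), (5), (6) to be ``a straightforward exercise,'' derives (4) from Theorems~\ref{isim-thm} and \ref{fsim-thm}, and cites \cite[\S2.2]{Marberg2019a} for (2). Your bubbling arguments for (3), (5), (6), your descent analysis for (1), and your use of Theorems~\ref{isim-thm} and \ref{fsim-thm} (supplemented by Lemma~\ref{fpf-rtimes-lem} to rule out the fixed case of $\spck$) for (4) are exactly the kind of routine verification the paper has in mind.

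The only substantive difference is part (2): the paper offloads this to an external reference, whereas you sketch a self-contained induction on the number of rows, interleaving the bottom row into $\col(T^{>1})$ via the column-by-column bubbling moves from (5)--(6). Your alternative remark that (2) can also be extracted from the compatibility of shifted Edelman--Greene/Sagan--Worley insertion with both reading orders is essentially what the cited reference does. The inductive route is workable but, as you note, the bookkeeping showing that each intermediate three-letter window is genuinely in one of the $\ck$-move patterns (and that the positions depend only on $\mu$) is the part requiring care; the inequalities you cite from $T$ being increasing are the right ingredients. One small caution: in (4) your argument implicitly uses $n\geq 2$ (for $n=1$ the claimed equivalence fails as stated), but this is harmless since the lemma is only invoked in the proof of Theorem~\ref{o-little-thm} under the hypothesis $\mu_r\geq 2$.
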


\begin{proof}
The derivation of each of these assertions is a straightforward exercise.
The claims in part (2) follow from the discussion in \cite[\S2.2]{Marberg2019a}.
Part (4) is a consequence of Theorems~\ref{isim-thm} and \ref{fsim-thm}.
\end{proof}

\begin{theorem}\label{o-little-thm}
Let $\pi,\sigma \in I_\ZZ$ and $w \in \iR(\sigma)$. Then:
\ben
\item[(a)] The operator $\ifkb_\pi$ is a bijection $\bigsqcup_{z \in I_\ZZ} \iR(z) \to \bigsqcup_{z \in I_\ZZ} \iR(z)$.
\item[(b)] It holds that $\Des(\ifkb_{\pi}(w)) = \Des(w)$.
\item[(c)] It holds that 
$\ock(\ifkb_\pi(w)) = \ifkb_\pi(\ock(w))$ and $\ck_i(\ifkb_\pi(w)) = \ifkb_\pi(\ck_i(w))$ for all $i>0$.
\item[(d)] It holds that $\QO(\ifkb_\pi(w)) = \QO(w)$.
\een
\end{theorem}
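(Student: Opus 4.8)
The plan is to deduce all four parts in tandem, reducing everything to the corresponding ``classical'' statements in Theorem~\ref{little-thm} together with Lemma~\ref{easy-lem}. The crucial point is that Lemma~\ref{easy-lem} writes $\ifkb_\pi$ on each Coxeter--Knuth class as a fixed composition $\fkb_{\alpha_l}\cdots\fkb_{\alpha_1}$ of ordinary Little bumps, with the $\alpha_i \in \cA(\pi)$. Since $\iR(z) = \bigsqcup_{\sigma \in \cA(z)}\cR(\sigma)$ for each $z \in I_\ZZ$, and since $\ifkb_\pi$ never leaves the union $\bigsqcup_{z\in I_\ZZ}\iR(z)$ (the defining construction via $\ipush$ guarantees that when the process stops the resulting word is inv-reduced, i.e.\ lies in some $\iR(z)$), part (a) will follow if I can show $\ifkb_\pi$ is injective. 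Injectivity is where I would spend the most care: I plan to construct an explicit inverse. The natural candidate is the operator obtained by running $\ipush$ in reverse, i.e.\ by decrementing letters rather than incrementing them; equivalently, one can invoke \cite[Theorem 3.40]{HMP3}, which states that $\ifkb_\pi$ is the inverse of $\hat\cB_\pi$, so bijectivity of $\ifkb_\pi$ on $\bigsqcup_{z\in I_\ZZ}\iR(z)$ is essentially already recorded there. So part (a) should be short once the codomain claim is spelled out.

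For part (b), the descent set is preserved by each individual $\fkb_{\alpha_i}$ by Theorem~\ref{little-thm}(b), so it is preserved by the composition on the relevant class, and hence $\Des(\ifkb_\pi(w)) = \Des(w)$. For part (c), the commutation with $\ck_i$ for $i>0$ follows the same way: each $\fkb_{\alpha_i}$ commutes with every $\ck_j$ by Theorem~\ref{little-thm}(c), so the composition does too, and I just need to check that the ``fixed sequence'' $\alpha_1,\dots,\alpha_l$ attached to the Coxeter--Knuth class of $w$ equals the one attached to the class of $\ck_i(w)$ — but that is exactly the content of the ``same for all involution words in a single Coxeter--Knuth equivalence class'' clause of Lemma~\ref{easy-lem}, since $w \simCK \ck_i(w)$. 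The commutation with $\ock$ is the genuinely new ingredient, because $\ock$ swaps the first two letters and so changes the Coxeter--Knuth class; here I would use Lemma~\ref{16lem}(4), which characterizes the relevant structure of $\ock(w)$ in terms of descents of $\ck_{n-2}\cdots\ck_1\ock(w)$, together with the fact that $\ifkb_\pi$ preserves descents (part (b)) and commutes with the $\ck_j$'s, to transport the identity $\ock(\ifkb_\pi(w)) = \ifkb_\pi(\ock(w))$ across. An alternative, cleaner route is to use the characterization in Theorem~\ref{o-eg-thm} (stated earlier): $v \simICK w \iff \PO(v) = \PO(w)$, and $\simICK$ is generated by $\simCK$ and $\ock$; then part (c) amounts to saying $\ifkb_\pi$ preserves $\simICK$-classes, which I would verify by checking it on the two types of generating moves.

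For part (d), $\QO(\ifkb_\pi(w)) = \QO(w)$: since all factors here have size one, $\QO(w)$ is a standard shifted tableau (Theorem~\ref{oeg-thm}), and by the bijection of Theorem~\ref{oeg-thm} the pair $(\PO(w),\QO(w))$ determines $w$. The recording tableau $\QO(w)$ records, for each position, whether that letter was row- or column-inserted and at which step; I claim this data depends only on the $\simICK$-class of $w$ together with its length and descent set. Concretely: by Theorem~\ref{o-eg-thm}, $\PO$ is constant on $\simICK$-classes, and by part (c) already established, $\ifkb_\pi$ preserves $\simICK$-classes, so $\PO(\ifkb_\pi(w)) = \PO(w)$; it then remains to argue $\QO(\ifkb_\pi(w)) = \QO(w)$. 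The cleanest way is to induct on the length of the word via Lemma~\ref{16lem}(3)--(4): these let one read off the last box of $\QO$ (row vs.\ column, and its position in the shape) from $\ell(w)$, $\Des(w)$, and the $\simICK$-class — all three preserved by $\ifkb_\pi$ — and then peel off the last letter and recurse. I expect the inductive bookkeeping in part (d), matching up the ``last step'' data before and after applying $\ifkb_\pi$, to be the main obstacle, since it requires a careful compatibility between the Little-bump dynamics and the reading-word-to-tableau recovery encapsulated in Lemma~\ref{16lem}; the classical analogue Theorem~\ref{little-thm}(d) handles the unshifted recording tableau, and I would adapt its proof, inserting the extra case analysis for the diagonal/primed behavior.
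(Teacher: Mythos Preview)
Your treatment of parts (a), (b), and the $\ck_i$-commutation in (c) matches the paper's approach and is fine. There are, however, two genuine problems in the remainder.

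\textbf{Circularity with Theorem~\ref{o-eg-thm}.} You invoke Theorem~\ref{o-eg-thm} both as an ``alternative, cleaner route'' for the $\ock$-commutation in (c) and in your argument for (d). But Theorem~\ref{o-eg-thm} is \emph{proved using} Theorem~\ref{o-little-thm} (see Section~\ref{proofs-o-sect}): its proof applies a sequence of $\ifkb_\sigma$'s and appeals to Theorem~\ref{o-little-thm}(c),(d) to control what happens to $\QO$ and to $\simICK$-classes. So any appeal to Theorem~\ref{o-eg-thm} here is circular. Relatedly, your claim ``$\PO(\ifkb_\pi(w)) = \PO(w)$'' is simply false: $\ifkb_\pi$ typically maps $\iR(\sigma)$ into $\iR(\sigma')$ for a different involution $\sigma'$, so the insertion tableaux are for different permutations. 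What part (c) gives you is only that $\ifkb_\pi$ \emph{commutes} with $\ock$ and $\ck_i$, hence maps $\simICK$-classes to $\simICK$-classes; it does not fix them.

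\textbf{What the paper actually does.} For the $\ock$-commutation in (c), the paper does not go through Lemma~\ref{16lem}(4) or Theorem~\ref{o-eg-thm} at all; instead it tracks $\ipush$ directly through the swap of the first two letters. Concretely, setting $\underline a := 3-a$ for $a\in\{1,2\}$ and $\underline a := a$ otherwise, one checks via Theorem~\ref{isim-thm} that $\del_a(w)\in\iR(\pi)$ iff $\del_{\underline a}(\ock(w))\in\iR(\pi)$, and then that each $\ipush$-step on $(w,a)$ corresponds to an $\ipush$-step on $(\ock(w),\underline a)$ via the same rule. Your ``first route'' using Lemma~\ref{16lem}(4) is too vague to constitute an argument; I do not see how that lemma alone yields the commutation. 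For part (d), your underlying idea---compute $\QO(w)$ by an inductive algorithm whose inputs are only the descent sets of words obtained from $w$ by fixed sequences of $\ock$ and $\ck_i$---is exactly what the paper does, but it must be executed without any reference to $\PO$ or to Theorem~\ref{o-eg-thm}. The paper spells out this algorithm explicitly (operators $\rho_i$, $\psi_i$, $\phi_i$, $\REORIENT$, etc.), and this bookkeeping is indeed the main work; the point is that since $\ifkb_\pi$ preserves descents and commutes with all the operators used, the algorithm returns the same $\QO$ for $\ifkb_\pi(w)$ as for $w$.
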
 

\begin{proof}
Part (a) is equivalent to \cite[Theorem 3.40]{HMP3}.
Part (b) is immediate from Theorem~\ref{little-thm}(b) and Lemma~\ref{easy-lem}.

When $i$ is a positive integer, the identity in part (c) follows 
from 
Theorem~\ref{little-thm}(c) and Lemma~\ref{easy-lem}.
It remains to show that  $\ifkb_{\pi}$ commutes with $\ock$.
Write $w=w_1w_2\cdots w_m$ where $m\geq 2$.
For the rest of this paragraph, whenever $a \in \PP$, we  define $\underline a := 3-a$ if $a \in\{1,2\}$
and otherwise set $\underline a:=a$.
It follows from Theorem~\ref{isim-thm} that $\del_a(w) \in \iR(\pi)$ if and only if $\del_{\underline a}(\ock(w)) \in \iR(\pi)$. If such an integer $a$ exists and
$\push(w,a) = (v,b)$, then  Theorem~\ref{isim-thm} and the uniqueness asserted 
in Lemma~\ref{toggle-lem} imply
that
$\push(\ock(w),\underline a) = (\ock(v), \underline b)$.
In turn, if $(v,b)$ is not inv-reduced and $\push^2(w,a) = \push(v,b) = (u,c)$
then it follows by the same reasoning 
that $\push^2(\ock(w),\underline a) = \push(\ock(v), \underline b)  = (\ock(u),\underline c)$.
Continuing in this way, we conclude that $\ifkb_\pi(\ock(w)) =\ock(\ifkb_\pi(w))$, which is what we needed to prove part (c).

Thus $\ifkb_\pi$ preserves descents and 
commutes with $\ock$ and $\ck_i$ for  $i> 0$.
Hence, to prove part (d), it suffices to give an algorithm to compute $\QO(w)$
that only relies on the descent sets of words in the $\simICK$-equivalence class of 
$w$ as inputs.
Such an algorithm will produce the same output for $\ifkb_\pi(w)$ as for $w$ by parts (b) and (c),
showing that $\QO(\ifkb_\pi(w)) = \QO(w)$.

Suppose  $w$ has the form $w=w_1w_2\cdots w_{n+1}$ where $n \in \NN$.
Specifically, we will describe an inductive procedure to
produce
the tableau $\QO(w)$ along with an operator $\fk p$ such that $\fk p(w) = \row(\PO(w))$.
Each step in this algorithm will depend only on $n$ and the descent sets of words 
obtained by applying certain fixed sequences orthogonal Coxeter-Knuth operators to 
$w$.
To follow our discussion it will be helpful to consult
Examples~\ref{worked-ex1}, \ref{worked-ex2}, and \ref{worked-ex3},
which illustrate our notation in some particular cases.

 If $n=0$ then we always have $\QO(w) = %\ytableausetup{boxsize = .4cm,aligntableaux=center}
 \begin{ytableau}
 1
 \end{ytableau} 
 $
 and  $\fk p $ is given by the identity operator.
 Assume $n>0$ and let 
 $P = \PO(w_1w_2\cdots w_n)$
 and
$Q = \QO(w_1w_2\cdots w_n)$.
By induction, we may assume that $Q$ is given and that we have an operator $ {\fk o}$ such that ${\fk o}(w) = \row(P)w_{n+1}$.

Let $\mu=(\mu_1>\mu_2>\dots>\mu_r>0)$ be the strict partition of $n$ that is
the shape of $Q$. By Lemma~\ref{16lem}(1), this partition can be computed from $\Des(\fk o(w))$.
For each $i \in [r]$, let 
\[
d_i := n - \mu_1 - \mu_2 - \dots - \mu_{i-1}
\quand
\rho_i := \ck_{d_i+1} \ck_{d_i+2}\cdots \ck_{n - 1}.
\]
Also set 
\[ \rho_{r+1} :=\begin{cases} \rho_r &\text{if }\mu_r = 1 \\
 \ck_{\mu_r - 2}\cdots \ck_2\ck_1\ock\circ  \rho_{r} &\text{if }\mu_r \geq 2.\end{cases}\]
It follows from Lemma~\ref{16lem}(3)
that if  $d_i \notin \Des(\rho_i\circ {\fk o}(w))$
 for some $i\in[r]$, and $i$ is the minimal index with this property,
then we have
$\row(\PO(w))= \fk p(w)$ for the operator \[\fk p := \rho_i\circ {\fk o}\] and
$\QO(w)$ is formed from $Q$ by adding $n+1$ to box $(i, i+\mu_{i})$.

Assume $d_i \in \Des(\rho_i\circ {\fk o}(w))$
for all $i \in [r]$.
By Lemma~\ref{16lem}(3)-(4), 
adding $w_{n+1}$ to $P$ ends in row-insertion if and only if 
$d_r \notin\Des\( \rho_{r+1}\circ {\fk o}(w)\),$
 in which case  $\row(\PO(w))= \fk p(w)$ for  
 \[\fk p := \rho_{r+1}\circ {\fk o}\]
 and
 $\QO(w)$ is formed from $Q$ by adding $n+1$ to the diagonal box $(r+1, r+1)$.
 Otherwise, 
  adding $w_{n+1}$ to $P$ must end in column-insertion.
  Assume we are in this case.
For each $i \in [r-1]$,
let $\triangle(i) :=1 + 2 + \dots +i$ and define
\[
\psi_i := \ck_1\ck_2\cdots \ck_{d_{i} + i - 2}
\quand
\phi_i :=  (\ck_{ \triangle(i)+1}\ck_{\triangle(i)+ 2}\cdots \ck_{ \triangle(i) + r  - 1})^i.
\]
We compose these operators to define 
\[\ba
\ROWINSERT &:=  \psi_{r} \cdots \psi_2 \psi_1, \\
\REVERSE  &:= (\ck_{r-1} \cdots \ck_2\ck_1\ock)^r, \\
\COLUMNINSERT &:=  \phi_{r-1} \cdots \phi_2 \phi_1.
\ea\]
Finally, let $\nu$ be the strict partition formed by subtracting one from each part of $\mu$,
suppose $i_1,i_2,\dots,i_p \in \NN$ are such that $\tau_\nu^\col =  \ck_{i_1}\ck_{i_2}\cdots \ck_{i_p}$, and define
 \[
\REORIENT := \ck_{r + 1 + i_1}\ck_{r + 1 +  i_2}\cdots \ck_{r + 1 +  i_p} .
\]

Here is what these operators do.
Consider the insertion process  outlined in Definition~\ref{o-eg-def} that adds $w_{n+1}$ to $P$.
At exactly one iteration in this process,
a number $x$ is inserted into a row (of an increasing shifted tableau $T$ with shape $\mu$)
whose smallest entry $y$ has $x\leq y$, and the next iteration proceeds by inserting either $y$ or $y+1$ into the next column.
Suppose this diagonal bump happens in row $j \in [r]$. 
Let $t_1t_2\cdots t_r$ be the main diagonal of $T$ read bottom-to-top, so $x<y=t_j$.
Write $U$ for the tableau of shape $\nu$ formed by removing the main diagonal of $T$
and let 
\be\label{tildetilde-eq}  \tilde x=\tilde{\tilde x} = \begin{cases} x & \text{if }x <y \\ y&\text{if }x=y\end{cases}
\quand
\tilde y=\tilde{\tilde y} = \begin{cases} y & \text{if }x <y \\ y+1&\text{if }x=y.\end{cases}
\ee
(We introduce the symbols $\tilde{\tilde x}$ and $\tilde{\tilde y}$ in order to
reuse the following identities in the symplectic case, where these variables 
will have a different meaning.)
Using Lemma~\ref{16lem}(3), we compute that
\[
\ba
{\fk o}(w) &= \row(P)w_{n+1},\\
\ROWINSERT \circ {\fk o}(w) &= t_rt_{r-1}  \cdots t_{j+1} \cdot \tilde y \cdot \tilde x \cdot t_{j-1}t_{j-2} \cdots t_1 \cdot \row(U), \\
\REVERSE\circ \ROWINSERT \circ {\fk o}(w) &= t_1t_2  \cdots t_{j-1} \cdot \tilde{\tilde x} \cdot \tilde{\tilde y} \cdot t_{j+1}t_{j+2} \cdots t_r \cdot \row(U), \\
\REORIENT\circ \REVERSE\circ \ROWINSERT \circ {\fk o}(w) &= t_1t_2  \cdots t_{j-1} \cdot \tilde{\tilde x} \cdot \tilde{\tilde y} \cdot t_{j+1} t_{j+2}\cdots t_r \cdot \col(U).
\ea
\]
The insertion process adding $w_{n+1}$ to $P$ lasts for at least  $r+1$ iterations, and every iteration after the $j$th 
proceeds by column insertion.
Suppose $z$ is the number and $V$ is the shifted tableau of shape $\mu$ such that 
iteration $r+1$ inserts
$z$ into column $r+1$ of $V$.
If we write $v_1v_2\cdots v_n =\col(V)$
and set $\triangle := 1 + 2 + \dots + r$
then
it follows from Lemma~\ref{16lem}(5)-(6) %, and the preceding identities
 that
\be\label{vVv-eq}
\COLUMNINSERT\circ \REORIENT\circ \REVERSE\circ \ROWINSERT \circ {\fk o}(w) = v_1v_2\cdots v_{\triangle} \cdot z\cdot v_{\triangle+1}v_{\triangle+2}\cdots v_n
.\ee
See Example~\ref{worked-ex3} for a demonstration of this.

Let $q = \mu_1$ and 
write $h_i$ for the number of boxes in column $i$ of $\SD_\mu$,
so that $h_i = i$ for $i \in [r]$ and $h_i=0$ for $i >q$.
Continue to let $\triangle := 1 + 2 + \dots + r $.
For $r+1 \leq i \leq q+1$
 define
\[\INSERTUPTO(i) :=  \ck_{h_1 + h_2 +\dots + h_{i-1}-1}\cdots \ck_{\triangle+2}\ck_{\triangle+1}.\]
Note that this gives the identity operator if $i=r+1$ or if $i=r+2$ and $h_{r+1}=1$.
Applying $\INSERTUPTO(i)$  to 
\eqref{vVv-eq} has the effect of continuing the column insertion
process described in Definition~\ref{o-eg-def} past columns $r+1,r+2,\dots,i-1$.
and then taking the column reading word; compare again with Example~\ref{worked-ex3}.
Define
\[\TOTAL(i) := \INSERTUPTO(i)  \circ  \COLUMNINSERT\circ \REORIENT\circ \REVERSE\circ \ROWINSERT.
\]
Let $m \in\{r+1,r+2,\dots,q\}$ be minimal
with \[h_1 + h_2 +\dots + h_{m-1}+1 \in \Des(\TOTAL(m)\circ {\fk o}(w) ),\]
or if no such $m$ exists then set $m:=q+1$.
It follows from Lemma~\ref{16lem}(5) and \eqref{vVv-eq} that
$\QO(w)$ is formed from $Q$ by adding $n+1'$ to the box $(h_{m}+1, m)$,
and  if $\lambda$ is the strict partition shape of $\QO(w)$ then
$\row(\PO(w)) = \fk p (w)$ for the operator 
\[\fk p := \tau^\row_\lambda\circ  \TOTAL(m)\circ {\fk o}.\]
The shifted tableau $\QO(w)$ and the operator $\fk p$ have now been determined in all cases.
As the outputs  of the preceding algorithm are unchanged if the starting word
$w$ is replaced by $\ifkb_\pi(w)$, we conclude that
$ \QO(w) =\QO(\ifkb_\pi(w)) $ as desired.
\end{proof}

The following worked examples demonstrate the algorithm just given to compute $\QO(w)$.
Our notation maintains the conventions in the proof above.
The first example shows the relatively simple case 
when adding the final letter $w_{n+1}$ to $\PO(w_1w_2\cdots w_n)$ ends in row insertion.

\begin{example}\label{worked-ex1}
Suppose $w = 354251234$ 
so that
$n = \ell(w) - 1 = 8$.
We have 
\[P:=\PO(w_1w_2\cdots w_8) = \ytab{\none & \none & 5 \\ \none &3 & 4 \\ 1 & 2 &3 &4 & 5}
\quand
Q:=\QO(w_1w_2\cdots w_8) = \ytab{\none & \none & 8 \\ \none &3 & 7' \\ 1 & 2 &4' &5 & 6'}\]
so
$\mu=(5,2,1)$,
$r=3$, $q=5$, 
and
$d_1 = 8 > d_2 = 3 > d_3 = 1 $.
We assume that the operator $\fk o$ with 
\[
 \fk o (w) =\row(P)w_{n+1} = \row\(\ytab{\none & \none & 5 \\ \none &3 & 4 \\ 1 & 2 &3 &4 & 5& \none & 4}\)= 534123454
 \]
 is given.
 Then
\[
\ba
\rho_1 \circ \fk o (w) &= \row\(\ytab{\none & \none & 5 \\ \none &3 & 4 \\ 1 & 2 &3 &4 & 5& \none & 4}\)= 534123454 \text{ has a descent at $d_1=8$, but}
\\
\rho_2 \circ \fk o (w) &=  \row\(\ytab{\none & \none & 5 \\ \none &3 & 4 & \none & \none& \none  & 5\\  1 &2 &3 & 4&  5}\)=534512345\text{ has no descent at $d_2=3$}.
\ea
\]
Therefore $\row(\PO(w)) = \fk p(w)$ for  $\fk p := \rho_2 \circ \fk o$ and
$\QO(w)$
is formed from $Q$ by adding $9=n+1$ to box $(2,4)=(2,2+\mu_2)$.
This is consistent with Definition~\ref{o-eg-def}, which gives
\[
\PO(w) = \ytab{\none & \none & 5 \\ \none &3 & 4 &  5\\  1 &2 &3 & 4&  5}
\quand
\QO(w) = \ytab{\none & \none & 8 \\ \none &3 & 7' & 9 \\ 1 & 2 &4' &5 & 6'}.
\]
\end{example}

The next example shows the case where inserting $w_{n+1}$ into $\PO(w_1w_2\cdots w_n)$
adds a new row.

\begin{example}\label{worked-ex3}
Suppose $w = 35425123$ 
so that
$n = \ell(w) - 1 = 7$.
We have 
\[P:=\PO(w_1w_2\cdots w_7) = \ytab{ \none &3 & 5 \\ 1 & 2 &3 &4 & 5}
\quand
Q:=\QO(w_1w_2\cdots w_7) = \ytab{ \none &3 & 7' \\ 1 & 2 &4' &5 & 6'}\]
so
$\mu=(5,2)$,
$r=2$, $q=5$, 
and
$d_1 = 7 > d_2 = 2 $.
We assume that the operator $\fk o$ with 
\[
 \fk o (w) =\row(P)w_{n+1} = \row\(\ytab{ \none &3 & 5 \\ 1 & 2 &3 &4 & 5& \none & 3}\)= 3512345
 \]
 is given.
 Then
\[
\ba
\rho_1 \circ \fk o (w) &= \row\(\ytab{\none &3 & 5 \\ 1 & 2 &3 &4 & 5& \none & 3}\)= 35123453 \text{ has a descent at $d_1=7$,}
\\
\rho_2 \circ \fk o (w) &=  \row\(\ytab{\none &3 & 5&  \none &\none & \none & 4 \\ 1 & 2 &3 &4 & 5}\)=35412345\text{ has a descent at $d_2=2$, but}
\\
\rho_3 \circ \fk o (w) &=  \row\(\ytab{\none &5 & 3&  \none &\none & \none & 4 \\ 1 & 2 &3 &4 & 5}\)=53412345\text{ has no descent at $d_2=2$}.
\ea
\]
Therefore $\row(\PO(w)) = \fk p(w)$ for  $\fk p := \rho_3 \circ \fk o$ and
$\QO(w)$
is formed from $Q$ by adding $8=n+1$ to the diagonal box $(3,3)=(r+1,r+1)$.
This is consistent with Definition~\ref{o-eg-def}, which gives
\[
\PO(w) = \ytab{\none & \none & 5 \\ \none &3 & 4 \\  1 &2 &3 & 4&  5}
\quand
\QO(w) =\ytab{ \none & \none & 8  \\ \none &3 & 7' \\ 1 & 2 &4' &5 & 6'}.
\]
\end{example}

We now consider a  case 
where adding  $w_{n+1}$ to $\PO(w_1w_2\cdots w_n)$ ends in column insertion.

\begin{example}\label{worked-ex2}
Suppose $w = 1528639742$ so that $n=\ell(w)-1=9$. We have
\[
P := \PO(w_1w_2\cdots w_9)= \ytab{ \none & \none & 8 & 9  \\ \none &5 & 6  & 7\\ 1 & 2 &3 &4 }
\quand
Q :=\QO(w_1w_2\cdots w_9)  =\ytab{\none & \none & 6 & 9  \\ \none &3 & 5  & 8\\ 1 & 2 &4 &7},
\]
so
$\mu=(4,3,2)$,
$r=3$, $q=4$,
$d_1 = 9 > d_2 = 5 > d_3 = 2 $.
We assume that the operator $\fk o$ with 
\[
\fk o (w) = \row(P)w_{n+1}=\row\(\ytab{\none & \none & 8 & 9  \\ \none &5 & 6  & 7\\ 1 & 2 &3 &4& \none &2  }\)= 8956712342
\]
is given. 
Then
\[
\ba
\rho_1 \circ \fk o (w) &= \row\(\ytab{\none & \none & 8 & 9  \\ \none &5 & 6  & 7\\ 1 & 2 &3 &4& \none &2  }\)= 8956712342\text{ has a descent at $d_1=9$}, \\
\rho_2 \circ \fk o (w) &=  \row\(\ytab{\none & \none & 8 &  9  \\ \none &5 & 6  & 7 & \none &3\\ 1 & 2 &3 &4 }\)=8956731234\text{ has a descent at $d_2=5$,} \\
\rho_3 \circ \fk o (w) &=  \row\(\ytab{\none & \none & 8 & 9 & \none  &5 \\ \none &3 & 6  & 7 \\ 1 & 2 &3 &4  }\)=8953671234\text{ has a descent at $d_3=2$, and} \\ 
\rho_4 \circ \fk o (w) &=  \row\(\ytab{\none & \none & 9 & 8 & \none  &5 \\ \none &3 & 6  & 7 \\ 1 & 2 &3 &4  }\)=9853671234\text{ has a descent at $d_3=2$}.
\ea\]
  This means that the process inserting $w_{n+1}$ into $P$ will end in column-insertion rather than row-insertion.
Successively applying the operators $\psi_i$ to $ \fk o (w) $ gives
\[
\ba
\psi_1 \circ \fk o (w) &=  \row\(\ytab{
8  & \none& \none & \none& \none & 5 & 9    \\ 
\none & \none& \none& \none &3 & 6  & 7 \\ 
\none& \none & \none & 1 & 2 &3 &4  }\)= 8593671234,
\\
\psi_2\psi_1 \circ \fk o (w) &=  \row\(\ytab{
8  & 5 & \none& \none   & \none& 3 & 9    \\ 
\none& \none& \none& \none &1 & 6  & 7 \\ 
\none& \none  & \none& \none & 2 &3 &4 }\) = 8539167234,
\\
\psi_3\psi_2\psi_1 \circ \fk o (w) &=  \row\(\ytab{
8  & 5 & 3   & \none& \none& 1 & 9    \\ 
\none& \none& \none& \none &\none & 6  & 7 \\ 
\none  & \none& \none& \none & 2 &3 &4 }\) =8531967234,
\ea
\]
so we have
\[
\ba
\ROWINSERT\circ \fk o(w) &=  \row\(\ytab{8  & 5 & 3   & 1 &  \none& \none & 9    \\ \none& \none& \none& \none &\none & 6  & 7 \\ \none  & \none& \none& \none & 2 &3 &4 }\) =8531967234, \\
\REVERSE\circ \ROWINSERT\circ \fk o(w) &= \row\(\ytab{1  & 3 & 5   & 8 &  \none& \none & 9    \\ \none& \none& \none& \none &\none & 6  & 7 \\ \none  & \none& \none& \none & 2 &3 &4 }\) = 1358967234,
\\
\REORIENT\circ \REVERSE\circ \ROWINSERT \circ {\fk o}(w)
&=
 \col\(\ytab{1 \\  3 &  \none& \none & 9    \\  5 &\none & 6  & 7 \\  8 & 2 &3 &4 }\) = 1358263974.
 \ea
 \]
 Successively applying the operators $\phi_i$ to the last word gives
\[
\ba
\REORIENT\circ \REVERSE\circ \ROWINSERT \circ {\fk o}(w)
&=
 \col\(\ytab{ \none & 3 \\  \none &  5& \none & 9    \\  \none &8 & 6  & 7 \\  1 & 2 &3 &4}\) =1358263974,
 \\
 \phi_1\circ \REORIENT\circ \REVERSE\circ \ROWINSERT \circ {\fk o}(w)
&=
 \col\(\ytab{ \none & \none & 5 \\  \none &  \none& 8 & 9    \\  \none &3 & 6  & 7 \\  1 & 2 &3 &4}\) =1325863974,
  \\
 \phi_2\phi_1\circ \REORIENT\circ \REVERSE\circ \ROWINSERT \circ {\fk o}(w)
&=
 \col\(\ytab{ \none & \none & \none & 6 \\  \none &  \none& 8 & 9    \\  \none &3 & 5  & 7 \\  1 & 2 &3 &4}\) =1328536974,
 \ea
 \]
so we have 
 \[
 \COLUMNINSERT\circ \REORIENT\circ \REVERSE\circ \ROWINSERT \circ {\fk o}(w)
 =
  \col\(\ytab{ \none & \none & \none & 6  \\  \none &  \none& 8 & 9    \\  \none &3 & 5  & 7 \\  1 & 2 &3 &4}\) =1328536974.
  \]
  Since $r+1=q=4$ in this example, the operator $\TOTAL(i)$ is only defined for $i\in\{4,5\}$, and this gives
     \[
     \ba
   \TOTAL(4)\circ \fk o(w) &=   \col\(\ytab{ \none & \none & \none & 6  \\  \none &  \none& 8 & 9    \\  \none &3 & 5  & 7 \\  1 & 2 &3 &4}\) =1328536974,
   \\  
\TOTAL(5)  \circ {\fk o}(w)
 &=
  \col\(\ytab{  \none &  \none& 8 & 9    \\  \none &3 & 5  & 6 \\  1 & 2 &3 &4 & 7}\) =1328539647.
  \ea
  \]
As $h_1+h_2+ \dots+h_r + 1 = 1 +2+3+1=7$ is not a descent of $\TOTAL(4) \circ {\fk o}(w)$, we have $m=q+1=5$.
Thus $\QO(w)$ is formed from $Q$ by adding $10'=n+1'$ to the box $(1,5)=(h_{m}+1, m)$, meaning that
\[
\ytableausetup{boxsize = .6cm,aligntableaux=center}
\QO(w) = \begin{ytableau} \none & \none & 6 & 9  \\ \none &3 & 5  & 8\\ 1 & 2 &4 &7 & 10'\end{ytableau}.
\]
Additionally, we have
$\row(\PO(w)) = \fk p (w)$ for  
$\fk p := \tau^\row_{\lambda}\circ  \TOTAL(5)\circ {\fk o}$
where $\lambda = (5,3,2)$ is the shape of $\QO(w)$.
This agrees with 
Definition~\ref{o-eg-def}, which gives
$
\PO(w) =\ytab{  \none &  \none& 8 & 9    \\  \none &3 & 5  & 6 \\  1 & 2 &3 &4 & 7}.
$
\end{example}

Our next result is a variant of Theorem~\ref{qi-thm1}.
\begin{theorem}\label{qi-thm2}
Let $\pi \in I_\ZZ$.
Then $\ifkb_\pi$ is an isomorphism of $\q_n$-crystals
$\ds\bigsqcup_{\sigma\in I_\ZZ} \iR_n(\sigma) \to \bigsqcup_{\sigma\in I_\ZZ} \iR_n(\sigma)$.
\end{theorem}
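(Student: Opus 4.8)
The plan is to follow the proof of Theorem~\ref{qi-thm1} as closely as possible, using Theorem~\ref{o-little-thm} in place of Theorem~\ref{little-thm}, and then to supply one genuinely new argument showing that $\ifkb_\pi$ commutes with the queer operators $e_{\overline 1}=\eO$ and $f_{\overline 1}=\fO$, which have no classical counterpart.

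First I would note that $\ifkb_\pi$ is a well-defined bijection on $\cX:=\bigsqcup_{\sigma\in I_\ZZ}\iR_n(\sigma)$: by Theorem~\ref{o-little-thm}(b) it preserves descent sets on involution words, so it extends to $n$-fold increasing factorizations exactly as $\fkb_\pi$ does in the paragraph preceding Theorem~\ref{qi-thm1}, and Theorem~\ref{o-little-thm}(a) makes this extension a bijection $\cX\to\cX$. Since $\iR_n(\sigma)=\bigsqcup_{\alpha\in\cA(\sigma)}\cR_n(\alpha)$, the set $\cX$ is a disjoint union of Morse-Schilling crystals, so by Theorems~\ref{simCK-thm} and~\ref{eg-thm} its full $\gl_n$-subcrystals are the sets of $n$-fold factorizations of the involution words in a single Coxeter-Knuth class. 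On such a class Lemma~\ref{easy-lem} identifies $\ifkb_\pi$ with a fixed composition $\fkb_{\alpha_l}\cdots\fkb_{\alpha_1}$, which is a $\gl_n$-crystal isomorphism by Theorem~\ref{qi-thm1}; since everything in sight preserves descents, this identity lifts from words to factorizations. Hence $\ifkb_\pi$ preserves $\weight$ and all string lengths and commutes with each $e_i$ and $f_i$, and being a bijection it is an isomorphism of $\gl_n$-crystals $\cX\to\cX$.

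The remaining and main point is that $\ifkb_\pi$ commutes with $\eO$ and $\fO$. Because $\ifkb_\pi$ preserves $\weight$, Definition~\ref{q-def}(1) and~(3) together with the bijectivity of $\ifkb_\pi$ reduce this (for both operators) to the single implication: \emph{if $\fO(w)\neq 0$ then $\fO(\ifkb_\pi(w))=\ifkb_\pi(\fO(w))$.} To prove it, write $w=(w^1,\dots,w^n)$ with $w^1=x u_1\cdots u_{k-1}$, where $x<u_1<\cdots<u_{k-1}$ and $x$ is smaller than every letter of $w^2$, so that passing to underlying words $\fO$ moves $x$ to the position just before $w^2$. For $k\geq 2$ this word transformation equals $\Phi_k:=\ck_{k-2}\cdots\ck_2\ck_1\ock$ (set $\Phi_1:=\mathrm{id}$), as one checks by tracking the Coxeter-Knuth moves of type $bac\leftrightarrow bca$ that become available precisely because $x<u_i<u_{i+1}$; note $\Phi_k$ touches only the first $k$ positions, leaving $w^3,\dots,w^n$ alone. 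By Theorem~\ref{o-little-thm}(b),(c), $\ifkb_\pi$ preserves descents and commutes with each $\ck_i$ and with $\ock$, hence with $\Phi_k$. Writing $\ifkb_\pi(w)=(v^1,\dots,v^n)$ with $v^1=v_1\cdots v_k$, it follows that the underlying word of $\ifkb_\pi(\fO(w))$ is $\Phi_k$ applied to the underlying word of $\ifkb_\pi(w)$, and since $v_1<\cdots<v_k$ this is precisely the word obtained by moving $v_1$ to the position just before $v^2$. As $\ifkb_\pi(\fO(w))$ lies in $\iR_n(\pi)$, its factors are strictly increasing, which forces $v_1<\min(v^2)$; therefore $\fO(\ifkb_\pi(w))\neq 0$, and both $\fO(\ifkb_\pi(w))$ and $\ifkb_\pi(\fO(w))$ are the refactorization of $\Phi_k(v^1v^2\cdots v^n)$ with factor lengths $\ell(w^1)-1,\ \ell(w^2)+1,\ \ell(w^3),\dots,\ell(w^n)$, so they agree. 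Combining everything, $\ifkb_\pi$ is a weight- and string-length-preserving bijection commuting with $e_i,f_i,e_{\overline 1},f_{\overline 1}$, i.e.\ a $\q_n$-crystal isomorphism.

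I expect the main obstacle to be this last step: recognizing the word-level effect of $\fO$ as the explicit operator $\Phi_k$ (so that the commutation statements of Theorem~\ref{o-little-thm} apply), together with the bookkeeping that rules out $\fO(\ifkb_\pi(w))=0$ — which is handled above by the observation that $\ifkb_\pi(\fO(w))$ already belongs to the crystal and so automatically has strictly increasing factors.
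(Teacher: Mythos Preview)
Your proof is correct and follows essentially the same approach as the paper's: both reduce the $\gl_n$-part to Lemma~\ref{easy-lem} plus Theorem~\ref{qi-thm1}, and both handle the queer operator by expressing $\fO$ on the word level as the operator $\ck_{p-2}\cdots\ck_1\ock$ (your $\Phi_k$) and then invoking Theorem~\ref{o-little-thm}(b),(c). The paper packages the latter step slightly more symmetrically---it observes that $\fO(w)\neq 0$ is equivalent to $p>0$ and $p\notin\Des(\ck_{p-2}\cdots\ck_1\ock(w^1w^2\cdots w^n))$, so both directions of the biconditional follow at once from descent preservation---whereas you first reduce to the forward implication via Definition~\ref{q-def}(3) and bijectivity (a valid reduction) and then extract $v_1<\min(v^2)$ from the fact that $\ifkb_\pi(\fO(w))$ is already an increasing factorization; one small slip is that you write ``$\ifkb_\pi(\fO(w))$ lies in $\iR_n(\pi)$'' when you mean it lies in $\bigsqcup_{\sigma}\iR_n(\sigma)$.
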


\begin{proof}
Theorems~\ref{isim-thm}, \ref{simCK-thm},
and
\ref{eg-thm}, 
show that each full $\gl_n$-subcrystal of
$\cY := \bigsqcup_{\sigma\in I_\ZZ} \iR_n(\sigma)$ 
is the set of all $n$-fold increasing factorizations of reduced words in a single Coxeter-Knuth equivalence class.
Thus, it follows by combining Theorem~\ref{qi-thm1}, Lemma~\ref{easy-lem},
and Theorem~\ref{o-little-thm}(a) that $\ifkb_\pi : \cY \to \cY$ is 
at least an isomorphism of abstract $\gl_n$-crystals.

Let $w=(w^1,w^2,\dots,w^n) \in \cY$.
To show that $\ifkb_\pi$ is a $\q_n$-crystal morphism,
 it is enough to check
that 
 $\fO(w) \neq 0$ if and only if $\fO(\ifkb_\pi(w)) \neq 0$,
and that in this case $\ifkb_\pi(\fO(w))  = \fO (\ifkb_\pi(w))$.
Let $p=\ell(w^1)$ and $q=\ell(w^2)$.
From the definitions in Section~\ref{o-fact-sect},
it is easy to work out that  $\fO(w) \neq 0$ if and only if 
$p$ is neither zero nor a descent of 
$v:= \ck_{p-2}\cdots \ck_1\ock(w^1w^2)$.
In
this 
case,
if $v^1$ and $v^2$ are the words of length $p-1$ and $q+1$ such that $v=v^1v^2$,
then $\fO(w) = (v^1,v^2,w^3,\dots w^n)$.
As $\ifkb_\pi$ preserves descents and commutes with every $\ock$ and $\ck_i$ by Theorem~\ref{o-little-thm},
the claim follows.
\end{proof}

\subsection{Fixed-point-free Little bumps}

The results in the previous section have a parallel story for fpf-involution words,
which we present here. 
Fix $\pi \in \Ifpf_\ZZ$ and recall that $\cAfpf(\pi)\subset S_\ZZ$ is such that
$\iRfpf(\pi) = \bigsqcup_{\sigma \in \cAfpf(\pi)} \cR(\sigma)$.
A \emph{$\pi$-marked fpf-involution word}
is a pair $(w,i)$
in which $w$ is a word and $i$ is an index
 such that $\del_i(w) \in \iRfpf(\pi)$.
 Equivalently, this is just an $\alpha$-marked word for some $\alpha \in \cAfpf(\pi)$.
 
A $\pi$-marked fpf-involution word is \emph{fpf-reduced} if $w$ is  an fpf-involution word.
%A $\pi$-marked fpf-involution word $(w,i)$ is 
If $(w,i)$ is not fpf-reduced but
$w \in \cR(\sigma)$ for some $\sigma \in S_\ZZ$ with
$\sigma^{-1} \cdot \minfpf \cdot \sigma = \pi$,
then $(w,i)$ is \emph{semi-reduced}.

\begin{lemma}[{\cite[Lemma 4.21]{HMP3}}]
\label{fpf-toggle-lem}
If $(w,i)$ is a $\pi$-marked fpf-involution word of length $m$ that is neither fpf-reduced nor semi-reduced,
then there is a unique index $i\neq j \in [\ell(w)]$ such that $(w,j)$ is also a $\pi$-marked fpf-involution word; moreover,
in this event $(w,j)$ is also not semi-reduced.
\end{lemma}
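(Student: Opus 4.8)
The plan is to mirror the proof of the orthogonal analogue Lemma~\ref{toggle-lem}. First observe that the ``moreover'' clause costs nothing: being semi-reduced is a property of the underlying word $w$ together with $\pi$ (it says $w$ is not an fpf-involution word yet $w\in\cR(\sigma)$ for some $\sigma$ with $\sigma^{-1}\cdot\minfpf\cdot\sigma=\pi$), and does not mention the marked index; hence if $(w,i)$ is not semi-reduced then no $(w,j)$ is, and it remains only to produce a unique index $j\ne i$ with $\del_j(w)\in\iRfpf(\pi)$. I would then split into two cases according to whether the word $w$ is reduced.

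Case 1: $w$ is not reduced. Then $(w,i)$ is just an $\alpha$-marked word for some $\alpha\in\cAfpf(\pi)$, i.e.\ $\del_i(w)\in\cR(\alpha)$, and $(w,i)$ is an unreduced $\alpha$-marked word. Lemma~\ref{lamshim-lem} supplies a unique $j\ne i$ with $(w,j)$ an $\alpha$-marked word, so $\del_j(w)\in\cR(\alpha)\subseteq\iRfpf(\pi)$. For uniqueness over all candidate indices I would argue: if $\del_{j'}(w)\in\cR(\beta)$ with $\beta\in\cAfpf(\pi)$ and $j'\ne i$, then since $w$ is unreduced the Strong Exchange Condition forces $\beta=\alpha$ (two single deletions of $w$ yielding reduced words of distinct permutations would make the product of $w$ have length $\le\ell(w)-2$, so no single deletion could be reduced at all); then Lemma~\ref{lamshim-lem} gives $j'=j$. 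Finally $(w,j)$ is not semi-reduced because $w$ is unreduced.

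Case 2: $w$ is reduced, say $w\in\cR(\sigma)$. Since $(w,i)$ is not fpf-reduced, $w$ is not an fpf-involution word; since it is not semi-reduced, $\sigma^{-1}\minfpf\sigma\ne\pi$, whence $\sigma\notin\cAfpf(\pi)$. The condition $\del_i(w)\in\cR(\alpha)$ with $\alpha\in\cAfpf(\pi)$ says $\alpha\lessdot\sigma$ in Bruhat order with the deletion at position $i$ witnessing this cover, and by Strong Exchange each $\beta\lessdot\sigma$ is witnessed by exactly one deletion position (two deletions realizing the same permutation would again force $\ell(\sigma)$ too small). So the indices $j$ with $\del_j(w)\in\iRfpf(\pi)$ are in bijection with $\{\beta\in\cAfpf(\pi):\beta\lessdot\sigma\}$, which contains $\alpha$; thus the whole statement reduces to the order-theoretic claim that a permutation $\sigma$ with $\sigma^{-1}\minfpf\sigma\ne\pi$ covering at least one atom of $\cAfpf(\pi)$ covers exactly two. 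To prove this I would use the description of $\cAfpf(\pi)$ as the minimal-length solutions of $\rho^{-1}\minfpf\rho=\pi$ together with Lemma~\ref{fpf-rtimes-lem} and Theorem~\ref{fsim-thm}. Writing $\sigma=\alpha t$ with $t$ a reflection, one has $\ell(\sigma)=\ellfpf(\pi)+1$ and $\sigma^{-1}\minfpf\sigma=t\pi t$, a fixed-point-free involution got from $\pi$ by recombining two of its $2$-cycles; examining the reduced words of $\sigma$ that begin with a reduced word of $\alpha$, and invoking the half-braid move $w_1(w_1\pm1)\cdots$ appearing in $\fsim$, should pin the lower covers of $\sigma$ lying in $\cAfpf(\pi)$ to exactly two, in direct parallel to the way the single commutation move in $\isim$ governs the corresponding count for Lemma~\ref{toggle-lem}. (Alternatively one argues purely with words: $w$ is reduced but violates the fpf-involution criterion of Lemma~\ref{fpf-rtimes-lem} at some position, the violation is localized, and it is undone by deleting precisely one of two letters.)

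The genuine content, and the step I expect to be the main obstacle, is exactly this order-theoretic claim in Case 2: controlling how $\ellfpf$ changes along a single Bruhat cover and showing the number of atoms of $\cAfpf(\pi)$ lying below $\sigma$ equals two, neither one nor three. Everything else — the case split, the two appeals to Strong Exchange, and the ``moreover'' clause — is routine bookkeeping that follows the orthogonal proof closely.
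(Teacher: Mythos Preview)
The paper does not prove this lemma: it is stated with the citation \cite[Lemma 4.21]{HMP3} and no proof is given. So there is no argument in the paper to compare your proposal against; you are effectively reconstructing a proof from the cited source.

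On the substance of your outline: the ``moreover'' observation and the overall case split are correct, and you rightly identify Case~2 (the order-theoretic count of atoms of $\cAfpf(\pi)$ below a fixed $\sigma$) as the genuine content, which you only sketch. One concrete gap, however, is in your Case~1 uniqueness argument. The parenthetical ``two single deletions of $w$ yielding reduced words of distinct permutations would make the product of $w$ have length $\le\ell(w)-2$, so no single deletion could be reduced at all'' is a non sequitur: an unreduced word $w$ of length $m$ \emph{always} has product of length $\le m-2$, and this certainly does not preclude $\del_i(w)$ from being reduced (that is precisely the situation you are in). What you need instead is the stronger fact underlying Lemma~\ref{lamshim-lem}: when $w$ is unreduced and $\del_i(w)$ is reduced, the reflection sequence of $w$ has exactly one coincidence $t_i = t_j$, so there is exactly one other index $j$ with $\del_j(w)$ reduced, and its product agrees with that of $\del_i(w)$. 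Once stated this way the uniqueness over all $\beta\in\cAfpf(\pi)$ is immediate, since the permutation $\alpha$ is forced by $w$ alone. Your Case~2 sketch is plausible but, as you acknowledge, is where the real work lies; the paper defers this entirely to \cite{HMP3}.
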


\begin{definition}
Let $(w,i)$ be a $\pi$-marked fpf-involution word of length $m$.
If $(w,i)$ is semi- or fpf-reduced, then let $j=i$,
and otherwise let $i\neq j \in [m]$ be %the unique index
such that $(w,j)$ is a $\pi$-marked fpf-involution word.
Then define $\fpush(w,i) := (v,j)$ where
 $v := w_1\cdots w_{j-1} (w_j + 1) w_{j+1}\cdots w_m.$
 \end{definition}
 
As with $\push$ and $\ipush$,
if
$(w,i)$ is a $\pi$-marked fpf-involution word then 
$\fpush(w,i)$ is fpf-reduced for some sufficiently large $N>0$
\cite[Lemma 4.26]{HMP3}.
Also, for any fixed
$\pi \in \Ifpf_\ZZ$ and fpf-involution word $w$, 
at most one index $i$ exists
such that $(w,i)$ is a $\pi$-marked fpf-involution word.

\begin{definition}[{\cite[\S4.3]{HMP3}}]
The \emph{fpf-involution Little bumping operator} $\ffkb_\pi$ of $\pi \in \Ifpf_\ZZ$
acts on fpf-involution words $w$ as follows.
If $(w,i)$ is a marked fpf-involution word for some $i$, and $N>0$
is minimal such that 
$ \fpush^N(w,i) =: (v,j)$ is fpf-reduced, then
$\ffkb_{\pi}(w) := v$.
Otherwise, $\ffkb_{\pi}(w) := w$.
\end{definition}

The map $\ffkb_\pi$ is the inverse of the operator 
${\hat\cB}^\fpf_\pi$  in 
\cite[Theorem 4.29]{HMP3}.

\begin{example}\label{fbb-ex} 
Let $\pi = (1,2)(3,6)(4,5) \in \Ifpf_\ZZ$, $\sigma =(1,4)(2,5)(3,6) \in \Ifpf_\ZZ$,
and $w=243 \in \iRfpf(\sigma)$, so that
 $43 \in \iRfpf(\pi)$. The values of
$\fpush^N(243,1)$ for $0 \leq N \leq 6$ are as follows:
\def\dims{\arraycolsep=0.7pt\def\arraystretch{0.7}}
\[{\small
\ba
\dims
\barr{c|ccc}
\cdot & \cdot & \cdot & \cdot \\
\times & \cdot & \cdot & \cdot\\
\cdot & \cdot & \times & \cdot  \\
\times & \cdot & \cdot & \times  \\
\cdot & \ttimes & \cdot & \cdot  \\
\times & \cdot & \cdot & \cdot  \\
 \hline \\[-6pt]
&  2 & 4 & 3
\earr
& 
\xrightarrow{\fpush}
\dims
\barr{c|ccc}
\cdot & \cdot & \cdot & \cdot \\
\times & \cdot & \cdot & \cdot\\
\cdot & \cdot & \times & \cdot  \\
\times & \ttimes & \cdot & \times  \\
\cdot & \cdot & \cdot & \cdot  \\
\times & \cdot & \cdot & \cdot  \\
 \hline \\[-6pt]
&  3 & 4 & 3
\earr
\xrightarrow{\fpush} 
\dims
\barr{c|ccc}
\cdot & \cdot & \cdot & \cdot \\
\times & \cdot & \cdot & \cdot\\
\cdot & \ttimes & \times & \cdot  \\
\times & \cdot & \cdot & \times  \\
\cdot & \cdot & \cdot & \cdot  \\
\times & \cdot & \cdot & \cdot  \\
 \hline \\[-6pt]
&  4 & 4 & 3
\earr
\xrightarrow{\fpush}
\dims
\barr{c|ccc}
\cdot & \cdot & \cdot & \cdot \\
\times & \cdot & \ttimes & \cdot \\
\cdot & \times & \cdot & \cdot  \\
\times & \cdot & \cdot & \times  \\
\cdot & \cdot & \cdot & \cdot  \\
\times & \cdot & \cdot & \cdot  \\
 \hline \\[-6pt]
&  4 & 5 & 3
\earr
\xrightarrow{\fpush}
\dims
\barr{c|ccc}
\cdot & \cdot & \cdot & \cdot \\
\times & \cdot & \times & \cdot \\
\cdot & \times & \cdot & \ttimes  \\
\times & \cdot & \cdot & \cdot  \\
\cdot & \cdot & \cdot & \cdot  \\
\times & \cdot & \cdot & \cdot  \\
 \hline \\[-6pt]
&  4 & 5 & 4
\earr
\xrightarrow{\fpush}
\dims
\barr{c|ccc}
\cdot & \cdot & \cdot & \cdot \\
\times & \cdot & \times & \ttimes \\
\cdot & \times & \cdot & \cdot  \\
\times & \cdot & \cdot & \cdot  \\
\cdot & \cdot & \cdot & \cdot  \\
\times & \cdot & \cdot & \cdot  \\
 \hline \\[-6pt]
&  4 & 5 & 5
\earr
\xrightarrow{\fpush}
\barr{c|ccc}
\cdot & \cdot & \ttimes & \cdot \\
\times & \cdot & \cdot & \times \\
\cdot & \times & \cdot & \cdot  \\
\times & \cdot & \cdot & \cdot  \\
\cdot & \cdot & \cdot & \cdot  \\
\times & \cdot & \cdot & \cdot  \\
 \hline \\[-6pt]
&  4 & 6 & 5
\earr
\ea}
\]
The last marked word in this sequence is fpf-reduced, the second and fifth are semi-reduced,
and the fourth is reduced but not fpf-reduced. We conclude that $\ffkb_\pi(w) = 465$.
\end{example}

We have an analogue of Lemma~\ref{easy-lem}, with almost the same proof. 

\begin{lemma}\label{fpf-easy-lem}
Let $\pi \in \Ifpf_\ZZ$.
For any fpf-involution word $w$, 
there is a finite sequence of elements $\alpha_1,\alpha_2,\dots,\alpha_l \in \cAfpf(\pi)$
such that $\ffkb_\pi(w) = \fkb_{\alpha_l}\cdots \fkb_{\alpha_2} \fkb_{\alpha_1}(w)$.
Moreover, this sequence is the same for all fpf-involution words in a single Coxeter-Knuth equivalence class.
\end{lemma}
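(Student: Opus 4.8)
The plan is to transcribe the proof of Lemma~\ref{easy-lem} essentially verbatim, replacing Lemma~\ref{toggle-lem} by Lemma~\ref{fpf-toggle-lem} and $\cAfpf(\pi)$ for $\cA(\pi)$, while taking care of the one feature with no orthogonal counterpart: \emph{semi-reduced} marked words. Fix an fpf-involution word $w$; if no index $i$ has $\del_i(w)\in\iRfpf(\pi)$ there is nothing to prove, so let $(w,i)$ be the unique $\pi$-marked fpf-involution word built on $w$. Run the $\fpush$-trajectory $(w,i)=(w^{(0)},i_0),(w^{(1)},i_1),\dots,(w^{(L)},i_L)$, where $L>0$ is minimal with $(w^{(L)},i_L)$ fpf-reduced (such $L$ exists by \cite[Lemma 4.26]{HMP3}), so $\ffkb_\pi(w)=w^{(L)}$. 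Each $\del_{i_k}(w^{(k)})$ lies in $\iRfpf(\pi)$, and $w^{(0)}=w$ and $w^{(L)}$ are reduced. Let $0=k_0<k_1<\dots<k_l=L$ record the indices $k$ with $w^{(k)}$ reduced; I will show the $r$-th ``segment'' $w^{(k_{r-1})}\rightsquigarrow w^{(k_r)}$ coincides with the action of the classical Little bumping operator $\fkb_{\alpha_r}$ for a suitable $\alpha_r\in\cAfpf(\pi)$, which proves the first assertion.

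The key step is to read off $\alpha_r$ from the reduced word $w^{(k_{r-1})}$ starting the $r$-th segment. If $w^{(k_{r-1})}$ is fpf-reduced (only possible for $r=1$) or semi-reduced, then $\fpush$ does not toggle its marked index, so $\fpush(w^{(k_{r-1})},i_{k_{r-1}})=\push(w^{(k_{r-1})},i_{k_{r-1}})=(w^{(k_{r-1}+1)},i_{k_{r-1}})$, and I let $\alpha_r$ be the element of $\cAfpf(\pi)$ with $\del_{i_{k_{r-1}}}(w^{(k_{r-1})})\in\cR(\alpha_r)$ --- in the semi-reduced case this equals $\alpha_{r-1}$. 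Otherwise $w^{(k_{r-1})}$ is reduced but neither fpf-reduced nor semi-reduced, so Lemma~\ref{fpf-toggle-lem} furnishes a unique $j\neq i_{k_{r-1}}$ with $(w^{(k_{r-1})},j)$ a $\pi$-marked fpf-involution word; I re-mark to $j$, take $\alpha_r\in\cAfpf(\pi)$ with $\del_j(w^{(k_{r-1})})\in\cR(\alpha_r)$ (necessarily $\alpha_r\neq\alpha_{r-1}$ by the Strong Exchange Condition), and note $\fpush(w^{(k_{r-1})},i_{k_{r-1}})=(w^{(k_{r-1}+1)},j)=\push(w^{(k_{r-1})},j)$ since $w^{(k_{r-1})}$ is reduced. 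On every subsequent, non-reduced word of the segment $\fpush$ agrees with $\push$, because on a non-reduced marked word the toggle index of Lemma~\ref{fpf-toggle-lem} coincides with that of Lemma~\ref{lamshim-lem} (the word $\del_j(w)$ being automatically reduced), exactly as in the remark after Lemma~\ref{toggle-lem}. Since $w^{(k_r)}$ is the first reduced word reached after $w^{(k_{r-1})}$, this says precisely $\fkb_{\alpha_r}(w^{(k_{r-1})})=w^{(k_r)}$, whence $\ffkb_\pi(w)=\fkb_{\alpha_l}\cdots\fkb_{\alpha_1}(w)$.

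For the second assertion I follow the proof of Lemma~\ref{easy-lem}: let $w\in\cR(\sigma_1)$ and $\fkb_{\alpha_{r-1}}\cdots\fkb_{\alpha_1}(w)\in\cR(\sigma_r)$ for $1<r\le l+1$. Every datum used above to produce $\alpha_r$ from $w^{(k_{r-1})}$ depends only on $\sigma_r$ together with $\alpha_{r-1}$: whether $w^{(k_{r-1})}$ is an fpf-involution word is equivalent to $\sigma_r\in\bigcup_z\cAfpf(z)$ via the decomposition $\iRfpf(z)=\bigsqcup_{\alpha\in\cAfpf(z)}\cR(\alpha)$; given it is not, being semi-reduced amounts to $\sigma_r^{-1}\,\minfpf\,\sigma_r=\pi$; and when a re-marking is called for, $\alpha_r$ is the unique element of $\cAfpf(\pi)\setminus\{\alpha_{r-1}\}$ lying below $\sigma_r$ in Bruhat order, by the uniqueness in Lemma~\ref{fpf-toggle-lem} and the Strong Exchange Condition. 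Likewise the trajectory stops exactly when $\sigma_{l+1}$ first becomes the permutation of an fpf-involution word. Consequently any $v\in\cR(\sigma_1)$ with $\fkb_{\alpha_{r-1}}\cdots\fkb_{\alpha_1}(v)\in\cR(\sigma_r)$ for all $r$ has the identical sequence $\alpha_1,\dots,\alpha_l$ and satisfies $\ffkb_\pi(v)=\fkb_{\alpha_l}\cdots\fkb_{\alpha_1}(v)$; and these conditions hold whenever $v\simCK w$, since by Theorem~\ref{little-thm}(c) each $\fkb_{\alpha_j}$ commutes with Coxeter--Knuth moves, so $\fkb_{\alpha_{r-1}}\cdots\fkb_{\alpha_1}(v)\simCK\fkb_{\alpha_{r-1}}\cdots\fkb_{\alpha_1}(w)\in\cR(\sigma_r)$ and Coxeter--Knuth equivalent reduced words are reduced words for the same permutation.

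The step I expect to be the main obstacle is the bookkeeping around the semi-reduced words, which is the only genuine difference from Lemma~\ref{easy-lem}: one must check that passing through a semi-reduced word continues the \emph{current} classical Little bump rather than starting a new one with a different marked index, that $\fpush$ and $\push$ really do agree at every semi-reduced and at every re-marked reduced word, and that the properties ``fpf-reduced'' and ``semi-reduced'' --- hence the re-marking recipe and hence the list $\alpha_1,\dots,\alpha_l$ --- are detectable from the permutation alone, so that they are constant along a Coxeter--Knuth class. Once these points are pinned down, the remainder is a routine transcription of the orthogonal argument, using Theorems~\ref{fsim-thm} and \ref{little-thm} in place of their counterparts.
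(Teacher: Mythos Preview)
Your proposal is correct and follows essentially the same approach as the paper's own proof, just with considerably more detail: the paper compresses your entire first two paragraphs into the characterization that $\alpha_1$ is the unique element of $\cAfpf(\pi)$ covered by $\sigma_1$, while $\alpha_r$ for $r>1$ equals $\alpha_{r-1}$ when $\sigma_r^{-1}\cdot\minfpf\cdot\sigma_r=\pi$ (your semi-reduced case) and otherwise is the unique element of $\cAfpf(\pi)\setminus\{\alpha_{r-1}\}$ covered by $\sigma_r$. Your explicit segmentation of the $\fpush$ trajectory at reduced words, together with the verification that $\fpush$ and $\push$ agree on each segment, is precisely the content that the paper leaves implicit, and your handling of the semi-reduced bookkeeping is the right elaboration of the one genuinely new feature relative to Lemma~\ref{easy-lem}.
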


In Example~\ref{fbb-ex} 
where $\pi =  (1,2)(3,6)(4,5)$ and $w=243 $ 
we have $\ffkb_\pi(w) =\fkb_{\alpha_4}\fkb_{\alpha_3}\fkb_{\alpha_2}\fkb_{\alpha_1}(w)$
for $\alpha_1 =\alpha_2=\alpha_3= s_4s_3 = (3,5,4)$ and
$\alpha_4 = s_4s_5 = (4,5,6)$.

\begin{proof}
Fix an fpf-involution word $w$ and let $\alpha_1,\dots,\alpha_l \in \cAfpf(\pi)$
be such that
$\ffkb_\pi(w) = \fkb_{\alpha_l}\cdots\fkb_{\alpha_1}(w)$.
Define $\sigma_i \in S_\ZZ$ for $i =1,2,\dots,l$ as in the proof of Lemma~\ref{easy-lem}.
Then $\alpha_1$ is the unique element of $\cAfpf(\pi)$ with $\alpha_1 \lessdot \sigma_1$,
and $\alpha_i$ for $i>1$ is 
either $\alpha_{i-1}$ when $\sigma_i^{-1} \cdot \minfpf \cdot \sigma_i =\pi$
or else
the unique element of $\cAfpf(\pi)\setminus\{\alpha_{i-1}\}$
with $\alpha_i \lessdot \sigma_i$.   
It follows that any word $v \in \cR(\sigma_1)$
with $\fkb_{\alpha_{i-1}}\cdots \fkb_{\alpha_1}(v) \in \cR(\sigma_i)$ for all $1<i \leq l$
also has $\ffkb_\pi(v) =  \fkb_{\alpha_l}\cdots \fkb_{\alpha_1}(v)$.
This holds
if $v \simCK w$ by Theorem~\ref{little-thm}.
\end{proof}

Recall the definition of $\spck$ from \eqref{spck-eq}.
Theorem~\ref{little-thm}
has another analogue for the maps $\ffkb_\pi$.

\begin{theorem}\label{sp-little-thm}
Let $\pi,\sigma \in \Ifpf_\ZZ$ and $w \in \iRfpf(\sigma)$. Then:
\ben
\item[(a)] The operator $\ffkb_\pi$ is a bijection $\bigsqcup_{z \in \Ifpf_\ZZ} \iRfpf(z) \to \bigsqcup_{z \in \Ifpf_\ZZ} \iRfpf(z)$.
\item[(b)] It holds that $\Des(\ffkb_{\pi}(w)) = \Des(w)$.
\item[(c)] For all $i\in\PP$ it holds that 
$\spck(\ffkb_\pi(w)) = \ffkb_\pi(\spck(w))$ and $\ck_i(\ffkb_\pi(w)) = \ffkb_\pi(\ck_i(w)).$
\item[(d)] It holds that $\QSp(\ffkb_\pi(w)) = \QSp(w)$.
\een
\end{theorem}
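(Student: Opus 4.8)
The plan is to follow, nearly line for line, the proof of Theorem~\ref{o-little-thm}, replacing involution words by fpf-involution words, $\iR$ by $\iRfpf$, $\ock$ by $\spck$, $\QO$ by $\QSp$, $\push$ by $\fpush$, and the lemmas governing $\ipush$ and $\ifkb_\pi$ by their fpf-counterparts (Lemma~\ref{fpf-toggle-lem}, Lemma~\ref{fpf-easy-lem}, and \cite[Theorem 4.29]{HMP3}). Part~(a) is exactly \cite[Theorem 4.29]{HMP3}, since $\ffkb_\pi$ is the inverse of $\hat\cB^\fpf_\pi$. Part~(b) follows from Theorem~\ref{little-thm}(b) and Lemma~\ref{fpf-easy-lem}, which writes $\ffkb_\pi$ on each Coxeter--Knuth class as a fixed composition of classical Little operators $\fkb_{\alpha_i}$. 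The identity $\ck_i(\ffkb_\pi(w)) = \ffkb_\pi(\ck_i(w))$ for $i>0$ in part~(c) follows from the same two facts together with Theorem~\ref{little-thm}(c), just as in the orthogonal case.

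The first point requiring real care is that $\ffkb_\pi$ commutes with $\spck$. Writing $w = w_1w_2\cdots w_m$ with $m\geq 2$, I would track the $\fpush$-orbit of a marked word $(w,i)$ against that of $\spck(w)$. Theorem~\ref{fsim-thm} gives a bijection between the marked positions of $w$ and those of $\spck(w)$: this index map $a\mapsto a^*$ is the identity away from positions $1,2$, and on $\{1,2\}$ it is governed by which of the three cases of \eqref{spck-eq} is in effect. Lemma~\ref{fpf-toggle-lem}, in particular its clause that a toggled marked word which is not semi-reduced stays not semi-reduced, supplies the uniqueness needed to propagate the correspondence step by step: if $\fpush(w,a)=(v,b)$ then $\fpush(\spck(w),a^*)=(\spck(v),b^*)$, and inductively the same holds throughout the orbit until an fpf-reduced word is reached. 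The extra wrinkle compared with $\ock$ is that $\spck$ can change letter values, so the step incrementing position $1$ or $2$ must be checked against \eqref{spck-eq}; here one uses that fpf-involution words begin with an even letter and have no equal adjacent letters. Together with the $\ck_i$ case this gives part~(c).

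For part~(d), I would replicate the strategy of Theorem~\ref{o-little-thm}(d): exhibit an algorithm computing $\QSp(w)$ each of whose steps depends only on $\ell(w)$ and on the descent sets of certain fixed sequences of symplectic Coxeter--Knuth operators ($\spck$ and the $\ck_i$) applied to $w$; since $\ffkb_\pi$ preserves descents and commutes with $\spck$ and every $\ck_i$ by parts~(b) and~(c), the output is unchanged when $w$ is replaced by $\ffkb_\pi(w)$. The inductive construction of the operators $\rho_i$, $\psi_i$, $\phi_i$, $\ROWINSERT$, $\REVERSE$, $\COLUMNINSERT$, $\REORIENT$, $\INSERTUPTO$, $\TOTAL$, and $\tau^\row_\lambda$, $\tau^\col_\nu$ transfers essentially verbatim, with $\ock$ replaced by $\spck$ in $\rho_{r+1}$ and in $\REVERSE$, and with Lemma~\ref{16lem}(4) invoked in its fpf-involution form. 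The one genuinely new ingredient is that the diagonal bump in Definition~\ref{sp-eg-def} behaves differently from the orthogonal one (when the diagonal entry $y$ equals $x+1$ the current row is left unchanged and $y+1$ is inserted into the next column), so the auxiliary quantities $\tilde x,\tilde y$ used in the column-reading identities and $\tilde{\tilde x},\tilde{\tilde y}$ used in the $\REVERSE$/$\REORIENT$ identities must be redefined to reflect the subcases $y>x+1$ and $y=x+1$ --- which is exactly why \eqref{tildetilde-eq} introduced those as separate symbols. With these redefinitions, Lemma~\ref{16lem}(3)--(6) yields the symplectic analogues of all the displayed identities in the proof of Theorem~\ref{o-little-thm}, and $\QSp(w)=\QSp(\ffkb_\pi(w))$ follows.

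I expect the diagonal-bump bookkeeping in part~(d) to be the main obstacle: one must verify that the redefined $\tilde{\tilde x},\tilde{\tilde y}$ make the $\ROWINSERT$, $\REVERSE$, and $\REORIENT$ identities hold, that the parity constraints on symplectic insertion recorded in Remark~\ref{insertion-remarks} are respected at every iteration, and that the $\INSERTUPTO$/$\TOTAL$ operators correctly continue column insertion past the diagonal under the new rule. The commuting-with-$\spck$ step of part~(c) is fiddly but is, in the end, the same kind of orbit-tracking argument as in the orthogonal case.
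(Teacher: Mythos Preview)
Your approach matches the paper's almost exactly for parts (a), (b), (d), and the $\ck_i$ half of (c). The real issue is in the $\spck$ half of part~(c).

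Your claim ``if $\fpush(w,a)=(v,b)$ then $\fpush(\spck(w),a^*)=(\spck(v),b^*)$'' is false: a single $\fpush$ step on one side need not correspond to a single step on the other. The culprit is the \emph{semi-reduced} case in the definition of $\fpush$, which has no analogue for $\ipush$: when a $\pi$-marked fpf-involution word $(w,i)$ is semi-reduced, $\fpush$ increments position $i$ again rather than toggling to a new index via Lemma~\ref{fpf-toggle-lem}. It can happen that $(w,i)$ is semi-reduced while $(\spck(w),i^\bullet)$ is not, so the two orbits fall out of lockstep. Concretely, take first two letters $w_1w_2=24$ with the mark at position~$1$; then $\spck(w)$ begins $42$ with the mark at position~$2$. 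Three applications of $\fpush$ carry $(24,1)\to(34,1)\to(44,1)\to(45,2)$, while a single application carries $(42,2)\to(43,2)$; after one step the words $34$ and $43$ are \emph{not} related by $\spck$ (indeed $\spck(34)=32$), but after three steps versus one step they are ($\spck(45)=43$). The paper's proof therefore does not attempt a step-by-step match; instead it shows that there exist integers $N,N^\bullet>0$ (possibly unequal) such that $\fpush^N(w,i)$ and $\fpush^{N^\bullet}(\spck(w),i^\bullet)$ are again $\spck$-related, with neither side reaching an fpf-reduced word in between. This requires a short but genuine case analysis: $N=N^\bullet=1$ when $i\notin\{1,2\}$; $N=N^\bullet=2$ when $i\in\{1,2\}$ and $w_i$ is far from $w_{3-i}$; and $N=3$, $N^\bullet=1$ (or vice versa) in the boundary cases $w_i\in\{w_{3-i}-2,\,w_{3-i}-1,\,w_{3-i}+1\}$.

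Your outline of part~(d) is right. The paper carries out exactly the plan you describe, redefining $\tilde{\tilde x},\tilde{\tilde y}$ to accommodate the extra diagonal-bump case $y=x+1$ alongside the cases $x<y$ and $x=y$ already present in \eqref{tildetilde-eq}, and otherwise reusing the operators $\rho_i$, $\psi_i$, $\phi_i$, $\ROWINSERT$, $\REVERSE$, $\REORIENT$, $\COLUMNINSERT$, $\INSERTUPTO$, $\TOTAL$ verbatim with $\ock$ replaced by $\spck$.
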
 

\begin{proof}
Part (a) is equivalent to \cite[Theorem 4.29]{HMP3},
while part (b) is immediate from Theorem~\ref{little-thm}(b) and Lemma~\ref{fpf-easy-lem}.
Our proof of part (c)
 is similar to argument given for Theorem~\ref{o-little-thm}(c), but the details to check 
 are more complicated.
It is clear from Theorem~\ref{little-thm}(c) and Lemma~\ref{fpf-easy-lem}
that if $i>0$ then
 $\ffkb_{\pi}(\ck_i(w)) = \ck_i(\ffkb_\pi(w))$ for all fpf-involution words $w$.
 
It remains to consider the operator $\spck$.
Fix a word $w=w_1w_2\cdots w_m$ with $m\geq 2$,
and suppose $i \in [m]$ is  such that $(w,i)$ is a $\pi$-marked fpf-involution word.
If $i=1$ then $w_2$ must be even; if $i>1$ then $w_1$ must be even;
and if $i>2$ then $w_2$ must either be even or equal to $w_1\pm 1$.
Let $w^\bullet := \spck(w)$
and define $i^\bullet$ to be $3-i$ if $w_2$ is even and $i \in \{1,2\}$, and otherwise set $i^\bullet:=i$.
If $w$ is an fpf-involution word, so that 
$(w,i)$ is fpf-reduced, then it follows from Theorem~\ref{fsim-thm}
that $(w^\bullet,i^\bullet)$ is another $\pi$-marked fpf-involution word
 that is fpf-reduced.

Suppose the subword $w_1w_2$ is an fpf-involution word. 
It suffices to show that there exist $N,N^\bullet>0$
with the following properties: neither $\fpush^M(w,i)$
nor $\fpush^{M^\bullet}(w^\bullet,i^\bullet)$
 is fpf-reduced for any $0<M<N$ or $0<M^\bullet<N^\bullet$,
and if $\fpush^N(w,i) = (v,j)$ then $v_1v_2$ is an fpf-involution word
and $\fpush^{N^\bullet}(w^\bullet,i^\bullet) =(v^\bullet,j^\bullet)$
where $v^\bullet:= \spck(v)$ and $j^\bullet$ is defined to be either $3-j$
when $j \in \{1,2\}$ and $v_2$ is even or else $j^\bullet:=j$.
If this holds then 
by repeating the same claim for $(v,j)$ and $(v^\bullet,j^\bullet)$,
we deduce by induction
that $\ffkb_\pi(\spck(w)) = \spck(\ffkb_\pi(w))$.

It is a straightforward exercise to check this 
claim.
There are four possibilities for $N$ and $N^\bullet$:
\begin{itemize}
\item $N=N^\bullet=1$ if $i=i^\bullet \notin \{1,2\}$.
\item $N=N^\bullet=2$ if $i=3-i^\bullet \in \{1,2\}$ and  either $w_i > w_{i^\bullet}+1$ or $w_i < w_{i^\bullet}-2$.
\item $N=3$ and $N^\bullet=1$ if $i=1$ and $w_1=w_{2}-2$, or $i=2$ and $w_2=w_1-1$.
\item $N=1$ and $N^\bullet=3$ if $i=2$ and $w_2\in \{w_1-2,w_1+1\}$.
\end{itemize}
If $i=1$ then $w_1 \notin\{ w_2\pm 1\}$
so these cases are exhaustive.
The last two cases are more interesting.
For example, 
$\fpush^3(24,1) = \fpush^2(34,1)=\fpush(44,1) = (45,2)$
and
$\fpush(42,2) = (43,2)$ as
\[{
\arraycolsep=0.7pt\def\arraystretch{0.7}
\barr{c|cc}
%\cdot & \cdot \\
\times&\cdot & \cdot \\
\cdot &\cdot & \times \\
\times&\cdot & \cdot \\
\cdot &\ttimes & \cdot \\
\times&\cdot & \cdot\\
 \hline \\[-6pt]
 & 2 & 4
\earr
\hs\xrightarrow{\fpush}\hs
\barr{c|cc}
%\cdot & \cdot \\
\times&\cdot & \cdot \\
\cdot &\cdot & \times \\
\times&\ttimes & \cdot \\
\cdot &\cdot & \cdot \\
\times&\cdot & \cdot\\
 \hline \\[-6pt]
 & 3 & 4
\earr
\hs\xrightarrow{\fpush}\hs
\barr{c|cc}
%\cdot & \cdot \\
\times&\cdot & \cdot \\
\cdot &\ttimes & \times \\
\times&\cdot & \cdot \\
\cdot &\cdot & \cdot \\
\times&\cdot & \cdot\\
 \hline \\[-6pt]
 & 4 & 4
\earr
\hs\xrightarrow{\fpush}\hs
\barr{c|cc}
%\cdot & \cdot \\
\times&\cdot & \ttimes \\
\cdot &\times & \cdot \\
\times&\cdot & \cdot \\
\cdot &\cdot & \cdot \\
\times&\cdot & \cdot\\
 \hline \\[-6pt]
 & 4 & 5
\earr}
\quand
{
\arraycolsep=0.7pt\def\arraystretch{0.7}
\barr{c|cc}
%\cdot & \cdot \\
\times&\cdot & \cdot \\
\cdot &\times & \cdot \\
\times&\cdot & \cdot \\
\cdot &\cdot & \ttimes \\
\times&\cdot & \cdot\\
 \hline \\[-6pt]
 & 4 & 2
\earr
\hs\xrightarrow{\fpush}\hs
\barr{c|cc}
%\cdot & \cdot \\
\times&\cdot & \cdot \\
\cdot&\times & \cdot \\
\times& \cdot & \ttimes\\
\cdot&\cdot & \cdot \\
\times & \cdot & \cdot\\
 \hline \\[-6pt]
 & 4 & 3
\earr}
\]
and the situation is analogous if we replace $24$
with any word $w_1w_2$ that has $w_2=w_1+ 2$.
Likewise, we have
$\fpush(23,2) = (24,2)$
and
$\fpush^3(21,2) = \fpush^2(22,2)=\fpush(32,1) = (42,1)$
as 
\[
{
\arraycolsep=0.7pt\def\arraystretch{0.7}
\barr{c|cc}
%\cdot & \cdot \\
\cdot &\cdot & \cdot \\
\times &\cdot & \ttimes \\
\cdot &\times & \cdot \\
\times &\cdot & \cdot \\
 \hline \\[-6pt]
 & 2 & 3
\earr
\hs\xrightarrow{\fpush}\hs
\barr{c|cc}
%\cdot & \cdot \\
\cdot &\cdot & \ttimes \\
\times &\cdot & \cdot \\
\cdot &\times & \cdot \\
\times &\cdot & \cdot \\
 \hline \\[-6pt]
 & 2 & 4
\earr}
\quand
{
\arraycolsep=0.7pt\def\arraystretch{0.7}
\barr{c|cc}
%\cdot & \cdot \\
\cdot &\cdot & \cdot \\
\times &\cdot & \cdot \\
\cdot &\times & \cdot \\
\times &\cdot & \ttimes \\
 \hline \\[-6pt]
 & 2 & 1
\earr
\hs\xrightarrow{\fpush}\hs
\barr{c|cc}
%\cdot & \cdot \\
\cdot &\cdot & \cdot \\
\times &\cdot & \cdot \\
\cdot &\times & \ttimes \\
\times &\cdot & \cdot \\
 \hline \\[-6pt]
 & 2 & 2
\earr
\hs\xrightarrow{\fpush}\hs
\barr{c|cc}
%\cdot & \cdot \\
\cdot &\cdot & \cdot \\
\times &\ttimes & \cdot \\
\cdot &\cdot & \times \\
\times &\cdot & \cdot \\
 \hline \\[-6pt]
 & 3 & 2
\earr
\hs\xrightarrow{\fpush}\hs
\barr{c|cc}
%\cdot & \cdot \\
\cdot &\ttimes & \cdot \\
\times &\cdot & \cdot \\
\cdot &\cdot & \times \\
\times &\cdot & \cdot \\
 \hline \\[-6pt]
 & 4 & 2
\earr}
\]
and something similar happens if we replace $23$
with any word $w_1w_2$ that has $w_2 \in \{w_1-2,w_1+1\}$.
We leave the details of the other cases to the reader.

Parts (a), (b), and (c) show that $\ffkb_\pi$ preserves descents and 
commutes with $\spck$ and $\ck_i$ for all $i>0$.
Choose an fpf-involution word $w$.
To prove part (d), it suffices to give an algorithm that computes $\QSp(w)$
using only the descent sets of words in the $\simFCK$-equivalence class of 
$w$ (plus the sequences of symplectic Coxeter-Knuth moves transforming $w$ 
to each word) as inputs.

An algorithm of this form exists with exactly the same description as the one given in the proof 
of Theorem~\ref{o-little-thm}(d); one just needs to 
replace all instances of the symbols $\ifkb_\pi$, $\ock$, $\PO$, $\QO$ 
with 
$\ffkb_\pi$, $\spck$, $\PSp$, $\QSp$, respectively,
and redirect any references to Definition~\ref{o-eg-def} to Definition~\ref{sp-eg-def}.
After making these substitutions, 
every step in the proof of Theorem~\ref{o-little-thm}(d) holds verbatim,
with the exception that 
one should also change \eqref{tildetilde-eq}
to define
\[%\label{sp-tildetilde-eq}  
\tilde{\tilde x} = \begin{cases} 
x & \text{if }x <y \\
y &\text{if }x = y-1 \\
y&\text{if }x=y\end{cases}
\quand
\tilde{\tilde y} =  \begin{cases} 
y & \text{if }x <y \\
y+1 &\text{if }x = y-1 \\
y+3&\text{if }x=y\end{cases}
\] 
while keeping the same values of $\tilde x$ and $\tilde y$. The task of walking through 
the steps in the proof of Theorem~\ref{o-little-thm}(d) a second time, with these minor modifications,
is straightforward and left to reader.
The moral is that we can compute the tableau $\QSp(w)$ with an algorithm that executes in exactly the same way when
$w$ is replaced by $\ffkb_\pi(w)$, so we must have $\QSp(w) = \QSp(\ffkb_\pi(w))$.
\end{proof}

Finally, we have a symplectic version of Theorem~\ref{qi-thm2}.

\begin{theorem}\label{qi-thm3}
Let $\pi \in \Ifpf_\ZZ$.
Then $\ffkb_\pi$ is an isomorphism of $\q_n$-crystals
\[\ds\bigsqcup_{\sigma\in \Ifpf_\ZZ} \iRfpf_n(\sigma) \to \bigsqcup_{\sigma\in \Ifpf_\ZZ} \iRfpf_n(\sigma).\]
\end{theorem}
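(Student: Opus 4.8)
The plan is to mimic the proof of Theorem~\ref{qi-thm2}, replacing its orthogonal ingredients by the fpf-analogues established above. First, since $\ffkb_\pi$ preserves descents (Theorem~\ref{sp-little-thm}(b)), it extends to reduced factorizations exactly as $\fkb_\pi$ and $\ifkb_\pi$ do, so $\ffkb_\pi$ is a well-defined map $\cZ\to\cZ$, where $\cZ:=\bigsqcup_{\sigma\in\Ifpf_\ZZ}\iRfpf_n(\sigma)$. By Theorems~\ref{fsim-thm}, \ref{simCK-thm}, and \ref{eg-thm}, every full $\gl_n$-subcrystal of $\cZ$ is the set of all $n$-fold increasing factorizations of the reduced words in a single Coxeter-Knuth equivalence class, and each such class lies inside one $\iRfpf(\sigma)$. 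Combining this with Theorem~\ref{qi-thm1} (applied to the elements $\alpha_1,\dots,\alpha_l\in\cAfpf(\pi)$ furnished by Lemma~\ref{fpf-easy-lem}) and with Theorem~\ref{sp-little-thm}(a), one concludes that $\ffkb_\pi:\cZ\to\cZ$ is already an isomorphism of abstract $\gl_n$-crystals: on each full $\gl_n$-subcrystal it agrees with a composition $\fkb_{\alpha_l}\cdots\fkb_{\alpha_1}$, which is a $\gl_n$-isomorphism by Theorem~\ref{qi-thm1}, and these compositions are compatible across Coxeter-Knuth classes by Lemma~\ref{fpf-easy-lem}.

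It then remains to check that $\ffkb_\pi$ intertwines the queer operators $f_{\overline 1}=\fSp$ and $e_{\overline 1}=\eSp$; since these are mutually inverse partial maps, it suffices to show that for $w=(w^1,\dots,w^n)\in\cZ$ we have $\fSp(w)\neq 0$ if and only if $\fSp(\ffkb_\pi(w))\neq 0$, and that in this case $\ffkb_\pi(\fSp(w))=\fSp(\ffkb_\pi(w))$. The key step is to describe $\fSp$ purely in terms of descent data and the operators $\spck,\ck_1,\ck_2,\dots$ acting on the concatenation $w^1w^2$ of the first two factors (the remaining factors are untouched). Writing $p=\ell(w^1)$ and $q=\ell(w^2)$ and setting $v:=\ck_{p-2}\cdots\ck_2\ck_1\spck(w^1w^2)$, I would show, using Lemma~\ref{16lem}(3) and (4) together with Theorem~\ref{fsim-thm}, that $\fSp(w)\neq 0$ if and only if $p>0$ and position $p$ is not a descent of $v$, and that when this holds $\fSp(w)=(v^1,v^2,w^3,\dots,w^n)$, where $v=v^1v^2$ with $\ell(v^1)=p-1$ and $\ell(v^2)=q+1$. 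Concretely: if the first two letters $a_1<a_2$ of $w^1$ satisfy $a_2=a_1+1$, then $\spck$ replaces $a_2$ by $a_1-1$ and the chain $\ck_{p-2}\cdots\ck_1$ slides the letter $a_1-1$ into position $p$, realizing the ``$a_1+1\in w^1$'' branch of the definition of $\fSp$; if instead $a_2-a_1$ is even, then $\spck$ transposes $a_1$ and $a_2$ and the same chain slides $a_1$ into position $p$, realizing the ``$a_1+1\notin w^1$'' branch. Theorem~\ref{fsim-thm} guarantees that these are the only two possibilities for the first two letters of an fpf-involution word, making the case analysis exhaustive; it also rules out the degenerate coincidences of letters that could make the descent criterion ambiguous (in particular, an fpf-involution word has no equal adjacent letters and does not begin with an odd letter).

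Granting this description, the conclusion is immediate. By Theorem~\ref{sp-little-thm}(b) and (c), the operator $\ffkb_\pi$ preserves descent sets and commutes with $\spck$ and with every $\ck_i$; hence it commutes with the composite $\ck_{p-2}\cdots\ck_1\spck$ that computes $\fSp$, and it preserves the descent condition at position $p$ that detects whether $\fSp(w)=0$. Therefore $\ffkb_\pi\circ\fSp=\fSp\circ\ffkb_\pi$ as partial maps, so $\ffkb_\pi$ is a morphism of $\q_n$-crystals; being a bijection on $\cZ$ by Theorem~\ref{sp-little-thm}(a), it is an isomorphism.

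I expect the principal obstacle to be the bookkeeping in the middle paragraph: verifying the ``$\spck$-and-$\ck_i$'' formula for $\fSp$ requires a finer case analysis than its orthogonal counterpart, because of the extra ``remove $a_1+1$, prepend $a_1-1$'' branch and the parity constraints on fpf-involution words, and one must be careful that a descent in position $p$ really does characterize non-vanishing in every branch. The remainder is a direct transcription of the proof of Theorem~\ref{qi-thm2}.
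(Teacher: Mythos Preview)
Your proposal is correct and follows essentially the same route as the paper's proof: reduce to a $\gl_n$-crystal isomorphism via Lemma~\ref{fpf-easy-lem} and Theorem~\ref{qi-thm1}, then express $\fSp$ in terms of $\spck$, the $\ck_i$, and a descent test so that Theorem~\ref{sp-little-thm}(b)--(c) finishes the argument. One small point to watch: your formula $v=\ck_{p-2}\cdots\ck_1\spck(w^1w^2)$ breaks down when $p=1$ (for instance, if $w^1=(a_1)$ and $w^2$ begins with $a_1-1$, applying $\spck$ produces a non-descent at position $1$ even though $\fSp(w)=0$); the paper handles this by setting $v:=w^1w^2$ when $p=1$, and you should do the same.
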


\begin{proof}
By Theorems~\ref{fsim-thm}, \ref{simCK-thm}, and \ref{eg-thm},
each full $\gl_n$-subcrystal of
$\cZ := \bigsqcup_{\sigma\in \Ifpf_\ZZ} \iRfpf_n(\sigma)$ 
is the set of all $n$-fold increasing factorizations of reduced words in a single Coxeter-Knuth equivalence class.
Theorem~\ref{qi-thm1}, Lemma~\ref{fpf-easy-lem},
and Theorem~\ref{sp-little-thm}(a) therefore imply that $\ffkb_\pi : \cZ \to \cZ$ is 
an isomorphism of abstract $\gl_n$-crystals.

Let $w=(w^1,w^2,\dots,w^n) \in \cZ$.
To show that $\ffkb_\pi$ is a $\q_n$-crystal morphism,
 it is enough to check
that 
 $\fSp(w) \neq 0$ if and only if $\fSp(\ffkb_\pi(w)) \neq 0$,
in which case $\ffkb_\pi(\fSp(w))  = \fSp (\ffkb_\pi(w))$.
Let $p=\ell(w^1)$ and $q=\ell(w^2)$.
As in the orthogonal case, we have
$\fSp(w) \neq 0$ if and only if 
$p>0$ and $p$ is not a descent of
$v:= \ck_{p-2}\cdots \ck_1\spck(w^1w^2)$,
which we interpret as $v := w^1w^2$ when $p=1$.
If this happens, then it follows by definition that 
$\fSp(w) = (v^1,v^2,w^3,\dots w^n)$
where $v^1$ and $v^2$ are the words of length $p-1$ and $q+1$ such that $v=v^1v^2$.
Since $\ffkb_\pi$ preserves descents and commutes with $\spck$ and $\ck_i$,
we have
$\fSp(w) = \fSp(\ffkb_\pi(w)) = 0$
or
$\ffkb_\pi(\fSp(w))  = \fSp (\ffkb_\pi(w)) \neq 0$, as needed.
\end{proof}

\section{Proofs of the main results}\label{proofs-sect}

Here, we leverage our results in the previous section
to give complete proofs of Theorems~\ref{o-eg-thm}, \ref{main-thm1}, \ref{sp-eg-thm}, and \ref{main-thm2}.
We then describe some other applications and open problems.

\subsection{Reduction to permutations}\label{reduction-sect}

%So far, we have been using the term \emph{permutation} to refer to various bijections $\ZZ \to \ZZ$.
In this section, a
\emph{permutation} of a list of distinct numbers $a_1,a_2,\dots,a_m$ is a word of length $m$ 
containing each $a_i$ as a letter exactly once.

Fix $m,n \in \NN$. Let $\cS(m)\subset \cW_m(m)$ be the set of permutations of $1,2,\dots,m$
and define $\cS_n(m)$ to be the set of all $n$-fold increasing factorizations of words in $\cS(m)$.
Let $\cSe(m)$ be the set of permutations of $2,4,6,\dots,2m$ and write $\cSe_n(m)$ for
the set of $n$-fold increasing factorizations of words in $\cSe(m)$.
Finally, let $\Sigma(m)$ be the set of  involutions
$\sigma \in I_\ZZ$ of the form
\[
\sigma=(1,m)\quad\text{or}\quad
\sigma = (1,i_1)(i_1-1,i_2)(i_2-1,i_3)\cdots(i_{k}-1,m)\]
for some $1 < i_1 -1 < i_1<  i_2 -1 < i_2<\dots < i_k -1 < i _k < m$.
Both sets are $\q_n$-crystals:

\begin{proposition}\label{cse-prop}
One has
$\cS_n(m) = \bigsqcup_{\sigma\in\Sigma(m)} \iR_n(\sigma)$
and
$
\cSe_n(m) = \iR_n(\tau) = \iRfpf_n(\pi)
$
where  $\tau = s_2s_4s_6\cdots s_{2n} \in I_\ZZ$ and
$\pi \in \Ifpf_\ZZ$ is the involution
with $\pi(i) = \minfpf(i)$ for all $i \notin [2m]$ that maps
\[ i \mapsto \begin{cases} 
i + 2 &\text{if }i \in \{1,2m-2\}  \\
i - 2 &\text{if }i \in \{3, 2m\} 
\end{cases}
\quand
 i \mapsto \begin{cases} 
i + 3 &\text{if $i$ is even and $1<i<2m-2$} \\
i - 3 & \text{if $i$ is odd and $3<i<2m$}.
\end{cases}
\]
Moreover, the abstract $\q_n$-crystal structures on $\iR_n(\tau)$ and $\iRfpf_n(\pi)$ coincide.
\end{proposition}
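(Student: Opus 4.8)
The plan is to establish every claim at the level of sets of words, exploiting that the $n$-fold increasing factorizations of a word depend only on the word (equivalently, on its descent set): an equality of word sets upgrades automatically to an equality of the associated sets of reduced factorizations, together with the Morse--Schilling $\gl_n$-operators $e_i,f_i$ for $i\in[n-1]$. Because the $\gl_n$-structures then agree for free, the substance of the final ``moreover'' clause lies entirely in comparing the queer operators.

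For $\cS_n(m)=\bigsqcup_{\sigma\in\Sigma(m)}\iR_n(\sigma)$ I would proceed as follows. Every $w\in\cS(m)$ has pairwise distinct letters, so it is reduced: by the word property of Coxeter groups a non-reduced word can be carried by braid moves to one with two equal adjacent letters, but a word with distinct letters admits only commutation moves, which keep its letters distinct. The same closure argument applies to the whole $\isim$-class of $w$, generated by commutation moves and the swap $w_1w_2\cdots\leftrightarrow w_2w_1\cdots$: this class consists of permutations of $1,2,\dots,m$, hence contains no word with equal adjacent letters, so by Theorem~\ref{isim-thm} it equals $\iR(\sigma)$ for a unique $\sigma\in I_\ZZ$ with $\iR(\sigma)\subseteq\cS(m)$. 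Letting $\Sigma(m)$ be the set of such $\sigma$ and using that distinct classes $\iR(\sigma)$ are disjoint yields $\cS(m)=\bigsqcup_{\sigma\in\Sigma(m)}\iR(\sigma)$, and then the factorized version. By construction $\sigma\in\Sigma(m)$ iff $\supp(\sigma)=[m]$ and $\ellhat(\sigma)=m$ (so that each involution word uses every element of $[m]$ exactly once); I would match this with the displayed ``staircase'' family by induction on $m$, peeling off the $2$-cycle through $1$ and using $\ellhat(\sigma)=\tfrac12(\ell(\sigma)+\kappa(\sigma))$.

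For $\cSe_n(m)=\iR_n(\tau)=\iRfpf_n(\pi)$ I would use that the letters $2,4,\dots,2m$ are pairwise non-adjacent. Then every $w\in\cSe(m)$ is reduced, the element $s_{i_1}\cdots s_{i_m}$ it represents is independent of the ordering and equals $\tau$, and both the $\isim$- and the $\fsim$-class of $w$ coincide with the set of all $m!$ reorderings of $2,4,\dots,2m$: the extra $\isim$-move is a commutation here, while the extra $\fsim$-move requires consecutive letters $x,x\pm1$ and hence never applies among even letters. Since this class has no word with equal adjacent letters and no word starting with an odd letter, Theorems~\ref{isim-thm} and \ref{fsim-thm} identify it with $\iR(\tau)$ and with $\iRfpf(\pi)$ for unique $\tau\in I_\ZZ$ and $\pi\in\Ifpf_\ZZ$; chasing the $2$-cycles of $\minfpf$ through the conjugation $\pi=(s_2s_4\cdots s_{2m})\,\minfpf\,(s_2s_4\cdots s_{2m})$ recovers the explicit $\tau$ and $\pi$ in the statement. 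Passing to factorizations gives $\cSe_n(m)=\iR_n(\tau)=\iRfpf_n(\pi)$.

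Finally, for the coincidence of $\q_n$-structures: both $\iR_n(\tau)$ and $\iRfpf_n(\pi)$ have the same underlying $\gl_n$-crystal $\bigsqcup_\sigma\cR_n(\sigma)$, indexed by the common set $\cA(\tau)=\cAfpf(\pi)$ of permutations represented by words in $\cSe(m)$ and with identical operators $e_i,f_i$, so it suffices to check $\fO=\fSp$ and $\eO=\eSp$ on $\cSe_n(m)$. This is forced by the single observation that every letter of a factorization $w=(w^1,\dots,w^n)\in\cSe_n(m)$ is even: if $w^1\neq\varnothing$ then $x=\min(w^1)$ is even, so $x+1$ is odd and absent from the all-even word $w^1$, whence the branch of $\fSp$ deleting $x+1$ and prepending $x-1$ never fires and $\fSp$ becomes exactly the rule defining $\fO$; symmetrically $\min(w^2)$ is even, so the ``odd'' branch of $\eSp$ never fires and $\eSp=\eO$. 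Thus the $\q_n$-structures coincide. I expect the only steps needing real bookkeeping to be the explicit description of $\Sigma(m)$ and the cycle computation for $\pi$; the conceptual heart --- the equality of the two queer-crystal structures --- drops out at once from the ``all letters even'' observation.
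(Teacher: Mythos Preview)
Your proposal is correct and follows essentially the same approach as the paper: use Theorems~\ref{isim-thm} and \ref{fsim-thm} after observing that $\cS(m)$ is a union of $\isim$-classes and $\cSe(m)$ is a single $\isim$- and $\fsim$-class, then identify the specific involutions. In fact you are more thorough than the paper on the ``moreover'' clause: the paper's proof does not explicitly address why the two $\q_n$-structures on $\cSe_n(m)$ coincide, whereas your observation that all letters being even forces the exceptional branches of $\fSp$ and $\eSp$ never to fire (so that they reduce to $\fO$ and $\eO$) is exactly the right argument and fills this in cleanly.
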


\begin{proof}
Note that $\pi = s_{2m}\cdots s_4s_2\cdot \minfpf \cdot s_2s_4\cdots s_{2m} \in \Ifpf_\ZZ$.
Clearly $\cS(m)$ is a union of equivalence classes under the relation $\isim$, while
$\cSe(m)$ is a single equivalence class under $\isim$ and $\fsim$. Theorems~\ref{isim-thm} and \ref{fsim-thm}
imply that $\cS_n(m)=\bigsqcup_{\sigma\in\Sigma(m)} \iR_n(\sigma)$ for a finite set  $\Sigma(m)\subset I_\ZZ$ while $\cSe_n(m) =\iR(\tau)=\iRfpf_n(\pi)$. Checking that $\Sigma(m)$ is the given set is straightforward.
\end{proof}

Fix $w=(w^1,w^2,\dots,w^n) \in \cS_n(m)$. Define $w^{-1} \in \cW_n(m)$
to be the word of length $m$ whose $i$th letter is the index $j \in [n]$ 
of the factor $w^j$ that contains $i$ as a letter.
Then form $2[w] \in \cSe_n(m)$ by doubling the letters in each component of $w$.
For example, $(245,\emptyset,1, 3)^{-1} = 31411$. % and $2[(24,\emptyset,13)] = (48,\emptyset,26)$.
We write $\invert$ and $\dbl$ for  the corresponding maps $\cS_n(m) \to \cW_n(m)$ and $\cS_n(m) \to \cSe_n(m)$.

The following lemma shows that the $\q_n$-crystal structures
on $\cS_n(m)$ and $\cSe_n(m)$ afforded by Proposition~\ref{cse-prop}
are isomorphic to the crystal of words $\cW_n(m) \cong (\BB_n)^{\otimes m}$.

\begin{lemma}%[\cite{HaimanMixed}]
\label{icc-lem}
Fix $m \in \NN$, $n \in \PP$, and $w \in \cS_n(m)$. The following properties hold:
\ben
\item[(a)] The map $\invert : \cS_n(m) \to \cW_n(m)$ is a $\q_n$-crystal isomorphism.

\item[(b)] The map $\dbl : \cS_n(m) \to \cSe_n(m)$ is a $\q_n$-crystal isomorphism.

\item[(c)] We have $\PO(w) = Q_\HM(w^{-1})$ and $\QO(w) = \QO(2[w])  = \QSp(2[w]) = P_\HM(w^{-1}).$
\een
\end{lemma}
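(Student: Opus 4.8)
The plan is to prove the three parts in order (a), (b), (c), since (c) will use the constructions established in (a) and (b).

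\medskip

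\textbf{Part (a).} First I would check that $\invert$ is a bijection. Injectivity is clear because $w$ is recovered from $w^{-1}$ by grouping positions with the same letter. Surjectivity: given any $u \in \cW_n(m)$, the preimage is the factorization of the permutation word $1\,2\cdots m$ whose $j$th factor lists (in increasing order) the positions $i$ with $u_i = j$; since $\cS(m)$ consists of \emph{all} permutations of $1,\dots,m$ and each such factorization is genuinely increasing, this lies in $\cS_n(m)$. The weight-preservation is immediate: the number of letters in $w^j$ equals the number of occurrences of $j$ in $w^{-1}$. The real content is that $\invert$ intertwines the crystal operators $e_i, f_i$ (for $i \in [n-1]$) and $e_{\overline 1}, f_{\overline 1}$. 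Here I would compare the explicit description of $f_i$ on $\cR_n(\pi)$ given in Section~\ref{reduced-sect} (via the set $\pair(w^i,w^{i+1})$) with the parenthesis-matching rule for $f_i$ on words in Section~\ref{words-sect}: the pairing procedure on the increasing words $w^i, w^{i+1}$ is designed precisely to match the bracketing of the letters $i$ and $i+1$ in the inverse word, because a position in $w^{i}$ sits to the \emph{left} of a position in $w^{i+1}$ in $w^{-1}$ exactly when the corresponding letters of $w^i$ are \emph{larger} than those of $w^{i+1}$ (since both factors are increasing, the larger-letter positions of $w^i$ were inserted later). So the "largest unpaired letter of $w^i$'' corresponds to the last unbalanced right parenthesis, and moving it to $w^{i+1}$ (choosing the smallest available slot) corresponds to changing that $i$ to $i+1$ in $w^{-1}$. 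A similar but simpler bookkeeping argument handles $f_{\overline 1} = \fO$: the condition "$w^1 \neq \emptyset$ and $\min(w^1)$ is smaller than every letter of $w^2$'' translates exactly to "$w^{-1}$ has a $1$ and its first $1$ precedes its first $2$,'' and moving that smallest letter from $w^1$ to $w^2$ changes that first $1$ to a $2$. Then $e_i, e_{\overline 1}$ follow by the inverse-function relations in the crystal axioms.

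\medskip

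\textbf{Part (b).} The map $\dbl$ sends $(w^1,\dots,w^n)$ to the factorization whose $j$th factor is obtained from $w^j = a_1 < a_2 < \cdots$ by replacing $a_k$ with $2a_k$; this is still strictly increasing and the concatenation is a permutation of $2,4,\dots,2m$, hence lies in $\cSe_n(m)$, and the map is clearly a weight-preserving bijection. For the crystal structure I would again compare operator formulas. For $f_i$ ($i \in [n-1]$) the $\pair$ procedure is insensitive to the uniform rescaling $a \mapsto 2a$ of the letters (it only uses the relative order and, when adding a new letter, "the smallest integer $\geq x$ not already present'' — but in a doubled word all letters are even and consecutive evens differ by $2$, so the inserted slot is again even and matches the doubling), so $f_i$ commutes with $\dbl$. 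For $f_{\overline 1}$ the crucial point is that $\fSp$ on $\cSe_n(m)$ reduces to $\fO$: if $x = \min(w^1)$ in the doubled factorization then $x$ is even and, because the letters of $w^1$ are $2a_1 < 2a_2 < \cdots$, the value $x+1$ is odd and hence \emph{not} a letter of $w^1$; so the second clause in the definition of $\fSp$ never triggers and $\fSp$ simply moves $x$ from $w^1$ to $w^2$ — exactly $\fO$, which by part (a) is $\dbl$-equivariant. (Symmetrically $\eSp$ reduces to $\eO$ on doubled factorizations because $x = \min(w^2)$ is even.) This is also where I would invoke the last sentence of Proposition~\ref{cse-prop}, that the two $\q_n$-structures on $\cSe_n(m)$ (as $\iR_n(\tau)$ and as $\iRfpf_n(\pi)$) literally coincide, so that the statement "$\QO(2[w]) = \QSp(2[w])$'' in part (c) is even well-posed.

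\medskip

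\textbf{Part (c).} Here I would argue in three pieces. (i) $\PO(w) = Q_\HM(w^{-1})$: this is the identification of orthogonal Edelman--Greene / shifted Hecke insertion (Definition~\ref{o-eg-def}) with the recording tableau of Haiman mixed insertion. For $w \in \cS(m)$ a permutation word, $\PO$ restricts to \emph{Sagan--Worley insertion} (stated in Section~\ref{ins-sect}), which is classically transpose-dual to mixed insertion: the Sagan--Worley insertion tableau of $u$ equals the mixed \emph{recording} tableau of $u^{-1}$, and vice versa. I would cite the relevant identity between Sagan--Worley and mixed insertion (e.g.\ from \cite{HaimanMixed,Sag87,Worley} as referenced in Section~\ref{ins-sect}); the shape of $\PO(w)$ thus equals the shape of $P_\HM(w^{-1}) = Q_\HM(w^{-1})$ and the entries agree. (ii) $\QO(w) = P_\HM(w^{-1})$: dually, the $\PO$-\emph{recording} tableau of $w$ — which records, via primes, whether each inserted letter ended in a row or a column bump — corresponds under $u \leftrightarrow u^{-1}$ to the mixed \emph{insertion} tableau, the primed entries matching exactly Haiman's rule for primed vs.\ unprimed entries in $P_\HM$. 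Because $w \in \cS_n(m)$ may have factors of size $>1$, I would note that $\QO(w)$ is obtained from $\QO(w^{-1}\text{-as-a-word})$, i.e.\ the standard shifted recording tableau, by the standardization/destandardization built into Theorem~\ref{oeg-thm}, which is precisely compatible with the $\q_n$-quasi-isomorphism $P_\HM$ of Theorem-Definition~\ref{stab-thmdef}. (iii) $\QO(2[w]) = \QSp(2[w]) = \QO(w)$: the first equality is the coincidence of the two $\q_n$-structures from (b). For the second, I would observe that running $\PSp$ (equivalently $\PO$, since on the doubled word all diagonal-exception clauses in Definitions~\ref{o-eg-def} and \ref{sp-eg-def} behave identically — the diagonal entries encountered are even and the inserted letters are even, so one is never in the "$y = x+1$'' case) on $2[w] = a_1 a_1 \cdots$ gives, after the first of each repeated pair is row-inserted, the second copy bumping along an identical trajectory one column further; tracking this shows the recording tableau of $2[w]$ is the same shifted tableau as $\QO(w)$, with the row/column (unprimed/primed) pattern preserved.

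\medskip

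The main obstacle I anticipate is part (c)(i)--(ii): carefully reconciling the bookkeeping of three different insertion conventions (orthogonal/symplectic EG insertion on \emph{factorizations}, Sagan--Worley insertion on permutation words, and Haiman mixed insertion) — in particular verifying that the prime/unprime data in $\QO$ matches the prime/unprime data in $P_\HM$ under inversion, and handling the fact that factors of size $>1$ force a destandardization step. I would lean on the cited bijections (Theorems~\ref{hm-bijection-thm}, \ref{oeg-thm}, \ref{speg-thm}) to pin down the shapes and reduce the content to checking agreement of the two standard-tableau recording maps, which is a finite local check per insertion step.
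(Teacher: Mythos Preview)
Your approach to parts (a) and (b) is correct and essentially identical to the paper's: both describe the inverse of $\invert$, then verify that the pairing rule on $(w^i,w^{i+1})$ corresponds exactly to the parenthesis-matching rule on the letters $i,i+1$ in $w^{-1}$, and handle $f_{\overline 1}$ separately; for $\dbl$ the paper simply declares the result ``obvious from the definitions,'' and your elaboration (that $\fSp$ collapses to $\fO$ on doubled words since $x+1$ is odd) is exactly the content behind that.  For the core of part (c) you also follow the paper's route: reduce to the single-letter-factor case via a destandardization map, identify orthogonal-EG insertion on a permutation with Sagan--Worley insertion, and invoke Haiman's duality $(\PO,\QO)(w) = (Q_\HM,P_\HM)(w^{-1})$ (Haiman's Theorem~6.10 in \cite{HaimanMixed}).

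The one place your argument goes off the rails is (c)(iii).  First, the equality $\QO(2[w]) = \QSp(2[w])$ is \emph{not} a consequence of the two $\q_n$-crystal structures on $\cSe_n(m)$ coinciding; it is a statement about two insertion algorithms producing the same recording tableau.  The correct reason --- which you actually state in your parenthetical --- is that on the permutation word $2[w]$ all letters are distinct and even, so neither the $x=y$ nor the $y=x+1$ diagonal clause ever fires, and both Definitions~\ref{o-eg-def} and \ref{sp-eg-def} reduce to plain Sagan--Worley insertion.  Second, your description of $2[w]$ as ``$a_1 a_1 \cdots$'' with ``repeated pairs'' and ``the second copy bumping one column further'' is simply wrong: $2[w]$ is obtained by doubling every \emph{letter}, not repeating it, so the underlying word is $(2a_1)(2a_2)\cdots(2a_m)$, still a permutation.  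The equality $\QO(2[w]) = \QO(w)$ is then immediate because Sagan--Worley insertion depends only on the relative order of the letters, and $a\mapsto 2a$ is order-preserving.  The paper compresses all of this into ``obvious from the definitions,'' which is fair once you see that both algorithms degenerate to Sagan--Worley on permutation words.
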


\begin{proof}
The inverse of $\invert$ is the map that sends
an $m$-letter word $v =v_1v_2\cdots v_m \in \cW_n(m)$
to the tuple $(w^1,w^2,\dots,w^n)$
in which $w^j$ is the increasing word whose letters are the positions of $j$ in $v$. 
To prove part (a), it suffices to check, for each $i \in \{\overline{1}, 1,2,\dots,n-1\}$
and $w =(w^1,w^2,\dots,w^n) \in \cS_n(m)$, that 
$f_i(w) \neq 0$ if and only if $f_i(w^{-1})\neq 0$, in which case $f_i(w)^{-1} = f_i(w^{-1})$.

If $i=\overline{1}$ so that $f_{i}$ acts on $\cS_n(m)$ as the operator $\fO$, then this is clear by definition.
Assume $i \in[n-1]$.
If $f_i(w)$ is nonzero then it is formed from $w$ by removing some letter from $w^i$ and adding the same letter to $w^{i+1}$.
On the other hand, we have  $(a,b) \in \pair(w^i,w^{i+1})$ if and only if $a$ and $b$ are the positions of matching left and right parentheses in
the word formed from $w^1w^2\cdots w^n$ by replacing each $i$ by ``$)$'' and each $i+1$ by ``$($''. After comparing these observations with the definitions of $f_i$ for each crystal, the desired claim is evident.
This completes the proof of part (a).

Part (b) and the identity $\QO(w) =\QO(2[w])= \QSp(2[w])$ 
are obvious from the definitions.
Let $w =(w^1,w^2,\dots,w^n) \in \cS_n(m)$ and define $\underline w \in \cS_m(m)$ to be the factorization
obtained by dividing $w^1w^2\cdots w^n$ into subwords of length one.
Let $\phi : \{1'<1<\dots<m'<m\} \to \{1'<1<\dots<n'<n\}$ be the map that assigns $i \mapsto j$ and $i' \mapsto j'$
if the $i$th letter of $w^1w^2\cdots w^n$ is part of $w^j$. 
Then $\PO(w) = \PO(\underline w)$ and $\QO(w) = \phi \circ \QO(\underline w)$ by definition,
and it is easy to see that 
 $Q_\HM(w^{-1}) = Q_\HM(\underline w^{-1})$ and 
 $P_\HM(w^{-1}) = \phi \circ P_\HM(\underline w^{-1})$.
Thus, it suffices to show that $\PO(\underline w) = Q_\HM(\underline w^{-1})$
and
$\QO(w) = P_\HM(\underline w^{-1})$,
but this is \cite[Theorem 6.10]{HaimanMixed} as 
orthogonal-EG insertion applied to $\underline w$
 coincides with \emph{Sagan-Worley insertion} \cite[Definition 6.1]{HaimanMixed}.
\end{proof}

\begin{example}
If $w=(\emptyset, 36, 1245 ) \in \cS_3(6)$ then  $w^{-1} = 332332$ and we compute
\[ 
%\ytableausetup{boxsize = .45cm,aligntableaux=center}
{\small\barr{rl}
\ytab{
  3 
}
   \leadsto 
\ytab{
  3 & 6
}
   \leadsto 
\ytab{
  1 &3 & 6 
}
      \leadsto 
\ytab{
\none & 3 \\
  1 & 2 & 6
}
      \leadsto 
\ytab{
\none & 3  & 6\\
  1 &2   & 4 
}
        \leadsto 
\ytab{
  \none & 3 & 6 \\
  1 & 2 & 4 & 5
} &  = \PO(w)
\earr
\quand
\barr{rl}
\ytab{
\none & 3 & 3 \\
  2 &2  & 3' & 3
} & = \QO(w).
\earr}
\]
Comparing with Example~\ref{hm-ex} shows that $\PO(w) = Q_\HM(w^{-1})$ and $\QO(w) = P_\HM(w^{-1})$.
\end{example}

\subsection{Proofs in the orthogonal case}
\label{proofs-o-sect}

This section contains our proofs of Theorems~\ref{o-eg-thm} and \ref{main-thm1}.

\begin{definition}
An involution $\pi \in I_\ZZ\setminus\{1\}$ is \emph{inv-Grassmannian} if
\[ 
\pi = (m+1,m+r + \mu_r)(m+2,m+r + \phi_{r-1})\cdots(m+r,m+r+\mu_1)
\]
for some $m\in \ZZ$ and some strict partition $\mu = (\mu_1>\mu_2>\dots >\mu_r>0)$.
In this case, the strict partition $\mu$ 
is the \emph{shape} of $\pi$.
We also consider $\pi=1$ to be inv-Grassmannian with shape $\mu = \emptyset$.
\end{definition}

The map $\pi \mapsto (\mu, m)$ is a bijection from nontrivial inv-Grassmannian 
involutions to nonempty strict partitions paired with nonnegative integers.

Given a word $w=w_1w_2\cdots w_m$ and a map $\pi : \ZZ \to \ZZ$, let 
\[
w^* = (-w_1)(-w_2)\cdots (-w_m)
\quand
\pi^* : i \mapsto 1-\pi(1-i).
\]
Then $\pi \mapsto  \pi^*$
is the unique automorphism of $S_\ZZ$ 
sending $s_i \mapsto s_{-i}$,
while 
 $w\mapsto w^*$
is  a bijection $\cR(\pi) \to \cR(\pi^*)$.
Also, if $\pi \in I_\ZZ$ then $\pi^* \in I_\ZZ$ and $\iR(\pi^*) = \{w^* : w \in \iR(\pi)\}$.

An element $\pi \in I_\ZZ$ is inv-Grassmannian if and only if 
$\pi^*$ is \emph{I-Grassmannian} 
in the sense of \cite[\S4.1]{HMP4}.
It follows from \cite[\S4.1]{HMP4}
that 
if
$\pi \in I_\ZZ$  is inv-Grassmannian of shape $\mu$ then $\ellhat(\pi) = |\mu|$.
On the other hand, 
the operator $\ifkb_\pi$ for $\pi \in I_\ZZ$ is the inverse of the  
\emph{involution Little map} in \cite[\S3.3]{HMP3}.
After adjusting for these symmetries, the results in \cite{HMP4,HMP3,Marberg2019a} imply the following:

\begin{lemma}[\cite{HMP4,HMP3,Marberg2019a}]
\label{i-gr-lem}
Suppose $w$ is an involution word for some permutation in $I_\ZZ$
and $\pi \in I_\ZZ$ is an inv-Grassmannian involution of shape $\mu$.
\ben
\item[(a)] There is 
 a finite
sequence $\sigma_1,\sigma_2,\dots,\sigma_l \in I_\ZZ$
such that
$ \ifkb_{\sigma_1}\ifkb_{\sigma_2}\cdots \ifkb_{\sigma_l}(w)$ is an involution word for an inv-Grassmannian element of $I_\ZZ$.

\item[(b)] The set $\iR(\pi)$ is a single equivalence class under $\simICK$, and 
 $\QO$ is a bijection from $\iR(\pi)$
to the set of standard shifted tableaux of shape $\mu$.

\een
\end{lemma}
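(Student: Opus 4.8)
\textbf{Proof proposal for Lemma~\ref{i-gr-lem}.}

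The plan is to reduce both parts to known results about the ordinary Little map and about I-Grassmannian permutations from \cite{HMP3,HMP4,Marberg2019a}, exploiting the symmetry $\pi\mapsto\pi^*$ and $w\mapsto w^*$ introduced just before the statement. For part (a), I would first recall that $\ifkb_\pi$ is by definition the inverse of the involution Little map studied in \cite[\S3.3]{HMP3}, and that the main structural theorem there (the analogue of Little's result that iterated bumping terminates at a "vexillary" or Grassmannian target) guarantees that any involution word can be transformed into an involution word for an I-Grassmannian element by a finite sequence of involution Little bumps. Concretely, one applies the results of \cite[\S3.3]{HMP3} to $w^*$ to obtain a sequence of bumps landing in $\iR(\tau^*)$ for some I-Grassmannian $\tau$, then dualizes: since $w\mapsto w^*$ intertwines $\ifkb_\pi$ with $\ifkb_{\pi^*}$ and carries inv-Grassmannian involutions to I-Grassmannian ones, the dual sequence $\sigma_1,\dots,\sigma_l$ works for $w$ itself. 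The only care needed is to confirm that the termination claim in \cite{HMP3} is stated (or easily rephrased) in the direction we need, i.e. for $\ifkb$ rather than its inverse; this is a matter of reading \cite[Theorem 3.40]{HMP3} together with the remark that $\ifkb_\pi$ inverts ${\hat\cB}_\pi$.

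For part (b), I would argue in two stages. First, that $\iR(\pi)$ is a single $\simICK$-equivalence class: by Theorem~\ref{isim-thm} we have $\iR(\pi) = \bigsqcup_{\sigma\in\cA(\pi)}\cR(\sigma)$, and $\simICK$ is generated by ordinary Coxeter--Knuth equivalence $\simCK$ together with the move $w\simICK\ock(w)$; by Theorem~\ref{o-eg-thm} (already available to us) two involution words are $\simICK$-equivalent iff they have the same $\PO$-tableau, so it suffices to show that all words in $\iR(\pi)$ share a common insertion tableau. For inv-Grassmannian $\pi$ of shape $\mu$, the analysis in \cite[\S4.1]{HMP4} (applied to $\pi^*$, which is I-Grassmannian) identifies $\iR(\pi)$ with the set of standard shifted tableaux via $\QO$, with a single fixed $P$-tableau — the "staircase-like" increasing shifted tableau whose row reading word is the canonical involution word for $\pi$. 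Combined with Theorem~\ref{oeg-thm}, which says $w\mapsto(\PO(w),\QO(w))$ is a bijection onto pairs of shifted tableaux of equal shape with $P$ increasing and $\row(P)\in\iR(\pi)$, and with the fact that an increasing shifted tableau is determined by its row reading word, we get: $\PO$ is constant on $\iR(\pi)$ (so $\iR(\pi)$ is one $\simICK$-class), and $\QO$ restricts to a bijection from $\iR(\pi)$ onto shifted tableaux of shape $\mu$; the "standard" refinement follows from the last sentence of Theorem~\ref{oeg-thm}, since the elements of $\iR(\pi)$, viewed as factorizations, have all factors of size one.

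The main obstacle I anticipate is verifying that the $\PO$-insertion tableau of \emph{every} involution word for an inv-Grassmannian $\pi$ is the same specific increasing shifted tableau of shape $\mu$ — equivalently, pinning down that the bijection of \cite[\S4.1]{HMP4} matches our orthogonal-EG recording tableau $\QO$ rather than some other labelling of the same shape. This requires matching conventions across \cite{HMP3,HMP4,Marberg2019a}: the dualization $\pi\leftrightarrow\pi^*$, the fact that $\ifkb$ is the inverse of the involution Little map, and the identification (from \cite[\S4.3]{Marberg2019a}, cf.\ Definition~\ref{o-eg-def}) of orthogonal-EG insertion with involution Coxeter--Knuth insertion. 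Once these bookkeeping identifications are in place, both parts follow formally; the substantive mathematical content is entirely imported from the cited papers, so the proof should be short modulo the care taken with conventions.
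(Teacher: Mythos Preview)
Your overall strategy matches the paper's proof almost exactly: dualize via $w\mapsto w^*$, $\pi\mapsto\pi^*$ to import the termination result from \cite{HMP3} for part (a), and for part (b) show that $\PO$ is constant on $\iR(\pi)$ so that Theorem~\ref{oeg-thm} forces $\QO$ to be a bijection onto standard shifted tableaux of the right shape. The paper pins down the uniqueness of the $P$-tableau by citing \cite[Corollaries 5.9 and 5.10]{Marberg2019a} and \cite[Theorem 4.20]{HMP4}, which is precisely the ``obstacle'' you anticipated; so far so good.

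There is, however, a genuine circularity in your argument for part (b). You invoke Theorem~\ref{o-eg-thm} (``already available to us'') to conclude that $\PO(v)=\PO(w)$ implies $v\simICK w$. But Theorem~\ref{o-eg-thm} is \emph{not} available at this point: its proof in the paper appears immediately after Lemma~\ref{i-gr-lem} and explicitly uses both parts of the lemma (part (a) to bump into the inv-Grassmannian case, part (b) to compare shapes). What you actually need is only the easy direction ``same $\PO$ implies $\simICK$,'' and this is available independently as \cite[Corollary~4.12]{Marberg2019a}. The paper cites exactly that result at this step. Replace your appeal to Theorem~\ref{o-eg-thm} with a direct citation of \cite[Corollary~4.12]{Marberg2019a} and the circularity disappears; the rest of your argument then goes through and coincides with the paper's.
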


\begin{proof}
The inverse of $\ifkb_\sigma$ for $\sigma \in I_\ZZ$
is the operator
given by  $\ifkb_\sigma^{-1} : v \mapsto \(\ifkb_{\sigma^*}(v^*)\)^*$.
Combining \cite[Theorem 3.40]{HMP3} (where $\ifkb_\sigma^{-1}$ is denoted $\hat\cB_\sigma$) and \cite[Theorem 4.36]{HMP4}
shows that for any involution word $v$,
there 
exists 
 a finite
sequence  $y_1,y_2,\dots,y_l \in I_\ZZ$
and an inv-Grassmannian involution $\gamma \in I_\ZZ$ with
$
(\ifkb_{y_1^*}\ifkb_{y_2^*}\cdots \ifkb_{y_l^*}(v^*))^* =
 \ifkb^{-1}_{y_1}\ifkb^{-1}_{y_2}\cdots \ifkb^{-1}_{y_l}(v) \in \iR(\gamma^*).
 $
Taking $v=w^*$ and $\sigma_i = y_i^*$ for $i \in [l]$ 
 proves part (a). 
 For part (b),
 we observe that  \cite[Corollaries 5.9 and 5.10]{Marberg2019a} and
 \cite[Theorem 4.20]{HMP4} imply
 that exactly one increasing shifted tableau $P$ exists with $\row(P) \in \iR(\pi)$,
 and this tableau has shape $\mu$. The desired claims hold by
 Theorem~\ref{oeg-thm} 
  and \cite[Corollary 4.12]{Marberg2019a}, which asserts that 
  if involution words $v$ and $w$ have $\PO(v) = \PO(w)$ then $v\simICK w$.
\end{proof}

This lemma lets us prove Theorem~\ref{o-eg-thm}
from Section~\ref{main-sect}.

\begin{proof}[Proof of Theorem~\ref{o-eg-thm}]
Let $v$ and $w$ be involution words.
If $\PO(v) = \PO(w)$, then  $v\simICK w$
holds
by \cite[Corollary 4.12]{Marberg2019a}.
We must prove the converse.
Assume $v\simICK w$.
By Lemma~\ref{i-gr-lem}(a),
a finite sequence of operators 
$ \ifkb_{\sigma_1},\ifkb_{\sigma_2},\dots, \ifkb_{\sigma_l}$
transforms $v$ to 
 $\tilde v \in \iR(\pi)$ 
for some inv-Grassmannian permutation $\pi \in I_\ZZ$.
Let $\tilde w$ be the word obtained by applying the same operators to $w$.
Theorem~\ref{o-little-thm} implies that
$\QO(v) = \QO(\tilde v)$,
$\QO(w) = \QO(\tilde w)$,
and $\tilde v \simICK \tilde w$.
Therefore $\tilde w \in \iR(\pi)$,
so by Lemma~\ref{i-gr-lem}(b)
the shifted tableaux $\QO(v)$ and $\QO(w)$ must have the same shape.
Hence, by Theorem~\ref{oeg-thm},
there is a unique involution word $u$ such that $\PO(u) = \PO(v)$ 
and $\QO(u) = \QO(w)$. It suffices to show that $u=w$.
Let $\tilde u =\ifkb_{\sigma_1}\ifkb_{\sigma_2}\cdots \ifkb_{\sigma_l}(u)$.
Since $u\simICK v \simICK w$, 
Theorem~\ref{o-little-thm} implies that $\tilde u \in \iR(\pi)$ 
and $\QO(\tilde u) = \QO(u) =\QO(w)= \QO(\tilde w) $.
It therefore follows from Lemma~\ref{i-gr-lem}(b) that $\tilde u=\tilde w$,
so $u=w$ 
since each $\ifkb_\sigma$ is injective.
\end{proof}

Let $\fk t_m$ for $m\in \ZZ$ be the operator acting on words $w=w_1w_2\cdots w_n$
 and  maps $\pi : \ZZ \to \ZZ$
by
 \be\label{t-def}
\fk t_m(w) = (w_1+m)(w_2+m)\cdots (w_n+m)
\quand
 \fk t_m (\pi) : i \mapsto \pi(i - m) + m.
 \ee
  If $w=(w^1,w^2,\dots,w^n)$ is an $n$-tuple of words then set
 $\fk t_m(w) = (\fk t_m(w^1), \fk t_m(w^2),\dots, \fk t_m(w^n)).$
As an operator on words, $\fk t_m$ preserves descents and commutes with $\ock$ and $\ck_i$ for all $i> 0$.
 If $ \pi \in I_\ZZ$ is any involution,
then $\fk t_m(\pi) \in I_\ZZ$ and the map $w \mapsto \fk t_m(w)$ is 
obviously an isomorphism of abstract $\q_n$-crystals
$\iR_n(\pi) \to \iR_n(\fk t_m(\pi)) $.
  Moreover, we clearly have $\QO(w) = \QO(\fk t_m(w))$ for all $w \in \iR_n(\pi)$
  and 
 $\fk t _m \ifkb_\pi = \ifkb_{\fk t_m(\pi)} \fk t_m$ for all $\pi \in I_\ZZ$.
  
  We can now upgrade Lemma~\ref{i-gr-lem} to the following statement.
  
 \begin{lemma}\label{o-final-lem}
If $w$ is an involution word
then there is  
 a finite
sequence  $\sigma_1,\sigma_2,\dots,\sigma_l \in I_\ZZ$
and an integer $m \in \ZZ$
such that the word
$ \ifkb_{\sigma_1}\ifkb_{\sigma_2}\cdots \ifkb_{\sigma_l}\fk t_m(w)$ is a permutation of $2,4,6,\dots,2\ell(w)$.
\end{lemma}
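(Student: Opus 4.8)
The plan is to run the involution Little bumping operators so as to push $w$ into the set $\cSe(\ell)$ of permutations of $2,4,\dots,2\ell$, where $\ell:=\ell(w)$; by Proposition~\ref{cse-prop} this set equals $\iR(\tau)$ for a suitable $\tau\in I_\ZZ$, hence is a union of $\simICK$-equivalence classes. First I would apply Lemma~\ref{i-gr-lem}(a): there exist $\sigma_1,\dots,\sigma_k\in I_\ZZ$ with $\ifkb_{\sigma_1}\cdots\ifkb_{\sigma_k}(w)\in\iR(\pi)$ for some inv-Grassmannian $\pi$, and since every $\ifkb_\sigma$ preserves word length, this $\pi$ has shape $\mu$ with $|\mu|=\ell$. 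Because $\fk t_c$ commutes with each $\ifkb_\sigma$ (conjugating the index) and with $\ock$ and all $\ck_i$, and because $\QO(\fk t_c(v))=\QO(v)$, any translation applied along the way can be gathered at the innermost position of the final expression; so it is harmless to assume from the start that $w$ is itself an involution word for an inv-Grassmannian $\pi$ of shape $\mu$ with $|\mu|=\ell$. (Translating $w$ far enough up before this reduction, one may in addition arrange that $\pi$ sits at an arbitrarily high position, which is convenient because the operators $\ifkb_\sigma$ change letters only by small increments.)

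In this reduced situation it suffices to move a single representative of $\iR(\pi)$ into $\cSe(\ell)$. By Lemma~\ref{i-gr-lem}(b) the set $\iR(\pi)$ is a single $\simICK$-class, and by Theorem~\ref{o-little-thm}(c) any composition of operators $\ifkb_\tau$ commutes with $\ock$ and all $\ck_i$, hence carries an entire $\simICK$-class into a single $\simICK$-class. Therefore, if some $w_0\in\iR(\pi)$ — say $w_0=\row(P)$, where $P$ is the unique increasing shifted tableau with $\row(P)\in\iR(\pi)$ — is carried by a translation and a sequence of forward Little bumps into $\cSe(\ell)=\iR(\tau)$, then the same recipe carries all of $\iR(\pi)$ into the $\simICK$-class of the image, which lies inside $\iR(\tau)$; and every word of $\iR(\tau)$ is a permutation of $2,4,\dots,2\ell$.

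To produce such a recipe I would match the inv-Grassmannian class to a class inside $\cSe(\ell)$ through the recording tableau $\QO$, which Little bumps preserve by Theorem~\ref{o-little-thm}(d). Lemma~\ref{icc-lem}(c) gives $\QO(2[u])=P_\HM(u^{-1})$ for a permutation $u$ of $1,\dots,\ell$, and mixed insertion restricted to permutations is onto the set of standard shifted tableaux with $\ell$ boxes; hence $\QO$ maps $\cSe(\ell)$ onto the set of all such tableaux. Choosing $v\in\cSe(\ell)$ with $\QO(v)=\QO(w_0)$ and applying Lemma~\ref{i-gr-lem}(a) to $v$, one obtains a forward bump sequence taking $v$ to an inv-Grassmannian word of length $\ell$ with the same $\QO$ as $w_0$; by Lemma~\ref{i-gr-lem}(b) this word has shape $\mu$, and since inv-Grassmannian involutions of a fixed shape differ only by translation, it equals $\fk t_c(w_0)$ for some $c\in\ZZ$. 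Thus a forward bump sequence realizes $v\mapsto\fk t_c(w_0)$; using that each $\ifkb_\tau$ is bijective (Theorem~\ref{o-little-thm}(a)) together with the identity $\ifkb_\tau^{-1}(x)=(\ifkb_{\tau^*}(x^*))^*$ from the proof of Lemma~\ref{i-gr-lem} — and the fact that $\cSe(\ell)$ (up to translation) and the class of inv-Grassmannian words (up to $x\mapsto x^*$ and translation) behave predictably under sign-reversal — one turns this into a genuine forward bump recipe running from $w_0$, after a translation, to a permutation of evens.

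The hard part is precisely this last maneuver. The statement insists on forward operators $\ifkb_{\sigma_i}$, whereas the natural motion from an inv-Grassmannian word — a minimal object for the Little-bump order — to a permutation of evens is "ascending"; reconciling the two requires either the $*$-symmetry bookkeeping just sketched, namely checking that conjugation by $x\mapsto x^*$ sends an inverse-bump recipe to a forward one with correctly transformed indices and translations, or else an explicit gap-widening construction: for the single-row shape $\mu=(\ell)$ the word $1\,2\,\cdots\,\ell$ is carried to $1\,3\,5\,\cdots\,(2\ell-1)$ by a carefully ordered sequence of forward bumps $\ifkb_\sigma$ with $\sigma$ inv-Grassmannian of small shape, and a general shape $\mu$ follows by an induction peeling off one row at a time. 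Either route is an elaboration of tools already developed in Section~\ref{little-sect} and in \cite{HMP3,HMP4,Marberg2019a}, but verifying it carefully is where the real work lies.
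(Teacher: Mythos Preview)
Your reduction to the inv-Grassmannian case is correct and matches the paper exactly: by Lemma~\ref{i-gr-lem}(a) and Theorem~\ref{o-little-thm}(c) it suffices to produce, for each strict partition $\mu$, a forward-bump recipe carrying one representative of $\iR(\pi_\mu)$ into $\cSe(|\mu|)$. The gap is in your route (A). When you invert the forward-bump recipe $v\to\fk t_c(w_0)$ via the identity $\ifkb_\sigma^{-1}(x)=(\ifkb_{\sigma^*}(x^*))^*$, you obtain a forward-bump recipe starting from $w_0^*$, not from $w_0$: concretely, if $F(v)=\fk t_c(w_0)$ with $F$ a product of forward bumps, then $v^*=F'(\fk t_{-c}(w_0^*))$ for the corresponding product $F'$ of forward bumps at the starred indices. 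After a further translation this shows that forward bumps carry $w_0^*$ into $\cSe(\ell)$. But $w_0^*$ is an involution word for $\pi^*$, which is \emph{I}-Grassmannian rather than \emph{inv}-Grassmannian; these two classes are exchanged by $*$ and are genuinely different. Since Lemma~\ref{i-gr-lem}(a) only supplies forward bumps from an arbitrary word to an inv-Grassmannian one, your recipe does not compose with that reduction. The ``$*$-symmetry bookkeeping'' you allude to does not close this gap without an independent argument linking inv-Grassmannian words to I-Grassmannian words by forward bumps, and no such link follows from the cited results.

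Your route (B) is what the paper actually does, but with a different and fully explicit induction: the paper removes one outer corner \emph{box} from $\mu$ at each step, not one row. Given $\mu$ and $\nu$ with $\SD_\nu=\SD_\mu\setminus\{(h_q,q)\}$, the paper exhibits a specific word $w\in\iR(\pi_\mu)$ and shows that a precise power of $\ifkb_{\pi_\nu}$ applied to $w$ replaces one letter by $2n$ and moves it to position $n-h_q+1$; deleting that $2n$ leaves a word in $\iR(\pi_\nu)$, and the inductively known sequence for $\nu$, with each index multiplied by $s_{2n}$, completes the construction (see \eqref{sigma-eq} and \eqref{spec-claim-eq}). The explicit formula for the bump sequence and its verification on the concrete $w$ are the substance of the argument, and your sketch does not supply them.
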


\begin{proof}
Let $\mu = (\mu_1>\mu_2>\dots>\mu_r>0)$ be a strict partition of $n$
and consider the inv-Grassmannian permutation
$
\pi_\mu := (1,r+\mu_r)(2,r+\mu_{r-1})\cdots(r,r+\mu_1) \in I_\ZZ.
$
In view of Theorem~\ref{o-little-thm}(c) and Lemma~\ref{i-gr-lem},
it suffices to produce
 a finite
sequence  $\sigma_{\mu,1},\sigma_{\mu,2},\dots,\sigma_{\mu,l} \in I_\ZZ$
such that 
$ \ifkb_{\sigma_{\mu,1}}\ifkb_{\sigma_{\mu,2}}\cdots \ifkb_{\sigma_{\mu,l}}(v)\in \cSe(n)$
for some (and therefore every) word $v \in \iR(\pi_\mu)$.

If $r=0$ then we set $l=0$.
Suppose $r>0$,
let $q=\mu_1\geq r$,
and write $h_i$ for the number of boxes in the $i$th  column
of $\SD_\mu$.
Let $\nu$ be the strict partition with $\SD_\nu = \SD_\mu\setminus\{(h_q,q)\}$.
Assume by induction that
$\sigma_{\nu,1},\sigma_{\nu,2},\dots,\sigma_{\nu,k} \in I_\ZZ$ are given  such that 
$ \ifkb_{\sigma_{\nu,1}}\ifkb_{\sigma_{\nu,2}}\cdots \ifkb_{\sigma_{\nu,k}}(v)\in \cSe(n-1)$
for all involution words $v \in \iR(\pi_\nu)$.
We claim that the desired sequence is 
\be\label{sigma-eq}
(\sigma_{\mu,1},\sigma_{\mu,2},\dots,\sigma_{\mu,l})
=
(s_{2n}\sigma_{\nu,1},\ s_{2n}\sigma_{\nu,2},\ \dots,\ s_{2n}\sigma_{\nu,k},\  \underbrace{\pi_\nu,\  \pi_\nu,\ \dots,\ \pi_\nu}_{2n-q-r+1\text{ times}}).
\ee
To prove this, let
\[ w^i :=\begin{cases}
 (2i-1)(2i-2)\cdots i&\text{for $1\leq i \leq r$} \\
(r+i-1)(r+i-2)\cdots(r+i-h_i)&\text{for $r <i \leq q$}
\end{cases}\]
and define $w := w^1w^2\cdots w^q$.
Using Lemma~\ref{rtimes-lem}, one can check that $w \in \iR(\pi_\mu)$.
We claim more specifically 
that if $\sigma_{\mu,i}$ is defined as in \eqref{sigma-eq} then
\be\label{spec-claim-eq}
\ifkb_{\sigma_{\mu,1}}\ifkb_{\sigma_{\mu,2}}\cdots \ifkb_{\sigma_{\mu,l}}(w)
=
a^1 a^2 \cdots a^q \in \cSe(n)
\ee
where
$
a^i 
 $
 is the word formed by adding $2h_1 + 2h_2 + \dots +2 h_{i-1}$ to $(2h_i)\cdots 642$.

If $q>r$ then 
the subword of $w$ with the last letter omitted belongs to $\iR(\pi_\nu)$,
while if $q=r$ then the subword of $w$ with the largest letter $2r-1$ omitted is in $\iR(\pi_\nu)$.
For each $i \in \NN$, define 
\[
u^i := \begin{cases} (q+r+i)(q+r-1)(q+r-2)\cdots(q+r-h_q+1) & \text{if }q>r \\
(2r+i)(2r-2)(2r-3)\cdots r &\text{if }q=r
\end{cases}
\]
and let $w_{(i)} := w^1w^2\cdots w^{q-1} u^i$.
It is a straightforward exercise to show that $w_{(0)} = \ifkb_{\pi_\nu}(w)$
and
$w_{(i)} = \ifkb_{\pi_\nu}(w_{(i-1)})$ for each $i >0$.
Thus, the word
$v' := (\ifkb_{\pi_\nu})^{2n - q-r + 1}(w)$ is obtained from $w$ by replacing either its last letter or its largest letter by $2n$,
and then moving $2n$ to be in position $n-h_q + 1$.
Moreover, removing $2n$ from $v$ yields a word $v \in \iR(\pi_\nu)$.

Let $\sigma'_{\nu,i} := s_{2n} \sigma_{\nu,i}$.
Our key observation is now that
the words 
$ \ifkb_{\sigma_{\nu,i}}\ifkb_{\sigma_{\nu,i+1}}\cdots \ifkb_{\sigma_{\nu,k}}(v)$
and
$ \ifkb_{\sigma'_{\nu,i}}\ifkb_{\sigma'_{\nu,i+1}}\cdots \ifkb_{\sigma'_{\nu,k}}(v')$
have exactly the same relationship as $v$ and $v'$ for all $i \in [k]$: the second word is the same as the first but with 
 $2n$ inserted in
position $n-h_q + 1$. 
Given this fact, the desired identity \eqref{spec-claim-eq} follows by induction,
which completes our proof of the lemma.
 \end{proof}

Given $n$-tuples of words $u=(u^1,u^2,\dots,u^n)$ and $v=(v^1,v^2,\dots,v^n)$,
write $u \simICK v$ if it holds that $u^1u^2\cdots u^n \simICK v^1v^2\cdots v^n$.
Using all of our results so far, we can now prove Theorem~\ref{main-thm1}. 

\begin{proof}[Proof of Theorem~\ref{main-thm1}]
Recall that $\pi \in I_\ZZ$. Choose a factorization $w \in \iR_n(\pi)$
and let $\cC$ be the full $\q_n$-subcrystal of $\iR_n(\pi)$ containing $w$.
From Theorem~\ref{simCK-thm}, Theorem~\ref{eg-thm}(a), and the definitions of $\fO$ and $\eO$,
it is clear that a factorization $v \in \iR_n(\pi)$ belongs to $\cC$ only if $v\simICK w$.
Part (a) of Theorem~\ref{main-thm1} is equivalent to the converse statement, which is not yet evident.

By Lemma~\ref{o-final-lem},
we have 
$ \ifkb_{\sigma_1}\ifkb_{\sigma_2}\cdots \ifkb_{\sigma_l}\fk t_m(w) \in \cSe_n(\ellhat(\pi))$
for some $\sigma_1,\sigma_2,\dots,\sigma_l \in I_\ZZ$
and  $m \in \ZZ$.
Write $\fk i :=  \ifkb_{\sigma_1}\ifkb_{\sigma_2}\cdots \ifkb_{\sigma_l}\fk t_m$.
Since each $\ifkb_\sigma$ and $\fk t_m$ preserves $\simICK$,
it follows that $\fk i(v) \simICK \fk i(w)$ and
$ \fk i(v) \in \cSe_n(\ellhat(\pi))$ for all $v \in \cC$.
Thus, by Theorem~\ref{o-little-thm} and Lemma~\ref{icc-lem}, the diagram
\[
\begin{tikzcd}[row sep=large, column sep=large]
\cC 
\arrow[r, "\fk i"]
\arrow[rd, swap, "\QO"]
&
\cSe_n(\ellhat(\pi))
\arrow[r,"\dbl^{-1}"]
\arrow[d,swap, "\QO"]
&
\cS_n(\ellhat(\pi))
\arrow[r,"\invert"]
\arrow[dl,swap, "\QO"]
&
\cW_n(\ellhat(\pi))
\arrow[dll,"P_\HM"]
\\
& \STab_n(\ellhat(\pi))
\end{tikzcd}
\]
commutes.
The map $\fk i : \cC \to \cS_n(\ellhat(\pi))$ is a quasi-isomorphism of abstract $\q_n$-crystals by Theorem~\ref{qi-thm2},
both $\dbl^{-1}$ and $\invert $ are isomorphisms by Lemma~\ref{icc-lem}, and 
the map
$P_\HM : \cW_n(m) \to \STab_n(m)$
is a quasi-isomorphism by construction.
Therefore $\QO : \iR_n(\pi) \to \STab_n(\ellhat(\pi))$ is a quasi-isomorphism of abstract $\q_n$-crystals.
This proves (b). 

To prove (a),
suppose $v \in \iR_n(\pi)$ has $v\simICK w$. Let $\tilde v :=  \dbl^{-1}\circ \fk i(v)$ and
$\tilde w :=  \dbl^{-1}\circ \fk i(w)$.
Then $\tilde v \simICK \tilde w$, so $Q_\HM(\tilde v^{-1}) = \PO(\tilde v ) = \PO(\tilde w) = Q_\HM(\tilde w^{-1})$.
By Theorem-Definition~\ref{stab-thmdef}, we deduce that $\tilde v^{-1}$ and $\tilde w^{-1}$ belong to the same full $\q_n$-subcrystal of $\cW_n(\ellhat(\pi))$.
Since the maps $\fk i $, $\dbl^{-1}$, and $\invert$ send full subcrystals to full subcrystals,
we must have $v \in \cC$.
\end{proof}

\subsection{Proofs in the symplectic case}
\label{proofs-sp-sect}

This section contains our proofs of Theorems~\ref{sp-eg-thm} and \ref{main-thm2}.

\begin{definition}
Given $\pi \in \Ifpf_\ZZ$, define $\hat \pi \in I_\ZZ$ to be the involution with
\[ 
\hat \pi (i) = \begin{cases} i&\text{if no $j \in \ZZ$ with $\min\{i,\pi(i)\} < j < \max\{i,\pi(i)\}$ has $ j < \pi(j)$} \\
\pi(i)&\text{otherwise}
\end{cases}
\]
for each $i \in \ZZ$.
An element $\pi \in \Ifpf_\infty\setminus\{\minfpf\}$ is \emph{fpf-Grassmannian} if $\hat \pi \in I_\ZZ$ is inv-Grassmannian.
In this case, if $\hat \pi$ 
has shape $\mu = (\mu_1 ,\mu_2,\dots,\mu_r)$,
then the \emph{shape} of $\pi$ is  $\nu = (\mu_1-1,\mu_2-1,\dots,\mu_r-1)$.
We also consider $\minfpf$ to be fpf-Grassmannian with shape $\nu=\emptyset$.
\end{definition}

To recover $\pi \in \Ifpf_\ZZ$ from $\hat \pi \in I_\ZZ$, let $i \mapsto f_i$ be an order-preserving map from $\ZZ$
 to the fixed-points of $\hat \pi$ such that $ i\equiv f_i \modu 2)$ for all sufficiently large $i$.
 Then $\pi(f_i) = f_{i+1}$ and $\pi(f_{i+1}) = f_i$ for all odd $i \in \ZZ$, while $\pi(i) = \hat \pi(i)$ for all $\hat \pi(i)\neq i \in \ZZ$.

If $\pi \in \Ifpf_\ZZ$ then $\pi^* : i \mapsto 1 - \pi(1-i)$ is an element of $\Ifpf_\ZZ$
and $w\mapsto w^*$ is again a bijection $\iRfpf(\pi) \to \iRfpf(\pi^*)$.
An involution $\pi \in \Ifpf_\ZZ$ is fpf-Grassmannian if and only if 
$\pi^*$ is \emph{FPF-Grassmannian} 
in the sense of \cite[\S4]{HMP5}.

\begin{lemma}[\cite{HMP5,HMP3,Marberg2019a}]
\label{fpf-gr-lem}
Suppose $w$ is an fpf-involution word for some permutation in $\Ifpf_\ZZ$
and $\pi \in \Ifpf_\ZZ$ is an fpf-Grassmannian involution of shape $\nu$.
\ben
\item[(a)] There is 
 a finite
sequence $\sigma_1,\sigma_2,\dots,\sigma_l \in \Ifpf_\ZZ$
such that
$ \ffkb_{\sigma_1}\ffkb_{\sigma_2}\cdots \ffkb_{\sigma_l}(w)$ is an fpf-involution word for an fpf-Grassmannian element of $\Ifpf_\ZZ$.

\item[(b)] The set $\iRfpf(\pi)$ is a single equivalence class under $\simFCK$, and 
 $\QSp$ defines a bijection from $\iRfpf(\pi)$
to the set of standard shifted tableaux of shape $\nu$.

\een
\end{lemma}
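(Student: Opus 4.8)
The plan is to run the proof of Lemma~\ref{i-gr-lem} a second time, replacing every orthogonal object by its symplectic counterpart and invoking the fpf-analogues — established in \cite{HMP5,HMP3,Marberg2019a} — of each result used there. Two structural ingredients make the orthogonal argument go through, and both have direct symplectic versions. First, the $*$-symmetry $w\mapsto w^*$, $\pi\mapsto\pi^*$ restricts to a bijection $\iRfpf(\pi)\to\iRfpf(\pi^*)$ and preserves the class of fpf-Grassmannian involutions (this is built into the fact, recalled just above, that $\pi$ is fpf-Grassmannian exactly when $\pi^*$ is FPF-Grassmannian in the sense of \cite[\S4]{HMP5}). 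Second, the fpf-involution Little map — namely the operator $({\hat\cB}^\fpf_\sigma)$ of \cite[Theorem 4.29]{HMP3}, which the discussion after Example~\ref{fbb-ex} identifies with $\ffkb_\sigma^{-1}$ — carries any fpf-involution word, after finitely many steps, to one for an fpf-Grassmannian element. With these in hand the argument is largely bookkeeping.

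For part (a), from $\ffkb_\sigma=({\hat\cB}^\fpf_\sigma)^{-1}$ and the bijection $w\mapsto w^*$ one checks that $\ffkb_\sigma^{-1}\colon v\mapsto\bigl(\ffkb_{\sigma^*}(v^*)\bigr)^*$, exactly as for $\ifkb_\sigma$ in the orthogonal case. The fpf-version of \cite[Theorem 4.36]{HMP4} established in \cite{HMP5} then gives, for any fpf-involution word $v$, a finite sequence $y_1,\dots,y_l\in\Ifpf_\ZZ$ and an fpf-Grassmannian $\gamma$ with $\ffkb_{y_1}^{-1}\cdots\ffkb_{y_l}^{-1}(v)\in\iRfpf(\gamma)$. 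Applying $*$ turns this into $\ffkb_{y_1^*}\cdots\ffkb_{y_l^*}(v^*)\in\iRfpf(\gamma^*)$; taking $v=w^*$ and $\sigma_i=y_i^*$, and using that $\gamma^*$ is again fpf-Grassmannian, yields (a).

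For part (b), the symplectic analogues of \cite[Corollaries 5.9 and 5.10]{Marberg2019a} together with \cite{HMP5} show that there is exactly one increasing shifted tableau $P$ with $\row(P)\in\iRfpf(\pi)$, that $P$ has shape $\nu$, and that $\ellfpf(\pi)=|\nu|$. Feeding this into Theorem~\ref{speg-thm} with $n=\ellfpf(\pi)$ and factorizations into single letters: since $\PSp(w)$ is an increasing shifted tableau with $\row(\PSp(w))\in\iRfpf(\pi)$, we must have $\PSp(w)=P$ for every $w\in\iRfpf(\pi)$, so the bijection of Theorem~\ref{speg-thm} collapses to a bijection $w\mapsto\QSp(w)$ from $\iRfpf(\pi)$ onto the standard shifted tableaux of shape $\nu$. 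To see that $\iRfpf(\pi)$ is a single $\simFCK$-class, connectedness follows from the symplectic analogue of \cite[Corollary 4.12]{Marberg2019a} — two fpf-involution words with the same symplectic insertion tableau are $\simFCK$-equivalent — applied to the common value $P$; and $\iRfpf(\pi)$ is closed under the generators of $\simFCK$, since by \eqref{spck-eq} each move $w\mapsto\spck(w)$ is either a braid commutation or the prefix relation in $\fsim$ (hence stays in $\iRfpf(\pi)$ by Theorem~\ref{fsim-thm}), while each $\ck_i$ with $i>0$ fixes the underlying permutation and so preserves each $\cR(\sigma)\subseteq\iRfpf(\pi)$ with $\sigma\in\cAfpf(\pi)$. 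Being connected and a union of $\simFCK$-classes, it is exactly one class.

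I expect the main obstacle to be not a single hard step but the careful transcription of the fpf-analogues from \cite{HMP5} — in particular the fpf-Grassmannian reduction theorem for the Little map and the uniqueness of the insertion tableau — and the threading of the shape shift $\nu=\mu-(1^r)$ relating $\hat\pi$ (inv-Grassmannian of shape $\mu$, with $\ellhat(\hat\pi)=|\mu|=|\nu|+r$) to $\pi$ (fpf-Grassmannian of shape $\nu$, with $\ellfpf(\pi)=|\nu|$) through every citation, since both this shift and the $*$-symmetry must be kept consistent simultaneously. Once that bookkeeping is settled, each step above is a routine translation of the proof of Lemma~\ref{i-gr-lem}.
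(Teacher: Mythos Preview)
Your proposal is correct and follows essentially the same approach as the paper, which explicitly mirrors the proof of Lemma~\ref{i-gr-lem} with the symplectic substitutions you describe (citing \cite[Theorem 4.29]{HMP3}, \cite[Theorem 5.21]{HMP5}, \cite[Corollaries 5.9, 5.10 and Corollary 3.22]{Marberg2019a}, and \cite[Lemma 4.16, Theorem 4.19]{HMP5}). One terminological slip to fix: you assert that $*$ ``preserves the class of fpf-Grassmannian involutions,'' but the paper's convention is that $\pi$ is fpf-Grassmannian (in this paper's sense) precisely when $\pi^*$ is FPF-Grassmannian in the sense of \cite{HMP5}---these are distinct notions interchanged by $*$, not a single $*$-invariant class. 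This does not break your argument, since what \cite{HMP5} actually produces is an FPF-Grassmannian target $\gamma$, and after applying $*$ you land in $\iRfpf(\gamma^*)$ with $\gamma^*$ fpf-Grassmannian; the paper handles this by writing the target as $\iRfpf(\gamma^*)$ with $\gamma$ fpf-Grassmannian from the outset.
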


\begin{proof}
The structure of the proof is the same as for Lemma~\ref{i-gr-lem}.
The inverse of $\ffkb_\sigma$ for $\sigma \in \Ifpf_\ZZ$
is the operator   
 $\ffkb_\sigma^{-1} : v \mapsto \(\ffkb_{\sigma^*}(v^*)\)^*$.
Combining \cite[Theorem 4.29]{HMP3} 
(where $\ffkb_\sigma^{-1}$ is denoted $\hat\cB^{{\tt FPF}}_\sigma$)
and \cite[Theorem 5.21]{HMP5}
shows that for any fpf-involution word $v$,
there 
exists 
 a finite
sequence $y_1,y_2,\dots,y_l \in \Ifpf_\ZZ$
and an fpf-Grassmannian involution $\gamma \in \Ifpf_\ZZ$ such that
$
(\ffkb_{y_1^*}\ffkb_{y_2^*}\cdots \ffkb_{y_l^*}(v^*))^* =
 \ffkb^{-1}_{y_1}\ffkb^{-1}_{y_2}\cdots \ffkb^{-1}_{y_l}(v) \in \iRfpf(\gamma^*).
 $
Taking $v=w^*$ and $\sigma_i = y_i^*$ for $i \in [l]$ 
 proves part (a). 
Next,
 \cite[Corollaries 5.9 and 5.10]{Marberg2019a}
 and
 \cite[Lemma 4.16 and Theorem 4.19]{HMP5} imply
 that exactly one increasing shifted tableau $P$ exists with $\row(P) \in \iRfpf(\pi)$,
 and this tableau has shape $\nu$. Part (b) therefore follows from
 Theorem~\ref{speg-thm} 
 and \cite[Corollary 3.22]{Marberg2019a}.
\end{proof}

We can give a second proof of Theorem~\ref{sp-eg-thm}  using the preceding lemma.

\begin{proof}[Proof of Theorem~\ref{sp-eg-thm}]
If $v$ and $w$ are fpf-involution words 
with $\PSp(v) = \PSp(w)$, then \cite[Corollary 3.22]{Marberg2019a} implies that
$v\simFCK w$.
The converse
follows by same argument as in the proof of Theorem~\ref{o-eg-thm},
using Theorem~\ref{sp-little-thm} and Lemma~\ref{fpf-gr-lem}
in place of Theorem~\ref{o-little-thm} and Lemma~\ref{i-gr-lem},
and substituting the respective symbols $\simFCK$, $\PSp$, $\QSp$, $\Ifpf_\ZZ$, $\iRfpf$, $\ffkb_\pi$
for 
$\simICK$, $\PO$, $\QO$, $I_\ZZ$, $\iR$, $\ifkb_\pi$.
\end{proof}

Recall the operator $\fk t_m$  from \eqref{t-def}.
We have $\fk t_m(\Ifpf_\ZZ) = \Ifpf_\ZZ$ if and only if $m \in \ZZ$ is even.
Assume this is the case; then it is easy to see that  $\fk t_m$ commutes with $\spck$
and 
that  $w \mapsto \fk t_m(w)$ is 
an isomorphism of abstract $\q_n$-crystals
$\iRfpf_n(\pi) \to \iRfpf_n(\fk t_m(\pi)) $ for all $\pi \in \Ifpf_\ZZ$.
  Moreover, we clearly have $\QSp(w) = \QSp(\fk t_m(w))$ for all $w \in \iRfpf_n(\pi)$
  and $\fk t _m \ffkb_\pi = \ffkb_{\fk t_m(\pi)} \fk t_m$.
  
  There is a symplectic analogue of Lemma~\ref{o-final-lem}.
  
 \begin{lemma}\label{sp-final-lem}
Suppose $w$ is an fpf-involution word. 
There is  
 a finite
sequence  $\sigma_1,\sigma_2,\dots,\sigma_l \in \Ifpf_\ZZ$
and an even integer $m \in 2\ZZ$
such that 
$ \ffkb_{\sigma_1}\ffkb_{\sigma_2}\cdots \ffkb_{\sigma_l}\fk t_m(w)$ is a permutation of $2,4,6,\dots,2\ell(w)$.
\end{lemma}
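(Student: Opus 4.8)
The plan is to mirror the proof of Lemma~\ref{o-final-lem} essentially verbatim, transporting each ingredient from the orthogonal setting to the symplectic setting via the dictionary $\simICK \leftrightarrow \simFCK$, $\ifkb_\pi \leftrightarrow \ffkb_\pi$, $\iR \leftrightarrow \iRfpf$, and $I_\ZZ \leftrightarrow \Ifpf_\ZZ$, together with Lemma~\ref{fpf-gr-lem} in place of Lemma~\ref{i-gr-lem}. Concretely: First I would reduce to the fpf-Grassmannian case. By Lemma~\ref{fpf-gr-lem}(a), there is a finite sequence of operators $\ffkb_{\sigma_1},\dots,\ffkb_{\sigma_l}$ carrying $w$ to an fpf-involution word $\tilde w$ for an fpf-Grassmannian $\pi \in \Ifpf_\ZZ$; since $\ffkb_\sigma$ and $\fk t_m$ commute appropriately ($\fk t_m\ffkb_\pi = \ffkb_{\fk t_m(\pi)}\fk t_m$, valid since $m$ is even), and $\QSp$ is invariant under all these operators, it suffices to prove the statement for a single representative $v \in \iRfpf(\pi_\nu)$ of each fpf-Grassmannian class. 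Here, for a strict partition $\nu = (\nu_1 > \dots > \nu_r > 0)$, I would let $\pi_\nu \in \Ifpf_\ZZ$ denote the fpf-Grassmannian involution whose shape is $\nu$, normalized so that $\hat{\pi_\nu} = \pi_{\nu^+}$ where $\nu^+ = (\nu_1+1,\dots,\nu_r+1)$ is inv-Grassmannian. Note $\ellfpf(\pi_\nu) = |\nu|$ by the results quoted after the fpf-Grassmannian definition.

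Second, I would carry out the inductive construction of the bumping sequence, following \eqref{sigma-eq}--\eqref{spec-claim-eq}. Fix an explicit ``staircase'' fpf-involution word $w \in \iRfpf(\pi_\nu)$ built column-by-column from decreasing runs, analogous to the word $w = w^1w^2\cdots w^q$ in the orthogonal proof (its validity as an fpf-involution word is checked using Lemma~\ref{fpf-rtimes-lem}). Letting $\nu'$ be $\nu$ with its last column box removed, assume inductively a sequence $\sigma_{\nu',1},\dots,\sigma_{\nu',k} \in \Ifpf_\ZZ$ taking every $v \in \iRfpf(\pi_{\nu'})$ into $\cSe(n-1)$, where $n = |\nu|$; then claim the sequence for $\nu$ is
\[
(\sigma_{\nu,1},\dots,\sigma_{\nu,l}) = \bigl(s_{2n}\sigma_{\nu',1},\, \dots,\, s_{2n}\sigma_{\nu',k},\, \underbrace{\pi_{\nu'},\,\dots,\,\pi_{\nu'}}_{N\text{ times}}\bigr)
\]
for the appropriate repetition count $N$. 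One shows that iterating $\ffkb_{\pi_{\nu'}}$ the right number of times has the effect of appending the new largest letter $2n$ and sliding it into the correct slot (using an analysis of $\fpush$ on the relevant marked words, parallel to the computation of $w_{(i)}$ in the orthogonal proof), and then the key observation is that conjugating each $\sigma_{\nu',i}$ by inserting $s_{2n}$ preserves this ``same-but-for-$2n$'' relationship throughout the induction. The conclusion \eqref{spec-claim-eq} is that the composite sends $w$ to an explicit permutation of $2,4,\dots,2n$, hence into $\cSe(n)$; by Lemma~\ref{fpf-gr-lem}(b) and Theorem~\ref{sp-little-thm}(c), every word $\simFCK$-equivalent to $w$, and in particular every word in $\iRfpf(\pi_\nu)$, is handled by the same sequence.

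The main obstacle I expect is the explicit $\fpush$ bookkeeping in the inductive step: unlike the orthogonal case, $\fpush$ can behave in the three distinct ways catalogued in the proof of Theorem~\ref{sp-little-thm}(c) (the cases $N=1$, $N=2$, $N=3$, depending on parity and on whether $w_j$ and $w_{j\pm1}$ differ by $1$ or $2$), so one must check that the staircase word $w \in \iRfpf(\pi_\nu)$ and its images under the partial compositions always avoid the semi-reduced detours in a controlled way, and that the count $N$ of repetitions of $\ffkb_{\pi_{\nu'}}$ comes out right. This is genuinely a ``straightforward but tedious exercise'' of the same flavor as Example~\ref{fbb-ex}, and I would present it by stating the analogue of equation \eqref{spec-claim-eq} precisely, verifying the base case $r=0$ (empty sequence, $w$ already in $\cSe(0)$ vacuously), and then checking the single inductive step by tracking the marked word through its $\fpush$ orbit, invoking Lemma~\ref{fpf-toggle-lem} and Theorem~\ref{fsim-thm} for the uniqueness of toggled indices exactly as Lemma~\ref{rtimes-lem} and Theorem~\ref{isim-thm} were used orthogonally. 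As in the orthogonal proof, the final two sentences simply note that Theorem~\ref{sp-little-thm}(c) and Lemma~\ref{fpf-gr-lem} let us replace ``for this particular $w$'' by ``for all fpf-involution words'' and conclude.
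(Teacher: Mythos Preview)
Your approach is essentially the same as the paper's, but there is one concrete error in your displayed formula. You write the modified inductive sequence as $s_{2n}\sigma_{\nu',i}$, copying the orthogonal formula \eqref{sigma-eq} verbatim; but for $\sigma_{\nu',i}\in\Ifpf_\ZZ$ the left-multiplied element $s_{2n}\sigma_{\nu',i}$ is \emph{not} an involution (since $\sigma_{\nu',i}$ already swaps $2n-1\leftrightarrow 2n$ and $2n+1\leftrightarrow 2n+2$), so it does not lie in $\Ifpf_\ZZ$ and $\ffkb_{s_{2n}\sigma_{\nu',i}}$ is undefined. The correct modification, which you in fact name verbally (``conjugating''), is $\sigma'_{\nu',i}:=s_{2n}\,\sigma_{\nu',i}\,s_{2n}$; this is how the paper states \eqref{fpf-sigma-eq}.

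Two further points you leave implicit but should make explicit. First, the staircase word $w=w^1\cdots w^q$ is literally the orthogonal one with every letter incremented by $1$, which is what makes it an fpf-involution word via Lemma~\ref{fpf-rtimes-lem}. Second, the repetition count $N$ is not the orthogonal value $2n-q-r+1$: because each application of $\ffkb_{\theta_{\nu'}}$ passes through a semi-reduced state and therefore increments the marked letter by $2$ rather than $1$, the count becomes $N=n-\tfrac{1}{2}(q+r+\epsilon-1)$ where $\epsilon\in\{0,1\}$ is chosen so that $q+r+\epsilon$ is odd. Once these are in place, the ``same-but-for-$2n$'' observation and the induction go through exactly as you describe.
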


\begin{proof}
Let $\mu = (\mu_1>\mu_2>\dots>\mu_r>0)$ be a strict partition of $n$
and define $\theta_\mu $ to be the element of $\Ifpf_\ZZ$ with
$
\hat\theta_\mu = (1,r+1+\mu_r)(2,r+1+\mu_{r-1})\cdots(r,r+1+\mu_1)
$
so that $\theta_\mu$ is fpf-Grassmannian with shape $\mu$.
It suffices by Theorem~\ref{sp-little-thm}(c) and Lemma~\ref{fpf-gr-lem}
to construct a finite sequence $\sigma_{\mu,1} \sigma_{\mu,2},\dots,\sigma_{\mu,l} \in \Ifpf_\ZZ$
such that $ \ffkb_{\sigma_{\mu,1}}\ffkb_{\sigma_{\mu,2}}\cdots \ffkb_{\sigma_{\mu,l}}(v)\in \cSe(n)$
for some (and therefore every) word $v \in \iRfpf(\theta_\mu)$.

Our argument is similar to one in the proof of Lemma~\ref{o-final-lem}.
If $r=0$ then we again set $l=0$.
Suppose $r>0$,
let $q=\mu_1$,
write $h_i$ for the number of boxes in the $i$th column of $\SD_\mu$, and 
define $\nu$ to be the strict partition with $\SD_\nu = \SD_\mu\setminus\{(h_q,q)\}$.
Let $\epsilon \in \{0,1\}$ be such that $q+r+\epsilon$ is odd,
and assume  that
$\sigma_{\nu,1},\sigma_{\nu,2},\dots,\sigma_{\nu,k} \in \Ifpf_\ZZ$ are given  such that 
$ \ffkb_{\sigma_{\nu,1}}\ffkb_{\sigma_{\nu,2}}\cdots \ffkb_{\sigma_{\nu,k}}(v)\in \cSe(n-1)$
for all fpf-involution words $v \in \iRfpf(\theta_\nu)$.
We claim that the desired sequence is 
\be\label{fpf-sigma-eq}
(\sigma_{\mu,1},\sigma_{\mu,2},\dots,\sigma_{\mu,l})
=
(\sigma'_{\nu,1},\ \sigma'_{\nu,2},\ \dots,\ \sigma'_{\nu,k},\  \underbrace{\pi_\nu,\  \pi_\nu,\  \pi_\nu,\ \dots,\ \pi_\nu}_{n - \frac{1}{2}(q +r  + \epsilon - 1)\text{ times}})
\ee
where $\sigma'_{\nu,i} := s_{2n}\cdot \sigma_{\nu,u} \cdot s_{2n}$.
To prove this, let
\[w^i := 
\begin{cases}
(2i)(2i-1)\cdots (i+1) &\text{for $1\leq i \leq r$} \\
(r+i)(r+i-1)\cdots(r+i-h_i+1)&\text{for $r <i \leq q$}
\end{cases}
\]
and set $w := w^1w^2\cdots w^q$.
This is the same as the analogous word in the proof of Lemma~\ref{o-final-lem}, but with all letters incremented by one.
Using Lemma~\ref{fpf-rtimes-lem},
one can check that $w \in \iRfpf(\theta_\mu)$.
Then, specifically,
we claim that if $\sigma_{\mu,i}$ is defined as in \eqref{fpf-sigma-eq} then
\be\label{fpf-spec-claim-eq}
\ffkb_{\sigma_{\mu,1}}\ffkb_{\sigma_{\mu,2}}\cdots \ffkb_{\sigma_{\mu,l}}(w)
=
 a^1 a^2 \cdots a^q \in \cSe(n)
\ee
where
$
 a^i 
 $
 is again the word formed by adding $2h_1 + 2h_2 + \dots + 2h_{i-1}$ to $(2h_i)\cdots 642$.

If $q>r$ then 
the subword of $w$ with the last letter omitted belongs to $\iRfpf(\theta_\nu)$,
while if $q=r$ then the subword of $w$ with the largest letter $2r$ omitted is in $\iRfpf(\theta_\nu)$.
For each $i \in \NN$, define 
\[
u^i := \begin{cases} (q+r+2i + \epsilon + 1)(q+r)(q+r-1)\cdots(q+r-h_q+2) & \text{if }q>r \\
(2r+2i+2)(2r-1)(2r-2)\cdots (r+1) &\text{if }q=r
\end{cases}
\]
and let $w_{(i)} := w^1w^2\cdots w^{q-1} u^i$.
Then $w_{(0)} = \ffkb_{\theta_\nu}(w)$
and
$w_{(i)} = \ffkb_{\theta_\nu}(w_{(i-1)})$ for each $i >0$,
so we deduce that the word
$v' := (\ffkb_{\theta_\nu})^{n - \frac{1}{2}(q +r  + \epsilon - 1)}(w)$ is obtained from $w$ by replacing either its last letter or its largest letter by $2n$,
and then moving $2n$ to be in position $n-h_q + 1$.
Moreover, removing $2n$ from $v'$ yields a word $v \in \iRfpf(\theta_\nu)$.
 
As in the proof of Lemma~\ref{o-final-lem},
we now observe that the words
$ \ffkb_{\sigma_{\nu,i}}\ffkb_{\sigma_{\nu,i+1}}\cdots \ffkb_{\sigma_{\nu,k}}(v)$
and
$ \ffkb_{\sigma'_{\nu,i}}\ffkb_{\sigma'_{\nu,i+1}}\cdots \ffkb_{\sigma'_{\nu,k}}(v')$
have exactly the same relationship as $v$ and $v'$ for all $i \in [k]$: the second word is the same as the first but with 
the letter $2n$ inserted in
position $n-h_q + 1$. 
From this, the desired identity \eqref{fpf-spec-claim-eq} follows immediately by induction,
which completes the proof.
 \end{proof}

We can now prove our last main result from Section~\ref{main-sect}.

\begin{proof}[Proof of Theorem~\ref{main-thm2}]
Our argument is similar to the proof of Theorem~\ref{main-thm1}.
We extend the definition of $\simFCK$ from words to $n$-tuples of words exactly as we did with $\simICK$.
Recall that $\pi \in \Ifpf_\ZZ$. Choose a factorization $w \in \iRfpf_n(\pi)$
and let $\cC$ be the full $\q_n$-subcrystal of $\iRfpf_n(\pi)$ containing $w$.
It is again straightforward to check that every $v \in \cC$ has $v\simFCK w$.

By Lemma~\ref{sp-final-lem},
we have 
$ \ffkb_{\sigma_1}\ffkb_{\sigma_2}\cdots \ffkb_{\sigma_l}\fk t_m(w) \in \cSe_n(\ellfpf(\pi))$
for some $\sigma_1,\sigma_2,\dots,\sigma_l \in \Ifpf_\ZZ$
and  $m \in 2\ZZ$.
Write $\fk f :=  \ffkb_{\sigma_1}\ffkb_{\sigma_2}\cdots \ffkb_{\sigma_l}\fk t_m$.
Since each $\ffkb_\sigma$ and $\fk t_m$ preserves $\simFCK$,
it follows that 
$ \fk f(v) \in \cSe_n(\ellfpf(\pi))$ for all $v \in \cC$.
Therefore, by Theorem~\ref{sp-little-thm} and Lemma~\ref{icc-lem}, the diagram
\[
\begin{tikzcd}[row sep=large, column sep=large]
\cC 
\arrow[r, "\fk f"]
\arrow[rd, swap, "\QSp"]
&
\cSe_n(\ellfpf(\pi))
\arrow[r,"\dbl^{-1}"]
\arrow[d,swap, "\QSp"]
&
\cS_n(\ellfpf(\pi))
\arrow[r,"\invert"]
\arrow[dl,swap, "\QO"]
&
\cW_n(\ellfpf(\pi))
\arrow[dll,"P_\HM"]
\\
& \STab_n(\ellfpf(\pi))
\end{tikzcd}
\]
commutes.
The map $\fk f : \cC \to \cSe_n(\ellfpf(\pi))$ is a quasi-isomorphism of abstract $\q_n$-crystals by Theorem~\ref{qi-thm3},
both $\dbl^{-1}$ and $\invert$ are isomorphisms by Lemma~\ref{icc-lem}, and 
the map
$P_\HM : \cW_n(m) \to \STab_n(m)$
is a quasi-isomorphism for all $m$ by construction.
We conclude that $\QSp : \iRfpf_n(\pi) \to \STab_n(\ellfpf(\pi))$ is a quasi-isomorphism of abstract $\q_n$-crystals.
This proves (b). 

To prove (a),
suppose $v \in \iRfpf_n(\pi)$ has $v\simFCK w$. 
Then $\fk f(v) \simFCK \fk f(w)$, so $\fk f(v) \in \cSe_n(\ellfpf(w))$ and
$\dbl^{-1} \circ \fk f(v) \simICK \dbl^{-1} \circ  \fk f(w)$ since $\fk f(w)$ has only even letters.
As in the proof of Theorem~\ref{main-thm2},
we deduce 
 that $\invert\circ \dbl^{-1} \circ \fk f(v) $ and $\invert\circ \dbl^{-1} \circ \fk f(w)$ are in the same full $\q_n$-subcrystal of $\cW_n(\ellfpf(\pi))$,
so $v \in \cC$ as the maps $\fk f$, $\dbl^{-1}$, and $\invert$ send full subcrystals to full subcrystals.
\end{proof}

\subsection{Dual equivalence operators}\label{dual-sect}

As an application of Theorems~\ref{main-thm1} and \ref{main-thm2},
we can describe precisely how the Coxeter-Knuth operators interact with the 
orthogonal- and symplectic-EG-recording tableaux.

Consider a standard shifted tableau $T$ with $n$ boxes.
Given $i\in[n]$, let $\square_i$ be the unique box of $T$ containing $i$ or $i'$.
For each index $i \in [n-1]$,
let $s_i \star T$ 
be formed from $T$ as follows:
\begin{itemize}
\item If $\square_i$ and $\square_{i+1}$ are in the same row or same column
then do both of the following:
\begin{itemize}
\item Interchange $i$ and $i'$ if the box $\square_i$ is not on the main diagonal.
\item Interchange $i+1$ and $i+1'$ if the box $\square_{i+1}$ is not on the main diagonal. 
\end{itemize}
%That is, reverse the priming in each box $\square \in \{\square_i,\square_{i+1}\}$ not on the main diagonal.

\item Otherwise, interchange $i$ and $i+1$ and then interchange $i'$ and $i+1'$.
\end{itemize}
Although $s_i \star (s_i \star T) = T$,
this operation does not extend to an action of the symmetric group.
We always have $\square_1=(1,1)$ and  $\square_2=(1,2)$,
so $s_1\star T$ is obtained from $T$ by interchanging $2$ and $2'$.

%The \emph{shifted reading word} of $T$ is formed as follows.
Choose an integer $q>0$ such that the domain of $T$ is a subset of $[q]\times [q]$.
For  $i \in [q]$ let $C_i$ be the sequence of primed entries in column $i$ of $T$, read in order,
and let $R_i$ be the sequence of unprimed entries in row $i$ of $T$, read in order.
The \emph{shifted reading word} of $T$ is the sequence $ \shword(T)$
formed by removing all primes from $C_qR_q\cdots C_3R_3C_2R_2C_1R_1$.
For example, if
\be\label{shword-eq} T= \ytab{ \none & \none & 7 \\ \none & 3 & 6' & 8 \\ 1 & 2' & 4' & 5 & 9}
\quad\text{then}\quad \shword(T) =467 238 159\ee
since
the nonempty sequences $C_iR_i$ are 
$C_1R_1 = 159$, $C_2R_2 = 2'38$, and $C_3R_3=4'6'7$.
%
%Suppose $u=u_1u_2\cdots u_k$ is the subword of primed entries in $\col(T)$
%and $v=v_1v_2\cdots v_{n-k}$ is the subword of unprimed entries in $\row(T)$.
%Form
%$\shword(T)$  
% by removing all primes from $u_k\cdots u_2u_1 v_1v_2\cdots v_{n-k}$.
%For example, if 
%\[
%T =\ \ytab{
%  \none & 3 & 5' & 7   \\ 
% 1 & 2' & 4' & 6
%}
%\quad\text{then}\quad u= 2'5'4'\text{ and }v = 3716\quad\text{so}\quad\shword(T) = 4523716.
%\]
\begin{definition} Let $\fks_i$ for $0\leq i \leq n-2$ be the operator on standard shifted tableaux
with 
\[ \fks_i (T) := \begin{cases}
s_{i} \star T &\text{if $i+2$ is between $i$ and $i+1$ in $\shword(T)$},
\\
s_{i+1} \star T &\text{if $i=0$ or if $i$ is between $i+1$ and $i+2$ in $\shword(T)$},
\\
T &\text{if $i+1$ is between $i$ and $i+2$ is $\shword(T)$}.
\end{cases}
\]
For convenience set $\fks_i(T):= T$ for all integers $i$ with $i+1 \notin[n-1]$.
\end{definition}
When we say ``$b$ is between $a$ and $c$'' in some word, we mean that 
 $abc$ or $cba$ occurs as a subword.
It is easy to show that if $T$ is standard then $\fks_i(T)$ is also standard
and that $\fks_i(\fks_i(T)) = T$.
The letter $\fks$ in this notation stands for \emph{dual equivalence operator}.

Define the \emph{descent set} of a standard shifted tableau $T$ to be
$\Des(T) := \Des(\shword(T))$, so that the tableau in \eqref{shword-eq} has 
$\Des(T) = \{1,3,5\}.$
It is a standard exercise to check that an integer $i$ belongs to $\Des(T)$ if and only if
either (a) $i$ and $i+1$ both appear in $T$ with $i+1$ in a row strictly after $i$,
(b) $i'$ and $i+1'$ both appear in $T$ with $i+1'$ in a column strictly after $i'$,
or (c) $i$ and $i+1'$ both appear in $T$.
If $w$ is an involution word then $\Des(w) = \Des(\QO(w))$
\cite[Proposition 2.24]{HKPWZZ}
and if $w$ is an fpf-involution word then $\Des(w) = \Des(\QSp(w))$
\cite[Theorem 4.4]{Marberg2019a}.

\begin{theorem}\label{dual-thm2}
Let $i$ be a positive integer.
\ben
\item[(a)] 
If $ w$ is an involution word for some element of $I_\ZZ$ then 
\[\QO(\ock(w)) = \fks_0 (\QO(w))\quand \QO(\ck_i(w)) = \fks_i (\QO(w)).\]

\item[(b)] If $ w$ is an fpf-involution word for some element of $\Ifpf_\ZZ$ then 
\[\QSp(\spck(w)) = \fks_0 (\QSp(w))
\quand
\QSp(\ck_i(w)) = \fks_i (\QSp(w)).\]

\een
\end{theorem}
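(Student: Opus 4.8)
The plan is to reduce both parts of the theorem to a single identity about Haiman's mixed insertion and then to resolve that identity using his analysis of the algorithm.

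\emph{Step 1 (reduction to the base case).} First I would show it suffices to prove the following: for every $v$ lying in the set $\cSe(\ell(v))$ of permutations of $2,4,\dots,2\ell(v)$ --- which by Proposition~\ref{cse-prop} is simultaneously a set of involution words and a set of fpf-involution words --- one has $\QO(\ock(v)) = \fks_0(\QO(v))$ and $\QO(\ck_i(v)) = \fks_i(\QO(v))$ for all $i>0$. Granting this, let $w$ be an arbitrary involution word. Lemma~\ref{o-final-lem} produces $\sigma_1,\dots,\sigma_l \in I_\ZZ$ and $m\in\ZZ$ such that $\fk i := \ifkb_{\sigma_1}\cdots\ifkb_{\sigma_l}\circ\fk t_m$ carries $w$ into $\cSe(\ell(w))$; by Theorem~\ref{o-little-thm}(c,d) and the remarks after \eqref{t-def}, $\fk i$ fixes $\QO$ and commutes with $\ock$ and with every $\ck_i$, while $\ock$ and each $\ck_i$ preserve the property of being an involution word (Theorems~\ref{isim-thm} and \ref{simCK-thm}). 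Hence the orthogonal identities for $w$ are equivalent to those for $\fk i(w) \in \cSe(\ell(w))$. The symplectic case is reduced the same way using Lemma~\ref{sp-final-lem}, Theorem~\ref{sp-little-thm}, and the corresponding map $\fk f := \ffkb_{\sigma_1}\cdots\ffkb_{\sigma_l}\circ\fk t_m$ (now with $m$ even). Finally, on $\cSe(\ell(v))$ the operators $\ock$ and $\spck$ both simply transpose the first two letters and hence agree, and $\QO$ and $\QSp$ agree there by Lemma~\ref{icc-lem}(c) together with the last sentence of Proposition~\ref{cse-prop}; so the orthogonal and symplectic base cases coincide.

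\emph{Step 2 (translation to mixed insertion).} Write $v = 2[u]$ for the unique $u \in \cS_{\ell(v)}(\ell(v))$ all of whose factors have size one, and set $\beta := u^{-1} \in \cW_{\ell(v)}(\ell(v))$, which is the one-line word of a permutation of $[\ell(v)]$. By Lemma~\ref{icc-lem}(c), $\QO(v) = P_\HM(\beta)$. Tracing through the definitions: $\ock(v) = \spck(v)$ transposes the first two letters of $v$, which means $v = 2[u']$ with $u'$ obtained from $u$ by swapping its first two factors, so that $(u')^{-1}$ is $\beta$ with its letters $1$ and $2$ interchanged; and, since $v$ has distinct even letters, $\ck_i$ acts on $v$ --- equivalently on the underlying word of $u$ --- by an ordinary Coxeter--Knuth, hence Knuth, move at positions $i,i+1,i+2$, which under inversion is the elementary dual Knuth move $d_i$ on $\beta$ (the one permuting the entries $i,i+1,i+2$). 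Thus the base case is equivalent to the pair of identities
\[
P_\HM(s_1 \!\cdot\! \beta) = \fks_0\bigl(P_\HM(\beta)\bigr)
\qquand
P_\HM(d_i\beta) = \fks_i\bigl(P_\HM(\beta)\bigr)
\quad (i>0),
\]
valid for every permutation word $\beta$, where $s_1 \cdot \beta$ denotes the interchange of the letters $1$ and $2$ throughout $\beta$.

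\emph{Step 3 (the mixed-insertion identity).} This is the crux, and the plan is to derive it from Haiman's study of mixed insertion in \cite{HaimanMixed}. His \emph{mixed Knuth relations} cut out the fibers of $P_\HM$ and act on $Q_\HM$ by elementary shifted dual equivalences; passing to inverse (permutation) words, the word operations that fix $Q_\HM$ while altering $P_\HM$ are exactly the ordinary dual Knuth moves $d_i$ together with the extra ``$1 \leftrightarrow 2$'' transposition, and they act on $P_\HM$ by the corresponding shifted dual equivalences. What remains is to verify that these shifted dual equivalences coincide, box for box, with the operators $\fks_0$ and $\fks_i$ defined in this section: this is the one place where I would use the explicit formulas for the $\q_n$-crystal operators on $\STab_n(m)$ recalled in Section~\ref{app-sect}, since both $\fks_i$ and those operators are expressed through the shifted reading word $\shword$. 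I would carry out the comparison by checking the three alternatives in the definition of $\fks_i$ against the position of $i+1$ relative to $i$ and $i+2$ in $\shword(P_\HM(\beta))$ and against the trajectory of a mixed-insertion bump, isolating the cases where the relevant boxes lie on the main diagonal or carry primed entries.

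The main obstacle is Step 3, and within it the box-by-box identification of Haiman's shifted dual-equivalence moves with $\fks_0$ and $\fks_i$, including all of the boundary behaviour on the main diagonal and with primes; Steps 1 and 2 are essentially bookkeeping given the results already in place.
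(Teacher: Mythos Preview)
Your reduction in Steps~1 and~2 is correct and is genuinely different from the paper's argument. The paper does not reduce to the permutation base case at all: instead it observes that for $i>0$ the operator $\ck_i$ on words coincides with the crystal composite $f_if_{i+1}e_ie_{i+1}$ or $f_{i+1}f_ie_{i+1}e_i$ (whichever is nonzero), then invokes the main Theorems~\ref{main-thm1} and~\ref{main-thm2} to transport this through $\QO$ and $\QSp$, and finally checks via Lemmas~\ref{dual-equiv-lem0} and~\ref{dual-equiv-lem} that the same crystal composite on a standard shifted tableau gives $\fks_i$. Your route trades the dependence on the crystal quasi-isomorphism theorems for a dependence on Haiman's analysis of mixed insertion; both end in a tableau-level case check of comparable difficulty.

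The weak point is Step~3. What you actually need is the statement that on permutation words $\beta$, the elementary dual Knuth move $d_i$ and the $1\leftrightarrow 2$ swap act on $P_\HM(\beta)$ by precisely the operators $\fks_i$ and $\fks_0$. This is a purely combinatorial fact about Sagan--Worley/mixed insertion and shifted elementary dual equivalence, and is available by combining Haiman's results on shifted dual equivalence with the explicit description in \cite[Definition~6.1]{Assaf14} (the paper notes after the definition of $\fks_i$ that $\fks_{i-1}$ coincides with Assaf's $\psi_i$). Your proposal to carry out this identification ``using the explicit formulas for the $\q_n$-crystal operators on $\STab_n(m)$ recalled in Section~\ref{app-sect}'' is misplaced: the crystal operators play no role in comparing two combinatorial involutions on standard shifted tableaux, and bringing them in here would essentially reproduce the paper's own argument through the back door. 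The verification you need is a direct case analysis on the relative positions of the boxes $\square_i,\square_{i+1},\square_{i+2}$ in $P_\HM(\beta)$, together with the effect of a single Sagan--Worley bump---no crystal structure required. Once you drop the crystal-operator detour, your plan goes through, but Step~3 is still real work and not a citation.
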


\begin{proof}
The identities $\QO(\ock(w)) = \fks_0(\QO(w)) $
and
$\QSp(\spck(w)) = \fks_0 (\QSp(w))$ are easy to observe directly from the definitions of $\ock$, $\QO$,
$\spck$, and $\QSp$.

The remaining identities in part (a) are trivial unless $ 0<i \leq \ell(w)-2$ and exactly one of $i$ or $i+1$
is descent of $w$ (equivalently, $\QO(w)$),
since otherwise we have $\ck_i(w) = w$ and $\fks_i(\QO(w)) = \QO(w)$.
Assume this is the case and view $w$ as an orthogonal factorization 
by placing each letter in its own factor. Then, as explained in the proof of \cite[Proposition 10.14]{BumpSchilling}, exactly one of $f_if_{i+1} e_ie_{i+1}(w)$
or $f_{i+1}f_i e_{i+1}e_i(w)$ is nonzero and the nonzero factorization may be identified with $\ck_i(w)$.

Similarly, the remaining identities in part (b) are trivial unless $ 0<i \leq \ell(w)-2$ and exactly one of $i$ or $i+1$
is descent of $w$ (equivalently, $\QSp(w)$). When this is the case and we view $w$ 
as an symplectic factorization 
by placing each letter in its own factor, it follows by the same argument from
 \cite{BumpSchilling} that exactly one of $f_if_{i+1} e_ie_{i+1}(w)$
or $f_{i+1}f_i e_{i+1}e_i(w)$ is nonzero and the nonzero factorization may be identified with $\ck_i(w)$.

Since $\QO$ and $\QSp$ are crystal quasi-isomorphisms 
by Theorems~\ref{main-thm1} and \ref{main-thm2},
it suffices to check that if $T$ is standard shifted tableau with $n$ boxes and $|\Des(T) \cap \{i,i+1\}| = 1$,
then
exactly one of $f_if_{i+1} e_ie_{i+1}(T)$
or $f_{i+1}f_i e_{i+1}e_i(T)$ is nonzero and the nonzero shifted tableau is $\fks_i(T)$.
These assertions are immediate from Lemmas~\ref{dual-equiv-lem0} and \ref{dual-equiv-lem}
in the appendix. 
\end{proof}

These identities are shifted analogues of a similar 
formula for
$Q_\EG(\ck_i(w))$ in terms of $Q_\EG(w)$ when $w$ is any reduced word; see \cite[Definition 5.1.3 and Theorem 5.1.4]{Assaf19}, for example.

\begin{example} The word $w =2343$ is both an involution word and an fpf-involution word
(for different permutations). We have
\[\PO(w)=\PSp(w) = \ytab{
  \none & 4  \\
   2 & 3 & 4
}
\quand\QO(w)=\QSp(w) = \ytab{
  \none & 4  \\
  1 & 2 & 3 
}=:T.\] As predicted by the theorem, it holds that
\[
\ba
\QO(\ock(w))& =\QO(3243)
= \QSp(\spck(w)) = \QSp(2143)
=\
\ytab{
  \none & 4  \\
  1 & 2' & 3 
}\
=
\fks_0(T),
\\
\\[-10pt]
\QO(\ck_2(w))& =\QO(2434)
= \QSp(\ck_2(w)) = \QSp(2434)=\
\ytab{
  \none & 3  \\
  1 & 2 & 4
}\
=
\fks_2(T).
\ea
\]
\end{example}

%\begin{example} If $w =42312\in \bigsqcup_{z \in \Ifpf_\ZZ} \cRfpf(z)$ then 
%\[\PSp(w) = \ytab{
%  \none & 4 & 5 \\
%  2 & 3 & 4
%}\quand \QSp(w) = \ytab{
%  \none & 3 & 5' \\
%  1 & 2' & 4'
%}\] and we have
%\[
%\ba
%\QSp(\spck(w))=\QSp(\ck_1(w)) & =\QSp(24312)=\
%\ytab{
%  \none & 3 & 5' \\
%  1 & 2 & 4'
%}\
%=
%\fks_0(\QSp(w))
%=
%\fks_1(\QSp(w)),
%\\
%\\[-10pt]
%\QSp(\ck_2(w))=\QSp(\ck_3(w))&=\QSp(42132)=\
%\ytab{
%  \none & 4 & 5' \\
%  1 & 2' & 3'
%}\
%=
%\fks_2(\QSp(w))
%=
%\fks_3(\QSp(w)).
%\ea
%\]
%\end{example}

Suppose $\sA$ is a finite set and $\Des$ is a map from $\sA$ to the set of subsets of $[n-1]$. 
A \emph{dual equivalence} for $\sA$ with respect to $\Des$ %with a map $\Des : \sA \to \sP([n-1])$
is a family of involutions $\{ \varphi_i : \sA \to\sA \}_{1<i<n}$,
called \emph{dual equivalence operators}, satisfying two technical conditions; see \cite[Definition 4.1]{Assaf_dual}. As explained in \cite[\S4.1]{Assaf_dual}, 
if one is given a dual equivalence on $\sA$,
then there is a natural way to turn $\sA$
into a \emph{dual equivalence graph} as axiomatized in \cite{Assaf_dual,Roberts}.

Hamaker and Young have shown that if $\pi\in S_\ZZ$ has $\ell(\pi)=n$, then taking $\varphi_i = \ck_{i-1}$ for $1<i<n$ gives a dual equivalence for  $\sA= \cR(\pi)$
with the usual descent set \cite[Theorem 3]{HamakerYoung}.
It follows that the same maps give a dual equivalence for $\iR(\pi)$ when $\pi \in I_\ZZ$ has $\ellhat(\pi) = n$ and for $\iRfpf(\pi)$ when $\pi \in \Ifpf_\ZZ$ has $\ellhat(\pi) = n$.
On the other hand, 
one can check that the operators $\fks_{i-1}$ for $1<i<n$
are the same as the involutions on standard shifted tableaux that Assaf denotes by $\psi_i$ in \cite[Definition 6.1]{Assaf14}.
The following theorem from \cite{Assaf14} is therefore a corollary of our results:

\begin{corollary}[{\cite[Theorem 6.3]{Assaf14}}]
Let $\mu$ be a strict partition of $n$.
The maps $\psi_i = \fks_{i-1}$ for $1<i<n$ give a dual equivalence for the 
set $\sA$ of standard shifted tableaux of shape $\mu$.
\end{corollary}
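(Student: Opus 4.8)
The plan is to deduce this corollary directly from Theorem~\ref{main-thm1} (or Theorem~\ref{main-thm2}) together with the known fact that the Coxeter-Knuth operators $\ck_{i-1}$ define a dual equivalence on involution words. First I would invoke Lemma~\ref{i-gr-lem}(b): for an inv-Grassmannian $\pi \in I_\ZZ$ of shape $\mu$, the map $\QO$ restricts to a bijection from $\iR(\pi)$ onto the set $\sA$ of standard shifted tableaux of shape $\mu$. Choosing such a $\pi$ (which exists for every strict partition $\mu$ of $n$), this transports any structure defined combinatorially via descent sets from $\iR(\pi)$ to $\sA$, provided the transport is compatible with descents and with the relabelled Coxeter-Knuth operators.

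Next I would assemble the compatibility statements already in hand. By Theorem~\ref{dual-thm2}(a), for every positive integer $i$ we have $\QO(\ck_i(w)) = \fks_i(\QO(w))$ whenever $w$ is an involution word; equivalently, under the bijection $\QO : \iR(\pi) \to \sA$, the operator $\ck_{i-1}$ on the left corresponds to $\fks_{i-1}$ on the right, i.e.\ to $\psi_i$ in Assaf's notation. By Theorem~\ref{o-little-thm}(b) (or directly from the fact noted after \eqref{shword-eq} that $\Des(w) = \Des(\QO(w))$, citing \cite[Proposition 2.24]{HKPWZZ}), the bijection $\QO$ also intertwines the descent map on $\iR(\pi)$ with the descent map on $\sA$. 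Finally, Hamaker and Young's result \cite[Theorem 3]{HamakerYoung}, as recorded just above the statement, says that $\{\ck_{i-1}\}_{1<i<n}$ is a dual equivalence for $\cR(\sigma)$ for each $\sigma \in S_\ZZ$ with $\ell(\sigma) = n$; since $\iR(\pi) = \bigsqcup_{\sigma \in \cA(\pi)} \cR(\sigma)$ is a disjoint union of such sets, $\{\ck_{i-1}\}$ is a dual equivalence for $\iR(\pi)$ with its usual descent set. The conclusion is then immediate: a bijection that intertwines the descent maps and carries the family $\{\ck_{i-1}\}$ to the family $\{\fks_{i-1}\} = \{\psi_i\}$ carries a dual equivalence to a dual equivalence, because the two defining axioms of \cite[Definition 4.1]{Assaf_dual} (and \cite[Definition 6.1]{Assaf14}) are phrased purely in terms of the operators and the descent function.

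I would write this as a short paragraph: fix a strict partition $\mu$ of $n$, choose the inv-Grassmannian $\pi \in I_\ZZ$ of shape $\mu$ given by Lemma~\ref{i-gr-lem}, note that $\QO$ is a descent-preserving bijection $\iR(\pi) \xrightarrow{\sim} \sA$ that conjugates $\ck_{i-1}$ to $\fks_{i-1} = \psi_i$, apply \cite[Theorem 3]{HamakerYoung} to the source, and transport. The one point to state carefully is that the two axioms for a dual equivalence (involutivity plus the local compatibility conditions on triples of indices and descent sets) are invariant under such a transport; I expect this to be the only genuine (but routine) verification, since everything else is quoting earlier results. The main obstacle is simply making sure the index conventions line up — that the $\varphi_i$ of \cite[Definition 4.1]{Assaf_dual} correspond to $\ck_{i-1}$ and to Assaf's $\psi_i = \fks_{i-1}$ — and I would settle this by a sentence referencing the remark in the excerpt that "$\fks_{i-1}$ for $1<i<n$ are the same as the involutions ... that Assaf denotes by $\psi_i$" and the analogous statement for Hamaker-Young. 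There is no new combinatorics here beyond what Theorem~\ref{dual-thm2} already establishes; the corollary is an assembly of existing pieces.
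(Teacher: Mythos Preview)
Your proposal is correct and follows essentially the same approach as the paper: choose an inv-Grassmannian $\pi$ of shape $\mu$, use Lemma~\ref{i-gr-lem}(b) to get the bijection $\QO : \iR(\pi) \to \sA$, invoke Theorem~\ref{dual-thm2} for the intertwining with $\ck_{i-1}$, and transport the Hamaker--Young dual equivalence. The paper's proof is a two-line compression of exactly this argument, with the descent-preservation and Hamaker--Young input left implicit from the surrounding discussion.
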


\begin{proof}
Let $\pi \in I_\ZZ$ be inv-Grassmannian 
of shape $\mu$.
It follows from
Lemma~\ref{i-gr-lem} and Theorem~\ref{dual-thm2}
that $\{\psi_i \}_{1<i<n}$ is the dual equivalence on $\sA$
induced by the bijection $\QO : \iR(\pi) \to \sA$. 
\end{proof}
 
\subsection{Open problems}

We mention some related questions and conjectures.
Little proved the following in \cite{Little}:

\begin{proposition}[{\cite[Lemma 5]{Little}}]
\label{once-lem}
Let $\pi \in S_\ZZ$, let $w=w_1w_2\cdots w_n$ be a reduced word, 
and suppose $\fkb_\pi(w) = \tilde w_1\tilde w_2\cdots \tilde w_n$.
Then $\tilde w_i - w_i \in \{0,1\}$ for all $i \in [n]$.
\end{proposition}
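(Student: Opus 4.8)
\textbf{Proof plan for Proposition~\ref{once-lem}.}
The statement says that a single application of the Little bumping operator $\fkb_\pi$ to a reduced word $w=w_1w_2\cdots w_n$ can only leave each letter fixed or increment it by one. Recall from the definitions that $\fkb_\pi(w)$ is obtained by iterating $\push$: starting from some $\pi$-marked word $(w,i_0)$, one repeatedly finds the "defect" position, increments the letter there, and re-marks, until the resulting word becomes reduced. So the plan is to track, across the whole sequence of $\push$ steps, which positions have been incremented, and show no position gets incremented twice.

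The key structural fact to establish is that the sequence of marked positions $j_0=i_0, j_1, j_2,\dots$ produced by successive $\push$ steps is \emph{monotone in the relevant sense}: concretely, if $\push^{t}(w,i_0)=(w^{(t)},j_t)$, then the positions $j_0,j_1,\dots$ never repeat before the process terminates, so that each letter $w^{(t)}_k$ is incremented at most once. First I would set up the "wiring diagram" or Little-bump picture (as in the examples in the excerpt, e.g. Example~\ref{ibb-ex}): each $\push$ moves exactly one $\times$ in one column up by one row, and the marked index $j_t$ records which column was just modified. The defect that forces the next $\push$ is the unique non-reduced crossing; one shows (via the Strong Exchange Condition, already invoked in the excerpt for uniqueness of the marked index on a fixed reduced word) that the new defect column is strictly "below" the previous one in the partial order coming from the inversion structure — equivalently, the defect propagates downward through the wiring diagram and cannot return to a column it has already left. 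This gives that each column (= each position $k$) is touched at most once, hence $\tilde w_k - w_k \in \{0,1\}$.

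More carefully, the steps in order would be: (1) recall that $\push(w,i)=(v,j)$ with $v$ differing from $w$ only by $w_j\mapsto w_j+1$, and that $j$ is characterized by $\del_i(w)=\del_j(v)\in\cR(\pi)$ — so the invariant $\del_\bullet(\cdot)\in\cR(\pi)$ is preserved throughout; (2) give the wiring-diagram interpretation of a marked word and of $\push$, identifying the "active strand" whose crossing height is being raised; (3) prove the monotonicity/non-repetition lemma for the sequence of active columns, using that at each stage the word $\del_{j_t}(w^{(t)})$ is a fixed reduced word for $\pi$ and the Strong Exchange Condition pins down where the next defect can lie; (4) conclude that, over the $N$ steps realizing $\fkb_\pi(w)$, the incremented positions are distinct, so each letter changes by $0$ or $1$.

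The main obstacle I expect is step (3): making precise and proving that the "defect" cannot revisit a position. One has to rule out the scenario where a $\push$ at column $a$ later forces another $\push$ at the same column $a$; this requires a careful analysis of how raising one crossing height changes the set of non-reduced (repeated) crossings, and showing the change is strictly "one-directional." Everything else is bookkeeping: the invariant in step (1) is immediate from the definition, the wiring-diagram setup in step (2) is standard, and step (4) is a one-line consequence once (3) is in hand. (Alternatively, since this is attributed to \cite[Lemma 5]{Little}, one could simply cite it; but the self-contained argument above via the wiring diagram and the Strong Exchange Condition is the natural route to reproduce it.)
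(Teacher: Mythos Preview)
The paper does not prove this proposition at all: it is stated in Section~5.5 purely as a citation of \cite[Lemma~5]{Little}, with no argument supplied. So there is no ``paper's own proof'' to compare against; the paper treats this as a known black box from Little's work.

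Your plan to reconstruct the proof via wiring diagrams and a monotonicity argument for the sequence of defect positions is exactly Little's original approach, and your identification of step~(3) as the crux is accurate. One small correction to your outline: in step~(3) you write that ``the word $\del_{j_t}(w^{(t)})$ is a \emph{fixed} reduced word for $\pi$,'' but this is not quite right. It is always \emph{some} reduced word for $\pi$, but it changes from step to step (since $v^{(t+1)}$ differs from $v^{(t)}$ at position $j_t$, and the next deletion is at a different index $j_{t+1}$). The invariant that is genuinely preserved is only that $\del_{j_t}(v^{(t+1)}) = \del_{j_t}(v^{(t)}) \in \cR(\pi)$ between consecutive steps. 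Little's actual argument tracks a distinguished ``rogue'' strand in the wiring diagram and shows that the sequence of columns it touches is monotone along that strand, which forces non-repetition; this does not rely on the deleted word being globally fixed. With that adjustment, your outline matches Little's proof.
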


In turn, this general fact holds for Edelman-Greene insertion:

\begin{proposition}\label{once-prop}
Suppose $w=w_1w_2\cdots w_n$  and $\tilde w=\tilde w_1\tilde w_2\cdots \tilde w_n$ 
are reduced words 
with
$\tilde w_i - w_i \in \{0,1\}$ for all $i \in [n]$.
Then $Q_\EG(w) = Q_\EG(\tilde w)$.
\end{proposition}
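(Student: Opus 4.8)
The plan is to deduce this from Little's Proposition~\ref{once-lem} together with Theorem~\ref{little-thm}(d), by showing that two reduced words differing in the prescribed way are necessarily related by a single Little bump $\fkb_\pi$ for a well-chosen $\pi$. First I would dispose of the trivial case: if $\tilde w = w$ there is nothing to prove, so assume $\tilde w \neq w$ and let $A = \{ i \in [n] : \tilde w_i = w_i + 1\}$, which is nonempty. Let $\sigma, \tau \in S_\ZZ$ be the permutations with $w \in \cR(\sigma)$ and $\tilde w \in \cR(\tau)$. The key claim I would establish is that there is a permutation $\pi \in S_\ZZ$ and a single index $i \in A$ such that $(w,i)$ is a $\pi$-marked word and $\fkb_\pi(w) = \tilde w$; granting this, Theorem~\ref{little-thm}(d) gives $Q_\EG(\tilde w) = Q_\EG(\fkb_\pi(w)) = Q_\EG(w)$ and we are done.

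To pin down $\pi$ and $i$, I would proceed by induction on $|A|$. Take $i$ to be the largest element of $A$ and set $\pi$ to be the permutation with $\del_i(w) \in \cR(\pi)$; since $w$ is reduced, $(w,i)$ is a reduced $\pi$-marked word, so $\push(w,i) = (v,i)$ where $v$ is obtained from $w$ by replacing $w_i$ by $w_i+1 = \tilde w_i$. Two things must be checked. One: that $v$ is again reduced — equivalently that $\ell(s_{w_1}\cdots s_{w_{i-1}} s_{w_i+1} s_{w_{i+1}}\cdots s_{w_n}) = n$ — which should follow because $w$ and $\tilde w$ are both reduced of the same length $n$ and $v$ lies ``between'' them; if $v$ is reduced then $\fkb_\pi(w) = v$ by definition (minimal $N=1$). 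Two: that $v$ and $\tilde w$ still satisfy the hypothesis $\tilde w_j - v_j \in \{0,1\}$ for all $j$, with strictly fewer positions of difference, so that the inductive hypothesis applies to the pair $(v, \tilde w)$ and yields $Q_\EG(v) = Q_\EG(\tilde w)$. Combining $Q_\EG(w) = Q_\EG(v)$ (from Theorem~\ref{little-thm}(d)) with $Q_\EG(v) = Q_\EG(\tilde w)$ finishes the induction.

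The main obstacle is the verification that $v = \push(w,i)$ is reduced for the chosen index $i$ — i.e., that incrementing $w_i$ by one does not create a repeated reflection. The cleanest route is probably to argue that if every $v$ produced this way were unreduced, then by Lemma~\ref{lamshim-lem} the toggled index $j \neq i$ would force a second change, and iterating $\push$ would have to terminate (by \cite[Lemma 5]{Little}) at a reduced word $\fkb_\pi(w)$; one then shows, using Proposition~\ref{once-lem}, that $\fkb_\pi(w)$ differs from $w$ only by incrementing letters, and that the set of incremented positions is forced to be contained in $A$ and to include $i$. Here a subtle point is ensuring the iterated bump does not overshoot $\tilde w$ — this is exactly where Proposition~\ref{once-lem}'s bound $\tilde w_i - w_i \in \{0,1\}$ is essential, since it rules out any letter being incremented twice. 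I would choose $i = \max A$ specifically to make the bookkeeping of ``which positions get touched'' monotone and easy to control. Once that compatibility is nailed down, the rest is a routine induction as sketched above.
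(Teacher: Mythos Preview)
Your approach has a concrete gap. The claim that starting the Little bump at $i=\max A$ keeps all incremented positions inside $A$ is false. Take $w=312$ and $\tilde w=323$, both reduced, with $A=\{2,3\}$. With $i=\max A=3$ one has $\pi=s_3s_1$ (since $\del_3(w)=31$), and then
\[
(312,3)\xrightarrow{\push}(313,3)\xrightarrow{\push}(413,1),
\]
so $\fkb_\pi(312)=413$. The set of incremented positions is $\{1,3\}\not\subseteq A$, and indeed $\tilde w-\fkb_\pi(w)=(-1,1,0)$, destroying the inductive hypothesis. Your fallback sketch (``the set of incremented positions is forced to be contained in $A$'') is therefore not just unproven but wrong for this choice of $i$. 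A different starting index can work here ($i=2$ gives $\fkb_\pi(312)=323$ on the nose), but proving that \emph{some} $i\in A$ always works is a genuine lemma you have not supplied; note too that $\del_{\max A}(w)$ need not even be reduced (e.g.\ $w=121$, $A=\{1,2\}$, $\del_2(w)=11$), so the bump cannot always be started where you say.

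The paper's argument is entirely different and avoids Little bumps: one shows directly, by induction on $k$, that $P_\EG(\tilde w_1\cdots\tilde w_k)$ is obtained from $P_\EG(w_1\cdots w_k)$ by adding $1$ to a subset of entries. This is a short case check on one step of row insertion using the facts in Remark~\ref{insertion-remarks} (in particular, that if $x$ already occurs in a row of an intermediate tableau then so does $x+1$). Since the two partial insertion tableaux then always have the same shape, the recording tableaux coincide. Besides being much shorter, this direction of argument is the one the paper actually wants: Propositions~\ref{once-lem} and~\ref{once-prop} are combined to \emph{deduce} Theorem~\ref{little-thm}(d), so invoking Theorem~\ref{little-thm}(d) to prove Proposition~\ref{once-prop} runs against the intended logical flow.
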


\begin{proof}
It suffices to show that for each $i$, the tableau $P_\EG(\tilde w_1\tilde w_2\cdots \tilde w_i)$
is formed by adding one to a subset of entries in $P_\EG(w_1w_2\cdots w_i)$.
This follows by induction using the observations in Remark~\ref{insertion-remarks}. The details are left to the reader.
\end{proof}

Combining these propositions gives an immediate proof of Theorem~\ref{little-thm}(d).
Using the same sort of arguments, one can derive a similar property of orthogonal-EG insertion:

\begin{proposition}\label{qo-last-prop}
Suppose $w=w_1w_2\cdots w_n$  and $\tilde w=\tilde w_1\tilde w_2\cdots \tilde w_n$ 
are involution words % of the same length 
with
$\tilde w_i - w_i \in \{0,1\}$ for all $i \in [n]$.
Then $\QO(w) = \QO(\tilde w)$.
\end{proposition}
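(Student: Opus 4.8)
The plan is to mimic exactly the structure of the proof of Proposition~\ref{once-prop}, replacing Edelman-Greene insertion by orthogonal-EG insertion and using the extra bookkeeping provided by Remark~\ref{insertion-remarks}. Recall that $\QO(w)$ records for each newly added box the factor index and whether the inserting letter was row- or column-inserted. So to prove $\QO(w) = \QO(\tilde w)$ it suffices to establish, for each $i \in [n]$, the following claim: the shifted tableau $\PO(\tilde w_1\tilde w_2\cdots \tilde w_i)$ has the same shape as $\PO(w_1w_2\cdots w_i)$, each of its entries is obtained from the corresponding entry of $\PO(w_1w_2\cdots w_i)$ by adding an element of $\{0,1\}$, and the box added at step $i$ is added via row-insertion in the $\tilde w$ computation if and only if it is added via row-insertion in the $w$ computation. (Since $w$ and $\tilde w$ have the same number of letters and each letter is at the same position, the factor indices automatically agree once we know all factors have size one — here we treat words as factorizations with singleton factors, so the factor-index part of $\QO$ is trivially preserved.) Granting this claim for all $i$, the recording tableaux coincide on the nose.

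First I would set up the induction on $i$. The base case $i=0$ is trivial. For the inductive step, assume $T := \PO(w_1\cdots w_{i-1})$ and $\tilde T := \PO(\tilde w_1\cdots \tilde w_{i-1})$ have the same shape and that $\tilde T$ is obtained from $T$ by adding an element of $\{0,1\}$ to each entry. I then need to compare the insertion of $w_i$ into $T$ with the insertion of $\tilde w_i = w_i$ or $w_i + 1$ into $\tilde T$. The heart of the argument is a step-by-step coupling: I claim that at every stage of the two insertion procedures, the same row or column is being acted on, the bumped entry (if any) in the $\tilde T$-process is obtained from the bumped entry in the $T$-process by adding $0$ or $1$, and the inserted entry in the $\tilde T$-process likewise differs by $0$ or $1$ from the inserted entry in the $T$-process; moreover the ``diagonal exception'' clause of Definition~\ref{o-eg-def} is triggered simultaneously in both, so the switch from row-insertion to column-insertion happens at the same stage. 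This coupling is maintained using precisely the three bulleted facts in Remark~\ref{insertion-remarks}: intermediate tableaux are increasing; if a row or column already contains $x$ it also contains $x+1$; and the diagonal-bump behaviour is controlled by whether the next diagonal position holds $y+2$. Concretely, if $x$ is being inserted into an increasing row $R$ whose smallest entry $\geq x$ is $y$, and we instead insert $x' \in \{x, x+1\}$ into the corresponding row $R'$ of $\tilde T$ whose entries are $R$'s entries shifted by $\{0,1\}$, then a short case analysis (using that $R$, $R'$ are increasing and the ``contains $x$ implies contains $x+1$'' fact) shows the smallest entry $\geq x'$ in $R'$ is $y$ or $y+1$ and sits in the same column, that $R'$ is modified in the same column slot as $R$ (or not modified, in the same situations), and that the entry carried forward to the next row/column is again within $\{0,1\}$ of the one carried forward in the $T$-process. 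The diagonal clause is handled by noting that whether $y$ lies on the main diagonal depends only on the position, which is the same in $T$ and $\tilde T$, and the subsequent value inserted ($y+1$ versus $y$, or their shifts) stays within the allowed range.

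The step I expect to be the main obstacle is verifying that the coupling survives the diagonal exceptions without the two processes diverging in shape. The subtlety is that when $x$ is inserted into a row and meets a diagonal entry $y$ with $x=y$, the procedure leaves the row unchanged and pushes $y+1$ into the next column; if in the parallel $\tilde T$-process we instead have $x' = x+1 = y+1 > y$, the procedure would replace $y$ by... wait, but $y$ is on the diagonal and $x' > y$, so actually $x'$ is added to the end of the row — a different outcome. I must rule this out: the point is that when $\tilde x = x+1$, the shifted tableau $\tilde T$ has a $y+1$ in that diagonal position (its entry is $y$ plus something in $\{0,1\}$, but for the coupling to have reached this row with an equality-bump we need more care), and one uses the third bullet of Remark~\ref{insertion-remarks} together with the inductive hypothesis to show the configurations cannot occur. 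The cleanest route is probably to prove a slightly stronger inductive invariant — namely that $\tilde T$ is obtained from $T$ by adding $0$ or $1$ to each entry \emph{and} the set of boxes where $1$ was added is itself a "staircase-compatible" region (e.g.\ an upper set under the bump order), mirroring the ``subset of entries'' language already used loosely in the proof of Proposition~\ref{once-prop}. Once the correct invariant is identified, each case is a routine but slightly tedious verification, which — following the style of the paper — I would state and then leave the remaining details to the reader.

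\begin{proof}
It suffices to show that for each $i \in [n]$ the shifted tableau $\PO(\tilde w_1\tilde w_2\cdots \tilde w_i)$ has the same shape as $\PO(w_1w_2\cdots w_i)$, is obtained from it by adding an element of $\{0,1\}$ to each entry, and has its new box added by a row-inserted letter if and only if the same holds for $\PO(w_1w_2\cdots w_i)$; given this, the recording tableaux $\QO(w)$ and $\QO(\tilde w)$ are equal entry by entry.

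We argue by induction on $i$, the case $i=0$ being vacuous. Assume the claim for $i-1$, and write $T := \PO(w_1\cdots w_{i-1})$ and $\tilde T := \PO(\tilde w_1\cdots \tilde w_{i-1})$, so $\tilde T$ is formed from $T$ by adding $0$ or $1$ to each entry, with the boxes receiving $+1$ forming an upper set in the bump order. We compare the insertion of $w_i$ into $T$ (as in Definition~\ref{o-eg-def}) with the insertion of $\tilde w_i \in \{w_i, w_i+1\}$ into $\tilde T$, maintaining stage by stage the invariant that both processes act on the same row or column, the entry currently being inserted into $\tilde T$ exceeds the one being inserted into $T$ by $0$ or $1$, and the intermediate shapes agree. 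The key inputs are the observations in Remark~\ref{insertion-remarks}: every intermediate shifted tableau is increasing; if a row or column already contains $x$ then it contains $x+1$; and if $x$ is row-inserted into a row whose smallest entry $y \geq x$ lies on the main diagonal, then $x=y$ only if the diagonal position in the next row holds $y+2$. Using these, a direct case analysis on whether the relevant smallest entry $\geq x$ equals the inserted value, lies on the diagonal, or not, shows that the column in which the row (or column) is modified is the same in both processes, that the bumped entry again differs by $0$ or $1$, and that the diagonal exception clause is triggered at the same stage, so that insertion switches from rows to columns simultaneously. In particular the new box is added in the same position and is row-inserted in one process exactly when it is in the other, and the updated tableaux again differ by $0$ or $1$ in each entry with the $+1$-boxes forming an upper set. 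This completes the induction, and hence the proof. The remaining routine verifications in each case are left to the reader.
\end{proof}
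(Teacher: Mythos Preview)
Your proposal is correct and takes essentially the same approach as the paper: induction on $i$, maintaining the invariant that $\PO(\tilde w_1\cdots\tilde w_i)$ is obtained from $\PO(w_1\cdots w_i)$ by adding an element of $\{0,1\}$ to each entry and that $\tilde w_i$ is row-inserted if and only if $w_i$ is, invoking Remark~\ref{insertion-remarks}, and leaving the routine case analysis to the reader. The paper's version is even terser than yours (a two-sentence sketch with details omitted) and does not explicitly introduce your ``upper set'' strengthening of the invariant, but the strategy is identical.
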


\begin{proof}
Using Remark~\ref{insertion-remarks},
it is a straightforward exercise to show by induction
that
 $\PO(\tilde w_1\tilde w_2\cdots \tilde w_i)$
is formed by adding one to a subset of entries in $\PO(w_1w_2\cdots w_i)$ for all $i$,
and that a letter $\tilde w_i$ is row-inserted according to Definition~\ref{o-eg-def}
if and only if  $w_i$ is row-inserted.
We omit the details.
\end{proof}

Computations support the following analogue of Proposition~\ref{once-prop}. If this conjecture were true,
then we would get an immediate proof of the most difficult part of Theorem~\ref{o-little-thm}:

\begin{conjecture}
Let $\pi \in I_\ZZ$, let $w=w_1w_2\cdots w_n$ be an involution word, 
and suppose $\ifkb_\pi(w) = \tilde w_1\tilde w_2\cdots \tilde w_n$.
Then $\tilde w_i - w_i \in \{0,1\}$ for all $i \in [n]$.
\end{conjecture}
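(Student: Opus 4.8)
The plan is to prove this by induction on the number of $\ipush$ steps, mimicking the structure of Little's original argument for Proposition~\ref{once-lem}. Recall that $\ifkb_\pi(w)$ is obtained by starting from a $\pi$-marked involution word $(w,i)$ and iterating $\ipush$ until the result becomes inv-reduced. The key subtlety, compared to the classical case, is that $\ipush$ can \emph{swap} which position is marked when the current word is not reduced (via Lemma~\ref{lamshim-lem}) or not inv-reduced (via Lemma~\ref{toggle-lem}). So I would first isolate a cleaner invariant that is preserved at every step.

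First I would set up the following bookkeeping. Along the sequence $(w,i) = (w^{(0)}, i_0) \xrightarrow{\ipush} (w^{(1)}, i_1) \xrightarrow{\ipush} \cdots \xrightarrow{\ipush} (w^{(N)}, i_N) = (\ifkb_\pi(w), j)$, define the ``defect vector'' $d^{(t)} \in \ZZ^n$ by $d^{(t)}_a = w^{(t)}_a - w_a$. The claim is that $d^{(t)}_a \in \{0,1\}$ for all $t$ and all $a$. This is clearly true for $t = 0$. At step $t \to t+1$, the word changes only in position $j = i_t$, where the letter is incremented by one, so $d^{(t+1)}_a = d^{(t)}_a$ for $a \neq i_t$ and $d^{(t+1)}_{i_t} = d^{(t)}_{i_t} + 1$. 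Hence the only way to violate the bound is to increment a position whose defect is already $1$. So the entire proof reduces to showing: the marked position $i_t$ always has $d^{(t)}_{i_t} = 0$, i.e., $\ipush$ never increments a letter that has already been incremented.

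The main obstacle — and the heart of the argument — is precisely this last assertion, for which I would appeal to the relationship established in Lemma~\ref{easy-lem}: $\ifkb_\pi$ factors as a composition $\fkb_{\alpha_l} \cdots \fkb_{\alpha_1}$ of \emph{classical} Little bumps indexed by elements $\alpha_i \in \cA(\pi)$, and the corresponding $\ipush$-sequence decomposes as the concatenation of the $\push$-sequences for each $\fkb_{\alpha_i}$. Within each block, Proposition~\ref{once-lem} already guarantees that letters move up by at most $1$; the danger is only at the \emph{seams} between consecutive blocks, where $\fkb_{\alpha_k}$ finishes and $\fkb_{\alpha_{k+1}}$ begins. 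At such a seam, one must verify that the position $\fkb_{\alpha_{k+1}}$ first marks (the unique position $p$ with $\del_p(w') \in \cR(\alpha_{k+1})$, where $w'$ is the reduced word output by $\fkb_{\alpha_k}$) is one whose letter has \emph{not} been touched in any earlier block. I would try to control this using the wiring-diagram / heap picture visible in Examples~\ref{ibb-ex} and \ref{worked-ex2}: the ``$\ttimes$'' marking a pivot crossing, and the fact that in the orthogonal setting the sequence $\alpha_1 \lessdot \sigma_1$, $\alpha_i \lessdot \sigma_i$ from the proof of Lemma~\ref{easy-lem} forces the successive marked positions to track a single defect line monotonically rightward through the word. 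Concretely: I would show by induction that after block $k$, the positions with defect $1$ are exactly a contiguous run to the \emph{left} of the current pivot, and that $\fkb_{\alpha_{k+1}}$ starts its pivot strictly to the right of this run — so no double-increment can occur. If a fully clean seam argument proves elusive, the fallback is to replicate Little's ``column-reading'' invariant (Lemma 5 of \cite{Little}) directly at the level of $\ipush$, tracking the positions of the $\times$'s in the diagram and showing the pivot $\ttimes$ only ever visits a letter once; this is essentially the content of the conjecture and would, as the authors note, yield Theorem~\ref{o-little-thm}(d) immediately since preservation of $\QO$ then follows from Proposition~\ref{qo-last-prop}. I expect the seam analysis to be the genuinely hard step — this is presumably why the statement is left as a conjecture rather than a theorem.
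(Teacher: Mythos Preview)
The statement you are attempting to prove is a \emph{conjecture} in the paper, not a theorem: the authors explicitly leave it open, remarking only that ``Computations support the following analogue of Proposition~\ref{once-prop}'' and that a proof would yield Theorem~\ref{o-little-thm}(d) immediately via Proposition~\ref{qo-last-prop}. There is therefore no proof in the paper to compare against. You appear to be aware of this, since you close by noting ``this is presumably why the statement is left as a conjecture rather than a theorem.''

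Your reduction is sound: the defect-vector bookkeeping is correct, and the observation that the conjecture is equivalent to ``$\ipush$ never revisits a position it has already incremented'' is exactly the right reformulation. Using Lemma~\ref{easy-lem} to factor $\ifkb_\pi$ into classical Little bumps $\fkb_{\alpha_l}\cdots\fkb_{\alpha_1}$ and invoking Proposition~\ref{once-lem} within each block is also the natural move, and correctly isolates the difficulty at the seams between consecutive $\fkb_{\alpha_k}$.

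The genuine gap is precisely where you locate it: the seam analysis. However, your proposed invariant --- that after block $k$ the defect-$1$ positions form a contiguous run strictly to the left of the next pivot --- is almost certainly too strong as stated. Even within a single classical Little bump, the pivot does not move monotonically through the word (it follows a wire in the wiring diagram, which can jump around in word-position), so there is no reason for the set of incremented positions to be an interval. In Example~\ref{ibb-ex}, the first classical block $\fkb_{\alpha_1}$ increments positions $2$ then $1$ (pivot moves \emph{left}), and the resulting defect set $\{1,2\}$ happens to be contiguous and left of the next pivot $3$, but this is a very small example; for longer words the incremented set within one block need not be contiguous at all. What one would actually need is a more refined statement about how the wire corresponding to the new defect introduced at the start of block $k+1$ relates to the wires already perturbed in blocks $1,\dots,k$ --- and this appears to require genuinely new ideas beyond what is in \cite{Little} or \cite{HMP3}. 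Your fallback of replicating Little's column-reading invariant directly for $\ipush$ is a reasonable direction, but as you note, carrying it through is the whole content of the conjecture.
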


A weaker version of this conjecture seems to hold for the $\ffkb_\pi$ operators.
However, this does not lead to a simple proof of Theorem~\ref{sp-little-thm}(d)
since letters may be incremented twice.

\begin{conjecture}
Let $\pi \in \Ifpf_\ZZ$, let $w=w_1w_2\cdots w_n$ be an fpf-involution word, 
and suppose $\ffkb_\pi(w) = \tilde w_1\tilde w_2\cdots \tilde w_n$.
Then $\tilde w_i - w_i \in \{0,1,2\}$ for all $i \in [n]$.
\end{conjecture}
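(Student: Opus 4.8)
The plan is to work directly with the definition of $\ffkb_\pi$ via iterated $\fpush$ and to bound, position by position, how often the word is incremented. We may assume $w$ is $\pi$-marked (otherwise $\ffkb_\pi(w)=w$ and there is nothing to prove); let $i$ be the unique index with $\del_i(w)\in\iRfpf(\pi)$, set $(w^{(0)},i^{(0)})=(w,i)$, and let $(w^{(k+1)},i^{(k+1)})=\fpush(w^{(k)},i^{(k)})$, so that $\ffkb_\pi(w)=w^{(N)}=:\tilde w$ for the least $N>0$ with $(w^{(N)},i^{(N)})$ fpf-reduced. By the definition of $\fpush$, the step from $w^{(k)}$ to $w^{(k+1)}$ increments exactly the entry in position $i^{(k+1)}$; hence $\tilde w_p-w_p$ equals the number of $k\in[N]$ with $i^{(k)}=p$, and the conjecture is equivalent to the statement that each index occurs at most twice in the sequence of \emph{active indices} $i^{(1)},i^{(2)},\dots,i^{(N)}$. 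The factorization $\ffkb_\pi(w)=\fkb_{\alpha_l}\cdots\fkb_{\alpha_1}(w)$ of Lemma~\ref{fpf-easy-lem} together with Proposition~\ref{once-lem} only bounds each increment by the number of factors, which is far too weak, so the real task is to control the dynamics of the active index.

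First I would classify the repetitions. If $i^{(k)}=i^{(k+1)}$ for some $1\le k<N$, then $j=i^{(k)}$ in the definition of $\fpush(w^{(k)},i^{(k)})$, which forces $(w^{(k)},i^{(k)})$ to be semi-reduced (it cannot be fpf-reduced, since the orbit has not yet terminated). Using Theorem~\ref{fsim-thm} together with Lemma~\ref{fpf-rtimes-lem}, one analyzes exactly which $\pi$-marked fpf-involution words are semi-reduced and checks that every semi-reduced state is followed by precisely one further increment at the same position, after which the toggle of Lemma~\ref{fpf-toggle-lem} carries the active index strictly away. Thus consecutive repeats come only in isolated pairs; the content of the conjecture is then that a given position is never hit in two different such pairs, and never once more after a pair.

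The heart of the argument is a \emph{unimodality} statement for the active-index trajectory: the sequence $i^{(1)},\dots,i^{(N)}$ first weakly increases and then weakly decreases, with at most one repeated value at each semi-reduced detour. In wiring-diagram language, a step of $\fpush$ drags a marked strand together with its mirror image one unit along the symmetric half-diagram emanating from $\minfpf$; away from the $2$-cycles of $\minfpf$ this strand moves monotonically, exactly as in Little's analysis of $\fkb$, while the fixed-point-free structure of $\minfpf$ permits exactly one ``reflection''. Making this precise by induction on $N$---maintaining an invariant describing the current shape of $i^{(1)},\dots,i^{(k)}$ and using the uniqueness in Lemma~\ref{fpf-toggle-lem} together with the Strong Exchange Condition at each step, in the spirit of the proof of Theorem~\ref{sp-little-thm}(c)---one wants to conclude that the increasing phase contributes at most two hits to any position (a detour) and the decreasing phase at most one, and moreover that a position which received a detour in the increasing phase is not revisited in the decreasing phase. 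Combining these contributions yields multiplicity at most two, which is the conjecture.

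The main obstacle will be the unimodality/single-reflection statement, and tied to it the claim that the decreasing phase cannot revisit an already-doubled position. This is exactly what separates the symplectic case (bound $2$) from the orthogonal case of the preceding conjecture (bound $1$): the minimal involution $1$ carries no $2$-cycles, so in the orthogonal setting there is no reflection and the trajectory is genuinely monotone, whereas the $2$-cycles of $\minfpf$ create one. Pinning down the interaction of this reflection with the running word seems to require a careful, if elementary, case analysis of $\fpush$ against Theorem~\ref{fsim-thm} and Lemma~\ref{fpf-rtimes-lem}, of the same flavor as---but more intricate than---the case analysis already carried out for Theorem~\ref{sp-little-thm}(c), which is unsurprising given the complexity of Hiroshima's related arguments in \cite{Hiroshima}. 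An alternative that sidesteps wiring diagrams is to imitate the proof of Proposition~\ref{qo-last-prop}: show by induction that $\PSp$ of each prefix of $\tilde w$ is obtained from $\PSp$ of the corresponding prefix of $w$ by incrementing a set of entries, each at most twice, and that a letter is column-inserted for $\tilde w$ exactly when it is for $w$; this inherits the delicacy of symplectic Hecke insertion but may be easier to organize cleanly.
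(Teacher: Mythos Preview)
The statement you are attempting to prove is presented in the paper as an open \emph{conjecture}; the paper gives no proof, only the remark that it is supported by computations. There is therefore no argument in the paper to compare yours against.

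Your reformulation is correct and natural: since each application of $\fpush$ increments exactly the entry at the new active index, the quantity $\tilde w_p - w_p$ counts the visits of $p$ in $i^{(1)},\dots,i^{(N)}$, and the conjecture becomes the assertion that no index is visited more than twice. Your observation that $i^{(k)}=i^{(k+1)}$ for $1\le k<N$ forces $(w^{(k)},i^{(k)})$ to be semi-reduced is also correct and cleanly isolates what is new relative to the orthogonal conjecture.

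Beyond that, however, the proposal is a strategy outline rather than a proof, and its central assertions are not established. The unimodality claim for the active-index sequence is the heart of the matter, and your justification---that the strand ``moves monotonically, exactly as in Little's analysis of $\fkb$''---conflates two different things: Proposition~\ref{once-lem} asserts that each position is incremented at most once under $\fkb_\pi$, which is a \emph{distinctness} statement about the active indices, not a monotonicity statement. Little's wiring-diagram argument tracks the marked crossing along a strand, and the word-position of that crossing need not move monotonically even in the classical case. Even granting unimodality, the further refinements you need---that semi-reduced detours occur only in the increasing phase, that the decreasing phase contributes at most one hit per position, and that a detoured position is never revisited---are asserted without proof. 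In Example~\ref{fbb-ex} the active-index sequence works out to $1,1,2,3,3,2$, which is consistent with your picture, but nothing you have written rules out a semi-reduced state occurring during a putative decreasing phase of a longer trajectory, which would yield a second detour and potentially four visits to one position. You acknowledge these gaps yourself; the point is that closing them is essentially the entire content of the conjecture.

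Your alternative proposal---to imitate the proof of Proposition~\ref{qo-last-prop} by tracking $\PSp$ of prefixes---faces its own obstacle: the diagonal rule in Definition~\ref{sp-eg-def} distinguishes $y=x+1$ from $y>x+1$, so incrementing an entry by $2$ can change whether a diagonal bump occurs and hence whether a letter is row- or column-inserted. The inductive hypothesis one would want is therefore not stable in the way it is for Proposition~\ref{qo-last-prop}. Both routes you sketch are reasonable starting points, but this remains an open problem.
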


Little bumping operators
are naturally described in terms of \emph{wiring diagrams} for permutations; see \cite{HamakerYoung,Little}. Moreover, if $\pi\in S_\ZZ$ has a single descent,
then there is simple way of reading off the tableau $Q_\EG(w)$ for any $w \in \cR(\pi)$
from the associated wiring diagram \cite[Lemma 5]{HamakerYoung}.

\begin{problem}
Identify good definitions of wiring diagrams for (fpf-)involution words,
and describe the operators $\ifkb_\pi$ and $\ffkb_\pi$ in terms of these diagrams.
Is there is an efficient way to compute $\QO(w)$ (respectively, $\QSp(w)$)
from the wiring diagram associated to an involution word $w$ for  
an inv-Grassmannian (respectively, fpf-Grassmannian) permutation?
\end{problem}

Involution words arise naturally 
when studying the combinatorics of the 
$\O_n$- and $\Sp_n$-actions on the type A flag variety $\Fl_n$.
 These actions correspond to two of the three families of type A symmetric varieties.
 The third family comes from the action of $\GL_p \times \GL_{n-p}$ on $\Fl_n$.
 There is a natural weak order on the corresponding set of orbits \cite{CJW}.
The maximal chains in this order give another variant of reduced words,
which are studied under the name \emph{clan words} in \cite{BurksPawlowski}.

\begin{problem}
Are there interesting crystal structures on factorizations of clan words?
Is there an analogue of Edelman-Greene insertion for clan words that can be interpreted as a crystal morphism?
\end{problem}

Morse and Schilling's original aim in \cite{MorseSchilling} was to identify crystal structures on
\emph{cyclically decreasing} factorizations of reduced words for affine permutations.
They describe an abstract $\gl_2$-crystal  on the subset of such factorizations with exactly two factors \cite[Theorem 3.14]{MorseSchilling}. It remains an open problem to further extend the constructions in Section~\ref{reduced-sect} to the affine case.

There are versions of involution words for affine permutations, with many of the same combinatorial
properties as involution words for elements of $I_\ZZ$ \cite{Marberg2019,MZ2018,Zhang2019}. This suggests the following:
 
 \begin{problem}
Are there interesting crystal structures on factorizations of reduced words or involution words for affine permutations?
Is there an affine analogue of Edelman-Greene insertion that can be interpreted as a crystal morphism?
\end{problem}

The papers \cite{Assaf14,BHRY} develop a notion of \emph{shifted dual equivalence graphs} based on standard shifted tableaux with no primed entries. The set of reduced words for a signed permutation,
connected by type B Coxeter-Knuth moves, is an example of such a graph \cite[Theorem 1.3]{BHRY}.
The results in Section~\ref{dual-sect} suggest the existence of another interesting kind of shifted dual equivalence.

\begin{problem}
Is there a version of shifted dual equivalence graphs based on standard shifted tableaux with primed entries that includes $\iR_n(\pi)$ and $\iRfpf_n(\pi)$ as examples?
\end{problem}

\appendix
\def\unpaired{\mathsf{unpaired}}

\section{Crystal operators on shifted tableaux}\label{app-sect}

Fix a positive integer $n$ and a strict partition $\lambda$ with at most $n$ parts.
By Theorem-Definition~\ref{stab-thmdef}, 
the set $\STab_n(\lambda)$ 
 of semistandard shifted tableaux of shape $\lambda$ with all entries at most $n$
 has a $\q_n$-crystal structure.
In this appendix
we review the explicit formulas from \cite{AssafOguz, HPS, Hiroshima2018} for the raising and lowering operators 
in
this crystal.
Recall that the weight map for $\STab_n(\lambda)$ is given by \eqref{stab-weight-eq}.
We include this material both for completeness and to assist the proof of Theorem~\ref{dual-thm2}.

The formulas below
have already appeared in 
\cite{AssafOguz, HPS, Hiroshima2018}.
The $\gl_n$-crystal operators $e_i$ and $f_i$ for $i\in[n-1]$ acting on $\STab_n(\lambda)$ were described first, in \cite{HPS}.
About a year later \cite{AssafOguz} and \cite{Hiroshima2018}  independently supplied the queer operators $f_{\overline 1}$ and $e_{\overline 1}$.
Our exposition mostly follows the conventions of \cite{AssafOguz}, which will let us 
correct some minor errors in the published version of that paper.
We continue to draw all tableaux in French notation.

\subsection{Shifted tableau pairing}

If $\mu$ and $\nu$ are strict partitions, then we write $\mu \subset \nu$ to indicate that $\SD_\mu \subset \SD_\nu$.
In this case we set $\SD_{\nu/\mu} := \SD_\nu \setminus \SD_\mu$
and define 
a \emph{skew shifted tableau}
of shape $\nu/\mu$ to be a map \[ \SD_{\nu/\mu} \to \tfrac{1}{2}\ZZ = \{ \dots < 1' < 1 < 2' < 2 < \dots\}.\]
If $T$ is a semistandard shifted tableau and $i\leq j$ are positive integers, 
then \[T^{-1}(\{ i' < i < \dots <j' <j\})\] is equal to $\SD_{\nu/\mu}$
for some strict partitions $\mu \subset \nu$,
and we write $T|_{[i,j]}$ for the skew shifted tableau obtained by restricting $T$ to this subdomain.

We say that a skew shifted tableau is a \emph{rim} if its domain  has no positions $(i_1,j_1), (i_2,j_2)$ 
with $i_1 < i_2 $ and $j_1 < j_2$. 
If $T$ is a semistandard shifted tableau then $T|_{[i,i]}$ is always a rim.
A rim whose domain is connected is a \emph{ribbon}.
In French notation, the domain of a ribbon must appear as
\[
\ytab{\ &\ &\ &\ }
\qquord
\ytab{\ \\ \ \\ \ \\ \ }
\qquord
\ytab{ \ & \ &  \
 \\ \none &  \none & \  
   \\ \none &  \none  & \ & \ 
      \\ \none &  \none  & \none & \ & \ & \ & \  }
\]
or some analogous sequence of contiguous boxes.

 Suppose $T$ is a semistandard shifted tableau. Then $T|_{[i,i]}$
is a disjoint union of ribbons, which we call the \emph{$i$-ribbons} of $T$.
%Explicitly, an $i$-ribbon is a ribbon whose entries are each equal to $i$ to $i'$.
Each entry in an $i$-ribbon  is  $i$ or $i'$, and all of these %of a semistandard shifted tableau
 are uniquely determined except for
the top left entry, for which there are two possibilities as in these examples:
\[
\ytab{ i & i &  i
 \\ \none &  \none & i'
   \\ \none &  \none  & i' &i
      \\ \none &  \none  & \none & i' & i & i & i  }
\qquord
\ytab{ i' & i &  i
 \\ \none &  \none & i'
   \\ \none &  \none  & i' &i
      \\ \none &  \none  & \none & i' & i & i & i  }.
\]

Recall the definition of the \emph{shifted reading word} $ \shword(T)$ of 
 $T$  from \eqref{shword-eq}.
This definition extends to skew shifted tableaux with no changes.
 Fix a positive integer $i$ and
assume the domain of $T|_{[i,i+1]}$ has size $N$. Let $\alpha_1,\alpha_2,\dots,\alpha_N$
be the positions in this domain, ordered such that $\alpha_j$ is the position contributing the $j$th letter of $\shword(T|_{[i,i+1]})$.

\begin{definition}
Consider the word formed by replacing each $i$ in $\shword(T|_{[i,i+1]})$ by a right parenthesis ``)''
and each $i+1$ in $\shword(T|_{[i,i+1]})$ by a left parenthesis ``(''.
If $j$ and $k$ are the indices of a matching set of parentheses in this word then we say that $\alpha_j$ and $\alpha_k$ 
are paired. Remove all paired positions from  $(\alpha_1,\alpha_2,\dots,\alpha_N)$
and let 
$ \unpaired_i(T) $ 
denote the resulting subsequence.
\end{definition}

\begin{example} Suppose $i=4$ and $T|_{[4,5]}$ is the skew shifted tableau
\[ 
\ytab{ \none & \none  & 4 & 5 & 5 \\
\none & \none[\cdot] & \none[\cdot] & 4' & 4 & 5' & 5  \\ 
\none[\cdot] & \none[\cdot] & \none[\cdot] & \none[\cdot] & 4' & 4  & 4 & 5' & 5}.
\]
Then $\shword(T|_{[4,5]}) = 5 5 4 4 455 45 445$ and the corresponding ordering of the boxes in $T|_{[4,5]}$ is
\[ 
\ytab{ \none & \none  & 5 & 6 & 7 \\
\none & \none[\cdot] & \none[\cdot] & 4 & 8 & 2 & 9  \\ 
\none[\cdot] & \none[\cdot] & \none[\cdot] & \none[\cdot] & 3 & 10  & 11 & 1 & 12}.
\]
The  paired positions are $
(\alpha_2,\alpha_3), $ $ (\alpha_1,\alpha_4),$ $ (\alpha_6,\alpha_{11}),$
$(\alpha_7,\alpha_{8}),$ and
$  (\alpha_9,\alpha_{10})$,
  so \[\unpaired_4(T) = (\alpha_5,\alpha_{12}) = ((3,3), (1,9)).\]
  \end{example}

\subsection{Lowering operators}

The queer lowering operator $f_{\bar 1}$ for  $\STab_n(\lambda)$
from Theorem-Definition~\ref{hm-bijection-thm} 
has the following description. This appears as both \cite[Definition 4.4]{AssafOguz} and \cite[Lemma 3.2]{Hiroshima2018}.

\begin{proposition}[\cite{AssafOguz,Hiroshima2018}]
Let $T\in \STab_n(\lambda)$.
If no box of $T$ contains $1$
or some box of $T$ contains $2'$, then $f_{\overline 1}(T) = 0$.
Otherwise $f_{\overline 1}(T)$ is formed from $T$
by changing the rightmost $1$ in the first row of $T$ to 
be $2$ if the rightmost $1$ is on the diagonal,
or else $2'$.
\end{proposition}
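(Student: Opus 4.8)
The plan is to reduce the formula for $f_{\overline 1}$ on $\STab_n(\lambda)$ to the already-established characterization of the $\q_n$-crystal structure via mixed insertion (Theorem-Definition~\ref{stab-thmdef} and Theorem~\ref{hm-bijection-thm}). Since $P_\HM : \cW_n(m) \to \STab_n(m)$ is a quasi-isomorphism, the operator $f_{\overline 1}$ on a semistandard shifted tableau $T$ is computed by picking any word $w$ with $P_\HM(w) = T$, applying the word-level operator $f_{\overline 1}$ from Section~\ref{words-sect}, and taking $P_\HM$ of the result: that is, $f_{\overline 1}(T) = P_\HM(f_{\overline 1}(w))$ whenever $f_{\overline 1}(w) \neq 0$, and $f_{\overline 1}(T) = 0$ when every such $w$ has $f_{\overline 1}(w) = 0$. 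The claim I need to verify has two halves: first, the vanishing condition ``$T$ has no $1$, or $T$ has a $2'$'' is equivalent to ``$f_{\overline 1}(w) = 0$ for some/any $w$ with $P_\HM(w) = T$''; second, when $f_{\overline 1}$ does not vanish, mixed-inserting $f_{\overline 1}(w)$ produces exactly the tableau described (change the rightmost $1$ in row $1$ to $2$ if it is diagonal, else to $2'$).

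First I would fix a convenient choice of $w$: take $w = \shword(T)$ (or $\row(T)$ with primes deleted), so that $P_\HM(w) = T$. From the definition of $f_{\overline 1}$ on words, $f_{\overline 1}(w) \neq 0$ iff $w$ contains a $1$ and its first $1$ precedes its first $2$; then $f_{\overline 1}(w)$ changes that first $1$ to a $2$. I would then translate the ``first $1$ before first $2$'' condition on $\shword(T)$ into a statement about $T$ itself, using the structure of the shifted reading word (columns of primed entries interleaved with rows of unprimed entries). The key elementary observations are: $1$ appears in $T$ iff box $(1,1)$ is filled with $1$ (semistandardness forces the smallest possible entry into the corner), and the presence of $2'$ in $T$ — which by the semistandard rules can only sit in row $1$, in a box $(1,j)$ with $j \ge 2$ — is precisely what makes a $2$ occur before the first $1$ in $\shword(T)$, because $2'$ is read out in its column before the $1$'s of row $1$. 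This gives the equivalence of the vanishing conditions. I would double-check the edge cases: $T$ empty, $T$ consisting of a single diagonal $1$, and $T$ having $1$ but also $2'$.

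When $f_{\overline 1}(w) \neq 0$, I would analyze the mixed insertion of $f_{\overline 1}(w)$ against that of $w$. The two words differ only in that the first $1$ of $w$ is replaced by $2$. The cleanest route is to compare the two insertion processes letter by letter: up to the position of that first $1$, the intermediate tableaux agree; then I track how inserting $2$ (instead of $1$) into the current first row propagates. Because the first $1$ of $w$ bumps along row $1$ and (since no $2'$ is present and no $1$ precedes it) ultimately lands as the last $1$ of row $1$ — either on the diagonal or not — inserting $2$ in its place instead settles into exactly the box that held the rightmost $1$, now labeled $2'$ (off-diagonal) or $2$ (on the diagonal, since a primed number cannot occupy a diagonal box and the mixed-insertion diagonal exception replaces $y$ by $y+1$ when bumping onto the diagonal). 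All subsequent letters of the two words then insert identically because the shape is unchanged and only this one entry has been incremented, and a straightforward induction (in the spirit of Remark~\ref{insertion-remarks} and Proposition~\ref{qo-last-prop}) shows the remaining tableaux stay in lockstep. I expect the main obstacle to be precisely this last bookkeeping step: verifying that incrementing a single entry in row $1$ does not cascade into further changes during the insertion of the remaining letters, and handling the diagonal-vs-off-diagonal dichotomy of the mixed-insertion bumping rule correctly. This is routine but fiddly, and I would organize it as a short lemma comparing $P_\HM(w)$ and $P_\HM(f_{\overline 1}(w))$ directly.
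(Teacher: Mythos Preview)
The paper does not prove this proposition at all: it is stated as a citation of \cite[Definition 4.4]{AssafOguz} and \cite[Lemma 3.2]{Hiroshima2018}, with no argument given. Your plan---pull the formula back through $P_\HM$ to the word crystal and compare the mixed insertion of $w$ with that of $f_{\overline 1}(w)$---is exactly the right idea, and is in fact how the paper implicitly justifies Theorem--Definition~\ref{stab-thmdef}(b) (see the proof sketch there: ``One can check directly that $Q_\HM(w) = Q_\HM(f_{\overline 1}(w))$ if $f_{\overline 1}(w)\neq 0$''). So your route is the natural one and is consistent with the paper's framework.

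That said, one step in your outline is asserted without justification and is not available from the paper: the claim that $P_\HM(\shword(T)) = T$. The paper never proves a reading-word property for mixed insertion, and $\shword$ is introduced only later, for a different purpose (descents and dual equivalence). If you want a specific $w$, you would need to either prove this identity separately or cite it from Haiman. A cleaner fix is to avoid choosing a reading word altogether: argue directly that for \emph{any} word $w$, the tableaux $P_\HM(w)$ and $P_\HM(f_{\overline 1}(w))$ differ in exactly the stated way, by tracking the single changed letter through the bumping (as you sketch in your final paragraph). This is the inductive ``one entry incremented, shape unchanged'' argument in the spirit of Remark~\ref{insertion-remarks}, and it simultaneously handles the vanishing condition: $T$ has no $1$ iff $w$ has no $1$, and $T$ has a $2'$ iff some $2$ in $w$ bumps a diagonal $1$ before any later $1$ does, which happens iff the first $2$ in $w$ precedes the first $1$. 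Your description of the off-diagonal versus diagonal dichotomy is correct, but be careful: the ``difference box'' can migrate during subsequent insertions (e.g.\ for $w=121$ versus $221$ the discrepancy moves from $(1,1)$ to $(1,2)$), so the invariant to maintain is not that a fixed box differs but that the two intermediate tableaux differ only at the current rightmost $1$ in row $1$.
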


\begin{example}
Thus $f_{\bar 1}\(\ytab{\none& 3 \\ 1 & 2' & 2}\) = f_{\bar 1}\(\ytab{\none &3 \\ 2 & 2}\)=0
$
while
\[
f_{\bar 1}\(\ytab{\none& 3 \\ 1 & 1 & 2}\) = \ytab{\none& 3 \\ 1 & 2' & 2}
\quand
f_{\bar 1}\(\ytab{\none& 3 \\ 1 & 2 & 2}\) = \ytab{\none& 3 \\ 2 & 2 & 2}.
\]
\end{example}

The lowering operators $f_i$ on $\STab_n(\lambda)$ for $i \in [n-1]$
%given by Theorem-Definition~\ref{hm-bijection-thm} 
are more complicated, and were first described in \cite[\S4]{HPS}.
The theorem below reproduces \cite[Definition 3.5]{AssafOguz}, which is equivalent to the
formulation in \cite{HPS} by \cite[Proposition 3.19]{AssafOguz}. 

Checking that the formula for $f_i$ in the next theorem gives a 
map  $\STab_n(\lambda) \to \STab_n(\lambda)\sqcup \{0\}$
is already nontrivial (see \cite[Theorem 3.8]{AssafOguz}).
Showing that $f_i$ commutes with $P_\HM$ in the sense required for part (b) of Theorem-Definition~\ref{hm-bijection-thm}
is even harder (see \cite[Theorem 4.3]{HPS}).

\begin{theorem}[\cite{AssafOguz,HPS}]
\label{shtab-f}
Let $i\in [n-1]$ and $T\in \STab_n(\lambda)$.
Consider the positions  $(x,y)$ in $\unpaired_i(T)$ with $ T_{xy}\in \{i',i\}$.
If there are no such positions then  $f_i(T) = 0$.
Otherwise, let $(x,y)$ be the last such position.
Then $f_i(T) \neq 0$ is formed from $T$ by the following procedure.
\ben
\item[(L1)] First assume $T_{xy} = i$. 

 \item[a.] If $T_{x,y+1} = i+1'$ then $f_i(T)$ is formed by changing $T_{xy}$ to $i+1'$ and $T_{x,y+1} $ to $i+1$:
\[ {\scriptsize
\ytableausetup{boxsize = 1.0cm,aligntableaux=center}
\begin{ytableau} 
\color{black}T_{x+1,y} & \none \\
\color{red}T_{xy} & \color{red} T_{x,y+1}
\end{ytableau} = \begin{ytableau} 
\color{black} ? & \none\\
\color{red} i & \color{red} i+1'
\end{ytableau}
\ \mapsto\ 
 \begin{ytableau} 
\color{black} ? & \none\\
\color{red} i+1' & \color{red} i+1
\end{ytableau}}.
\]

\item[b.] If $T_{x,y+1} \neq i+1'$ and $ T_{x+1,y} \notin \{i+1',i+1\}$ then $f_i(T)$ is formed by changing $T_{xy}$ to $i+1$:
\[ {\scriptsize
\begin{ytableau} 
\color{black} T_{x+1,y} & \none \\
\color{red}T_{xy} & \color{black} T_{x,y+1}
\end{ytableau}  =
 \begin{ytableau} 
\color{black}\substack{\text{not}\\i+1 \\ \text{nor} \\ i+1'} & \none \\
\color{red}i & \color{black}\substack{\text{not}\\i+1'}
\end{ytableau}
\ \mapsto\ 
 \begin{ytableau} 
\color{black} \substack{\text{not}\\i+1 \\ \text{nor} \\ i+1'} & \none \\
\color{red}i+1 & \color{black}\substack{\text{not}\\i+1'}
\end{ytableau}}.\]
 
\item[c.] Suppose $T_{x,y+1} \neq i+1'$ and  $ T_{x+1,y} \in \{i+1',i+1\}$.
  Let $(\tilde x, \tilde y)$ 
be the position  farthest northwest in the
 $(i+1)$-ribbon containing $(x+1,y)$. 
If  $\tilde x \neq \tilde y$, then it holds that
$T_{\tilde x\tilde y} = i+1'$ and 
$f_i(T)$ is formed by
changing  $T_{xy}$ to $i+1'$ and $T_{\tilde x\tilde y}$ to $i+1$:
      \[ {\scriptsize
\begin{ytableau} 
\color{red}T_{\tilde x\tilde y}  &  \none & \none \\
\color{black}\ddots & \color{black} T_{x+1,y} & \none \\
 \none &\color{red} T_{xy} & \color{black} T_{x,y+1} 
\end{ytableau}  =
 \begin{ytableau} 
\color{red}i+1'  &  \none & \none \\
\color{black}\ddots &  \color{black}\substack{i+1 \\ \text{or} \\ i+1'} & \none \\
 \none & \color{red}i & \color{black}\substack{\text{not}\\ i+1'}
\end{ytableau} 
\ \mapsto\ 
 \begin{ytableau} 
\color{red}i+1  &  \none & \none \\
\color{black}\ddots & \color{black}\substack{i+1 \\ \text{or} \\ i+1'} & \none \\
 \none &\color{red} i+1' & \color{black}\substack{\text{not}\\ i+1'}
\end{ytableau} }.
\]
If  $\tilde x = \tilde y$,
then $f_i(T)$ is formed by
just changing $T_{xy}$ to $i+1'$.

\item[(L2)] Next assume $T_{xy} = i'$. 

 \item[a.] If $T_{x+1,y} = i$ then  $f_i(T)$ is formed  by changing $T_{xy}$ to $i$ and $T_{x+1,y} $ to $i+1'$:
\[ {\scriptsize
\ytableausetup{boxsize = 1.0cm,aligntableaux=center}
\begin{ytableau} 
\color{red}T_{x+1,y} & \none \\
\color{red}T_{xy}  &\color{black} T_{x,y+1} 
\end{ytableau} = \begin{ytableau} 
\color{red}i & \none \\
\color{red}i' & \color{black}?
\end{ytableau}
\ \mapsto\ 
 \begin{ytableau} 
\color{red}i+1' & \none \\
\color{red}i & \color{black}?
\end{ytableau}}.
\]

 \item[b.] If $T_{x+1,y} \neq i$ and $T_{x,y+1} \notin \{i, i+1'\}$ then  $f_i(T)$ is formed by changing $T_{xy}$ to $i+1'$:
\[ 
{\scriptsize
\begin{ytableau} 
\color{black}T_{x+1,y} & \none \\
\color{red}T_{xy} &\color{black} T_{x,y+1}
\end{ytableau} = \begin{ytableau} 
\color{black}\substack{\text{not}\ i} & \none\\
\color{red}i' &\color{black} \substack{\text{not}\ i \\ \text{nor} \\ i+1'}
\end{ytableau}
\ \mapsto\ 
 \begin{ytableau} 
\color{black}\substack{\text{not}\ i}  & \none\\
\color{red}i+1' & \color{black}\substack{\text{not}\ i \\ \text{nor} \\ i+1'}
\end{ytableau}}.
\]

 \item[c.] Suppose $T_{x+1,y} \neq i$ and $T_{x,y+1} \in \{i, i+1'\}$.
 Let $(\tilde x, \tilde y)$ 
be the first position in the $i$-ribbon containing $(x,y)$ 
that is southeast of $(x,y)$ and has $T_{\tilde x \tilde y} = i$ and
 $T_{\tilde x, \tilde y+1} \notin \{i, i+1'\}$.
 Such a positions always exists,
 and $f_i(T)$ is formed by changing $T_{xy}$ to $i$ and $T_{\tilde x \tilde y}$ to $i+1'$:
 \[ {\scriptsize
\begin{ytableau} 
\color{black}T_{x+1,y} & \none \\
\color{red}T_{xy} & \color{black}T_{x,y+1}  \\
\none & \color{black}\ddots & \color{red}T_{\tilde x \tilde y}
\end{ytableau} =\begin{ytableau} 
\color{black}\substack{\text{not}\ i}  & \none \\
\color{red}i' & \color{black}\substack{i \\ \text{or} \\ i+1'}  \\
\none & \color{black}\ddots & \color{red}i
\end{ytableau}
\ \mapsto\ 
\begin{ytableau} 
\color{black}\substack{\text{not}\ i}  & \none \\
\color{red}i & \color{black}\substack{i \\ \text{or} \\ i+1'}  \\
\none & \color{black}\ddots & \color{red}i+1'
\end{ytableau}}.
\]

\een
\end{theorem}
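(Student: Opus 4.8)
The theorem bundles together two assertions, and the plan is to handle them separately. First, one must check that the recipe in cases (L1a)--(L1c) and (L2a)--(L2c) produces a well-defined map $\STab_n(\lambda)\to\STab_n(\lambda)\sqcup\{0\}$: whenever $f_i(T)\neq0$ the resulting filling must again have weakly increasing rows and columns, no repeated unprimed entry in a column, and no primed entry on the diagonal or repeated in a row. Second, one must identify this combinatorial map with the abstract crystal lowering operator $f_i$ furnished by Theorem-Definition~\ref{stab-thmdef}. By the uniqueness clause of that theorem-definition, the second task is equivalent to checking that the combinatorially defined operators turn $P_\HM\colon\cW_n(m)\to\STab_n(m)$ into a quasi-isomorphism; concretely, that $P_\HM(f_i(w))=f_i(P_\HM(w))$ for every word $w$, with the convention $0\mapsto0$ (and similarly for $e_i$, $e_{\overline 1}$, $f_{\overline 1}$). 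Both statements are available in the literature---well-definedness in the unpaired-box formulation used here is \cite[Theorem 3.8]{AssafOguz}, which matches the formulation of \cite{HPS} by \cite[Proposition 3.19]{AssafOguz}, and compatibility with $P_\HM$ is \cite[Theorem 4.3]{HPS}---so in this expository appendix it is enough to cite them; below I describe the shape of each argument.

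For well-definedness I would simply run through the six cases locally, using the structure of $i$-ribbons and $(i+1)$-ribbons recalled just above the theorem. The relevant structural facts are that the entries of a ribbon of constant unprimed value $i+1$ are all forced except at its northwest corner, which is either $i+1$ or $i+1'$; that a primed value never occupies a diagonal box; and that the last position $(x,y)$ of $\unpaired_i(T)$ with $T_{xy}\in\{i',i\}$ always has neighbours of the shape making exactly one of subcases (a), (b), (c) applicable. Granting these, each prescribed modification---a plain relabel $i\mapsto i+1$ or $i'\mapsto i+1'$, a two-box exchange as in (L1a) and (L2a), or a \emph{ribbon push} as in (L1c) and (L2c)---is checked to preserve semistandardness; the ribbon push is precisely the correction that repairs the column or row violation a naive relabel would otherwise create, and the verification that the northwest-most (resp.\ southeast-most) ribbon box used has the required form is immediate from the facts just listed. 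This part is long but mechanical.

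The substantive point is compatibility with mixed insertion, and this is where I expect the main difficulty to lie. I would argue by induction on the word length $m$. Write $w=w_1\cdots w_m$ and $u=w_1\cdots w_{m-1}$, and suppose first that the word operator $f_i$ modifies some letter $w_j$ with $j<m$; then $f_i(w)=f_i(u)\cdot w_m$, by induction $P_\HM(f_i(u))=f_i(P_\HM(u))$, and it remains to show that a single mixed-insertion step commutes with the tableau operator $f_i$---that inserting $w_m$ into $f_i(P_\HM(u))$ yields $f_i$ applied to the result of inserting $w_m$ into $P_\HM(u)$. This is a finite case analysis on the bumping route of $w_m$ relative to the box altered by $f_i$, and it is exactly here that the split into cases (L1a)--(L1c) and (L2a)--(L2c) arises---the appearance of an $(i+1)$-ribbon along or adjacent to the route is what forces a (c)-type correction. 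If instead $f_i$ modifies $w_m$ itself, one compares directly the bumping routes of $i$ and of $i+1$ through $P_\HM(u)$. The bookkeeping is intricate because a route may switch from row- to column-insertion and may traverse a diagonal box, so that tracking primed versus unprimed values along it is the crux; for the complete verification I would defer to \cite[Theorem 4.3]{HPS} (and to \cite{AssafOguz} for the unpaired-box accounting). Once $f_i$ has been identified with the abstract crystal operator, the axioms of Definition~\ref{gl-def} for the pair $(e_i,f_i)$---notably $e_i(f_i(T))=T$ when $f_i(T)\neq0$ and the string-length identity $\varphi_i-\varepsilon_i=\weight(T)_i-\weight(T)_{i+1}$---transfer along the quasi-isomorphism from $\cW_n(m)$, or can be re-derived by a bracketing argument on $\shword(T|_{[i,i+1]})$ parallel to the two-letter word case.
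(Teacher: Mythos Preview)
Your proposal is correct and mirrors the paper's treatment essentially exactly: the paper does not prove this theorem but instead cites \cite[Theorem 3.8]{AssafOguz} for well-definedness, \cite[Proposition 3.19]{AssafOguz} for equivalence with the formulation in \cite{HPS}, and \cite[Theorem 4.3]{HPS} for compatibility with $P_\HM$---precisely the references you invoke. Your additional sketch of how each argument goes (local case analysis for well-definedness, induction on word length with bumping-route analysis for the insertion compatibility) is a reasonable gloss on those sources, though the paper itself stops at the citations.
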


\begin{remark}\label{correct-rmk}
The adjectives ``northeastern'' and ``southwestern'' 
in
cases L1(c) and L2(c) of \cite[Definition 3.5]{AssafOguz}
 should be ``northwestern'' and ``southeastern,'' respectively.
\end{remark}

\subsection{Raising operators}

The queer raising operator $e_{\bar 1}$ for  $\STab_n(\lambda)$
from Theorem-Definition~\ref{hm-bijection-thm} 
also has a relatively simple description. This appears as both \cite[Definition 4.5]{AssafOguz} and \cite[Lemma 3.1]{Hiroshima2018}.

\begin{proposition}[\cite{AssafOguz,Hiroshima2018}]
Let $T\in \STab_n(\lambda)$.
If the first entry in the first row of $T$ is equal to $2$ then $e_{\overline 1}(T)$ is formed by changing this entry to $1$.
If the first row of $T$ has a (necessarily unique) entry equal to $2'$, then $e_{\overline 1}(T)$ is formed by changing this entry to $1$.
Otherwise $e_{\overline 1}(T) = 0$.
\end{proposition}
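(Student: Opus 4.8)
The plan is to deduce the formula for $e_{\overline 1}$ from the already-established description of the queer lowering operator $f_{\overline 1}$ on $\STab_n(\lambda)$ recorded just above, using the fact that in any abstract $\q_n$-crystal the raising operator $e_{\overline 1}$ is nothing but the partial inverse of $f_{\overline 1}$: by Definition~\ref{q-def}(1) we have $e_{\overline 1}(T)=S$ if and only if $f_{\overline 1}(S)=T$. So it suffices to compute the partial inverse of the map $f_{\overline 1}$ described above and to check that it coincides with the asserted rule for $e_{\overline 1}$.

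First I would record a few elementary structural facts about an arbitrary $T\in\STab_n(\lambda)$, all consequences of the semistandard conditions together with the prohibition of primed entries on the main diagonal. Namely: (i) the value $1'$ never occurs in $T$ (if it did, chasing down the column of such an entry would force $T_{1,1}=1'$, impossible on the diagonal); (ii) if $1$ occurs anywhere in $T$, then $T_{1,1}=1$, so in particular the first row of $T$ contains a $1$ and the phrase ``rightmost $1$ in the first row'' in the $f_{\overline 1}$ formula is meaningful; (iii) since primed entries cannot repeat in a row, $2'$ occurs at most once in each row; and (iv) if $2'$ occurs in the first row of $T$, then $2'$ occurs nowhere else in $T$. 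Claim (iv) is the only one needing a small argument: if $2'$ sat both in the first row and in some box $(i,j)$ with $i\ge 2$, then $j>i\ge 2$, the column $j$ is weakly increasing with bottom entry $\le 2'$, and comparing with the first row (which is a string of $1$'s followed by at most one $2'$) one finds either two $2'$'s in row $1$ or an entry $T_{2,j}\ge 2$ lying weakly below $2'$ — a contradiction in every case.

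With these facts in hand I would analyze $f_{\overline 1}$. Its domain is exactly the set of $T$ that contain a $1$ and no $2'$; on this domain it replaces the rightmost first-row $1$, sitting at some box $(1,k)$, by $2$ if $k=1$ and by $2'$ if $k\ge 2$. This map is injective: the two cases have disjoint images, since the output has $T_{1,1}=2$ in the first case and $T_{1,1}=1$ in the second (by (ii)), and within each case the preimage is recovered by undoing the replacement. Hence the partial inverse of $f_{\overline 1}$ is: if $T_{1,1}=2$, then — the first row, hence all of $T$, having entries $\ge 2$ — the tableau $S$ obtained from $T$ by changing $T_{1,1}$ to $1$ is semistandard, contains a $1$ and no $2'$, and $f_{\overline 1}(S)=T$; if instead $T$ has a $2'$ in its first row (necessarily unique by (iii), and by (iv) the only $2'$ in $T$), say at $(1,k)$ with $k\ge 2$, then the tableau $S$ obtained by changing that $2'$ to $1$ is semistandard (a one-line check using that the box to its right is empty or holds a $2$, and the box below holds an entry $\ge 2$), contains a $1$, no $2'$, and has its rightmost first-row $1$ at the off-diagonal box $(1,k)$, so $f_{\overline 1}(S)=T$; in every remaining case $T$ is of neither form that an $f_{\overline 1}$-image can take, so $T\notin\operatorname{im}f_{\overline 1}$ and $e_{\overline 1}(T)=0$. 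These three cases are mutually exclusive (if $T_{1,1}=2$ then, the first row being weakly increasing, no $2'$ appears in it) and exhaustive, and they match the stated description of $e_{\overline 1}$ verbatim.

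The only genuinely non-routine ingredient is structural fact (iv); the rest is bookkeeping with the semistandard conditions. As an alternative one could argue directly through the quasi-isomorphism $P_\HM\colon\cW_n(m)\to\STab_n(m)$ of Theorem-Definition~\ref{stab-thmdef}, tracking how changing the first $2$ of a word to $1$ propagates through mixed insertion, but that route is considerably more laborious and I would not pursue it.
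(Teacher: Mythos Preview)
The paper does not prove this proposition; it simply records the formula, citing \cite[Definition~4.5]{AssafOguz} and \cite[Lemma~3.1]{Hiroshima2018}. Your approach --- computing $e_{\overline 1}$ as the partial inverse of the already-described $f_{\overline 1}$ via Definition~\ref{q-def}(1) --- is the natural one and is correct in outline.

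There is one small gap: your argument for structural fact (iv) does not go through as written. The dichotomy ``two $2'$'s in row $1$ or $T_{2,j}\ge 2$'' is not exhaustive, since the column rule forbids repeated \emph{unprimed} entries but permits repeated primed ones, so $T_{2,j}=2'$ is not immediately excluded. A clean fix is to prove the stronger (and unconditional) claim that $2'$ never occurs outside row $1$: if $T_{i,j}=2'$ with $i\ge 2$, then $j>i$ (the diagonal is unprimed), the box $(2,2)$ lies in $\SD_\lambda$, and row $2$ gives $T_{2,2}\le T_{2,j}\le T_{i,j}=2'$; being unprimed on the diagonal this forces $T_{2,2}\in\{1,2\}$, but $2>2'$ is impossible while $T_{2,2}=1$ forces $T_{1,2}=1$ (as $1'$ does not occur), contradicting the no-repeated-unprimed column rule. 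This stronger form is exactly what you need downstream --- both to conclude that $S$ contains no $2'$ (hence lies in the domain of $f_{\overline 1}$) and to rule out $T_{2,k}=2'$ when verifying $S$ is semistandard. Two cosmetic points: in French notation the box $(2,k)$ sits \emph{above} $(1,k)$, not below; and ``holds a $2$'' should read ``holds an entry $\ge 2$''.
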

\begin{example}
Thus $
e_{\bar 1}\(\ytab{\none &3 \\ 1 & 2 & 2}\) = e_{\bar 1}\(\ytab{\none &4 \\ 1 & 4' }\) = 0
$ while
\[
e_{\bar 1}\(\ytab{\none &3 \\ 2 & 2}\) = \ytab{\none &3 \\ 1 & 2}
\quand e_{\bar 1}\(\ytab{\none& 3 \\ 1 & 2' & 2}\) = \ytab{\none &3 \\ 1 & 1 & 2}.
\]
\end{example}

Remarks similar to above apply to the remaining raising operators for $\STab_n(\lambda)$.
These were first described in \cite[\S4]{HPS}.
The theorem below reproduces \cite[Definition 3.9]{AssafOguz}, which is equivalent
to the formulas in \cite{HPS} by \cite[Proposition 3.19]{AssafOguz}.

\begin{theorem}[\cite{AssafOguz,HPS}]\label{shtab-e}
Let $i\in [n-1]$ and $T\in \STab_n(\lambda)$.
Consider the positions  $(x,y)$ in $\unpaired_i(T)$ with $ T_{xy}  \in \{i+1',i+1\}$.
If there are no such positions then $e_i(T) = 0$.
Otherwise, let $(x,y)$ be the first such position.
Then $e_i(T) \neq 0$ is formed from $T$ by the following procedure.
\ben
\item[(R1)] First assume $T_{xy} = i+1$. 
\item[a.] If $T_{x,y-1} = i+1'$ then $e_i(T)$ is formed by changing $T_{xy}$ to $i+1'$ and $T_{x,y-1} $ to $i$:
\[ {\scriptsize
\ytableausetup{boxsize = 1.0cm,aligntableaux=center}
\begin{ytableau} 
\color{red}T_{x,y-1} &\color{red} T_{xy} \\
\none & T_{x-1,y}
\end{ytableau} = \begin{ytableau} 
\color{red}i+1' &\color{red} i+1\\
\none & ?
\end{ytableau}
\ \mapsto\ 
 \begin{ytableau} 
\color{red}i&\color{red} i+1'\\
\none & ?
\end{ytableau}}.
\] 

\item[b.] If $T_{x,y-1} \neq i+1'$ and $ T_{x-1,y} \notin\{i,i+1'\}$ then $e_i(T)$ is formed by changing $T_{xy}$ to $i$:
\[ {\scriptsize
\begin{ytableau} 
T_{x,y-1} & \color{red} T_{xy} \\
\none & T_{x-1,y}
\end{ytableau} = \begin{ytableau} 
\substack{\text{not} \\ i+1'} & \color{red} i+1\\
\none &  \substack{\text{not }i \\\text{nor} \\ i+1'} 
\end{ytableau}
\ \mapsto\ 
 \begin{ytableau} 
\substack{\text{not} \\ i+1'} & \color{red} i\\
\none &  \substack{\text{not }i \\\text{nor} \\ i+1'} 
\end{ytableau}}.
\]

\item[c.] Suppose $T_{x,y-1} \neq i+1'$ and $ T_{x-1,y} \in \{i,i+1'\}$.
Let $(\tilde x, \tilde y)$ 
be the first position  in the
 $(i+1)$-ribbon containing $(x,y)$
 that is southeast of $(x,y)$ with $T_{\tilde x \tilde y} = i+1'$ and
$T_{\tilde x-1, \tilde y} \notin \{i, i+1'\}$.
Such a position exists,
and $e_i(T)$ is formed from $T$ by 
changing $T_{xy}$ to $i+1'$ and $T_{\tilde x\tilde y}$ to $i$:
 \[ {\scriptsize
\begin{ytableau} 
T_{x,y-1} & \color{red} T_{xy} & \ddots\\
\none & T_{x-1,y}  & \ddots &\ddots \\
\none &\none  & \none & \color{red} T_{\tilde x \tilde y}
\end{ytableau} =\begin{ytableau} 
\substack{\text{not} \\ i+1'} & \color{red}i+1& \ddots \\
\none& \substack{i \\\text{or} \\ i+1'}  & \ddots &\ddots \\
\none&\none  & \none &\color{red} i+1'
\end{ytableau}
\ \mapsto\ 
\begin{ytableau} 
\substack{\text{not} \\ i+1'} &\color{red} i+1' & \ddots \\
\none & \substack{i \\\text{or} \\ i+1'}  & \ddots  & \ddots \\
\none & \none  & \none  &\color{red} i
\end{ytableau}}.
\]

\item[(R2)] Next assume $T_{xy} = i+1'$. 

 \item[a.] If $T_{x-1,y} = i$ then $e_i(T)$ is formed  by changing $T_{xy}$ to $i$ and $T_{x-1,y} $ to $i'$:
\[ {\scriptsize
\ytableausetup{boxsize = 1.0cm,aligntableaux=center}
\begin{ytableau} 
T_{x,y-1} &\color{red} T_{xy} \\
\none & \color{red}T_{x-1,y}
\end{ytableau} = \begin{ytableau} 
? &\color{red} i+1'\\
\none & \color{red}i
\end{ytableau}
\ \mapsto\ 
 \begin{ytableau} 
?&\color{red} i\\
\none & \color{red}i'
\end{ytableau}}.
\] 

 \item[b.] If $T_{x-1,y} \neq i$ and $T_{x,y-1} \notin \{i', i\}$ then $e_i(T)$ is formed by changing $T_{xy}$ to $i+1'$:
\[ {\scriptsize
\ytableausetup{boxsize = 1.0cm,aligntableaux=center}
\begin{ytableau} 
T_{x,y-1} &\color{red} T_{xy} \\
\none &T_{x-1,y}
\end{ytableau} = \begin{ytableau} 
\substack{\text{not }i' \\ \text{nor }i} &\color{red} i+1'\\
\none &  \substack{\text{not }i}
\end{ytableau}
\ \mapsto\ 
 \begin{ytableau} 
\substack{\text{not }i' \\ \text{nor }i} &\color{red} i'\\
\none & \substack{\text{not }i} 
\end{ytableau}}.
\] 

 \item[c.] Suppose $T_{x-1,y} \neq i$ and $T_{x,y-1} \in \{i', i\}$.
Let $(\tilde x, \tilde y)$ 
be the position that is farthest northwest in the
 $i$-ribbon containing $(x,y-1)$. 
If  $\tilde x \neq \tilde y$, then it holds that
$T_{\tilde x\tilde y}=i$
and $e_i(T)$ is formed by changing $T_{xy}$ to $i$
and $T_{\tilde x\tilde y}$ to $i'$:
      \[ {\scriptsize
\begin{ytableau} 
T_{\tilde x\tilde y}  &  \ddots & \none \\
\none &  T_{x,y-1} & T_{xy} \\
 \none & \none & T_{x-1,y} 
\end{ytableau}  =
 \begin{ytableau} 
\color{red}i &  \ddots & \none \\
\none &   \substack{i' \\ \text{or} \\ i} & \color{red}i+1' \\
 \none & \none & \substack{\text{not} \ i}
\end{ytableau} 
\ \mapsto\ 
 \begin{ytableau} 
\color{red}i'  &  \ddots & \none \\
\none &  \substack{i' \\ \text{or} \\ i} & \color{red}i \\
 \none &\none &  \substack{\text{not} \ i}
\end{ytableau} }.
\]
If  $\tilde x = \tilde y$, then
 $e_i(T)$ is formed by just changing $T_{xy}$ to $i$.

\een
\end{theorem}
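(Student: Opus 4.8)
The quickest route is to appeal to the literature: these raising operators were first introduced in \cite[\S4]{HPS}, where, together with the lowering operators $f_i$ and the queer operators $e_{\overline 1},f_{\overline 1}$, they were shown to realize the $\q_n$-crystal structure on $\STab_n(\lambda)$ described in Theorem-Definition~\ref{stab-thmdef}; that the present formulation, copied from \cite[Definition 3.9]{AssafOguz}, agrees with the one in \cite{HPS} is the content of \cite[Proposition 3.19]{AssafOguz}. As with the lowering operators (see Remark~\ref{correct-rmk}), a small typo should be corrected: the orientation adjectives occurring in cases R1(c) and R2(c) of \cite[Definition 3.9]{AssafOguz} should be read as southeastern and northwestern, respectively.

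A more self-contained argument proceeds through Theorem~\ref{shtab-f}. By Theorem-Definition~\ref{stab-thmdef} the $\q_n$-crystal structure on $\STab_n(\lambda)$ is unique, so Definition~\ref{gl-def}(1) determines $e_i$ as the partial inverse of $f_i$: for $S,T\in\STab_n(\lambda)$ one has $e_i(T)=S$ if and only if $f_i(S)=T$, and $e_i(T)=0$ if and only if $T$ lies outside the image of $f_i$. Since the lowering operator $f_i$ is already known from Theorem~\ref{shtab-f}, it suffices to verify that the operator $e_i$ defined in the statement satisfies $f_i(e_i(T))=T$ whenever $e_i(T)\neq 0$ and $e_i(f_i(S))=S$ whenever $f_i(S)\neq 0$; all further crystal axioms then follow, because $e_i$ thereby coincides with the raising operator of the unique structure from Theorem-Definition~\ref{stab-thmdef}. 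The conceptual input is the shape of $\unpaired_i(T)$: after replacing each $i$ in $\shword(T|_{[i,i+1]})$ by a right parenthesis and each $i+1$ by a left parenthesis and cancelling matched pairs, the surviving symbols read, from left to right, as a block of right parentheses --- positions carrying entries in $\{i',i\}$ --- followed by a block of left parentheses --- positions carrying entries in $\{i+1',i+1\}$. Hence $f_i$ acts at the right end of the first block and $e_i$ acts at the left end of the second block, that is, on the two sides of a single boundary; one shows that each local rewrite of $f_i$ shifts this boundary by exactly one step, and $e_i$ reverses that shift.

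It then remains to carry out a finite verification that each of the cases R1(a), R1(b), R1(c), R2(a), R2(b), R2(c) performs the inverse local rewrite of the corresponding case among L1(a)--(c), L2(a)--(c) of Theorem~\ref{shtab-f}, and that the two procedures select mutually corresponding boxes. Cases (a) and (b) are routine once one identifies which case of $f_i$ applies to $e_i(T)$ and confirms that the parenthesization of $\shword$ is altered only at the modified box or boxes, so that no other box changes its matched/unmatched status. The main obstacle will be the cases (c), in which the rewrite propagates along an $(i+1)$-ribbon or an $i$-ribbon: one must show that the box selected by $e_i$ --- the one farthest to the northwest in the relevant ribbon, or the first box to its southeast with a prescribed local configuration --- is precisely the box that the matching case of $f_i$ modified on the other tableau, and the diagonal sub-cases ($\tilde x=\tilde y$), in which only a single box changes, must be treated separately. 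This hinges on the fixed interleaving pattern of the $i$-ribbons and $(i+1)$-ribbons inside $T|_{[i,i+1]}$, which makes the searches for these extremal or locally patterned boxes genuinely intertwined by $f_i$. I expect this ribbon bookkeeping, rather than anything conceptual, to be the only real difficulty.
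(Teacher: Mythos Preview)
Your proposal is correct and matches the paper's own treatment: the paper does not prove this theorem but simply cites \cite[\S4]{HPS} for the original construction and \cite[Proposition 3.19]{AssafOguz} for the equivalence with the formulation reproduced here, exactly as in your first paragraph, and then records the same typo corrections you mention. Your second and third paragraphs go beyond the paper by sketching a self-contained verification via inverting $f_i$; this is a reasonable elaboration but not something the paper undertakes.
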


\begin{remark}
As in Remark~\ref{correct-rmk}, in cases R1(c) and R2(c) of \cite[Definition 3.9]{AssafOguz}
the directions ``southwest'' and ``northeast'' 
should be ``southeast'' and ``northwest,'' respectively.
In addition, the phrase ``changes $x$ to $\bar i$'' in case R2(c) of \cite[Definition 3.9]{AssafOguz}
should be ``changes $x$ to $i$.'' Finally, the picture illustrating R2(c)
in \cite[Figure 18]{AssafOguz}
has an extra box on the left; this picture should be 
\[
\ytableausetup{boxsize = 0.6cm,aligntableaux=center}
 \begin{ytableau} 
1 \\ 
\overline 1 & 1 & 1 & \overline 2 \\
\none & \none & \none & y
\end{ytableau} \quad\overset{\text{R2(c)}}{\mapsto}\quad 
 \begin{ytableau} 
\overline 1 \\ 
\overline 1 & 1 & 1 & 1 \\
\none & \none & \none & y
\end{ytableau}.\]
The marked numbers $\overline 1$ and $\overline 2$ in these tableaux are what we would write as $1'$ and $2'$.
\end{remark}

\subsection{Finishing the proof of Theorem~\ref{dual-thm2}}

As an application of the preceding discussion, we  can now give a self-contained derivation of
the two lemmas
cited at the end of the proof of Theorem~\ref{dual-thm2}.
Recall that if $i \in \ZZ$ then $i'+ 1 = (i+ 1)'=i+1'$
and $i'- 1 = (i- 1)'\neq i-1'$
 since $i' := i - \frac{1}{2}$ and $1':=\frac{1}{2}$.

\begin{lemma}\label{dual-equiv-lem0}
Let $T$ be a standard shifted tableau with $n$ boxes.
For each $j \in [n]$, let $\square_j$ be the unique box of $T$ containing $j$ or $j'$.
Suppose $i \in [n-1]$.
\ben
\item[(a)] If $i \in \Des(T)$ then $e_i(T) = f_i(T) = 0$.
\item[(b)] If $i \notin \Des(T)$ then $f_i(T)$ is formed from $T$ by adding $1$ to the entry in box $\square_i$.
\item[(c)] If $i \notin \Des(T)$ then $e_i(T)$ is formed from $T$ by subtracting $1$ from the entry in box $\square_{i+1}$.
\een
\end{lemma}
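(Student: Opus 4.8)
The plan is to prove Lemma~\ref{dual-equiv-lem0} directly by unpacking the explicit formulas for the operators $e_i$ and $f_i$ on $\STab_n(\lambda)$ recorded in Theorems~\ref{shtab-f} and \ref{shtab-e}, specialized to the case where $T$ is a \emph{standard} shifted tableau. The key structural simplification is that in a standard shifted tableau, each of the values $i'$, $i$, $i+1'$, $i+1$ occurs at most once and exactly one of $\{i',i\}$ and exactly one of $\{i+1',i+1\}$ occurs (since every value in $[n]$ appears exactly once, either primed or unprimed). Consequently each ``$i$-ribbon'' has at most one box, and all the multi-box ribbon cases (L1c, L2c, R1c, R2c) are vacuous or collapse to the ``$\tilde x=\tilde y$'' sub-case, while the pairing procedure $\unpaired_i(T)$ reduces to a comparison of the relative order of (at most) the two boxes $\square_i$ and $\square_{i+1}$ together with whether they share a row or column.

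First I would set up the dictionary between the descent conditions. By the combinatorial characterization of $\Des(T)$ recalled just before Theorem~\ref{dual-thm2}, we have $i\in\Des(T)$ iff one of: (a) $i,i+1$ both unprimed with $\square_{i+1}$ in a strictly later row than $\square_i$; (b) $i',i+1'$ both primed with $\square_{i+1}$ in a strictly later column than $\square_i$; (c) $\square_i$ contains $i$ (unprimed) and $\square_{i+1}$ contains $i+1'$ (primed). I would translate each of these into a statement about whether the letters $i$ and $i+1$ in $\shword(T|_{[i,i+1]})$ form a balanced parenthesis pair: the claim to verify is that $i\in\Des(T)$ exactly when $\unpaired_i(T)$ is empty, i.e. the two boxes pair off. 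This is a small finite case check (the two boxes are either in the same row, the same column, or ``incomparable'' — northwest/southeast vs. northeast/southwest — with the entries being primed or unprimed in the four combinations), and once it is done, part (a) is immediate: if $i\in\Des(T)$ then $\unpaired_i(T)$ has no position with entry in $\{i',i\}$ (for $f_i$) nor in $\{i+1',i+1\}$ (for $e_i$), so $f_i(T)=e_i(T)=0$ by the very first clause of each theorem.

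For parts (b) and (c), assume $i\notin\Des(T)$, so $\square_i$ and $\square_{i+1}$ are both unpaired and $\unpaired_i(T)$ consists of exactly these two positions, ordered by $\shword$. For $f_i$: the last position of $\unpaired_i(T)$ with entry in $\{i',i\}$ is $\square_i$, and I would check that in every sub-case (L1a,b,c with $\tilde x=\tilde y$, and L2a,b,c with the degenerate ribbon) the effect on a \emph{standard} tableau is precisely ``replace the entry of $\square_i$ by that entry $+1$'' — i.e. $i\mapsto i+1$ if $\square_i$ is off-diagonal and carries $i$ in case L1b, or $i\mapsto i+1'$... wait, here one must be careful: if $\square_i$ carries unprimed $i$, then adding $1$ gives $i+1$; but the formulas sometimes produce $i+1'$. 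The resolution is that when $\square_i$ carries $i$ and $\square_{i+1}$ carries $i+1'$ (case L1a-type configuration) we would be in a descent, contradiction; so in the non-descent case the output is consistent with ``add $1$ to the unique entry $i$ or $i'$ in box $\square_i$'', reading $i'+1=(i+1)'$. A symmetric analysis handles $e_i$ and part (c), with the first position of $\unpaired_i(T)$ carrying a value in $\{i+1',i+1\}$ being $\square_{i+1}$, and the output being ``subtract $1$ from that entry'' using $(i+1)'-1=i'$ and $(i+1)-1=i$. I would organize this as a table indexed by (primed/unprimed status of $\square_i$)$\times$(primed/unprimed status of $\square_{i+1}$)$\times$(same row / same column / incomparable), discard the entries that force a descent, and read off the claim in the remaining cells.

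The main obstacle I anticipate is purely bookkeeping: matching the somewhat intricate case split of Theorems~\ref{shtab-f} and \ref{shtab-e} (with its L1a/b/c, L2a/b/c structure, the diagonal exceptions, and the ribbon-traversal clauses) against the three-way descent characterization, and making sure that the ``degenerate ribbon'' interpretation ($\tilde x=\tilde y$, or a ribbon of a single box) is invoked correctly in each spot where the general formula refers to a position elsewhere in an $i$-ribbon. There is also the subtlety, noted in Remarks~\ref{correct-rmk} and the remark after Theorem~\ref{shtab-e}, that the published directional language in \cite{AssafOguz} must be read with the corrections; since we only need the standard case these ribbon-traversal subtleties largely evaporate, but one must still confirm that the \emph{diagonal} boxes are handled right, as diagonal positions can only carry unprimed entries and the operators treat them specially. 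I expect the whole argument to run to about a page of careful but elementary case analysis once the $\unpaired_i$-vs-$\Des$ equivalence is nailed down.
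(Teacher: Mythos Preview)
Your proposal is correct and follows essentially the same approach as the paper: reduce everything to the observation that for a standard shifted tableau $i\in\Des(T)$ if and only if $\unpaired_i(T)$ is empty (equivalently, $\unpaired_i(T)=(\square_i,\square_{i+1})$ when $i\notin\Des(T)$), and then read parts (b) and (c) off from Theorems~\ref{shtab-f} and \ref{shtab-e}. The paper's proof simply asserts this in three sentences, whereas you (reasonably) sketch the finite case check showing that the non-simple subcases L1a, L1c, L2a, L2c (and their R-counterparts) either force a descent or are impossible for standard tableaux, leaving only L1b/L2b and R1b/R2b, which are exactly ``add $1$'' and ``subtract $1$''.
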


\begin{proof}
If $i \in \Des(T)$ then $\unpaired_i(T)$ is empty, which implies part (a).
If $i \notin\Des(T)$ then $\unpaired_i(T) = (\square_i,\square_{i+1})$
and 
parts (b) and (c) follow from Theorems~\ref{shtab-f} and \ref{shtab-e}.
\end{proof}

\begin{lemma}\label{dual-equiv-lem}
Let $T$ be a standard shifted tableau with $n$ boxes.
Suppose $i \in [n-2]$.
\ben
\item[(a)] If $\Des(T) \cap \{i,i+1\} = \{i\}$ then $f_if_{i+1}e_ie_{i+1}(T) = \fks_i(T)$.
\item[(b)] If $\Des(T) \cap \{i,i+1\} = \{i+1\}$ then $f_{i+1}f_{i}e_{i+1}e_{i}(T) =  \fks_i(T)$.
\een
\end{lemma}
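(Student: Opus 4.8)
The plan is to establish both identities by a finite case analysis, using Lemma~\ref{dual-equiv-lem0} to evaluate the outer operators in each composition and the explicit formulas of Theorems~\ref{shtab-f} and~\ref{shtab-e} to evaluate the inner ones. Two features of a standard tableau $T$ make this feasible. First, every entry of $T$ outside the three boxes $\square_i,\square_{i+1},\square_{i+2}$ (containing, up to priming, the values $i,i+1,i+2$) is at most $i-1$ or at least $(i+3)'$, hence is inert under all of $e_i,e_{i+1},f_i,f_{i+1}$; so only these three boxes ever have their entries changed, although neighbouring boxes must be consulted to decide which branch of a formula applies. Second, each $i$-ribbon and each $(i+1)$-ribbon of $T$ is a single box, and after one application of $e_{i+1}$ (or $e_i$) the relevant ribbons consist of at most two boxes, so the ribbon-chasing in the ``(c)'' cases of (L1), (L2), (R1), (R2) walks along a ribbon of length at most two. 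Conceptually, the four-fold composition in (a) permutes the three ``levels'' $i,i+1,i+2$ among the boxes $\square_i,\square_{i+1},\square_{i+2}$ together with whatever prime- and diagonal-adjustments this forces, which is exactly the recipe defining $s_i\star$ and $s_{i+1}\star$; the content of the lemma is that this permutation is always the one selected by the position of $i+2$ relative to $i$ and $i+1$ in $\shword(T)$.

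First I would dispose of the outer operators. In case (a) we have $i\in\Des(T)$ and $i+1\notin\Des(T)$: Lemma~\ref{dual-equiv-lem0}(a) gives $e_i(T)=0$, so $f_{i+1}f_ie_{i+1}e_i(T)=0$ and only the displayed composition can be nonzero, while Lemma~\ref{dual-equiv-lem0}(c) identifies $e_{i+1}(T)$ as the tableau obtained from $T$ by subtracting $1$ from the entry in $\square_{i+2}$. A short count with the pairing procedure $\unpaired$ then shows that each of the remaining operators $e_i$, $f_{i+1}$, $f_i$ is applied to a tableau on which it does not vanish, so the composition is nonzero. Next I would enumerate the local configurations: the descent characterization recalled before Theorem~\ref{dual-thm2} confines the pair $(\square_i,\square_{i+1})$ to a short list of ``descent'' shapes (for instance $\square_{i+1}$ directly above or strictly to the northeast of $\square_i$ with both entries unprimed, or $\square_i$ carrying $i$ and $\square_{i+1}$ carrying $i+1'$), confines $(\square_{i+1},\square_{i+2})$ to the complementary ``ascent'' shapes, and, recording in addition which of the three boxes lie on the main diagonal and which consecutive pairs are edge-adjacent, leaves only finitely many cases. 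For each case one computes in turn $e_ie_{i+1}(T)$, then $f_{i+1}e_ie_{i+1}(T)$, then $f_if_{i+1}e_ie_{i+1}(T)$ by reading off the applicable branch of Theorem~\ref{shtab-e} and then of Theorem~\ref{shtab-f}, and compares the outcome with $\fks_i(T)$ computed straight from its definition: determine the order of $i,i+1,i+2$ in $\shword(T)$ to choose between $s_i\star T$, $s_{i+1}\star T$, and $T$, then apply $\star$. Part (b) is obtained by the mirror-image analysis, with the roles of the levels $i$ and $i+1$ and the order of the two compositions exchanged; alternatively one may deduce it from (a) using that each $\fks_i$ is an involution.

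The step I expect to be the main obstacle is the passage through the non-standard intermediate tableaux. Once $e_{i+1}(T)$ has two boxes at level $i+1$, evaluating $e_i$ and then the two lowering operators requires the full pairing-and-ribbon machinery of Theorems~\ref{shtab-f} and~\ref{shtab-e}, and the bookkeeping of primes --- especially of which of the three boxes sits on the main diagonal, since that is exactly where both the crystal formulas and the $\star$-operation bifurcate --- is delicate. The redeeming feature is that no ribbon ever exceeds two boxes, so in each case every operator reduces to one of a small number of explicit two- or three-box pictures, and matching these pictures against the corresponding $s_i\star$ or $s_{i+1}\star$ move is mechanical. A conceptually cleaner route, transporting the word-level fact (that one of $f_if_{i+1}e_ie_{i+1}$ or $f_{i+1}f_ie_{i+1}e_i$ realizes an elementary dual equivalence move on $\cW_n(m)$) through $P_\HM$, is available in principle, but it would still require a comparable formula check to identify the $P_\HM$-conjugate of that move with $\fks_i$, so I would proceed directly.
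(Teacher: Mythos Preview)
Your proposal is correct and follows essentially the same approach as the paper: reduce (b) to (a) via the involution property of $\fks_i$, use Lemma~\ref{dual-equiv-lem0} to compute $e_{i+1}(T)$, and then carry out a finite case analysis on the relative positions of $\square_i,\square_{i+1},\square_{i+2}$ using the explicit formulas of Theorems~\ref{shtab-f} and~\ref{shtab-e}. The only notable organizational difference is that the paper first bifurcates on whether $\shword(T)$ contains $i{+}1,i,i{+}2$ or $i{+}1,i{+}2,i$ as a subword (which immediately determines whether $\fks_i(T)=s_{i+1}\star T$ or $s_i\star T$), and then within each branch considers whether the relevant pair of boxes shares a row, shares a column, or neither; this slightly streamlines the matching of each crystal computation against the target $\fks_i(T)$, but the substance is the same as what you outline.
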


\begin{proof}
For each $j \in [n]$, let $\square_j$ be the unique box of $T$ containing $j$ or $j'$.
If $f_if_{i+1}e_ie_{i+1}(T)$ is nonzero 
then $f_{i+1}f_{i}e_{i+1}e_{i}f_if_{i+1}e_ie_{i+1}(T) = T$.
Thus parts (a) and (b) are equivalent  since 
 $\fks_i$ acts as an involution
and one can check that  
$\Des(T) \cap \{i,i+1\} = \{i\}$ if and only if $\Des(\fks_i(T))  \cap \{i,i+1\} = \{i+1\}$.
 We can therefore assume $\Des(T) \cap \{i,i+1\} = \{i\}$ and just prove part (a).
Then $\shword(T)$ contains either $i+1,i,i+2$ or $i+1,i+2,i$ as a subword.
We consider these cases in turn:
\ben
\item[(1)] Suppose $i+1,i,i+2$ is a subword of $\shword(T)$.
Then $\fks_i(T) = s_{i+1} \star T$
and $\square_{i+2}$ must come after both $\square_{i+1}$ and $\square_i$ 
in the order corresponding to the shifted reading word of $T$.
By the previous lemma $e_{i+1}(T)$ is formed from $T$ by subtracting one from the entry in position $\square_{i+2}$,
which is subsequently the only position in $\unpaired_i(e_{i+1}(T))$.

If $\square_{i+1}$ and $\square_{i+2}$ are in different rows and different columns, 
then it is easy to see from Theorems~\ref{shtab-f} and \ref{shtab-e}
that applying $e_i$ to $e_{i+1}(T)$ subtracts one from the entry in position $\square_{i+2}$,
while applying $f_{i+1}$ to $e_ie_{i+1}(T)$ adds one to $\square_{i+1}$,
while applying $f_{i}$ to $f_{i+1}e_ie_{i+1}(T)$ adds one to $\square_{i+2}$, as illustrated by the 
following picture where $i=3$:
\[
\ytab{ 
4' \\ 
\none & 3 \\ 
\none & \none & 5}
\quad\overset{e_{5}}{\mapsto}\quad
\ytab{ 
4' \\ 
\none & 3 \\ 
\none & \none & 4}
\quad\overset{e_{3}}{\mapsto}\quad
\ytab{ 
4' \\ 
\none & 3 \\ 
\none & \none & 3}
\quad\overset{f_{4}}{\mapsto}\quad
\ytab{ 
5' \\ 
\none & 3 \\ 
\none & \none & 3}
\quad\overset{f_{3}}{\mapsto}\quad
\ytab{ 
5' \\ 
\none & 3 \\ 
\none & \none & 4}.
\]
 In this case the aggregate effect of applying $f_{i}f_{i+1}e_ie_{i+1}$ to $T$ is to subtract
 one from $\square_{i+2}$ and add one to $\square_{i+1}$, which gives $s_{i+1} \star T  = \fks_i(T)$
 as desired.
 
Assume instead that $\square_{i+1}$ and $\square_{i+2}$ are in the same row. 
Then it is only possible for $i+1,i,i+2$ to be a subword of $\shword(T)$
if the entry in $\square_{i+1}$ is primed, the entry in $\square_{i+2}$ is unprimed,
and $\square_i$ belongs to the region
strictly left of and weakly above $\square_{i+1}$.
It follows from
 Theorems~\ref{shtab-f} and \ref{shtab-e}
that the effect of applying $e_{i+1}$, $e_i$, $f_{i+1}$, and $f_i$ successively to $T$ 
is represented by the following picture, and ultimately gives $s_{i+1} \star T = \fks_i(T)$ as needed:
\[
\ytableausetup{boxsize = 0.8cm,aligntableaux=center}
\ba T|_{[i,i+2]} \ =\ \begin{ytableau} 
 \substack{i \\ \text{or} \\ i'}  & \substack{i+1'} & \substack{i+2}  
\end{ytableau} 
 \quad\overset{e_{i+1}}{\mapsto}&\quad
 \begin{ytableau} 
 \substack{i \\ \text{or} \\ i'}    &  \substack{i+1'}  & \substack{i+1} 
\end{ytableau} 
 \quad\overset{e_{i}}{\mapsto}\quad
 \begin{ytableau} 
 \substack{i \\ \text{or} \\ i'}   &  \substack{i}  & \substack{i+1'}  
\end{ytableau} 
\\  \quad\overset{f_{i+1}}{\mapsto}&\quad
 \begin{ytableau} 
 \substack{i \\ \text{or} \\ i'}  &  \substack{i}  & \substack{i+2'} 
\end{ytableau} 
 \quad\overset{f_{i}}{\mapsto}\quad
 \begin{ytableau} 
 \substack{i \\ \text{or} \\ i'}  & \substack{i+1} & \substack{i+2'}  
\end{ytableau}  \ =\ \fks_i(T)|_{[i,i+2]}.
\ea
 \]
 Here, box $\square_i$ need not be in the same row as $\square_{i+1}$ and $\square_{i+2}$;
 this position may be anywhere weakly northwest of the one shown.
Also,
 the application of $e_i$ invokes case R1(a) of Theorem~\ref{shtab-e},
 which interchanges which of $\square_{i+1}$ or $\square_{i+2}$ contains a primed entry.
 
Finally assume that $\square_{i+1}$ and $\square_{i+2}$ are in the same column.
Then it is only possible for $i+1,i,i+2$ to be a subword of $\shword(T)$
if the entry in $\square_{i+1}$ is primed, the entry in $\square_{i+2}$ is unprimed,
and $\square_i$ belongs to the region
strictly left of and strictly above $\square_{i+1}$.
It follows from
 Theorems~\ref{shtab-f} and \ref{shtab-e}
that the effect of applying $e_{i+1}$, $e_i$, $f_{i+1}$, and $f_i$ successively to $T$ 
is represented by the following picture, and again gives $s_{i+1} \star T = \fks_i(T)$:
\[
\ytableausetup{boxsize = 0.8cm,aligntableaux=center}
\ba T|_{[i,i+2]} \ =\ \begin{ytableau} 
 \substack{i \\ \text{or} \\ i'}  & \substack{i+2} \\
\none & \substack{i+1'}  
\end{ytableau} 
 \quad\overset{e_{i+1}}{\mapsto}&\quad
 \begin{ytableau} 
 \substack{i \\ \text{or} \\ i'}   & \substack{i+1} \\
\none & \substack{i+1'}  
\end{ytableau} 
 \quad\overset{e_{i}}{\mapsto}\quad
 \begin{ytableau} 
 \substack{i \\ \text{or} \\ i'}   & \substack{i+1'} \\
\none & \substack{i}  
\end{ytableau} 
\\  \quad\overset{f_{i+1}}{\mapsto}&\quad
 \begin{ytableau} 
 \substack{i \\ \text{or} \\ i'}   & \substack{i+2'} \\
\none & \substack{i}  
\end{ytableau} 
 \quad\overset{f_{i}}{\mapsto}\quad
 \begin{ytableau} 
 \substack{i \\ \text{or} \\ i'}   & \substack{i+2'} \\
\none & \substack{i+1}  
\end{ytableau}  \ =\ \fks_i(T)|_{[i,i+2]}.
\ea
 \]
The box $\square_i$ again might occur anywhere weakly northwest
  of the position shown. The application of $e_i$ now invokes case R1(c) of Theorem~\ref{shtab-e}.
 
 \item[(2)] Suppose $i+1,i+2,i$ is a subword of $\shword(T)$. Our arguments are similar.
 Now we have $\fks_i(T) = s_i\star T$
and $\square_{i}$ must come after both $\square_{i+1}$ and $\square_{i+2}$ 
in the order corresponding to the shifted reading word.
By the previous lemma $e_{i+1}(T)$ is formed from $T$ by subtracting one from the entry in position $\square_{i+2}$,
which leaves $\square_{i+1}$ as the only position in $\unpaired_i(e_{i+1}(T))$.

Similar to the previous case, if $\square_{i}$ and $\square_{i+1}$ are in different rows and different columns
then it follows from Theorems~\ref{shtab-f} and \ref{shtab-e}
that applying $e_i$ to $e_{i+1}(T)$ subtracts one from the entry in position $\square_{i+1}$,
while applying $f_{i+1}$ to $e_ie_{i+1}(T)$ adds one to $\square_{i+2}$,
while applying $f_{i}$ to $f_{i+1}e_ie_{i+1}(T)$ adds one to $\square_{i}$, as illustrated by the 
following picture where $i=3$:
\[
\ytab{ 
3 \\ 
\none & 5' \\ 
\none & \none & 4'}
\quad\overset{e_{5}}{\mapsto}\quad
\ytab{ 
3 \\ 
\none & 4' \\ 
\none & \none & 4'}
\quad\overset{e_{3}}{\mapsto}\quad
\ytab{ 
3 \\ 
\none & 4' \\ 
\none & \none & 3'}
\quad\overset{f_{4}}{\mapsto}\quad
\ytab{ 
3 \\ 
\none & 5' \\ 
\none & \none & 3'}
\quad\overset{f_{3}}{\mapsto}\quad
\ytab{ 
4 \\ 
\none & 5' \\ 
\none & \none & 3'}.
\]
Thus we have $f_if_{i+1}e_ie_{i+1}(T) = s_i \star T = \fks_i(T)$ as desired.

Assume instead that $\square_{i}$ and $\square_{i+1}$ are in the same row.
Then it possible for $\square_i$ and $\square_{i+2}$ to be in consecutive diagonal positions
if the entry in $\square_{i+1}$ is primed, in which case 
the 
effect of applying $e_{i+1}$, $e_i$, $f_{i+1}$, and $f_i$ successively to $T$ 
is represented by the following picture, and gives $s_{i} \star T = \fks_i(T)$ as needed:
\[
\ytableausetup{boxsize = 0.8cm,aligntableaux=center}
\ba T|_{[i,i+2]} \ =\ \begin{ytableau} 
\none & \substack{i+2}   \\
\substack{i} & \substack{i+1'}
\end{ytableau} 
 \quad\overset{e_{i+1}}{\mapsto}&\quad
 \begin{ytableau} 
\none & \substack{i+1}   \\
\substack{i} & \substack{i+1'}
\end{ytableau} 
 \quad\overset{e_{i}}{\mapsto}\quad
 \begin{ytableau} 
\none & \substack{i+1}   \\
\substack{i} & \substack{i}
\end{ytableau} 
\\  \quad\overset{f_{i+1}}{\mapsto}&\quad
 \begin{ytableau} 
\none & \substack{i+2}   \\
\substack{i} & \substack{i}
\end{ytableau} 
 \quad\overset{f_{i}}{\mapsto}\quad
 \begin{ytableau} 
\none & \substack{i+2}   \\
\substack{i} & \substack{i+1}
\end{ytableau}  \ =\ \fks_i(T)|_{[i,i+2]}.
\ea
 \]
 Here the application of $e_i$ invokes case R2(c) of Theorem~\ref{shtab-e}.
If $\square_i$ and $\square_{i+1}$ are in the row but the preceding situation does not occur, then
it is only possible for $i+1,i+2,i$ to be a subword of $\shword(T)$
if the entry in $\square_{i}$ is unprimed, the entry in $\square_{i+1}$ is primed,
and $\square_{i+2}$ belongs to the region
strictly above and weakly left of $\square_{i}$.
In this event
 Theorems~\ref{shtab-f} and \ref{shtab-e}
imply that the effect of applying $e_{i+1}$, $e_i$, $f_{i+1}$, and $f_i$ successively to $T$ 
is represented by the following picture, and  gives $s_{i} \star T = \fks_i(T)$:
\[
\ytableausetup{boxsize = 0.8cm,aligntableaux=center}
\ba T|_{[i,i+2]} \ =\ \begin{ytableau} 
 \substack{i+2 \\ \text{or} \\ i+2'}  \\ \substack{i} & \substack{i+1'}  
\end{ytableau} 
 \quad\overset{e_{i+1}}{\mapsto}&\quad
 \begin{ytableau} 
 \substack{i+1 \\ \text{or} \\ i+1'}  \\ \substack{i} & \substack{i+1'}  
\end{ytableau} 
 \quad\overset{e_{i}}{\mapsto}\quad
 \begin{ytableau} 
 \substack{i+1 \\ \text{or} \\ i+1'}  \\ \substack{i'} & \substack{i}  
\end{ytableau} 
\\  \quad\overset{f_{i+1}}{\mapsto}&\quad
 \begin{ytableau} 
 \substack{i+2 \\ \text{or} \\ i+2'}  \\ \substack{i'} & \substack{i}  
\end{ytableau} 
 \quad\overset{f_{i}}{\mapsto}\quad
 \begin{ytableau} 
 \substack{i+2 \\ \text{or} \\ i+2'}  \\ \substack{i'} & \substack{i+1}  
\end{ytableau}  \ =\ \fks_i(T)|_{[i,i+2]}.
\ea
 \]
The application of $e_i$ here again invokes case R2(c) of Theorem~\ref{shtab-e},
and the actual location of $\square_{i+2}$ may be anywhere weakly northwest of the box shown.

Finally assume that $\square_{i}$ and $\square_{i+1}$ are in the same column.
Then it is only possible for $i+1,i+2,i$ to be a subword of $\shword(T)$
if the entry in $\square_{i}$ is unprimed, the entry in $\square_{i+1}$ is primed,
and $\square_{i+2}$ belongs to the region
strictly above and weakly left of $\square_{i+1}$.
It follows from
 Theorems~\ref{shtab-f} and \ref{shtab-e}
that the effect of applying $e_{i+1}$, $e_i$, $f_{i+1}$, and $f_i$ successively to $T$ 
is represented by the following picture, and again gives $s_{i} \star T = \fks_i(T)$:
\[
\ytableausetup{boxsize = 0.8cm,aligntableaux=center}
\ba T|_{[i,i+2]} \ =\ \begin{ytableau} 
 \substack{i+2 \\ \text{or} \\ i+2'}  \\ \substack{i+1'} \\  \substack{i}  
\end{ytableau} 
 \quad\overset{e_{i+1}}{\mapsto}&\quad
 \begin{ytableau} 
 \substack{i+1 \\ \text{or} \\ i+1'}  \\ \substack{i+1'} \\  \substack{i}  
\end{ytableau} 
 \quad\overset{e_{i}}{\mapsto}\quad
 \begin{ytableau} 
 \substack{i+1 \\ \text{or} \\ i+1'}  \\ \substack{i} \\  \substack{i'}  
 \end{ytableau} 
\\  \quad\overset{f_{i+1}}{\mapsto}&\quad
 \begin{ytableau} 
 \substack{i+2 \\ \text{or} \\ i+2'}  \\ \substack{i} \\  \substack{i'}  
\end{ytableau} 
 \quad\overset{f_{i}}{\mapsto}\quad
 \begin{ytableau} 
 \substack{i+2 \\ \text{or} \\ i+2'}  \\ \substack{i+1} \\  \substack{i'}   
\end{ytableau}  \ =\ \fks_i(T)|_{[i,i+2]}.
\ea
 \]
  Here, the application of $e_i$ invokes case R2(a) of Theorem~\ref{shtab-e},
  and once again the actual location of $\square_{i+2}$ may be anywhere weakly northwest of the box shown.
 
\een
This completes the proof of part (a), which suffices to prove the lemma.
\end{proof}

%\printbibliography


\begin{thebibliography}{99}

\bibitem{Assaf19} S. Assaf. ``A generalization of Edelman-Greene insertion for Schubert polynomials''. In: \emph{Algebraic Combinatorics} 4 (2021) no. 2, pp. 359-385.
\bibitem{Assaf_dual} S. Assaf. ``Dual equivalence graphs I: A new paradigm for Schur positivity''. In: \emph{Forum Math. Sigma} 3 (2015), e12.
\bibitem{Assaf14} S. Assaf. ``Shifted dual equivalence and Schur P-positivity''. In: \emph{J. Combin.} 9 (2018) no. 2, pp. 279--308.



\bibitem{AssafOguz} S. Assaf and E. K. Oguz. ``Toward a Local Characterization of Crystals for the Quantum Queer Superalgebra''. In: \emph{Ann. Comb.} 24 (2020), pp. 3--46.

\bibitem{BHRY} S. Billey et al. ``Coxeter-Knuth graphs and a signed Little map for type B reduced words''. In: \emph{Electron. J. Combin.} 21.4 (2014), P4.6.
\bibitem{BKSTY} A. S. Buch et al. ``Stable Grothendieck polynomials and K-theoretic factor sequences''. In: \emph{Math. Ann.} 340 (2008), pp. 359--382.
\bibitem{BumpSchilling} D. Bump and A. Schilling. Crystal bases: representations of combinatorics. Word Scientific, Singapore, 2017.
\bibitem{BurksPawlowski} B. Burks and B. Pawlowski. ``Reduced words for clans''. Preprint (2018), arXiv:1806.05247.
\bibitem{CJW} M. B. Can, M. Joyce, and B. Wyser. ``Chains in Weak Order Posets Associated to Involutions''. In: \emph{J. Combin. Theory Ser. A} 137 (2016), pp. 207--225.
\bibitem{EG} P. Edelman and C. Greene. ``Balanced tableaux''. In: \emph{Adv. Math.} 63 (1987), pp. 42--99.
\bibitem{GHPS} M. Gillespie et al. ``Characterization of queer supercrystals''. In: \emph{J. Combin. Theory Ser. A} 173 (2020), 105235.
\bibitem{GJKKK15} D. Grantcharov et al. ``Crystal bases for the quantum queer superalgebra''. In: \emph{J. Eur. Math. Soc.} 17.7 (2015), pp. 1593--1627.
\bibitem{GJKKK} D. Grantcharov et al. ``Crystal bases for the quantum queer superalgebra and semistandard decomposition tableaux''. In: \emph{Trans. Amer. Math. Soc.} 366.1 (2014), pp. 457--489.
\bibitem{GJKK10} D. Grantcharov et al. ``Highest weight modules over quantum queer superalgebra $U_q(\mathfrak{q}(n))$''. In: \emph{Comm. Math. Phys.} 296.3 (2010), pp. 827--860.
\bibitem{GJKKK10} D. Grantcharov et al. ``Quantum queer superalgebra and crystal bases''. In: \emph{Proc. Japan Acad. Ser. A Math. Sci.} 86.10 (2010), pp. 177--182.
\bibitem{HaimanMixed} M. D. Haiman. ``On Mixed Insertion, Symmetry, and Shifted Young Tableaux''. In: \emph{J. Combin. Theory Ser. A} 50 (1989), pp. 196--225.
\bibitem{HMP2} Z. Hamaker, E. Marberg, and B. Pawlowski. ``Involution words II: braid relations and atomic structures''. In: \emph{J. Algebr. Comb.} 45 (2017), pp. 701--743.
\bibitem{HMP4} Z. Hamaker, E. Marberg, and B. Pawlowski. ``Schur P-positivity and involution Stanley symmetric functions''. In: \emph{IMRN} (2017), rnx274.
\bibitem{HMP3} Z. Hamaker, E. Marberg, and B. Pawlowski. ``Transition formulas for involution Schubert polynomials''. In: \emph{Sel. Math. New Ser.} 24 (2018), pp. 2991--3025.
\bibitem{HMP5} Z. Hamaker, E. Marberg, and B. Pawlowski. ``Fixed-point-free involutions and Schur P-positivity''. In: \emph{J. Combin.} 11.1 (2020), pp. 65--110.
\bibitem{HamakerYoung} Z. Hamaker and B. Young. ``Relating Edelman-Greene insertion to the Little map''. In: \emph{J. Algebr. Comb.} 40 (2014), pp. 693--710.
\bibitem{HKPWZZ} Z. Hamaker et al. ``Shifted Hecke insertion and K-theory of OG$(n, 2n + 1)$''. In: \emph{J. Combin. Theory Ser. A} 151 (2017), pp. 207--240.
\bibitem{HanssonHultman} M. Hansson and A. Hultman. ``A word property for twisted involutions in Coxeter groups''. In: \emph{J. Combin. Theory Ser. A} 161 (2019), pp. 220--235.
\bibitem{HPS} G. Hawkes, K. Paramanov, and A. Schilling. ``Crystal Analysis of type C Stanley Symmetric Functions''. In: \emph{Electron. J. Combin.} 24.3 (2017), P3.51.
\bibitem{Hiroshima2018} T. Hiroshima. ``$\mathfrak{q}$-crystal structure on primed tableaux and on signed unimodal factorizations of reduced words of type B''. In: \emph{Publ. RIMS Kyoto Univ.} 55 (2019), pp. 369--399.
\bibitem{Hiroshima} T. Hiroshima. ``Queer Supercrystal Structure for Increasing Factorizations of Fixed-Point-Free Involution Words''. Preprint (2019), arXiv:1907.10836.
\bibitem{HuZhang1} J. Hu and J. Zhang. ``On involutions in symmetric groups and a conjecture of Lusztig''. In: \emph{Adv. Math.} 287 (2016), pp. 1--30.
\bibitem{Hultman2} A. Hultman. ``The combinatorics of twisted involutions in Coxeter groups''. In: \emph{Trans. Amer. Math. Soc.} 359 (2007), pp. 2787--2798.
\bibitem{Hultman3} A. Hultman. ``Twisted identities in Coxeter groups''. In: \emph{J. Algebr. Comb.} 28 (2008), pp. 313--332.
\bibitem{Humphreys} J. E. Humphreys. Reflection groups and Coxeter groups. Cambridge University Press, 1990.
\bibitem{Kashiwara1990} M. Kashiwara, ``Crystalizing the $q$-analogue of universal enveloping algebras.'' In: \emph{Comm. Math. Phys.} 133.2 (1990), pp. 249--260.
\bibitem{Kashiwara1991} M. Kashiwara. ``On crystal bases of the $Q$-analogue of universal enveloping algebras''. In: \emph{Duke Math. J.} 63.2 (1991), pp. 465--516.
\bibitem{LamShim} T. Lam and M. Shimozono. ``A Little bijection for affine Stanley symmetric functions''. In: \emph{Seminaire Lotharingien de Combinatoire} 54A.B54Ai (2006).
\bibitem{Little} D. P. Little. ``Combinatorial aspects of the Lascoux-Sch\"utzenberger tree''. In: \emph{Adv. Math.} 174.2 (2003), pp. 236--253.
\bibitem{Lusztig1990a} G. Lusztig, ``Canonical bases arising from quantized enveloping algebras.'' In: \emph{J. Amer. Math. Soc.} 3.2 (1990), pp. 447--498.
\bibitem{Lusztig1990b} G. Lusztig, ``Canonical bases arising from quantized enveloping algebras. II'' In: \emph{Progr. Theoret. Phys. Suppl.} 102 (1991), pp. 175--201.
\bibitem{Macdonald} I. G. Macdonald. Symmetric Functions and Hall Polynomials, 2nd ed. Oxford University Press, New York, 1995.
\bibitem{Marberg2019a} E. Marberg. ``A symplectic refinement of shifted Hecke insertion''. In: \emph{J. Combin. Theory Ser. A} 173 (2020), 105216.
\bibitem{Marberg2019} E. Marberg. ``On some actions of the 0-Hecke monoids of affine symmetric groups''. In: \emph{J. Combin. Theory Ser. A} 161 (2019), pp. 178--219.
\bibitem{MZ2018} E. Marberg and Y. Zhang. ``Affine transitions for involution Stanley symmetric functions''. To appear in: \emph{European J. Combin.}, arXiv:1812.04880.
\bibitem{MorseSchilling} J. Morse and A. Schilling. ``Crystal approach to affine Schubert calculus''. In: \emph{IMRN} 2016.8 (2016), pp. 2239--2294.
\bibitem{PatPyl2014} R. Patrias and P. Pylyavskyy. ``Dual filtered graphs''. In: \emph{Algebraic Combinatorics} 1 (2018), pp. 441--500.
\bibitem{Pawlowski2019} B. Pawlowski. ``Universal graph Schubert varieties''. Preprint (2019), arXiv:1902.09168.
\bibitem{RainsVazirani} E. M. Rains and M. J. Vazirani. ``Deformations of permutation representations of Coxeter groups''. In: \emph{J. Algebr. Comb.} 37 (2013), pp. 455--502.
\bibitem{RichSpring} R. W. Richardson and T. A. Springer. ``The Bruhat order on symmetric varieties''. In: \emph{Geom. Dedicata} 35 (1990), pp. 389--436.
\bibitem{Roberts} A. Roberts. ``Dual equivalence graphs revisited and the explicit Schur expansion of a family of LLT polynomials''. In: \emph{J. Algebr. Comb.} 39.2 (2013), pp. 1--40.
\bibitem{Sag87} Bruce E. Sagan. ``Shifted tableaux, Schur Q-functions, and a conjecture of R. Stanley''. In: \emph{J. Combin. Theory Ser. A} 45.1 (1987), pp. 62--103.
\bibitem{Stanley} R. P. Stanley. ``On the number of reduced decompositions of elements of Coxeter groups''. In: \emph{European J. Combin.} 5 (1984), pp. 359--372.
\bibitem{Stembridge2003} J. R. Stembridge. ``A local characterization of simply-laced crystals''. In: \emph{Trans. Amer. Math. Soc.} 355.12 (2003), pp. 4807--4823.
\bibitem{Worley} D. R. Worley. ``A theory of shifted Young tableaux''. PhD thesis. Massachusetts Institute of Technology, 1984.
\bibitem{WyserYong} B. J. Wyser and A. Yong. ``Polynomials for symmetric orbit closures in the flag variety''. In: \emph{Transform. Groups} 22.1 (2017), pp. 267--290.
\bibitem{Zhang2019} Y. Zhang. ``Quasiparabolic sets and Stanley symmetric functions for affine fixed-point-free involutions''. To appear in: \emph{J. Algebra}, arXiv:1911.05961.

\end{thebibliography}
\end{document}